\newcommand{\Z}{{\mathbb Z}}
\newcommand{\ZZ}{{\mathcal Z}}
\newcommand{\Q}{{\mathbb Q}}
\newcommand{\R}{{\mathbb R}}
\newcommand{\C}{{\mathbb C}}
\newcommand{\kk}{{\mathbf k}}
\newcommand{\cat}{{\rm {cat }}}
\newcommand{\Cat}{{\rm {Cat}}}
\newcommand{\id}{{\rm {id }}}
\newcommand{\Hom}{{\rm Hom}}
\newcommand{\im}{\rm im}
\newcommand{\ind}{{\rm {ind}}}
\newcommand{\Int}{{\rm {Int}}}
\newcommand{\Supp}{{\rm {Supp}}}
\newcommand{\cwgt}{{\rm {cwgt}}}
\newcommand{\RP}{\mathbf {RP}}
\newcommand{\clz}{{\rm {cl}}}
\DeclareMathOperator{\px}{{\widehat{\Z\pi_\xi}}}
\DeclareMathOperator{\sx}{{\Sigma_\xi^{-1}(\Z[\pi])}}
\newcommand{\rk}{{\rm rk}}
\newcommand{\comment}[1]{}
\newcommand{\Nov}{{\rm \mathbf {Nov}}}
\def\V{\mathcal V}
\def\A{\mathcal A}
\def\ker{{\rm Ker }}
\def\im{{\rm Im }}
\def\L{\mathcal L}
\newtheorem{theorem}{Theorem}
\newtheorem{proposition}{Proposition}
\newtheorem{lemma}[proposition]{Lemma}
\newtheorem{corollary}[proposition]{Corollary}
\theoremstyle{definition}
\newtheorem{definition}{Definition}
\newtheorem{example}{Example}
\newtheorem{remark}{Remark}
\begin{document}
\title{Closed 1-forms in topology and dynamics}

\author{Michael Farber and Dirk Sch\"utz}
\address{Department of Mathematics, University of Durham, Durham DH1 3LE, UK}
\email{Michael.Farber@durham.ac.uk}

\address{Department of Mathematics, University of Durham, Durham DH1 3LE, UK}
 \email{dirk.schuetz@durham.ac.uk}



\date{\today}


\begin{abstract}
This article surveys recent progress of results in topology and dynamics
based on techniques of closed one-forms. Our approach allows us to draw conclusions about properties of flows by studying homotopical and cohomological features of manifolds. More specifically we describe a Lusternik - Schnirelmann type theory
for closed one-forms, the focusing effect for flows and the theory of Lyapunov one-forms.
We also discuss recent results about cohomological treatment of the invariants $\cat(X, \xi)$ and $\cat^1(X, \xi)$ and their explicit computation in certain examples.
\end{abstract}

\maketitle

\begin{center}
{\it To S.P. Novikov on the occasion of his 70-th birthday}
\end{center}
\vskip 1cm

In 1981 S.P. Novikov \cite{N1, N2} initiated a generalization of Morse theory which gives topological estimates on the numbers of zeros of closed 1-forms, see also \cite{N3, N5}. Novikov was motivated by a variety of important problems of mathematical physics, leading, in one way or another, to the problem of finding relations between the topology of the underlying manifold and the number of zeros which closed 1-forms in a specific one-dimensional real cohomology class possess. Recall that a closed 1-form can be viewed as a multi-valued function or functional whose branching behavior is fully characterized by the cohomology class.

In \cite{N2} Novikov studies various problems of physics where the motion can be reduced to the principle of an extremal action $S$, which is a multi-valued functional on the space of curves, with the variation $\delta S$ being a well-defined closed 1-form, cf. \cite{N4, NSm, NT}. Among problems of this kind are the Kirchhoff equations for the motion of a rigid body in ideal fluid, the Leggett equation for the magnetic momentum, and others.

His fundamental idea in \cite{N2} was based on a plan to construct a chain complex, now called the Novikov complex, which uses dynamics of the gradient flow in the abelian covering associated with the cohomology class. The dynamics of gradient flows appears traditionally in Morse theory providing a bridge between the critical set of a function and the global ambient topology.

At present the Novikov theory of closed one-forms is a rapidly developing area of topology which interacts with various mathematical theories.
J.-Cl. Sikorav \cite{Si1, Sik} was the first who applied Novikov theory in symplectic topology. Hofer and Salamon \cite{HS} pioneered exploiting the ideas of Novikov theory in Floer theory. More recent applications of Novikov theory in symplectic topology and Hamiltonian dynamics can be found, for example, in the work of Oh \cite{O}, Usher \cite{U} and references mentioned there. Combinatorial group theory is another area where Novikov theory plays an important role; this connection was also discovered by J.-Cl. Sikorav who realized that the Bieri - Neumann - Strebel invariant can be expressed in terms of (a generalized noncommutative) Novikov homology.

Many problems of Novikov theory are still being actively developed in current research. The list of such topics includes (a) constructions of chain complexes (more general than the Novikov complex) which are able to capture the link between topology of the manifold and topology of the set of zeros, (b) various types of inequalities for closed 1-forms (with Morse or Bott type nondegeneracy assumptions), (c) equivariant inequalities and (d) problems about sharpness of these inequalities.

Two recent monographs \cite{farbook}, \cite{P5} give expositions of Novikov theory from quite different angles.

Topology of closed 1-forms is a broader research area which together with the Novikov theory studies a new Lusternik - Schnirelmann type theory for closed 1-forms initiated in 2002 in \cite{farber}.
The latter theory also aims at finding relations between topology of the zero set of a closed one-form and homotopy information, based mainly on the cohomology class of the form. The words \lq\lq Lusternik - Schnirelmann type\rq\rq\, intend to emphasize that no assumptions on the character of zeros are made, unlike the ones which appear in the Novikov theory requiring that all zeros are nondegenerate in the sense of Morse.

Although Lusternik - Schnirelmann theory for closed one-forms shares many common features with the Lusternik - Schnirelmann theory of functions and with the Novikov theory of closed one-forms, it is also very different from both these classical theories. The most striking new phenomenon is based on the fact that in any nonzero cohomology class there always exists a closed 1-form having at most one zero (see Theorem \ref{colliding} below). Hence, the new theory is not merely about the number of zeros but, as we show in this article, about qualitative dynamical properties of smooth flows on manifolds.

Given a smooth flow one wants to {\it \lq\lq tame\rq\rq}\  it by a closed 1-form lying in a prescribed cohomology class. More precisely, one wants to find a closed 1-form such that the flow is locally decreasing. This idea leads to the notion of a Lyapunov closed one-form generalizing the classical notion of a Lyapunov function. The key question about the existence of Lyapunov closed 1-forms for flows has been resolved in a series of papers of Farber, Kappeler, Latschev and Zehnder \cite{farbe4, FKLZ, FKLZ1, Lat2, Lat3}. Note that the classical theorem of C. Conley \cite{Cnl, Cnl1} gives the answer in the special case of the zero cohomology class, i.e. when one deals with Lyapunov functions. When the cohomology class is nontrivial the notion of asymptotic cycle of a flow introduced by Schwartzman \cite{Sc, Sc1} plays a crucial role.

 A brief account of Lusternik - Schnirelmann theory for closed one-forms can be found in chapter 10 of the book \cite{farbook} published in 2004. The main idea of this paper is to survey the new results obtained after 2004. To put the material in context we start this survey with a section describing the basic results of the Novikov theory.

  In a forthcoming paper \cite{FGS} we introduce and study the notion of sigma invariants for a finite CW-complex $X$ analogously to the sigma invariants of groups of Bieri et al \cite{binest, bieren}. These invariants carry information about the finiteness properties of infinite abelian covers of $X$.
  Both the present survey and \cite{FGS} have a common main theme: they are based on movability properties of subsets of $X$ with respect to a closed 1-form which are very close in spirit to the idea of Novikov homology.

 \vskip 0.5cm
\begin{center}
Table of contents
\end{center}
\vskip 0.5cm
1. Fundamentals of the Novikov theory

2. The colliding theorem

3. Closed 1-forms on general topological spaces

4. Lyapunov 1-forms for flows

5. Notions of category with respect to a cohomology class

6. Focussing effect

7. Existence of flow-convex neighbourhoods

8. Proof of Theorem \ref{jankothm1}

9. Topology of the chain recurrent set $R_\xi$

10. Proof of Theorem \ref{tkthm}

11. Cohomological estimates for $\cat(X, \xi)$

12. Upper bounds for $\cat(X, \xi)$ and relations with the Bieri - Neumann - Strebel invariants

13. Homological category weights, estimates for $\cat(X, \xi)$ and calculation of $\cat(X, \xi)$, $\cat^1(X, \xi)$ for products of surfaces

 \section{Fundamentals of the Novikov theory}

 S.P. Novikov \cite{N1}, \cite{N2} suggested a generalization of the classical Morse theory which gives lower bounds on the number of zeros of
 Morse closed 1-forms. In this section we will give a brief account of the Novikov theory. We touch here only the following selected topics: Novikov inequalities, Novikov numbers, and Novikov Principle leading to the notion of the Novikov complex. We refer the reader to the original papers \cite{N1, N2} and to the monograph \cite{farbook} for proofs and more details. Historical information about the development of the subject after \cite{N1,N2} as well as bibliographic references can also be found in \cite{farbook}.

\subsection{Novikov inequalities} Let $M$ be a smooth manifold. A smooth closed 1-form $\omega$ on $M$ is defined as a smooth section of the cotangent bundle $T^\ast M\to M$ satisfying $d\omega=0$. By the Poincar\'e lemma for any simply connected open set $U\subset M$ one has $\omega|U=df_U$ where $f_U:U\to \R$ is a smooth function, defined uniquely up to addition of a locally constant function. The zeros of $\omega$ are points $p\in M$ such that $\omega_p=0$. If $p$ lies in a simply connected domain $U$ then $\omega_p=0$ if and only if $p$ is a critical point of $f_U$.

 \begin{definition} A zero $p\in M$ of a smooth closed 1-form $\omega$ is said to be non-degenerate if it is a non-degenerate critical point of the function $f_U$. \end{definition}
 Clearly, this property is independent of the choice of the simply connected domain $U$ and the function $f_U$.
 \begin{definition}
 The Morse index of a non-degenerate zero $p$ of $\omega$ is defined as the Morse index of $p$ viewed as a critical point of $f_U$.
 \end{definition}
 We denote the Morse index by $\ind(p)$. It takes values $0, 1, 2, \dots, n$ where $n=\dim M$.

The main problem of the Novikov theory is as follows. Let $\omega$ be a smooth closed one-form on a closed smooth manifold $M$.
Let us additionally assume that $\omega$ is Morse, i.e. all its zeros are nondegenerate in the sense explained above. We denote by $c_i(\omega)$ the number of zeros of $\omega$ having Morse index $i$, where $i=0, 1, \dots, n$. One wants to estimate the numbers $c_i(\omega)$ in terms of information about the topology of $M$ and of the cohomology class \begin{eqnarray}\label{class}\xi=[\omega]\in H^1(M;\R)\end{eqnarray} represented by $\omega$.

In the case of classical Morse theory one has $\xi=[\omega]=0$ (i.e. $\omega=df$ where $f: M\to \R$ is a smooth function) and the answer is given by the Morse inequalities
\begin{eqnarray}\label{morse1}c_i(\omega)\ge b_i(M) +q_i (M)+q_{i-1}(M)\end{eqnarray}
where $b_i(M)$ denotes the $i$-th Betti number of $M$ and $q_i(M)$ denotes the minimal number of generators of the torsion subgroup of $H_i(M;\Z)$.

S.P. Novikov \cite{N1,N2} introduced generalizations of the numbers $b_i(M)$ and $q_i(M)$ which depend on the cohomology class $\xi$ of $\omega$  (see (\ref{class})) and are denoted $b_i(\xi)$ and $q_i(\xi)$ correspondingly. We call $b_i(\xi)$ the Novikov - Betti number; the number $q_i(\xi)$ is the Novikov torsion number. Their definitions will be given below. The Novikov inequality (in its simplest form) states:

\begin{theorem}\label{thmnov} Let $\omega$ be a smooth closed 1-form on a smooth closed manifold $M$. Assume that all zeros of $\omega$ are nondegenerate. Then
\begin{eqnarray}\label{novikov}
c_i(\omega) \ge b_i(\xi) + q_i(\xi) + q_{i-1}(\xi),
\end{eqnarray}
where $\xi=[\omega]\in H^1(M;\R)$ denotes the cohomology class represented by $\omega$.
\end{theorem}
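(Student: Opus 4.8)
The plan is to establish the Novikov inequalities by constructing the Novikov complex and comparing it with ordinary homology in the manner of classical Morse theory, but over the Novikov ring. First I would fix a Morse closed $1$-form $\omega$ on $M$ and choose an auxiliary Riemannian metric so that the negative gradient flow of $\omega$ satisfies the Morse–Smale transversality condition (generic after a small perturbation of the metric, not of $\omega$). Passing to the minimal abelian cover $\widetilde{M}\to M$ on which $\omega$ becomes exact, say $\omega = df$ for $f:\widetilde M\to\R$, the function $f$ is a genuine Morse function whose critical points lie over the zeros of $\omega$, with the deck group $H = H_1(M;\Z)/\mathrm{tors}$ (or the relevant quotient determined by the periods of $\omega$) acting freely. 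The bounded-below condition on $f$ along flow lines, together with properness arguments, guarantees that the count of gradient trajectories between critical points over a given deck translation is finite, so one obtains a free chain complex $C_*(\omega)$ over the group ring $\Z[H]$ with $\mathrm{rank}\, C_i(\omega) = c_i(\omega)$.

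The second step is to complete this picture to the Novikov ring $\Nov = \widehat{\Z[H]}_\xi$, the ring of formal series in $H$ with support bounded above with respect to $\xi$. The key point, the Novikov Principle, is that the completed complex $C_*(\omega)\otimes_{\Z[H]}\Nov$ is chain homotopy equivalent to $C_*(\widetilde M;\Z)\otimes_{\Z[H]}\Nov$, i.e. to $C_*(M;\Nov)$ with the local coefficient system twisted by $\xi$. One way to see this is to use a handle decomposition of $M$ adapted to $\omega$ (or, equivalently, to interpolate between $\omega$ and an exact form along a path, controlling the continuation maps); the completion is exactly what is needed to make the infinitely many handles that appear in the universal-type cover assemble into a well-defined boundary operator. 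From this equivalence one reads off that the homology $H_i(M;\Nov)$ is a finitely generated $\Nov$-module, and the Novikov–Betti number $b_i(\xi)$ is defined as its rank over the (skew) field of fractions of $\Nov$, while $q_i(\xi)$ is the minimal number of generators of its torsion submodule — the analogue of $b_i(M)$ and $q_i(M)$ but over $\Nov$.

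The third step is purely algebraic and mirrors the derivation of the classical Morse inequalities. Since $\Nov$ is a principal ideal domain (this is the crucial structural fact about the one-variable Novikov ring), the finitely generated complex $C_*(\omega)\otimes\Nov$ with $\mathrm{rank}\,C_i = c_i(\omega)$ decomposes up to chain isomorphism into elementary pieces, and a standard counting argument over a PID shows that any finite free complex over $\Nov$ computing a homology with Betti numbers $b_i$ and torsion numbers $q_i$ must have $c_i \ge b_i + q_i + q_{i-1}$. Combining this with the identification from step two yields $c_i(\omega)\ge b_i(\xi)+q_i(\xi)+q_{i-1}(\xi)$, which is the assertion.

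I expect the main obstacle to be step two: rigorously establishing the chain homotopy equivalence between the Novikov complex and the completed singular (or cellular) complex with twisted coefficients. This requires controlling the compactness of moduli spaces of gradient flow lines in a noncompact cover — showing that trajectories cannot escape to infinity in the \lq\lq increasing\rq\rq\ direction of $f$ because of the a priori energy bound supplied by the periods of $\omega$ — and then showing that the broken-trajectory phenomena assemble correctly so that $\partial^2 = 0$ and the comparison map is a quasi-isomorphism after completion. The finiteness statements (that the coefficients of the boundary operator are Novikov series, not arbitrary series) and the PID property of $\Nov$ are where all the analytic and algebraic subtlety is concentrated; the rest is bookkeeping analogous to the compact case. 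For the purposes of this survey one may of course cite \cite{N1, N2, farbook} for these points rather than reproving them.
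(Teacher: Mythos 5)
Your proposal follows the same route as the paper: construct the Novikov complex freely generated by the zeros via the gradient flow, invoke the Novikov Principle identifying its homology with $H_i(M;\L_\xi)$, and then run the standard counting argument for finite free complexes over a principal ideal domain, citing the references for the analytic details exactly as the survey does. The one point to watch is that the PID property (Lemma \ref{lmpr}) is stated for the ring $\Nov$ of series with arbitrary real exponents, into which $\Z[\pi]$ is mapped by $\phi_\xi$; this is why one completes over $\Nov$ rather than over the completed group ring $\widehat{\Z[H]}_\xi$ of the deck group (the two coincide only when $\rk(\xi)=1$).
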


Note that for $\xi=0$ the number $b_i(\xi)$ coincides with $b_i(M)$ and the number $q_i(\xi)$ coincides with $q_i(M)$, i.e. the Novikov inequalities (\ref{novikov}) contain the Morse inequalities (\ref{morse1}) as a special case.

Our goal in this section is to explain the main ideas of the proof of Theorem \ref{thmnov} without intending to present full details. A complete proof can be found in \cite{farbook}, pages 46 - 48.

Next we introduce the Novikov ring $\Nov_\kk$ where $\kk$ is a commutative ring.
Elements of $\Nov_\kk$ are formal \lq\lq power series\rq\rq\, of the form
\begin{eqnarray*}
x = \sum_{\gamma\in \R} n_\gamma t^{\gamma},\label{eks}
\end{eqnarray*}
where $t$ is a formal variable, the coefficients are elements of $\kk$, $n_\gamma\in \kk$, and the exponents $\gamma\in \R$
are arbitrary real numbers satisfying the following condition: for any $c\in \R$ the set
\begin{eqnarray}\label{set}
 \{\gamma\in \R; n_\gamma \ne 0, \, \gamma >c\}\label{finite}
\end{eqnarray} is finite.
Equivalently, an element $x\in \Nov_\kk$ can be represented in the form
\[x =n_0t^{\gamma_0} + n_1t^{\gamma_1} + n_2t^{\gamma_2} + \, \dots \]
where $n_i\in \kk$, $\gamma_i\in \R$, $\gamma_0> \gamma_1> \gamma_2 > \dots$ and $\gamma_i$ tends to $-\infty$. In other words, element of the Novikov ring $\Nov_\kk$ are \lq\lq Laurent like\rq\rq\, power series with integral coefficients and with arbitrary real exponents tending to $-\infty$.

Addition in $\Nov_\kk$ is given by adding the coefficients of powers of $t$
with equal exponents. Multiplication in $\Nov_\kk$ is given by the formula
\[\sum n_\gamma t^{\gamma}\cdot
\sum n'_{\gamma} t^{\gamma} \, =\,
\sum n_\gamma^{''}t^\gamma\]
where
\[ n_\gamma^{''}=
\sum_{{\gamma_1 +\gamma_{2} =\gamma}} n_{\gamma_1}
n'_{\gamma_2}.\]
Note that the last sum contains only finitely many nonzero terms and moreover, the set of all the
exponents $\gamma\in \R$ for which $n_\gamma^{''}$ is nonzero also satisfies the property that the sets (\ref{set}) are finite.

For simplicity, we will abbreviate the notation ${\Nov}_{\Z}$ to $\Nov$.

\begin{lemma}\label{lmpr}
The ring $\Nov$ is a principal ideal domain. Moreover, for any field $\kk$ the ring $\Nov_\kk$ is a field.
\end{lemma}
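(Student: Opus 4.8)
The plan is to prove the second assertion first, since it is the easier one and is needed in the argument for the first. Let $\kk$ be a field and $0\neq x\in\Nov_\kk$ with leading term $n_0t^{\gamma_0}$, so $x=n_0t^{\gamma_0}+n_1t^{\gamma_1}+\cdots$ with $\gamma_0>\gamma_1>\cdots$. I would factor $x=n_0t^{\gamma_0}(1-z)$ with $z=-\sum_{j\ge1}(n_j/n_0)\,t^{\gamma_j-\gamma_0}$, an element of $\Nov_\kk$ all of whose exponents are $\le\gamma_1-\gamma_0<0$. Writing $\varepsilon=\gamma_0-\gamma_1>0$, the $k$-th power $z^k$ has all exponents $\le-k\varepsilon$, so for every $\gamma\in\R$ only finitely many $k$ contribute to the coefficient of $t^\gamma$ in $\sum_{k\ge0}z^k$, and the exponent set of this series still satisfies the finiteness condition (\ref{set}); hence $\sum_{k\ge0}z^k\in\Nov_\kk$ is an inverse of $1-z$, and $x^{-1}=n_0^{-1}t^{-\gamma_0}\sum_{k\ge0}z^k\in\Nov_\kk$. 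Since $\Nov_\kk$ is a commutative ring with unit in which the leading term of a product is the product of the leading terms — so it has no zero divisors — this shows $\Nov_\kk$ is a field.

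For $\Nov=\Nov_\Z$ I would use the same observation that it is a domain, and for a nonzero ideal $I\subseteq\Nov$ record, for each $0\neq x=\sum n_\gamma t^\gamma$, the leading coefficient $\ell(x)\in\Z\setminus\{0\}$ and the top exponent $w(x)\in\R$, with $\ell$ multiplicative and $w$ additive. Let $m$ be the least element of the nonempty set $\{\,|\ell(x)|:0\neq x\in I\,\}$ of positive integers. The first step is to show $m\mid\ell(x)$ for every $0\neq x\in I$: the monomial multiple $x':=t^{w(x)-w(x_0)}x_0$ lies in $I$ with $w(x')=w(x)$ and $\ell(x')=m$, so for integers $a,b$ with $a\ell(x)+bm=\gcd(\ell(x),m)$ the element $ax+bx'\in I$ is nonzero with top exponent $w(x)$ and leading coefficient $\gcd(\ell(x),m)$, and minimality of $m$ forces this gcd to equal $m$. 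Fix $x_0\in I$ with $\ell(x_0)=m$ (one may normalize $w(x_0)=0$ by a monomial multiple, since monomials are units).

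It remains to prove $I=(x_0)$. Given $0\neq y\in I$, I would form the quotient $z=y\,x_0^{-1}$ inside $\Nov_\Q$, which is a field by the first part, and write $z=\sum_{j\ge0}p_jt^{\zeta_j}$ with $p_j\in\Q\setminus\{0\}$ and $\zeta_0>\zeta_1>\cdots\to-\infty$. The claim is that every $p_j$ lies in $\Z$, which I would prove by induction on $j$: granting $p_0,\dots,p_{j-1}\in\Z$, the partial remainder $y_j:=y-x_0\sum_{i<j}p_it^{\zeta_i}$ lies in $\Nov$ and in $I$ and equals $x_0\sum_{i\ge j}p_it^{\zeta_i}$, so $\ell(y_j)=m\,p_j$; but $\ell(y_j)\in\Z$ (as $y_j\in\Nov$) and $m\mid\ell(y_j)$ (by the first step applied to $y_j\in I$), whence $p_j\in\Z$. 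Therefore $z=\sum_jp_jt^{\zeta_j}$ is a genuine element of $\Nov$ and $y=x_0z\in(x_0)$, so $I=(x_0)$ is principal.

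The delicate point is this last step, and it is where routing through $\Nov_\Q$ earns its keep. The naive substitute — repeatedly cancelling the leading term of $y$ by subtracting monomial multiples of $x_0$ — produces a sequence of exponents that need not tend to $-\infty$ (they may accumulate at a finite real number), so the "quotient" one assembles may fail to be a legitimate Novikov series and the procedure need not terminate. By contrast, the quotient formed in $\Nov_\Q$ is already a bona fide element there and hence automatically has exponents tending to $-\infty$; the only remaining work is the integrality of its coefficients, which is exactly what the minimal-leading-coefficient choice of $x_0$ supplies.
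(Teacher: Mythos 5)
Your proof is correct and complete. Note that the paper itself does not prove this lemma: it calls the second statement trivial and refers to \cite{farbook}, page 8, for the first, so there is no in-text argument to compare against. Your treatment of the field case (factoring out the leading monomial and inverting $1-z$ by a geometric series, which converges in $\Nov_\kk$ because the exponents of $z^k$ drop below $-k\varepsilon$) is the standard one. For the PID statement, your two key steps both check out: the gcd argument shows the minimal absolute leading coefficient $m$ divides every leading coefficient occurring in $I$, and the induction on the coefficients of $z=yx_0^{-1}\in\Nov_\Q$ correctly converts $m\mid\ell(y_j)$ together with $\ell(y_j)=mp_j$ into $p_j\in\Z$. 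You have also put your finger on exactly the delicate point: a naive term-by-term division inside $\Nov$ produces a quotient whose exponent set need not tend to $-\infty$, whereas forming the quotient in the field $\Nov_\Q$ guarantees it is a legitimate Novikov series from the outset, leaving only integrality of its coefficients to verify. This is a valid self-contained proof of the lemma.
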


The second statement is in fact trivial; the proof of the first statement can be found in \cite{farbook}, page 8.
The credit for Lemma \ref{lmpr} should be mainly given to S.P. Novikov who discovered and stated it without proof. Full proofs appear in
  \cite{HS}, \cite{P2} and \cite{farbook}. Similar statements concerning related rings of rational functions were discovered in \cite{F1}, \cite{Si1}.

Now we can explain how one associates to a cohomology class $\xi\in H^1(M;\R)$ the Novikov numbers $b_i(\xi)$ and $q_i(\xi)$. Here we assume that $M$
is a smooth closed manifold although the construction is applicable to finite polyhedra.
Consider the group ring
$\Z[\pi]$ where $\pi$ denotes the fundamental group $\pi=\pi_1(X, x_0)$. The class $\xi$ determines a ring homomorphism
\begin{eqnarray}\label{rep}
\phi_\xi: \Z[\pi] \to \Nov
\end{eqnarray}
defined on a group element $g\in \pi$ by the formula
$$\phi_\xi(g) \, = \, t^{\langle \xi, g\rangle} \, \in \Nov.$$
Here $\langle \xi, g\rangle \in \R$ is the evaluation of the cohomology class $\xi$ on the homotopy class $g$. Recall that $t$ denote the formal indeterminant
of the Novikov ring. Clearly,
$$\phi_\xi(gg') = t^{\langle \xi, gg'\rangle} = t^{\langle \xi, g\rangle + \langle \xi, g'\rangle}= \phi_\xi(g)\phi_\xi(g')$$
i.e. $\phi_\xi$ is multiplicative on group elements. It follows that $\phi_\xi$ extends by linearity on the whole group ring as a ring homomorphism.
By the well-known construction homomorphism (\ref{rep}) defines a local system of left $\Nov$-modules over $M$ which we denote by $\L_\xi$.
The homology $H_i(M;\L_\xi)$ of this local system is a finitely generated module over the Novikov ring $\Nov$. As we know, $\Nov$ is a principal ideal domain, and therefore $H_i(M;\L_\xi)$ is a direct sum of a free and a torsion $\Nov$-module.
\begin{definition}\label{defnov}
The Novikov-Betti number $b_i(\xi)$ is defined as the rank of the free part of $H_i(X, \L_\xi)$. The Novikov torsion number $q_i(\xi)$ is defined as the minimal number of generators of the torsion submodule of $H_i(X;\L_\xi)$.
\end{definition}

Recall the definition of homology of local system $H_i(M; \L_\xi)$. Consider the universal cover $\tilde M \to M$ and the singular chain complex
$C_\ast(\tilde M)$.
The latter is a chain complex of free left modules over the group ring $\Z[\pi]$.
One views $\Nov$ as a  right $\Z[\pi]$-module where
$$x\cdot g =x \phi_\xi(g), \quad \mbox{where}\quad x\in \Nov, \quad g\in \pi.$$
Then $H_i(M;\L_\xi)$ is the homology of the chain complex
\begin{eqnarray}\label{novchain}\Nov\otimes_{\Z[\pi]} C_\ast(\tilde M).\end{eqnarray}

\subsection{Novikov and Universal complexes} The main idea of S.P. Novikov in \cite{N1, N2} which finally led him to Theorem \ref{thmnov} was the statement that {\it for any Morse closed 1-form $\omega$ on a smooth closed manifold $M$
there exists a chain complex $C^{\omega}$ (which is now known as {\it the Novikov complex}) having the following two properties:
\begin{enumerate}
\item $C^\omega$ is a free chain complex of $\Nov$-modules and each module $C^\omega_i$ has a canonical free basis which is in one-to-one correspondence with zeros of $\omega$ having Morse index $i$, where $i=0, 1, \dots, n=\dim M$.
    \item $C^\omega$ is chain homotopy equivalent to the complex $\Nov\otimes_{\Z[\pi]}C_\ast(\tilde M)$. In particular, the homology $H_i(C^\omega)$ is isomorphic to $H_i(M;\L_\xi)$.
\end{enumerate}}

This statement, which we call the {\it the Novikov Principle}, trivially implies Theorem \ref{thmnov}.
It should be compared with the classical {\it  Morse Principle} which claims that {\it for any Morse function on a closed smooth manifold there exists a chain complex
$C^f$ having the following two properties:
\begin{enumerate}
\item $C^f$ is a free chain complex of $\Z[\pi]$-modules and each module $C^f_i$ has a canonical free basis which is in one-to-one correspondence with critical points of $f$ having Morse index $i$, where $i=0, 1, \dots, n=\dim M$.
    \item $C^f$ is chain homotopy equivalent to the chain complex $C_\ast(\tilde M)$, where $\tilde M$ is the universal cover of $M$ and $\pi=\pi_1(M)$.
\end{enumerate}} There are several explicit constructions of Morse theory which lead to the complex $C^f$. One of them is based on the fact that $M$ admits a cell-decomposition with cells in one-to-one correspondence with critical points of $f$. Another well-known construction of $C^f$ is based on the Witten deformation of the de Rham complex.

One is naturally led to ask if there exist other ring homomorphisms \begin{eqnarray}\label{rep1}
\rho: \Z[\pi]\to \mathcal R,\end{eqnarray}
distinct from (\ref{rep}), for which the analogue of the Novikov Principle holds.
To be more specific, we say that {\it the Novikov Principle is valid for a group $\pi$, a group homomorphism\footnote{One has $\xi(gg') =\xi(g)+\xi(g')$ for $g,g'\in \pi$} $\xi: \pi\to \R$ and a ring homomorphism $\rho: \Z[\pi] \to \mathcal R$ if
for any smooth closed manifold $M$ with $\pi_1(M)=\pi$ and for any Morse closed 1-form $\omega$ on $M$ representing the cohomology class $\xi$ there exists a chain complex
$$C^{\omega}=(0\to C_n^\omega \to C^\omega_{n-1} \to \dots \to C_1^\omega\to C_0^\omega \to 0)$$ of free finitely generated $\mathcal R$-modules
having the following two properties:
\begin{enumerate}
\item each module $C^\omega_i$ has a canonical free basis which is in one-to-one correspondence with the zeros of $\omega$ having Morse index $i$;
    \item The complex $C^\omega$ is chain homotopy equivalent to $\mathcal R\otimes_{\Z[\pi]}C_\ast(\tilde M)$, where $\mathcal R$ is viewed as a left
    $\Z[\pi]$-module via $\rho$.
\end{enumerate}}

A positive answer to this question was given in \cite{FR} in the case of rational cohomology classes and in \cite{F11} in the general case.
Besides (\ref{rep}) the Novikov principle holds for many other ring homomorphisms (\ref{rep1}). Moreover, there exists \lq\lq the largest\rq\rq\, such homomorphism which we describe below. The chain complex $C^\omega$ in this case is called {\it the Universal complex}.
This complex lives over a localization of the group ring $\Z[\pi]$.

An element
$\alpha\in \Z[\pi]$ will be called {\it $\xi$-negative} if $\alpha=\sum n_j
g_j$ (finite sum), where $n_j\in \Z$ and $\xi(g_j)<0$ for all $j$. An $m\times
m$-matrix $A$ over the group ring $\Z[\pi]$ will be called {\it $\xi$-negative}
if all its entries are $\xi$-negative.
Consider the class $\Sigma_\xi$ of square matrices of the form
${\rm {Id}}+A$, where $A$ is an arbitrary $\xi$-negative square matrix with
entries in $\Z[\pi]$.

Note that in the case $\xi=0$ (when one studies Morse functions) the set of $\xi$-negative matrices
is empty and so the Cohn localization (\ref{la15}) is just the identity map.

We will use the notion of noncommutative localization developed by P.M. Cohn \cite{Co}.
{\it The universal Cohn localization} of the group ring
$\Z[\pi]$ with respect to the class $\Sigma_\xi$ is a ring $\sx$ together with
a ring homomorphism
\begin{eqnarray}\label{universal}\rho_\xi : \Z[\pi] \to \sx,\label{la15}
\end{eqnarray}
satisfying the following two properties: first, any matrix $\rho_\xi({\rm
{Id}}+A)$, where $A$ is $\xi$-negative, is invertible over $\sx$, and,
secondly, it is a {\it universal} homomorphism having this property, i.e., for
any ring homomorphism $\rho: \Z[\pi]\to \mathcal R$, inverting all matrices of
the form $I+A$, where $A$ is $\xi$-negative, there exists a unique ring
homomorphism $\phi: \sx \to \mathcal R$ such that the following diagram
commutes

\begin{figure}[h]
\setlength{\unitlength}{1cm}
  \begin{center}
\begin{picture}(9,2.8)
\linethickness{0.1mm}
\put(2.7,0.5){$\Z[\pi]$}
\put(4,2.){$\sx$}
\put(6,0.5){$\mathcal R$}
\put(3.7,0.6){\vector(1,0){1.9}}
\put(4.4,0.1){$\rho$}
\put(3.3,1.0){\vector(1,1){0.7}}
\put(3.3,1.5){$\rho_\xi$}
\put(5,1.7){\vector(1,-1){0.7}}
\put(5.5, 1.4){$\phi$}
\end{picture}
\end{center}
\end{figure}

We refer to the book of P.M. Cohn \cite{Co}, where the existence of
localization (\ref{la15}) is proven.

\begin{theorem}
\label{form2}
Let $\pi$ be any group and $\xi\in H^1(\pi;\R)$ be a cohomology class. Then the Novikov Principle is valid for the Cohn localization (\ref{universal}).
\end{theorem}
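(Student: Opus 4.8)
The plan is to argue in two stages. First one establishes the Novikov Principle over the \emph{Novikov completion} $\px$ of the group ring $\Z[\pi]$ — the ring of formal sums $\sum_{g\in\pi}n_g\,g$ ($n_g\in\Z$) for which $\{g\in\pi:n_g\ne 0,\ \langle\xi,g\rangle\ge c\}$ is finite for every $c\in\R$; this is essentially the classical statement going back to S.\,P. Novikov and carried out in full generality by Pajitnov and Latour (see \cite{farbook, P5}). Then one \emph{descends} the resulting complex from $\px$ to the Cohn localization $\sx$, and this second stage is the new content, due to \cite{FR} for rational $\xi$ and to \cite{F11} in general.

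\emph{Stage 1.} Fix a closed smooth manifold $M$ with $\pi_1(M)=\pi$ and a Morse closed $1$-form $\omega$ representing $\xi$. Choose a gradient-like vector field $v$ for $\omega$; after an arbitrarily small perturbation we may assume the flow of $v$, lifted to the universal cover $\widetilde M$, is Morse--Smale and satisfies the standard genericity hypotheses guaranteeing compactness of the moduli spaces of flow lines. On $\widetilde M$ the pulled-back form is exact, $p^\ast\omega=dF$ for a $\pi$-equivariant Morse function $F\colon\widetilde M\to\R$ with $F(gx)=F(x)+\langle\xi,g\rangle$; the index-$i$ critical points of $F$ are the translates $g\cdot\widetilde p$ of chosen lifts $\widetilde p$ of the index-$i$ zeros $p$ of $\omega$. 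For an index-$i$ zero $p$ and an index-$(i-1)$ zero $q$, a $v$-trajectory from $\widetilde p$ to $g\widetilde q$ can exist only when $F(g\widetilde q)<F(\widetilde p)$, i.e.\ $\langle\xi,g\rangle<F(\widetilde p)-F(\widetilde q)$; combined with the compactness of the trajectory moduli this shows that the signed count $\sum_g n(\widetilde p,g\widetilde q)\,g$ is a well-defined element of $\px$. These elements are the entries of a boundary operator on the free $\px$-module generated by the zeros of $\omega$, yielding the Novikov complex, which we denote $\widehat{C}^{\,\omega}$. That $\widehat{C}^{\,\omega}$ is chain homotopy equivalent to $\px\otimes_{\Z[\pi]}C_\ast(\widetilde M)$ is proved by comparing it with the cellular chain complex of a handle decomposition: for rational $\xi$ one cuts $M$ along a closed hypersurface Poincar\'e dual to $\xi$ and works on the resulting cobordism, while for an arbitrary class one either approximates $\xi$ by rational classes or argues directly in $\widetilde M$ (see \cite{farbook, P5}).

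\emph{Stage 2.} Every $\xi$-negative square matrix $A$ over $\Z[\pi]$ becomes invertible over $\px$: all group elements occurring in $A$ have $\xi$-value bounded above by some $\mu<0$, so the entries of $A^k$ have $\xi$-value $\le k\mu\to-\infty$ and the geometric series $\sum_{k\ge 0}(-1)^kA^k$ converges in the ring of matrices over $\px$ to $(\mathrm{Id}+A)^{-1}$. By the universal property of the Cohn localization there is therefore a canonical ring homomorphism $j\colon\sx\to\px$ with $j\circ\rho_\xi$ equal to the inclusion $\Z[\pi]\hookrightarrow\px$. The heart of the theorem is that $\widehat{C}^{\,\omega}$ is \emph{induced from $\sx$}: there is a finite free chain complex $C^\omega$ over $\sx$, equipped with a basis in one-to-one correspondence with the zeros of $\omega$, such that $\px\otimes_{\sx}C^\omega\cong\widehat{C}^{\,\omega}$ and the equivalence of Stage 1 refines to a chain homotopy equivalence $C^\omega\simeq\sx\otimes_{\Z[\pi]}C_\ast(\widetilde M)$ over $\sx$. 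Equivalently, the boundary matrices of $\widehat{C}^{\,\omega}$, though a priori infinite power series, lie in the image of the map from matrices over $\sx$ to matrices over $\px$. The proof of this relies on algebraic properties of the Cohn localization and its relation to the completion $\px$ (in particular the injectivity of $j$ and the flatness of $\px$ over $\sx$), so that the relevant chain-homotopy data over $\sx$ can be recovered from their images over $\px$, together with a realization/cancellation argument inside Cohn's localization: the equivalence of Stage 1 presents $\widehat{C}^{\,\omega}$ as the outcome of repeatedly cancelling trivial summands and changing basis in $\px\otimes_{\Z[\pi]}C_\ast(\widetilde M)$, and one checks that the base changes involved are realized over $\px$ by elementary matrices and by matrices $\mathrm{Id}+A$ with $A$ $\xi$-negative — each already invertible over $\sx$ — so the whole simplification can be carried out over $\sx$.

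The main obstacle is precisely Stage 2, the passage from the completion $\px$ to the strictly smaller ring $\sx$; Stage 1 is classical, its only delicate point being the genericity needed for compactness of trajectory spaces in the non-compact cover $\widetilde M$. For rational $\xi$ the descent is carried out in \cite{FR} by an explicit analysis, in the language of noncommutative localization, of the cobordism obtained by cutting $M$ along a dual hypersurface; for a general class, where no such cobordism exists, the argument in \cite{F11} proceeds through an approximation of $\xi$ by rational classes together with careful control of the induced maps of Novikov completions and Cohn localizations. Establishing that the based chain homotopy type indeed descends along $j\colon\sx\to\px$ is the crux of the whole proof.
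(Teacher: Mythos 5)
The paper itself offers no proof of Theorem \ref{form2}; it only points to \cite{FR} for the rational case and \cite{F10} for the general case, remarking that the construction rests on a technique of \emph{collapse for chain complexes}. Measured against that actual strategy, your architecture is inverted in a way that creates a genuine gap. The Farber--Ranicki argument does not first build the Novikov complex over the completion $\px$ and then descend it to $\sx$; it works over $\sx$ from the start. One takes a handle decomposition (or CW structure) of $M$ adapted to $\omega$, forms the free $\Z[\pi]$-complex $C_\ast(\tilde M)$, passes to $\sx\otimes_{\Z[\pi]}C_\ast(\tilde M)$, and then algebraically cancels generators in pairs until only generators indexed by the zeros of $\omega$ remain. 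The whole point of choosing $\Sigma_\xi$ to be the class of matrices $\mathrm{Id}+A$ with $A$ $\xi$-negative is that these are exactly the matrices one must invert during the cancellation, so every step is legitimate over $\sx$ by construction and no completion ever enters.

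Your Stage 2, by contrast, needs to transport chain-homotopy data \emph{backwards} along $j\colon\sx\to\px$, and you explicitly invoke ``the injectivity of $j$ and the flatness of $\px$ over $\sx$.'' The paper states in so many words that it is not known whether the canonical map (\ref{la16}) $\sx\to\px$ is injective; flatness is likewise not available. Without injectivity, even knowing that the boundary matrices of $\widehat{C}^{\,\omega}$ lie in the image of $j$ (itself a nontrivial rationality statement, true only for suitable gradients) would not determine a complex over $\sx$, let alone refine the chain homotopy equivalence of Stage 1 to one over $\sx$. So the crux of your argument rests on an open problem. The repair is to discard the descent entirely: run your cancellation/base-change observation (the last sentence of your Stage 2, which is essentially the collapse technique) directly on $\sx\otimes_{\Z[\pi]}C_\ast(\tilde M)$, using $\px$ at most as a target in which to \emph{detect} that certain matrices are of the form $\mathrm{Id}+(\xi\text{-negative})$, never as the ring over which the complex is first constructed. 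Stage 1 then becomes unnecessary for the theorem as stated (though it is of course how one relates the universal complex to the classical Novikov complex over $\Nov$ via $\phi_\xi$).
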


Farber and Ranicki in \cite{FR} proved this theorem in the case of integral cohomology classes; the general case was treated by Farber \cite{F10}.
The construction of the Universal complex $C^\omega$ employs the technique of collapse for chain complexes which is analogous to the well-known operation of combinatorial collapse. This technique was initiated in \cite{FR}.

One should mention another important closely related ring called {\it the Novikov-Sikorav completion} $\px$; it was first introduced by J.-Cl. Sikorav
\cite{Si1} who was inspired by the construction of the Novikov ring $\Nov$. One can view $\px$ as the noncommutative analogue of the Novikov ring.
Elements of $\px$ are represented by formal sums, possibly infinite,
$$\alpha = \sum n_i g_i$$ where $n_i\in \Z$ and $g_i\in \pi$, satisfying the
following condition: {\it for any real number $c\in \R$ the set $\{i; \,
\xi(g_i)\ge c\}$ is finite. } Compare this with the construction of the Novikov
ring above. Addition and multiplication are given by the usual
formulae; for example, the product of $\alpha=\sum n_i g_i\in \px$ and
$\beta=\sum m_j h_j\in \px$ is given by
\begin{eqnarray}
\alpha\cdot\beta = \sum_{i,j} (n_i m_j) (g_i h_j).
\end{eqnarray}
The ring homomorphism $\rho: \Z[\pi]\to \px$ is the inclusion. If $A$ is a $\xi$-negative
square matrix over the ring $\Z[\pi]$, then the power series $$({\rm {Id}}+A)^{-1} = {\rm
{Id}} - A + A^2 - \dots$$ converges in $\px$ and hence the matrix $I+A$ is invertible in
$\px$. From the universal property of the Cohn localization it follows that there exists a canonical ring homomorphism
\begin{eqnarray}
\sx\to \px,\label{la16}
\end{eqnarray}
extending the inclusion $\Z[\pi] \to \px$. This implies that the
homomorphism (\ref{la15}) is injective.

It is not known whether
(\ref{la16}) is always injective. Intuitively, the image of (\ref{la16})
consists of {\it \lq\lq rational power series\rq\rq}. This is a consequence of
the characterization of elements of the Cohn localization as components of
solutions of a linear system of equations; cf. \cite{Co}, Chapter 7.

\subsection{Novikov numbers and the fundamental group}
It is obvious that one-dimensional Novikov numbers $b_1(\xi)$ and $q_1(\xi)$
depend only on the fundamental group $\pi_1(X)$ and on the homomorphism of periods $\xi: \pi_1(X)\to \R$. Interestingly
there is also a strong inverse dependence, i.e. one may sometimes
recover information about the fundamental group while studying $b_1(\xi)$ and $q_1(\xi)$.
The following result shows that nontriviality of the first Novikov - Betti number $b_1(\xi)$ happens only if the fundamental group
$\pi_1(X)$ is \lq\lq large\rq\rq:

\begin{theorem}[Farber-Sch\"utz \cite{fsch}]\label{bone}
Let $X$ be a connected finite complex and let $\xi\in H^1(X;\R)$ be a
nonzero cohomology class. If the first Novikov-Betti number $b_1(\xi)$ is
positive then $\pi_1(X)$ contains a nonabelian free subgroup.
\end{theorem}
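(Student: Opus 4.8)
The plan is to tie positivity of $b_1(\xi)$ to membership in the Bieri--Neumann--Strebel invariant $\Sigma^1(\pi)$ of $\pi=\pi_1(X)$ and then invoke the Bieri--Strebel structure theory of finitely presented groups. Since $b_1(\xi)$ depends only on the finitely presented group $\pi$ and on the period homomorphism $\xi\colon\pi\to\R$ (as noted above), I may assume $X$ is a finite $2$-dimensional complex, e.g.\ the presentation complex of a finite presentation of $\pi$; then $H_*(X;\M)=H_*(\pi;\M)$ for $*\le1$ and every $\Z[\pi]$-module $\M$.

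I would first isolate two preliminary facts. \emph{(a) Symmetry: $b_1(-\xi)=b_1(\xi)$.} Here $b_1(\xi)$ equals the rank over the field $\Nov_\Q$ of the first Novikov homology $H_1(X;\Nov_\Q)$, and every nonzero element of the Laurent group ring $\Q[\xi(\pi)]$ is invertible in $\Nov_\Q$: writing it as a finite sum of monomials and factoring out the one of largest exponent leaves a factor of the form $1+(\text{terms of strictly smaller exponent})$, which is a unit. Hence $b_1(\xi)$ equals the generic $\Q[\xi(\pi)]$-rank of the first homology of the covering of $X$ corresponding to $\ker\xi$; this rank depends only on the normal subgroup $\ker\xi\le\pi$, and therefore is unchanged under $\xi\mapsto-\xi$. \emph{(b) Nonvanishing of Novikov--Sikorav homology: $b_1(\xi)>0$ implies $H_1(\pi;\px)\ne0$.} The ring homomorphism $\px\to\Nov$ extending $\phi_\xi$ makes $\Nov$ a $\px$-algebra, yielding a universal-coefficient spectral sequence ${\rm Tor}^{\px}_p\bigl(\Nov,H_q(\pi;\px)\bigr)\Rightarrow H_{p+q}(X;\L_\xi)$. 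Since $\xi\ne0$, the augmentation ideal of $\px$ contains the unit $1-g$ for any $g\in\pi$ with $\xi(g)<0$, so $H_0(\pi;\px)=0$; were $H_1(\pi;\px)$ also zero, the spectral sequence would force $H_1(X;\L_\xi)=0$ and hence $b_1(\xi)=0$.

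Assume now $b_1(\xi)>0$. By (a) also $b_1(-\xi)>0$, so by (b) the first Novikov--Sikorav homology of $\pi$ is nonzero both for $\xi$ and for $-\xi$. By Sikorav's theorem, which characterises $\Sigma^1(\pi)$ as the set of classes for which the Novikov--Sikorav homology of $\pi$ vanishes in degrees $\le1$, we conclude $[\xi]\notin\Sigma^1(\pi)$ and $[-\xi]\notin\Sigma^1(\pi)$. As $\Sigma^1(\pi)$ is open in the character sphere $S(\pi)=(\Hom(\pi,\R)\smallsetminus\{0\})/\R_{>0}$ and the classes of surjective homomorphisms $\pi\to\Z$ are dense, I may pick a surjective $\chi\colon\pi\to\Z$ so close to $\xi$ that $[\chi]\notin\Sigma^1(\pi)$ and $[-\chi]\notin\Sigma^1(\pi)$ as well. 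Being finitely presented and surjecting onto $\Z$, the group $\pi$ admits an HNN decomposition $\pi=\langle B,t\mid tAt^{-1}=C\rangle$ with $B$ finitely generated, compatible with $\chi$; by the Bieri--Strebel structure theorem the conditions $[\chi]\notin\Sigma^1$ and $[-\chi]\notin\Sigma^1$ force both associated subgroups $A$ and $C$ to be proper in $B$. A proper HNN extension acts on its Bass--Serre tree with no fixed vertex, no fixed end and no invariant line, and hence (by the ping-pong lemma) contains a nonabelian free subgroup. Thus $\pi_1(X)$ contains a nonabelian free subgroup.

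The step I expect to be the crux is the passage from $b_1(\xi)>0$ to membership in $\Sigma^1(\pi)$: one has to set up the ${\rm Tor}$ spectral sequence over the noncommutative completion $\px$ and match it with Sikorav's description of $\Sigma^1$ (in particular keeping track of the sign of the character), and one genuinely needs the symmetry $b_1(\xi)=b_1(-\xi)$ --- without it one could only conclude that $\pi$ is an \emph{ascending} HNN extension, which need not contain a free subgroup (metabelian Baumslag--Solitar groups being the standard example). The final implication, from a proper HNN splitting to a free subgroup, is classical but still requires the small extra argument ruling out an invariant line in the Bass--Serre tree (equivalently, that a proper HNN extension is not virtually cyclic).
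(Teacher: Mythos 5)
Your overall route is the one the paper attributes to \cite{fsch}: the symmetry $b_1(\xi)=b_1(-\xi)$ (your argument via the field of fractions $Q(H)\subset{\Nov}_{\Q}$ is exactly the mechanism of Lemma \ref{lomeshane}, i.e.\ Proposition 1.30 of \cite{farbook}), the passage from $b_1(\xi)>0$ to nonvanishing of the Novikov--Sikorav homology of $\pi$, Sikorav's characterisation of the Bieri--Neumann--Strebel set $\Sigma(\pi)$, and then the structure theory of \cite{binest}. Your steps (a) and (b) are sound, and the observation that the symmetry is genuinely needed (to rule out ascending HNN extensions such as Baumslag--Solitar groups) is exactly the right point.

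The gap is in the reduction to a discrete character. You write that since $\Sigma(\pi)$ is open and rational classes are dense, you may choose a surjection $\chi\colon\pi\to\Z$ near $\xi$ with $[\chi],[-\chi]\notin\Sigma(\pi)$. Openness of $\Sigma$ gives stability in precisely the opposite direction: it makes the \emph{complement} of $\Sigma$ closed, so knowing that $[\xi]$ and $[-\xi]$ lie in $\Sigma(\pi)^{c}$ tells you nothing about nearby classes --- $[\xi]$ may be a boundary point of $\Sigma(\pi)$ and every rational neighbour may lie inside $\Sigma(\pi)$. Nor can the perturbation be rescued through the Novikov numbers themselves: the condition $b_1(\xi)>0$ is not open in $\xi$ (the generic value of $b_1$ is the minimal one, and positivity can hold only along a proper jump locus, e.g.\ a single hyperplane in $H^1(X;\R)$), so $b_1(\chi)$ may vanish for every rational $\chi$ near $\xi$. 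The repair is to invoke the theorem of \cite{binest} in the form in which it is actually stated, namely for arbitrary points of the character sphere: a finitely presented group $G$ containing no nonabelian free subgroup satisfies $\Sigma(G)\cup-\Sigma(G)=S(G)$. Its contrapositive applied to $[\xi],[-\xi]\notin\Sigma(\pi)$ finishes the proof directly, with no perturbation and no explicit HNN/Bass--Serre discussion. (Your concluding Bass--Serre argument is correct in itself, but it only treats the discrete case --- which is exactly the case you have not legitimately reduced to.)
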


 Theorem \ref{bone} has the following Corollaries:

\begin{corollary}
Let $X$ be a connected finite polyhedron having an amenable fundamental group.
Then the first Novikov-Betti number vanishes $b_1(\xi)=0$ for any $\xi\not=0\in
H^1(X;\R)$.
\end{corollary}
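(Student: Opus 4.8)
The plan is to obtain this immediately from Theorem~\ref{bone} together with the classical fact that amenable groups contain no nonabelian free subgroup. First I would recall the two ingredients from group theory. The free group $F_2$ on two generators is not amenable: it admits a paradoxical decomposition, equivalently it fails the F\o{}lner condition, a fact going back to von Neumann. Moreover amenability is inherited by subgroups: if $G$ is amenable then every subgroup of $G$ is amenable. Combining these, no amenable group can contain a copy of $F_2$; since any free group of rank $\ge 2$ contains $F_2$, an amenable group contains no nonabelian free subgroup whatsoever.

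Now let $X$ be a connected finite polyhedron with amenable fundamental group $\pi=\pi_1(X)$, and let $\xi\in H^1(X;\R)$ be nonzero. By the previous paragraph $\pi$ has no nonabelian free subgroup. Theorem~\ref{bone} states that positivity of $b_1(\xi)$ forces $\pi$ to contain such a subgroup; the contrapositive therefore gives $b_1(\xi)\le 0$. On the other hand, by Definition~\ref{defnov} the number $b_1(\xi)$ is the rank of the free part of the finitely generated module $H_1(X;\L_\xi)$ over the principal ideal domain $\Nov$ (Lemma~\ref{lmpr}), hence a non-negative integer. We conclude $b_1(\xi)=0$.

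I do not expect any genuine obstacle here: the entire substance of the statement is carried by Theorem~\ref{bone}, and the two group-theoretic facts invoked are standard. The only point deserving a word of care is the reading of ``nonabelian free subgroup'' in Theorem~\ref{bone} as ``subgroup isomorphic to a free group of rank at least $2$''; since such a group already contains $F_2$, the contrapositive applies verbatim once $\pi$ is assumed amenable, and no finer analysis of the covering space $\tilde M$ or of the local system $\L_\xi$ is needed.
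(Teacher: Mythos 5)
Your proposal is correct and is exactly the intended argument: the paper states this as an immediate corollary of Theorem~\ref{bone}, relying on the standard facts that amenability passes to subgroups and that $F_2$ is not amenable, so an amenable group contains no nonabelian free subgroup. The contrapositive together with non-negativity of $b_1(\xi)$ finishes the proof, just as you wrote.
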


\begin{corollary}\label{cor3}
Assume that $X$ is a connected finite two-dimensional polyhedron. If the Euler
characteristic of $X$ is negative $\chi(X)<0$ then $\pi_1(X)$ contains a
nonabelian free subgroup.
\end{corollary}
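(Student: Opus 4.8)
The plan is to deduce this from Theorem \ref{bone}: it suffices to exhibit one nonzero class $\xi\in H^1(X;\R)$ with $b_1(\xi)>0$, and in fact I will show that $b_1(\xi)\ge -\chi(X)$ for \emph{every} nonzero $\xi$.

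First I would check that a nonzero class exists at all. Since $X$ is a finite $2$-dimensional polyhedron, $\chi(X)=b_0(X)-b_1(X)+b_2(X)=1-b_1(X)+b_2(X)$, so $b_1(X)=1+b_2(X)-\chi(X)\ge 1-\chi(X)\ge 2$ because $\chi(X)<0$ (an integer, hence $\le -1$). In particular $H^1(X;\R)\ne 0$, and we may fix $0\ne\xi\in H^1(X;\R)$.

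Next I would compute the Euler characteristic of Novikov homology. Choosing a finite CW structure on $X$ with $c_i$ cells in dimension $i$ (so $c_i=0$ for $i>2$), the cellular chain complex of the universal cover $\widetilde X$ is free over $\Z[\pi]$ with $c_i$ generators in degree $i$; hence $\Nov\otimes_{\Z[\pi]}C_\ast(\widetilde X)$, which computes $H_\ast(X;\L_\xi)$, is a complex of free $\Nov$-modules of ranks $c_0,c_1,c_2$. Since $\Nov$ is a principal ideal domain (Lemma \ref{lmpr}), the rank of the free part of each homology module is well defined and the alternating sum of these ranks equals the alternating sum of the ranks of the chain groups, so
\[
\chi(X)=\sum_{i=0}^{2}(-1)^i c_i=\sum_{i=0}^{2}(-1)^i b_i(\xi)=b_0(\xi)-b_1(\xi)+b_2(\xi).
\]
Because $b_0(\xi)\ge 0$ and $b_2(\xi)\ge 0$, this gives $b_1(\xi)\ge -\chi(X)>0$.

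Finally, applying Theorem \ref{bone} to the nonzero class $\xi$ with $b_1(\xi)>0$ yields that $\pi_1(X)$ contains a nonabelian free subgroup, which is the assertion. I do not expect a genuinely hard step here: the only point needing a little care is the displayed Euler-characteristic identity, which relies on the finiteness of $X$ together with the PID property of $\Nov$ to ensure that rank is additive along the complex; the real content of the statement is entirely carried by Theorem \ref{bone}.
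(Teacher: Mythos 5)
Your proposal is correct and follows the same route the paper indicates: the paper deduces Corollary \ref{cor3} from Theorem \ref{bone} together with the Euler--Poincar\'e identity $\sum_i(-1)^i b_i(\xi)=\chi(X)$ for the Novikov--Betti numbers, which is exactly your displayed identity. You simply supply the details the paper leaves implicit (existence of a nonzero class $\xi$, the PID/rank-additivity justification of the identity, and the inequality $b_1(\xi)\ge-\chi(X)$), all of which are sound.
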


Corollary \ref{cor3} follows from Theorem \ref{bone} and from the Euler -Poincar\`e theorem
for the Novikov numbers
$$\sum_{i=0}^\infty (-1)^i b_i(\xi) \, =\,  \chi(X).$$

The proof of Theorem \ref{bone} given in \cite{fsch} is based on the
results of R. Bieri, W. Neumann, R. Strebel \cite{binest} and J.-Cl.
Sikorav \cite{Si1}.

We refer the reader to articles \cite{P3}, \cite{P4}, \cite{Ran} and \cite{Ran1} for additional information.

\section{The colliding theorem}

As follows from the discussion of the previous section, the Novikov theory gives bounds from below on the number of distinct
zeros which have closed Morse type 1-forms $\omega$ lying in a prescribed cohomology class
$\xi\in H^1(M;\R)$. The total number of zeros is then at least the sum $\sum_j b_j(\xi)$ of the
Novikov numbers $b_j(\xi)$.

If $\omega$ is a closed 1-form representing {\it the zero cohomology class}
then $\omega=df$ where $f: M\to \R$ is a smooth function; in this case $\omega$
must have at least $\cat(M)$ geometrically distinct zeros (which are exactly the critical points of the function $f$), according to the
classical Lusternik-Schnirelmann theory \cite{DNF}; this result requires no assumptions on the nature of the zeros.

Encouraged by the success of the Novikov theory one may ask if it is possible to construct an analogue of the Lusternik - Schnirelmann theory for closed 1-forms. It is quite surprising that in general, with the exception of two
situations mentioned above, {\it there are no obstructions for constructing
closed 1-forms possessing a single zero.} Hence, for $\xi\not=0$, the only homotopy invariant $a(M,\xi)$ (depending on the topology of $M$ and on the cohomology class $\xi$) such that any closed 1-form $\omega$ on $M$ with $[\omega]=\xi$ has at least $a(M,\xi)$ zeros is $a(M, \xi)=0$ or $a(M, \xi)=1$.

\begin{theorem} [Farber \cite{farber}, Farber - Sch\"utz \cite{farzeros}]\label{colliding}
Let $M$ be a closed connected $n$-dimensional smooth manifold, and let $\xi\in
H^1(M;\R)$ be a nonzero real cohomology class. Then there exists a smooth
closed 1-form $\omega$ in the class $\xi$ having at most one zero.
\end{theorem}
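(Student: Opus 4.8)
The plan is to start from any Morse closed 1-form $\omega_0$ in the class $\xi$ (such a form exists since $\xi \neq 0$, e.g. by perturbing a closed form obtained from the de Rham class) and then repeatedly modify it inside charts so as to reduce the number of zeros, one cancellation at a time, until only a single zero remains. The key geometric mechanism is a \emph{local collision move}: if $p$ and $q$ are two zeros of $\omega$ of Morse indices differing by at most $1$, one can try to bring them together and merge them, whereas if the indices are equal or far apart one first creates an auxiliary pair to bridge them. Since $\xi$ is nonzero, the abelian cover $\bar M \to M$ corresponding to $\ker \xi$ (or to the period subgroup) is noncompact, and this noncompactness is exactly what removes the homological obstruction to cancellation that is present in the classical Morse-function case: the Novikov homology $H_*(M;\L_\xi)$ can be arbitrarily small even though $H_*(M)$ is not, and in the Cohn-localized Universal complex $\mathcal R \otimes_{\Z[\pi]} C_*(\tilde M)$ the zeros can be algebraically cancelled against each other.

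Concretely, I would proceed as follows. First, reduce to the case where $\omega$ has no zeros of index $0$ or $n$: using the noncompactness of the cover, a zero of index $0$ can be pushed to $-\infty$ along a gradient-like flow and cancelled against nothing — more precisely, one replaces $\omega$ near such a zero by an exact form $df$ on a large ball where $f$ has a single minimum, then extends $f$ to a function without interior minimum by routing its level sets out to the boundary using a primitive of $\omega$ on the (noncompact) cover; a dual argument removes index-$n$ zeros. Second, having only zeros of intermediate index, use Morse-theoretic handle cancellation: pick two zeros $p, q$ with $\ind q = \ind p + 1$, arrange (after a $C^0$-small homotopy of $\omega$ rel class, which only relocates zeros) that there is a single gradient trajectory from $q$ to $p$ with the transversality/Morse–Smale condition, and then perform the standard Whitney-trick / Morse-cancellation supported in a neighborhood of that trajectory, but carried out on the level of the closed form rather than a function (this is where the construction of \cite{farzeros} does the real work: the cancellation is done in a chart using an explicit model closed 1-form). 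Iterate until the algebraic count forces at least one zero to survive.

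The main obstacle — and the reason this is a theorem and not a routine exercise — is controlling the \emph{indices} of the zeros so that cancellation is always possible. A priori one is handed zeros of arbitrary indices, and two zeros of the same index cannot be directly merged; the trick is to first introduce a canceling pair $(p', q')$ of consecutive index near one of the given zeros (a "birth" move, supported in a ball, which changes neither $[\omega]$ nor, up to the move, anything global), thereby creating an intermediary that can then be slid and merged with the target zero in two steps. Making this work globally requires that the relevant connecting trajectories exist and can be made unique, which is guaranteed by a genericity argument for gradient-like vector fields of $\omega$, together with the observation that on the noncompact cover there is enough "room" — one can always push a zero far out along a half-infinite trajectory — so that no global homological obstruction survives. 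Once every zero can be cancelled against a partner except possibly one, the count comes down to parity/Euler-characteristic bookkeeping, and since we have already eliminated index $0$ and $n$, nothing prevents reaching exactly one remaining zero. I would expect the write-up to reduce, as in \cite{farber, farzeros}, to a careful description of the two local models (the "push to infinity" model killing extremal zeros, and the "collision in a ball" model merging two zeros of adjacent index) plus a bookkeeping induction on the total number of zeros.
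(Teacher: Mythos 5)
Your overall strategy --- eliminate extremal zeros, then cancel the remaining Morse zeros pairwise via handle cancellation until one survives --- does not work, and the failure is exactly where the theorem has its content. For a Morse closed 1-form the Novikov inequalities (Theorem \ref{thmnov}) give $c_i(\omega)\ge b_i(\xi)+q_i(\xi)+q_{i-1}(\xi)$, and the Novikov numbers are in general large even after indices $0$ and $n$ are excluded: for a genus-$g$ surface with $\xi\neq 0$ one has $b_1(\xi)=2g-2$, so no sequence of births and Morse cancellations can reduce the number of nondegenerate zeros below $2g-2$. The "algebraic cancellation in the Cohn-localized complex" you invoke is available only when the relevant Novikov homology vanishes (this is essentially Latour's condition for a nonvanishing representative); it is not a consequence of the noncompactness of the cover alone. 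The single zero promised by Theorem \ref{colliding} is therefore \emph{not} the survivor of pairwise cancellations --- it is a single highly degenerate zero into which many Morse zeros are merged simultaneously, and your induction "cancel until the count forces one to survive" has no mechanism for producing such a degenerate point.

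The paper's actual route is quite different. One starts from a \emph{weakly complete} Morse form in the class (Levitt), which automatically has no zeros of index $0$ or $n$; for $\rk(\xi)>1$ one proves that every leaf of the singular foliation $\omega=0$ is dense and approaches each point from both sides; one then perturbs $\omega$ by exact forms supported near the zeros so that all zeros come to lie on one singular leaf $L'$; finally one chooses a tree $\Gamma\subset L'$ through all the zeros and applies Takens' collapsing theorem to replace the primitive of $\omega'$ on a chart around $\Gamma$ by a function with a single (degenerate) critical point. The cases $\rk(\xi)=1$ and $n=2$ are handled separately. If you want to salvage your write-up, the part worth keeping is the elimination of index $0$ and $n$ zeros; the cancellation induction must be replaced by a simultaneous collision argument of Takens type, which requires first arranging all zeros onto a common singular leaf --- and that is where the density of leaves (hence the hypothesis $\xi\neq 0$) enters irreplaceably.
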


This result suggests that \lq\lq the Lusternik-Schnirelmann theory for closed
1-forms\rq\rq \,  (if it exists) must have a new
character, quite distinct from both the classical
Lusternik-Schnirelmann theory of functions and the Novikov theory of closed
1-forms.

Theorem \ref{colliding} was proven in \cite{farber} under an additional assumption that
the class $\xi$ is integral, $\xi\in H^1(M;\Z)$; see also \cite{farbook}, Theorem
10.1. In full generality it was proven in \cite{farzeros}.

Let us mention briefly a similar question. We know that if $\chi(M)=0$ then
there exists a nowhere zero 1-form $\omega$ on $M$. Given $\chi(M)=0$, one may
ask if it is possible to find a nowhere zero 1-form $\omega$ on $M$ which is
closed, i.e. $d\omega=0$? The answer is negative in general. For example,
vanishing of the Novikov numbers $b_j(\xi)=0$ is a necessary condition for the
class $\xi$ to be representable by a closed 1-form without zeros. The full list
of necessary and sufficient conditions (in the case $\dim M>5$) is given by the
theorem of Latour \cite{latour}.

Theorem \ref{colliding} was proven in \cite{farber} and \cite{farbook} in the case when the cohomology class $\xi$ is of rank 1.
In this section we follow our recent paper \cite{farzeros} and give the proof of Theorem \ref{colliding} in the case when $\rk(\xi)>1$.

\subsection{Singular foliations of closed one-forms}
Let $M$ be a smooth manifold.
A smooth closed 1-form $\omega$ with Morse zeros determines a {\it singular foliation}
$\omega=0$ on $M$. It is a decomposition of $M$ into leaves: two points $p,q
\in M$ belong to the same leaf if there exists a path $\gamma:[0,1]\to M$ with
$\gamma(0)=p$, $\gamma(1)=q$ and $\omega(\dot \gamma(t)) = 0$ for all $t$.
Locally, in a simply connected domain $U\subset M$, we have $\omega|_U =df$,
where $f:U\to \R$; each connected component of the level set $f^{-1}(c)$ lies
in a single leaf. If $U$ is small enough and does not contain the zeros of
$\omega$, one may find coordinates $x_1, \dots, x_n$ in $U$ such that $f\equiv
x_1$; hence the leaves in $U$ are the sets $x_1=c$. Near such points the
singular foliation $\omega=0$ is a usual foliation. On the contrary, if $U$ is
a small neighborhood of a zero $p\in M$ of $\omega$ having Morse index $0\le
k\le n$, then there are coordinates $x_1, \dots,x_n$ in $U$ such that
$x_i(p)=0$ and the leaves of $\omega=0$ in $U$ are the level sets $-x_1^2-\dots
-x_k^2 + x_{k+1}^2+\dots +x_n^2 =c.$ The leaf $L$ with $c=0$ contains the zero
$p$. It has a {\it singularity} at $p$: a neighborhood of $p$ in $L$ is
homeomorphic to a cone over the product $S^{k-1}\times S^{n-k-1}$. There are
finitely many {\it singular leaves}, i.e. the leaves containing the zeros of
$\omega$.

We are particularly interested in the singular leaves containing the zeros of
$\omega$ having Morse indices 1 and $n-1$. Removing such a zero $p$ {\it
locally} disconnects the leaf $L$. However globally the complement $L-p$ may or
may not be connected.

The singular foliation $\omega=0$ is {\it co-oriented}: the normal bundle to
any leaf at any nonsingular point has a specified orientation.

We shall use the notion of a weakly complete closed 1-form introduced by G.
Levitt \cite{Levitt}. A closed 1-form $\omega$ is called {\it weakly complete}
if it has Morse type zeros and for any smooth path $\sigma:[0,1]\to M^\ast$
with $\int_{\sigma}\omega=0$ the endpoints $\sigma(0)$ and $\sigma(1)$ lie in
the same leaf of the foliation $\omega=0$ on $M^\ast$. Here $M^\ast$ denotes
$M-\{p_1, \dots, p_m\}$ where $p_j$ are the zeros of $\omega$.

A weakly complete closed 1-form with $\xi=[\omega]\not=0$ has no zeros with
Morse indices $0$ and $n$. According to Levitt \cite{Levitt}, {\it any nonzero
real cohomology class $\xi\in H^1(M;\R)$ can be represented by a weakly
complete closed 1-form.}

The plan of our proof of Theorem \ref{colliding} is as follows. We start with a
weakly complete closed 1-form $\omega$ lying in the prescribed cohomology class
$\xi\in H^1(M;\R)$, $\xi\not=0$. We show that assuming $\rk(\xi)>1$ all leaves
of the singular foliation $\omega=0$ are dense (see \S \ref{density}). We
perturb $\omega$ such that the resulting closed 1-form $\omega'$ has a single
singular leaf (see \S \ref{modification}). After that we apply the technique of
Takens \cite{Ta} allowing us to collide the zeros in a single (highly
degenerate) zero. We first prove Theorem \ref{colliding} assuming that $n=\dim M
>2$; the special case $n=2$ is treated separately later.

\subsection{Density of the leaves}\label{density}

In this section we show that {\it if $\omega$ is weakly complete and
$\rk(\xi)>1$ then the leaves of $\omega=0$ are dense; moreover, given a point
$x\in M$ and a leaf $L\subset M$ of the singular foliation $\omega=0$, there
exist two sequences of points $x_k\in L$ and $y_k\in L$ such that $x_k\to x$,
$y_k\to x, $ and
\begin{eqnarray}\label{last}
\int_x^{x_k} \omega\, >0,\quad \mbox{while}\quad \int_x^{y_k}\omega \, <0.
\end{eqnarray}
 The integrals in (\ref{last}) are calculated along an arbitrary path lying in a
small neighborhood of $x$.} This can also be expressed by saying that the leaf
$L$ approaches $x$ from both the positive and the negative sides. This
statement was also observed by G. Levitt \cite{Levitt}, p. 645; we give below a
different proof. In general the assumptions that $\omega$ has no centers and
$\rk(\xi)>1$ do not imply that the leaves of the foliation $\omega=0$ are
dense, see the examples in Chapter 11, \S 9.3 of \cite{farbook}.

Let $\omega$ be a weakly complete closed 1-form in class $\xi$. Consider the
covering $\pi:\tilde M\to M$ corresponding to the kernel of the homomorphism of
periods ${\rm {Per}}_\xi : \pi_1(M) \to \R$, where $\xi=[\omega]\in H^1(M;\R)$.
Let $H\subset \R$ be the group of periods. The rank of $H$ equals $\rk(\xi)$;
since we assume that $\rk(\xi)>1$, the group $H$ is dense in $\R$. The group of
periods $H$ acts on the covering space $\tilde M$ as the group of covering
transformations. We have $\pi^\ast\omega =dF$ where $F:\tilde M\to \R$ is a
smooth function. The leaves of the singular foliation $\omega=0$ are the images
under the projection $\pi$ of the level sets $F^{-1}(c)$; this property follows
from the weak completeness of $\omega$, see \cite{Levitt}, Proposition II.1.
For any $g\in H$ and $x\in \tilde M$ one has
\begin{eqnarray}
F(g x)-F(x)\, =\, g\, \in\, \R.
\end{eqnarray}

Let $L=\pi(F^{-1}(c))$ be a leaf and let $x\in M$ be an arbitrary point. Our
goal is to show that $x$ lies in the closure $\bar L$ of $L$. Let $U\subset M$
be a path-connected neighborhood of $x$ which is assumed to be \lq\lq
small\rq\rq\, in the following sense: $\xi|_U =0$. We want to show that $U\cap
L$ contains a point $y$ satisfying $\int_x^y\omega >0$ where the integral is
calculated along a curve in $U$.

Consider a lift $\tilde x\in \tilde M$, $\pi(\tilde x)=x$. Let $\tilde U$ be a
neighborhood of $\tilde x$ which is mapped by $\pi$ homeomorphically onto $U$.
We claim that {\it the set of values $F(\tilde U)\subset \R$ contains an
interval $(a-\epsilon, a+\epsilon)$ where $a=F(\tilde x)$ and $\epsilon>0$.}
\begin{figure}[h]
\begin{center}
\resizebox{7cm}{4.2cm}{\includegraphics[48,473][497,768]{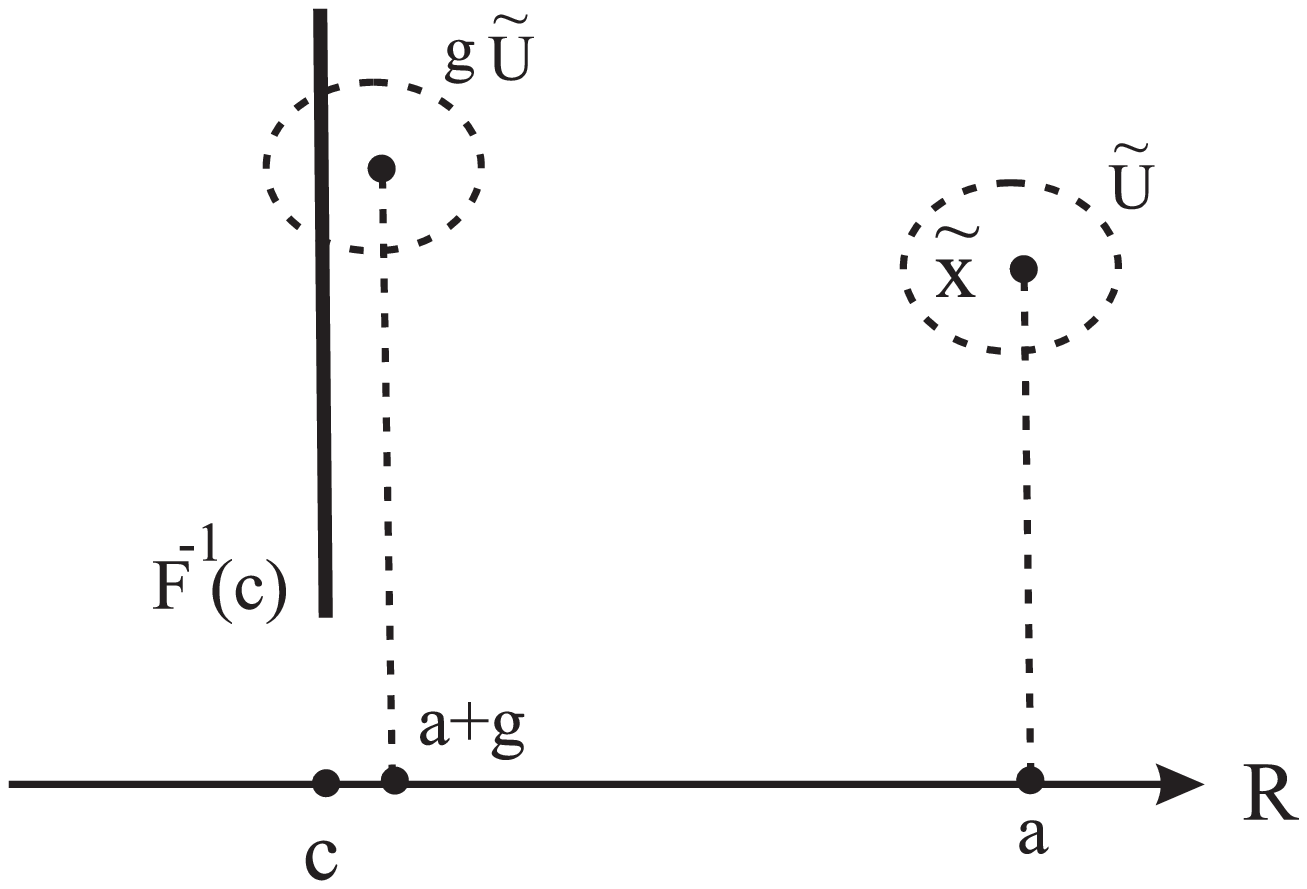}}
\end{center}
\end{figure}
This claim is obvious if $\tilde x$ is not a critical point of $F$ since in
this case one may choose the coordinates $x_1, \dots, x_n$ around $\tilde x$
such that $F(x)=a+x_1$. In the case when $\tilde x$ is a critical point of $F$,
one may choose the coordinates $x_1, \dots, x_n$ near the point $\tilde x\in
\tilde M$ such that $F(x)$ is given by $a\pm x_1^2\pm x_2^2+\dots+\pm x_n^2$
and our claim follows since we know that the Morse index is distinct from $0$
and $n$.

Because of the density of the group of translations $H\subset \R$ one may find
$g\in H$ such that the real number $F(g\tilde x)=F(\tilde x)+g=a+g$ lies in the
interval $(c,c+\epsilon)$. Then we obtain
\begin{eqnarray}
c\, \in \, (a+g-\epsilon,\,  a+g)\, \subset\, g+F(\tilde U)\, =\, F(g\tilde U).
\end{eqnarray}
We see that the sets $F^{-1}(c)$ and $g\tilde U$ have a common point $\tilde
y$. The point $y=\pi(\tilde y)\in U\cap L$ has the required property
$\int_x^y\omega >0$.

\subsection{Modification}\label{modification}

Our next goal is to replace $\omega$ by a Morse closed 1-form $\omega'$ which
has the property that all its zeros lie on the same singular leaf of the
singular foliation $\omega'=0$. In this section we assume that $n=\dim M>2$.

Let $\omega$ be a weakly complete Morse closed 1-form in class $\xi$ where
$\rk(\xi)>1$. Let $p_1, \dots, p_m\in M$ be the zeros of $\omega$. For each
$p_j$ choose a small neighborhood $ U_j\ni p_j$ and local coordinates $x_1,
\dots, x_n$ in $U_j$ such that $x_i(p_j)=0$ for $i=1, \dots, n$ and
\begin{eqnarray} \label{morse}\quad\omega|_{U_j}
=df_j, \quad \mbox{where}\quad f_j= -x_1^2-\dots -
x_{m_j}^2+x_{m_j+1}^2+\dots+x_n^2.
\end{eqnarray}
Here $m_j$ denotes the Morse index of $p_j$. We assume that the ball
$\sum_{i=1}^n x_i^2\leq 1$ is contained in $U_j$ and that $U_j\cap
U_{j'}=\emptyset$ for $j\not=j'$. Denote by $W_j$ the open ball $\sum_{i=1}^n
x_i^2< 1$.

Let $\phi:[0,1]\to [0,1]$ be a smooth function with the following properties:
(a) $\phi\equiv 0$ on $[3/4, 1]$; (b) $\phi\equiv \epsilon>0$ on $[0,1/2]$; (c)
$-1<\phi'\leq 0$.
\begin{figure}[h]
\begin{center}
\resizebox{5.9cm}{2.7cm}{\includegraphics[92,456][454,662]{phi.eps}}
\end{center}
\end{figure}
Such a function exists assuming that $\epsilon>0$ is small enough. (a), (b),
(c) imply that \begin{eqnarray}\label{stam} \phi'(r)>-2r, \quad\mbox{for}\quad
r>0.\end{eqnarray}

We replace the closed 1-form $\omega$ by
\begin{eqnarray}\label{form}
\omega'=\omega - \sum_{j=1}^m \mu_j \cdot dg_j\end{eqnarray} where $g_j: M\to
\R$ is a smooth function with support in $U_j$. In the coordinates $x_1, \dots,
x_n$ of $U_j$ (see above) the function $g_j$ is given by $g_j(x) =
\phi(||x||).$ The parameters $\mu_j\in [-1,1]$ appearing in (\ref{form}) are
specified later.

One has $\omega\equiv \omega'$ on $M-\cup_j U_j$ and near the zeros of
$\omega$. Let us show that $\omega'$ has no additional zeros. We have
$\omega'|_{U_j}=d(f_j-\mu_j g_j)$ (where $f_j$ is defined in (\ref{morse})) and
\begin{eqnarray}
\frac{\partial}{\partial x_i}(f_j-\mu_jg_j) = \pm 2x_i -\mu_j
\phi'(||x||)\frac{x_i}{||x||}
\end{eqnarray}
If this partial derivative vanishes and $x_i\not=0$ then $\phi'(r) =\pm
2r\mu_j^{-1}$ which may happen only for $r=||x||=0$ according to (\ref{stam}).

We now show how to choose the parameters $\mu_j$ so that the closed 1-form
$\omega'$ given by (\ref{form}) has a unique singular leaf. Let $L$ be a fixed
nonsingular leaf of $\omega=0$. Since $L$ is dense in $M$ (see \S
\ref{density}) for any $j=1, \dots, m$ the intersection $L\cap U_j$ contains
infinitely many connected components approaching $p_j$ from below and from
above and the function $f_j$ is constant on each of them.
\begin{figure}[h]
\begin{center}
\resizebox{4.0cm}{3.7cm}{\includegraphics[104,347][489,697]{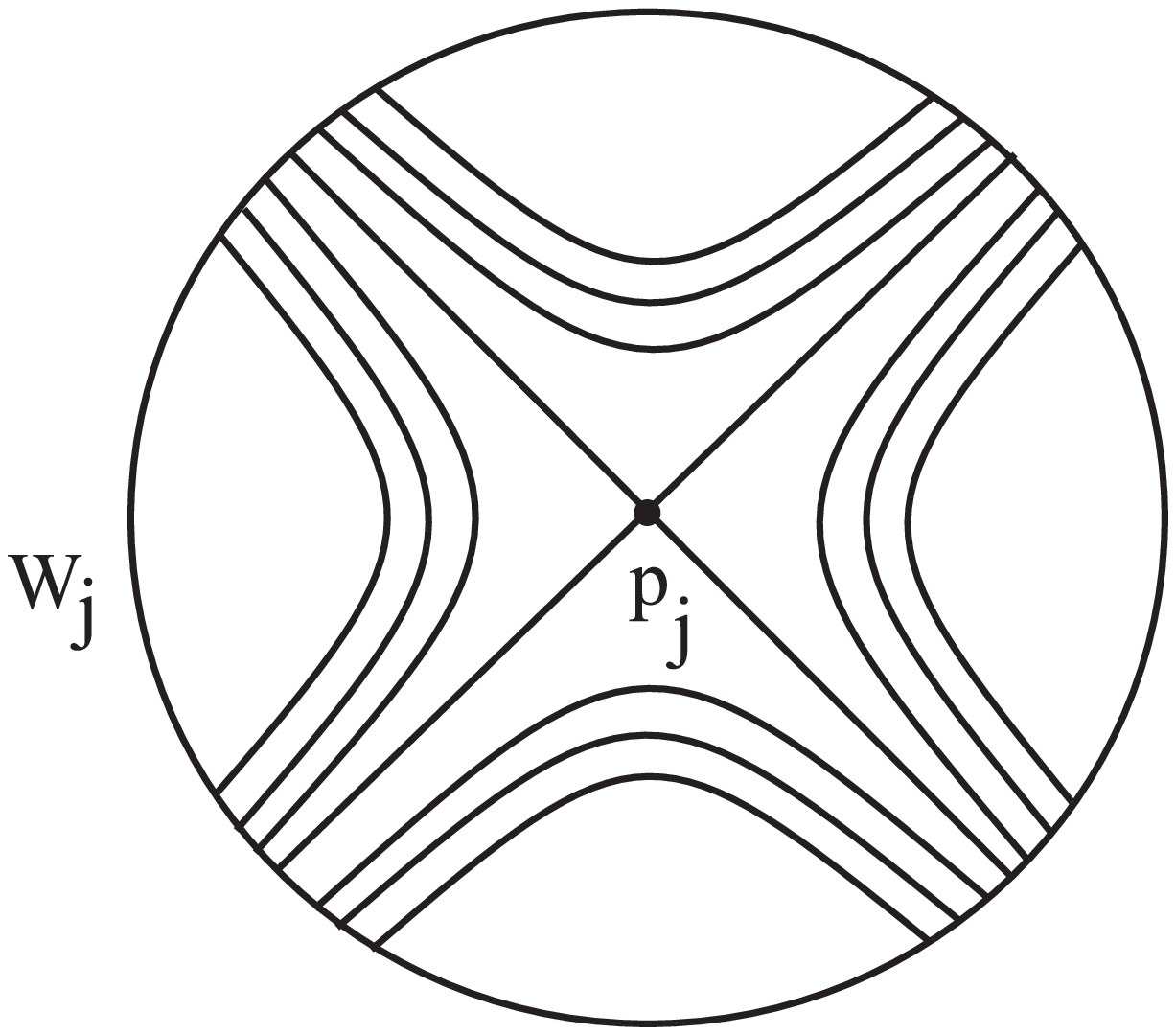}}
\end{center}
\end{figure}
We say that a subset $T_c\subset L\cap W_j$ is a \em level set\em\, if
$T_c=f_j^{-1}(c)\cap W_j$ for some $c\in \R$. Note that $f_j(p_j)=0$. The level
set $c=0$ contains the zero $p_j$; it is homeomorphic to the cone over the
product $S^{m_j-1}\times S^{n-m_j-1}$. Each level set $T_c$ with $c<0$ is
diffeomorphic to $S^{m_j-1}\times D^{n-m_j}$ and each level set $T_c$ with
$c>0$ is diffeomorphic to $D^{m_j}\times S^{n-m_j-1}$. Recall that $m_j$
denotes the Morse index of $p_j$.

Let $\V_j=f_j(L\cap W_j)\, \subset \R$ denote the set of values of $f_j$ on
different level sets belonging to the leaf $L$. The zero $0$ does not lie in
$\V_j$ since we assume that the leaf $L$ is nonsingular. However, according to
the result proven in \S \ref{density}, the zero $0\in \R$ is a limit point of
$\V_j$ and, moreover, the closure of either of the sets $\V_j\cap (0,\infty)$
and $\V_j\cap (-\infty, 0)$ contains $0\in \R$.

For the modification $\omega'$ (given by (\ref{form})) one has
$\omega'|_{U_j}=dh_j$ where $h_j=f_j-\mu_jg_j$. The level sets $T'_c$ for $h_j$
are defined as $h_j^{-1}(c)\cap W_j$. Clearly $T'_c$ is given by the equation
$$f_j(x)=\mu_j\phi(||x||) +c, \quad x\in W_j.$$
Hence for $||x||\geq 3/4$ this is the same as $T_c$; for $||x||\leq 1/2$ the
level set $T'_c$ coincides with $T_{c+\mu_j\epsilon}$. In the ring $1/2\leq
||x||\leq 3/4$ the level set $T'_c$ is homeomorphic to a cylinder.

The following figure illustrates the distinction between the level sets $T_c$
and $T'_c$ in the case $\mu_j>0$.
\begin{figure}[h]
\begin{center}
\resizebox{5.8cm}{5.5cm}{\includegraphics[94,326][461,698]{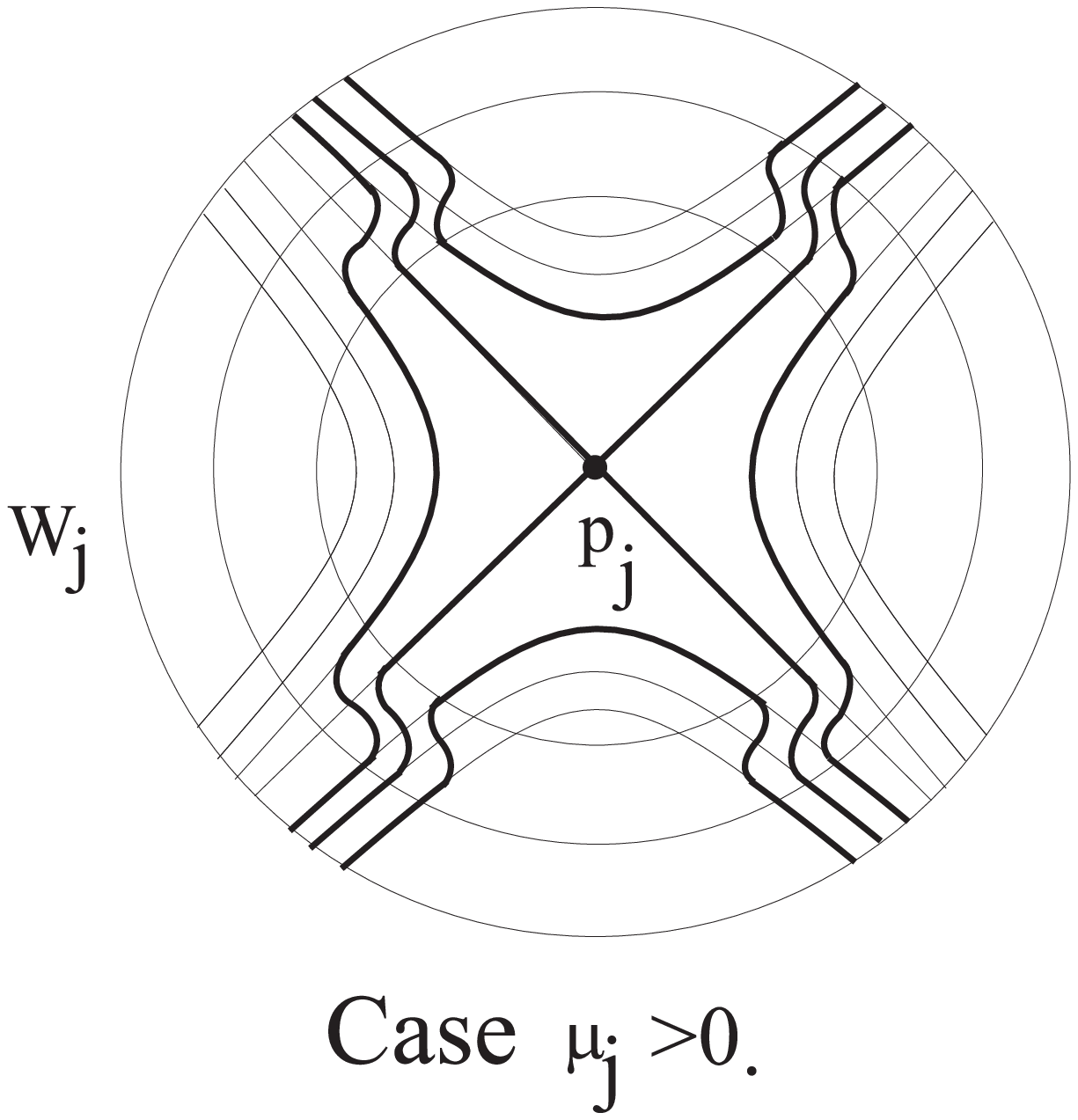}}
\end{center}
\end{figure}

Examine the changes which the leaf $L$ undergoes when we replace $\omega$ by
$\omega'$. Here we view $L$ with the {\it leaf topology}; it is the topology
induced on $L$ from the covering $\tilde M$ using an arbitrary lift $L\to
\tilde M$. First, let us assume that: (1) the Morse index $m_j$ satisfies
$m_j<n-1$; (2) the coefficient $\mu_j>0$ is positive; (3) the number
$-\epsilon\mu_j$ lies in the set $\V_j$. Then the complement
$$L-\underset{-\epsilon\mu_j<c<0}{\bigcup_{c\in \V_j}} T_c$$
is connected and it lies in a single leaf $L'$ of the singular foliation
$\omega'=0$. We see that the new leaf $L'$ is obtained from $L$ by infinitely
many surgeries. Namely, each level set $T_c\subset L$, where $c\in \V_j$
satisfies $-\epsilon\mu_j<c<0$, is removed and replaced by a copy of
$D^{m_j}\times S^{n-m_j-1}$; besides, the set $T_c\subset L$ where
$c=-\epsilon\mu_j$, is removed and gets replaced by a cone over the product
$S^{m_j-1}\times S^{n-m_j-1}$. Hence the new leaf $L'$ contains the zero $p_j$.

Let us now show how one may modify the above construction in the case
$m_j=n-1$. Since $n>2$ we have in this case $n-m_j-1<n-2$; hence removing the
sphere $S^{n-m_j-1}$ from the leaf $L$ does not disconnect $L$. We shall assume
that the coefficient $\mu_j$ is {\it negative} and that the number
$-\epsilon\mu_j$ lies in $\V_j\subset \R$. The complement
$$L-\underset{0<c<-\epsilon\mu_j}{\bigcup_{c\in \V_j}} T_c$$
is connected and it lies in a single leaf $L'$ of the singular foliation
$\omega'=0$. Clearly, $L'$ is obtained from $L$ by removing the level sets
$T_c$ where $c\in \V_j$ satisfies $0<c<-\epsilon \mu_j$ (each such $T_c$ is
diffeomorphic to $D^{m_j}\times S^{n-m_j-1}$) and by replacing them by copies
of $S^{m_j-1}\times D^{n-m_j}$. In addition, the set $T_c\subset L$ where
$c=-\epsilon\mu_j$, is removed and is replaces by a cone over the product
$S^{m_j-1}\times S^{n-m_j-1}$.

We see that $L'$ is a leaf of the singular foliation $\omega'=0$ containing all
the zeros $p_1, \dots, p_m$.

\subsection{Proof of Theorem \ref{colliding}}\label{proof}

Below we assume that $\rk(\xi)>1$. The case $\rk(\xi)=1$ is covered by Theorem
2.1 from \cite{farber}.

The results of the preceding sections allow us to complete the proof of Theorem
\ref{colliding} in the case $n=\dim M>2$. Indeed, we showed in \S \ref{modification}
how to construct a Morse closed 1-form $\omega'$ lying in the prescribed
cohomology class $\xi$ such that all zeros of $\omega'$ are Morse and belong to
the same singular leaf $L'$ of the singular foliation $\omega'=0$. Now we may
apply the colliding technique of F. Takens \cite{Ta}, pages 203--206. Namely,
we may find a piecewise smooth tree $\Gamma\subset L'$ containing all the zeros
of $\omega'$. Let $U\subset M$ be a small neighborhood of $\Gamma$ which is
diffeomorphic to $\R^n$. We may find a continuous map $\Psi: M\to M$ with the
following properties:

{\it $\Psi(\Gamma)$ is a single point $p\in \Gamma$;

$\Psi|_{M-\Gamma}$ is a diffeomorphism onto $M-p$;

$\Psi(U)=U;$

$\Psi$ is the identity map on the complement of a small neighborhood $V\subset
M$ of $\Gamma$ where the closure $\overline V$ is contained in $U$.}

Consider a smooth function $f: U\to \R$ such that $df=\omega'|_U$; it exists
and is unique up to a constant. The function $g=f\circ \Psi^{-1}:U\to \R$ is
well-defined (since $f|_\Gamma$ is constant). $g$ is continuous by the
universal property of the quotient topology. Moreover, $g$ is smooth on $M-p$.
Applying Theorem 2.7 from \cite{Ta}, we see that we can replace $g$ by a smooth
function $h:U\to \R$ having a single critical point at $p$ and such that $h=f$
on $U-\overline V$.

Let $\omega^{''}$ be a closed 1-form on $M$ given by
\begin{eqnarray}
\omega^{''}|_{M-\overline V}=\omega'|_{M-\overline V}\quad \mbox{and}\quad
\omega^{''}|_U=dh.
\end{eqnarray}
Clearly $\omega^{''}$ is a smooth closed 1-form on $M$ having no zeros in
$M-\{p\}$. Moreover, $\omega^{''}$ lies in the cohomology class $\xi=[\omega']$
(since any loop in $M$ is homologous to a loop in $M-\overline{V}$).

Now we prove Theorem \ref{colliding} the case $n=2$. We shall replace the
construction of \S \ref{modification} (which requires $n>2$) by a direct
construction. The final argument using the Takens' technique \cite{Ta} remains
the same.

Let $M$ be a closed surface and let $\xi\in H^1(M;\R)$ be a nonzero cohomology
class. We can split $M$ into a connected sum
$$M=M_1\sharp M_2\sharp \dots \sharp M_k$$
where each $M_j$ is a torus or a Klein bottle and such that the cohomology
class $\xi_j=\xi|_{M_j}\in H^1(M_j;\R)$ is nonzero. Let $\omega_j$ be a closed
1-form on $M_j$ lying in the class $\xi_j$ and having no zeros; obviously such
a form exists. \S 9.3.2 of \cite{farbook} describes the construction of connected
sum of closed 1-forms on surfaces. Each connecting tube contributes two zeros.
In fact there are three different ways of forming the connected sum, they are
denoted by A, B, C on Figure 9.8 in \cite{farbook}. In the type C connected sum the
zeros lie on the same singular leaf. Hence by using the type C connected sum
operation we get a closed 1-form $\omega$ on $M$ having $2k-2$ zeros which all
lie on the same singular leaf of the singular foliation $\omega=0$. The
colliding argument based on the technique of Takens \cite{Ta} applies as in the
case $n>2$ and produces a closed 1-form with at most one zero lying in class
$\xi$.

 \section{Closed one-forms on general topological spaces}

Since closed one-forms are central for our constructions, it will be convenient to operate with the notion of a closed one-form defined on general topological spaces. This notion will allow us to deal with spaces more general than smooth manifolds. The calculus of closed one-forms on topological spaces is very similar to that of smooth closed one-forms on manifolds: one may integrate any closed one-form along a path; the integral depends only on the homotopy class of the path; any closed one-form represents a one-dimensional cohomology class and any continuous function $f$ determines an exact closed one-form $df$, the differential of $f$.

In this section we recall the basic definitions referring to the book \cite{farbook} for proofs and more details.
\subsection{Basic definitions}
\begin{definition}
{\it A continuous closed 1-form $\omega$ on a topological space} $X$ is defined as a collection $\{f_U\}_{U\in \mathcal U}$ of continuous
real-valued functions $f_U: U\to \R$, where $\mathcal U=\{U\}$ is an open cover of $X$,
such that for any pair $U, V\in \mathcal U$ the difference
\[f_U|_{U\cap V} - f_V|_{U\cap V}: U\cap V \to \R\]
is a locally constant function.
Another such collection $\{g_V\}_{V\in \mathcal V}$
(where $\mathcal V$ is another open cover of $X$) defines {\it an equivalent} closed 1-form
if the union collection
$$\{f_U, g_V\}_{U\in \mathcal U, V\in \mathcal V}$$
is a closed 1-form (in the sense of the above definition), i.e., if for any
$U\in \mathcal U$ and $V\in \mathcal V$, the function $f_U-g_V$ is locally
constant on $U\cap V$. \end{definition}

Let $\phi: Y\to X$ be a continuous map and let $\omega$ be a continuous closed
1-form on $X$. Then the induced closed 1-form $\phi^\ast \omega$ is defined as
follows. Let $\omega=\{f_U\}_{U\in \mathcal U}$, where $\mathcal U$ is an open
cover of $X$. The family $\phi^{-1}\mathcal U=\{\phi^{-1}(U)\}_{U\in \mathcal
U}$ is an open cover of $Y$ and the functions $f_U\circ \phi$ define a
continuous closed 1-form with respect to the cover $\phi^{-1}\mathcal U$.

As an example consider an open cover $\mathcal U=\{X\}$ consisting of the whole space $X$. Then any continuous
function $f: X\to \R$ defines a closed 1-form on $X$, which is denoted by $df$.
For two continuous functions $f, g: X\to \R$, $df =dg$ holds if and only if the
difference $f-g: X\to \R$ is locally constant.

\subsection{Integration} One may integrate continuous closed 1-forms along continuous paths.
Let $\omega$ be a continuous closed 1-form  on $X$ given
by a collection of continuous real-valued functions $\{f_U\}_{U\in \mathcal U}$
with respect to an open cover $\mathcal U$ of $X$. Let $\gamma: [0,1]\to X$ be a continuous path. The line integral
$\int_\gamma \omega$ is defined as follows. Find a subdivision $t_0=0<t_1<\dots < t_N=1$ of the interval
$[0,1]$ such that for any $i$ the image $\gamma[t_i,t_{i+1}]$ is contained in a single open set $U_i\in \mathcal U$.
 Then we define
\begin{eqnarray}
\int\limits_\gamma \omega \, =\,
\sum\limits_{i=0}^{N-1} \, \, [f_{U_i}(\gamma(t_{i+1})) - f_{U_i}(\gamma(t_i))].\label{lsintegral}
\end{eqnarray}
The standard argument shows that the integral (\ref{lsintegral}) does not depend on the choice of the subdivision and the
open cover $\mathcal U$.

For any pair of continuous paths $\gamma, \gamma': [0,1]\to X$ with common
beginning $\gamma(0)=\gamma'(0)$ and common end points $\gamma(1)=\gamma'(1)$,
it holds that
$$\int_\gamma \omega \, =\,  \int_{\gamma'}\omega,$$
provided that $\gamma$ and $\gamma'$ are homotopic relative to the boundary.

\subsection{Cohomology class of a closed one-form}\label{cohclass}
Any continuous closed 1-form $\omega$ defines the {\it homomorphism of periods}
\begin{eqnarray}
\pi_1(X, x_0) \to \R, \quad [\gamma]\mapsto \int_\gamma \omega \in \R\label{lsperiod}
\end{eqnarray}
given by integration along closed loops $\gamma: [0,1]\to X$ with
$\gamma(0)=x_0=\gamma(1)$. The image of this homomorphism is a subgroup of $\R$ whose {\it rank} is called the rank of $\omega$ and is denoted
$\rk(\omega)$.

Recall that a topological space $X$ is {\it homologically locally
$n$-connected}\index{Homologically $n$-connected} if for every point $x\in X$
and any neighborhood $U$ of $x$ there exists a neighborhood $V$ of $x$ in $U$
such that the induced homomorphism of the reduced integral singular homology
$\tilde H_q(V)\to \tilde H_q(U)$ is trivial for all $q\leq n$. $X$ is {\it
locally path connected}\index{Locally path connected} iff it is
 homologically locally 0-connected.

\begin{lemma}\label{lslmlocal} Let $X$ be a locally path-connected topological space.
A continuous closed 1-form $\omega$ on $X$ equals $df$ for a continuous function $f: X\to
\R$ if and only if for any choice of the base point $x_0\in X$ the homomorphism of
periods (\ref{lsperiod}) determined by $\omega$ is trivial.
\end{lemma}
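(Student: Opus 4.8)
The plan is to prove the two implications separately; one is immediate and the other is the standard ``integrate to get a primitive'' construction, with the hypothesis of local path-connectedness entering at two controlled points. For the easy direction, assuming $\omega = df$, I would apply formula (\ref{lsintegral}) to the one-element cover $\{X\}$ together with the function $f$: since the integral does not depend on the chosen cover, this gives $\int_\gamma\omega = f(\gamma(1)) - f(\gamma(0))$ for every path $\gamma$, which vanishes on loops, so the period homomorphism (\ref{lsperiod}) is trivial for every choice of base point.

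For the converse, assume (\ref{lsperiod}) is trivial for every base point. Since $X$ is locally path-connected its path components are open, so I would build the primitive one path component at a time: fix a component, pick a base point $x_0$ in it, and for each $x$ in the component set $f(x) = \int_\gamma\omega$ for an arbitrary path $\gamma$ from $x_0$ to $x$. The first point to check is independence of $\gamma$: a second choice $\gamma'$ produces a loop at $x_0$ by concatenating $\gamma$ with the reverse of $\gamma'$, and additivity of the integral under concatenation together with its sign change under reversal --- both immediate from (\ref{lsintegral}) --- reduce this to the triviality of periods.

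Next I would verify continuity of $f$ and the identity $df=\omega$ simultaneously, using the local data $\omega = \{f_U\}_{U\in\mathcal U}$. Given $x\in X$, choose $U\in\mathcal U$ with $x\in U$ and (by local path-connectedness) a path-connected open neighbourhood $V$ of $x$ with $V\subseteq U$. For $y\in V$, integrating $\omega$ along a path from $x$ to $y$ lying entirely in $V\subseteq U$ gives, via (\ref{lsintegral}), exactly $f_U(y)-f_U(x)$; splicing such a path onto a path from $x_0$ to $x$ then yields $f(y) = f(x) + f_U(y) - f_U(x)$. Hence $f|_V = f_U|_V + \mathrm{const}$, which shows $f$ is continuous at $x$ (so continuous on $X$, the path components being open) and shows $f-f_U$ is locally constant on each $U\in\mathcal U$; the latter is precisely the condition that the one-element collection $\{f\}$ be equivalent to $\{f_U\}_{U\in\mathcal U}$, i.e.\ $df=\omega$.

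I do not expect a genuine obstacle here; the only points needing care are the two uses of local path-connectedness --- once to guarantee that path components are open, so that the component-by-component definition of $f$ glues to a globally continuous function, and once to extract, inside an arbitrary member $U$ of the defining cover, a path-connected open neighbourhood of a given point on which $f$ and $f_U$ differ only by a constant.
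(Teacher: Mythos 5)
Your proof is correct and follows essentially the same route as the paper's: the easy direction via $\int_\gamma\omega = f(\gamma(1))-f(\gamma(0))$, and the converse by defining $f$ component by component as a path integral from a base point, then comparing $f$ with the local primitives $f_U$ to get continuity and $df=\omega$. The only difference is that you spell out the path-connected neighbourhood $V\subseteq U$ explicitly where the paper works with connected components of $U$; this is a matter of presentation, not substance.
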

\begin{proof} If $\omega =df$,
then $\int_\gamma \omega = f(q)-f(p)$ holds for any path $\gamma$ in $X$, where
$q=\gamma(1), \, \, p=\gamma(0)$. Hence $\int_\gamma \omega =0$ if $\gamma$ is a closed loop.

Conversely, assume that the homomorphism of periods (\ref{lsperiod}) is
trivial. Our assumption about $X$ implies that connected components of $X$ are
open and path connected. In each connected component of $X$ choose a base point
$x_i\in X$. One defines a continuous function $f:X\to \R$ by
$$f(x) \, =\, \int_{x_i}^x \omega;$$
here $x$ and $x_i$ belong the same connected component of $X$ and the
integration is taken along an arbitrary path connecting $x_i$ to $x$. The
result of integration is independent of the choice of the path because of our
assumption that the homomorphism of periods (\ref{lsperiod}) is trivial. Assume
that $\omega$ is given by a collection of continuous functions $f_U: U\to \R$
with respect to an open cover $\{U\}$ of $X$. Then for any two points $x, y$
lying in the same connected component of $U$,
\[f(y) -f(x) =\int_x^y \omega = f_U(y) -f_U(x);\]
here the line integral is understood along any continuous path connecting $x$ and $y$.
This shows that the function $f-f_U$ is locally constant on $U$. Hence
$df =\omega$. \end{proof}

Any continuous closed 1-form $\omega$ on a topological space $X$ defines a
singular {\it cohomology class} $[\omega]\in H^1(X;\R)$. It is defined by the homomorphism of
periods (\ref{lsperiod}) viewed as an element of $\Hom(H_1(X);\R)=H^1(X;\R)$.
As follows from the above lemma, {\it two continuous closed 1-forms $\omega$
and $\omega'$ on $X$ lie in the same cohomology class $[\omega]=[\omega']$ if
and only if their difference $\omega - \omega'$ equals $df$, where $f:X\to \R$
is a continuous function.} Here we assume that $X$ is locally path-connected.

\begin{lemma}\label{lslmreal} Let $X$ be a paracompact Hausdorff homologically locally 1-con\-nected topological space.
Then any singular cohomology class $\xi\in H^1(X;\R)$
may be realized by a continuous closed 1-form on $X$.
\end{lemma}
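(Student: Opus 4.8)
The plan is to reduce the statement to a coboundary problem for the sheaf of continuous functions and to solve that problem with a partition of unity. First note that since $X$ is homologically locally $1$-connected it is in particular locally path-connected, so Lemma~\ref{lslmlocal} applies and the map $\omega\mapsto[\omega]$ induces an \emph{injection} of the set of continuous closed $1$-forms on $X$ modulo exact ones into $H^1(X;\R)$; hence it is enough to prove \emph{surjectivity} of this map, i.e.\ to realize a given class $\xi$. For this I would invoke the comparison between \v{C}ech and singular cohomology in degree one, valid for paracompact homologically locally $1$-connected spaces (see \cite{farbook} and the references there): the natural homomorphism $\check H^1(X;\R)\to H^1(X;\R)$ is an isomorphism. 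Consequently $\xi$ is represented by a \v{C}ech $1$-cocycle subordinate to some open cover $\mathcal U=\{U_\alpha\}$ of $X$, that is, a family of \emph{locally constant} functions $c_{\alpha\beta}\colon U_\alpha\cap U_\beta\to\R$ with $c_{\alpha\alpha}=0$ and $c_{\alpha\beta}+c_{\beta\gamma}=c_{\alpha\gamma}$ on every triple intersection $U_\alpha\cap U_\beta\cap U_\gamma$.

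Next I would promote this cocycle to a closed $1$-form. Using paracompactness, choose a partition of unity $\{\rho_\gamma\}$ subordinate to $\mathcal U$ and set
\[
f_\alpha\;=\;\sum_\gamma \rho_\gamma\,c_{\alpha\gamma}\;\colon\;U_\alpha\longrightarrow\R ,
\]
where the $\gamma$-th term denotes the continuous function on $U_\alpha$ that equals $\rho_\gamma c_{\alpha\gamma}$ on $U_\alpha\cap U_\gamma$ and equals $0$ off the support of $\rho_\gamma$ (the two prescriptions agree where they overlap, and the sum is locally finite). On $U_\alpha\cap U_\beta$ the cocycle identity gives $c_{\alpha\gamma}-c_{\beta\gamma}=c_{\alpha\beta}$ wherever $\rho_\gamma\ne 0$, so
\[
f_\alpha-f_\beta\;=\;\sum_\gamma \rho_\gamma\,(c_{\alpha\gamma}-c_{\beta\gamma})\;=\;\Big(\sum_\gamma\rho_\gamma\Big)\,c_{\alpha\beta}\;=\;c_{\alpha\beta},
\]
which is locally constant. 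Thus $\omega:=\{f_\alpha\}_{\alpha}$ is a continuous closed $1$-form on $X$ in the sense of the definitions above.

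It remains to check $[\omega]=\xi$. Since $H_0(X)$ is free, the universal coefficient theorem identifies $H^1(X;\R)$ with $\Hom(H_1(X;\Z),\R)$, so it suffices to compare homomorphisms of periods. Let $\gamma$ be a loop; subdivide $[0,1]$ so that each arc $\gamma[t_i,t_{i+1}]$ lies in some $U_{\alpha_i}$. Then each $\gamma(t_i)$ lies in $U_{\alpha_{i-1}}\cap U_{\alpha_i}$, and substituting $f_{\alpha_{i-1}}-f_{\alpha_i}=c_{\alpha_{i-1}\alpha_i}$ into formula~(\ref{lsintegral}) for $\int_\gamma\omega$ makes the sum telescope to $\sum_i c_{\alpha_{i-1}\alpha_i}(\gamma(t_i))$. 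This last sum is precisely the value on the $1$-cycle $\gamma$ of the singular cocycle corresponding to $\{c_{\alpha\beta}\}$ under the \v{C}ech--singular comparison; as $\{c_{\alpha\beta}\}$ represents $\xi$, it equals $\langle\xi,[\gamma]\rangle$. Hence $\omega$ and $\xi$ have the same periods, so $[\omega]=\xi$.

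The partition-of-unity construction and the telescoping computation are routine; the substantive ingredient — and the step I expect to be the main obstacle if one insists on a self-contained treatment — is the \v{C}ech-to-singular comparison in degree one for paracompact homologically locally $1$-connected spaces, together with the verification that the \v{C}ech coboundary pairing there matches the line integral of the form. Invariantly the same content can be phrased through the short exact sequence of sheaves $0\to\underline{\R}\to\mathcal C\to\mathcal C/\underline{\R}\to 0$ with $\mathcal C$ the fine sheaf of continuous functions: $H^1(X;\mathcal C)=0$ forces $H^0(X;\mathcal C/\underline{\R})\to H^1(X;\underline{\R})$ to be surjective, and global sections of $\mathcal C/\underline{\R}$ are exactly continuous closed $1$-forms modulo equivalence; but identifying $H^1(X;\underline{\R})$ with $H^1(X;\R)$ again uses the hypothesis on $X$.
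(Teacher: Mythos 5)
The paper states this lemma without proof, deferring to \cite{farbook}, so there is no in-paper argument to compare against; your proposal is correct and follows exactly the standard route taken there: the \v Cech-to-singular comparison isomorphism in degree one for paracompact Hausdorff homologically locally $1$-connected spaces, followed by a partition-of-unity splitting of the locally constant cocycle $\{c_{\alpha\beta}\}$ into continuous functions $f_\alpha$ with $f_\alpha-f_\beta=c_{\alpha\beta}$, and the telescoping identification of periods. All the individual steps (continuity of the extended summands $\rho_\gamma c_{\alpha\gamma}$, the cocycle computation, and the use of $H^1(X;\R)\cong\Hom(H_1(X;\Z),\R)$) check out.
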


\section{Lyapunov 1-forms for flows}
\label{lslyap}

C. Conley \cite{Cnl}, \cite{Cnl1} showed that any continuous flow $\Phi:
X\times \R\to X$ on a compact metric space $X$ \lq\lq decomposes\rq\rq\, into a
chain recurrent flow and a gradient-like flow. More precisely, he proved the
existence of a continuous function $L: X\to \R$ which (i) decreases along any
orbit of the flow in the complement $X-R$ of the chain recurrent set $R\subset
X$ of $\Phi$ and (ii) is constant on the connected components of $R$. Such a function $L$
is called a {\it Lyapunov function}\index{Lyapunov function of a flow} for
$\Phi$. Conley's existence result plays a fundamental role in his program of
understanding general flows as collections of isolated invariant sets linked by
heteroclinic orbits.

A more general notion of a {\it Lyapunov 1-form} was introduced in the paper \cite{farbe4}.
Lyapunov 1-forms, compared to
Lyapunov functions, allow us to go one step further and to analyze the flow
within the chain-recurrent set $R$ as well. Lyapunov 1-forms provide an
important tool in applying methods of homotopy theory to dynamical systems.

The problem of the existence of continuous Lyapunov 1-forms was first addressed in
paper \cite{FKLZ}, in the generality of compact metric
spaces, continuous flows and continuous closed 1-forms. In this section we present the result of  \cite{FKLZ1} dealing with
the smooth version of the problem: we are interested in constructing
smooth Lyapunov 1-forms for smooth flows on smooth manifolds. These
conditions are formulated in terms of homological properties of the flow; in
particular, we use Schwartzman's asymptotic cycles $\A_\mu(\Phi)\in H_1(M;\R)$
of the flow.

We would like to refer the reader to related work by J. Latschev \cite{Lat2, Lat3}.

Let $V$ be a smooth vector field on a smooth manifold $M$. Assume that $V$
generates a continuous flow $\Phi: M\times \R\to M$ and $Y\subset M$ is a
closed, flow-invariant subset.

\begin{definition}\label{def11}
\label{introlyap} A smooth closed 1-form $\omega$ on $M$ is called {\it a
Lyapunov 1-form}\index{Lyapunov 1-form of a flow} for the pair $(\Phi, Y)$ if
it has the following properties:
\begin{enumerate}
\item[{\rm ($\Lambda$1)}] The function $\iota_V(\omega)=\omega(V)$ is negative
on $M-Y$.

\item[{\rm ($\Lambda$2)}] $\omega|_Y=0$ and there exists a smooth function
$f:U\to \R$ defined on an open neighborhood $U$ of $Y$ such that
$\omega|_U=df$.
\end{enumerate}
\end{definition}

The above definition is a modification of the notion of a Lyapunov 1-form
introduced in \S 6 of \cite{farbe4}. Definition  \ref{def11} can also be compared with the definition of a Lyapunov
1-form in the continuous setting which was introduced in \cite{FKLZ}. Condition
($\Lambda$1) above is slightly stronger than condition (L1) of Definition 1 in
\cite{FKLZ}. Condition ($\Lambda$2) is similar to condition (L2) of Definition
1 from \cite{FKLZ} although they are not equivalent.

There exist several natural alternatives for condition ($\Lambda$2). One of
them is: \noindent
\begin{enumerate}
\item[{\rm ($\Lambda 2^\prime$)}] The 1-form $\omega$, viewed as a map $\omega:
M\to T^\ast(M)$, vanishes on $Y$.
\end{enumerate}
\noindent It is clear that ($\Lambda 2$) implies ($\Lambda 2^\prime$). We can
show that the converse is true under some additional assumptions:

\begin{lemma}\label{lmintegral} If the de Rham cohomology class $\xi$ of $\omega$ is integral,
$\xi=[\omega]\in H^1(M;\Z)$, then conditions ($\Lambda 2^\prime$) and ($\Lambda
2$) are equivalent.
\end{lemma}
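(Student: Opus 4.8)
The implication $(\Lambda 2)\Rightarrow(\Lambda 2')$ being trivial, the plan is to establish the converse: starting from $(\Lambda 2')$, I would produce an open neighbourhood $U$ of $Y$ and a smooth $f\colon U\to\R$ with $df=\omega|_U$, the identity $\omega|_Y=0$ being an immediate consequence of $(\Lambda 2')$. The mechanism is to represent the integral class $\xi$ by a smooth circle-valued map and then to invoke Sard's theorem.

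First I would choose a smooth map $g\colon M\to S^1=\R/\Z$ representing $\xi$ under the isomorphism $H^1(M;\Z)\cong[M,S^1]$. Writing $d\theta$ for the standard closed $1$-form on $S^1$ normalised by $\int_{S^1}d\theta=1$, the forms $g^\ast(d\theta)$ and $\omega$ represent the same de Rham class, so $\omega-g^\ast(d\theta)=dh$ for some smooth $h\colon M\to\R$; replacing $g$ by the map $x\mapsto g(x)+h(x)\bmod 1$ I may assume that $g^\ast(d\theta)=\omega$ on $M$.

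Now $(\Lambda 2')$ says $\omega_y=0$ in $T^\ast_yM$ for every $y\in Y$, so $dg_y=(g^\ast d\theta)_y=\omega_y=0$; thus every point of $Y$ is a critical point of $g$ and every point of $g(Y)$ is a critical value. By Sard's theorem the set of critical values of $g$ has measure zero in $S^1$, so $g(Y)$ is a proper subset of $S^1$, and I may pick $c\in S^1\setminus g(Y)$. Then $U:=g^{-1}(S^1\setminus\{c\})$ is open and contains $Y$. Choosing a diffeomorphism $\rho\colon S^1\setminus\{c\}\to I$ onto an open interval $I\subset\R$ with $\rho^\ast(dx)=d\theta|_{S^1\setminus\{c\}}$ (e.g. the obvious one, writing $c=[c_0]$ and $I=(c_0,c_0+1)$), the function $f:=\rho\circ g|_U\colon U\to\R$ is smooth and satisfies $df=g^\ast\rho^\ast(dx)=g^\ast(d\theta)|_U=\omega|_U$. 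This gives exactly $(\Lambda 2)$.

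The only step that is not purely formal is the appeal to Sard's theorem guaranteeing $g(Y)\ne S^1$; it is crucial here that the target is one-dimensional, so that a point where $dg$ fails to be surjective is precisely a point where $dg$ vanishes, which is what hypothesis $(\Lambda 2')$ controls. I do not anticipate any real difficulty beyond keeping track of the normalisations of $d\theta$ and $\rho$.
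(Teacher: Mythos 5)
Your proof is correct and follows essentially the same route as the paper: represent the integral class by a smooth circle-valued map $\phi$ with $\phi^\ast(d\theta)=\omega$, use Sard's theorem (the paper picks a regular value $\alpha$ of $\phi$, which by $(\Lambda 2')$ cannot be attained on $Y$, while you pick a point outside $\phi(Y)$ directly) and take $U$ to be the complement of that fibre, on which $\omega$ is exact. The two variants are interchangeable, so there is nothing further to add.
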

\begin{proof}
Clearly we only need to show that ($\Lambda 2^\prime$) implies ($\Lambda 2$).
Since $\xi$ is integral, there exists a smooth map $\phi: M\to S^1$ such that
$\omega=\phi^\ast(d\theta)$, where $d\theta$ is the standard angular 1-form on
the circle $S^1$. Let $\alpha\in S^1$ be a regular value of $\phi$. Assuming
that ($\Lambda 2^\prime$) holds, it then follows that $U=M - \phi^{-1}(\alpha)$
is an open neighborhood of $Y$. Clearly $\omega|_U=df$ where $f:U \to \R$ is a
smooth function which is related to $\phi$ by $\phi(x)=\exp(if(x))$ for any
$x\in U$. Hence ($\Lambda 2$) holds.
\end{proof}

\begin{lemma}\label{lmenr} Conditions
($\Lambda 2^\prime$) and ($\Lambda 2$) are equivalent if $Y$ is a Euclidean
Neighborhood Retract (ENR).
\end{lemma}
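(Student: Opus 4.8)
The plan is to reduce the statement to the integral case already handled in Lemma \ref{lmintegral} by exploiting that an ENR has a well-behaved neighborhood in a Euclidean space, but in fact a cleaner route is available and that is the one I would pursue. Since $(\Lambda 2)$ obviously implies $(\Lambda 2')$, the whole content is the reverse implication: given that $\omega$ vanishes identically on $Y$ as a section of $T^\ast M$, we must produce an open neighborhood $U\supset Y$ and a smooth function $f:U\to\R$ with $\omega|_U=df$. By Lemma \ref{lslmlocal} (applied to the locally path-connected space $U$, once we have a candidate $U$) this is equivalent to showing that the homomorphism of periods of $\omega$ restricted to $\pi_1(U)$ is trivial for a suitable choice of $U$. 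So the real goal is: \emph{find an open neighborhood $U$ of $Y$ in $M$ such that the restriction map $H_1(U;\R)\to H^1(U;\R)^\vee$ kills the class $[\omega|_U]$, i.e. such that $[\omega|_U]=0$ in $H^1(U;\R)$.}

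The first step is to recall the defining property of an ENR: since $Y$ is an ENR, there is an open neighborhood $W$ of $Y$ in $M$ together with a retraction $r:W\to Y$. The second step is to observe that $(\Lambda 2')$ — vanishing of $\omega$ along $Y$ — is precisely the infinitesimal statement that the pullback $r^\ast$ of the "restriction" of $\omega$ to $Y$ is, to first order, $\omega$ itself near $Y$; more usefully, I would argue that the de Rham cohomology class $[\omega]\in H^1(M;\R)$ restricts to zero on $Y$. Indeed, for any loop $\gamma$ contained in $Y$, the fact that $\omega$ vanishes on all tangent vectors to $Y$ forces $\int_\gamma\omega=0$; hence $[\omega]$ maps to $0$ under $H^1(M;\R)\to H^1(Y;\R)$. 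The third step is continuity of cohomology for ENRs: because $Y$ is an ENR, $H^\ast(Y;\R)=\varinjlim H^\ast(U;\R)$ over open neighborhoods $U$ of $Y$ in $M$ (this is the standard tautness/continuity property of Čech = singular cohomology for ENR subspaces of ANRs, and $M$ being a manifold is an ANR). Therefore, since the image of $[\omega]$ in $H^1(Y;\R)$ vanishes, there is some open neighborhood $U$ of $Y$, which we may take inside $W$, on which already $[\omega|_U]=0$ in $H^1(U;\R)$. The fourth and final step is then automatic: $U$ is locally path-connected (open in a manifold), $[\omega|_U]=0$ means the period homomorphism of $\omega|_U$ is trivial, so Lemma \ref{lslmlocal} gives a smooth $f:U\to\R$ with $\omega|_U=df$; note that $f$ is smooth rather than merely continuous because $\omega$ is smooth and the primitive of a smooth closed form obtained by integration is smooth. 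This establishes $(\Lambda 2)$.

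The step I expect to be the main obstacle — or at least the one requiring the most care to state correctly — is the third one, the passage from "the restricted class vanishes on $Y$" to "the restricted class vanishes on some neighborhood $U$." One must be sure that the direct-limit (tautness) property applies: this is where the ENR hypothesis is essential and where one cannot simply retract, because a retraction $r:W\to Y$ only gives $[\omega|_W]=r^\ast(\text{class on }Y\text{ pulled back})$, and the class on $Y$ itself need not vanish — only the image of $[\omega]$ does, and $r^\ast$ of that image is generally not $[\omega|_W]$. The correct tool is continuity of cohomology: for a compact (or closed) ENR $Y$ sitting in the ANR $M$, the natural map $\varinjlim_{U\supset Y}H^1(U;\R)\to H^1(Y;\R)$ is an isomorphism. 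I would cite this rather than reprove it, and then the argument closes cleanly. A minor secondary point to check is that the neighborhood $U$ produced by continuity can be chosen open in $M$ and small enough to sit inside any preassigned neighborhood, which is standard.
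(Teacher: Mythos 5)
Your proof is correct, but it takes a genuinely different route from the paper's. The paper uses the stronger form of the ENR neighborhood property (Dold, Ch.~IV, \S 8, Cor.~8.7): there is an open $U\supset Y$ and a retraction $r:U\to Y$ such that the inclusion $i_U:U\to M$ is homotopic \emph{in $M$} to $i_Y\circ r$. Any loop $\gamma$ in $U$ is then homotopic in $M$ to $r\circ\gamma\subset Y$, so $\int_\gamma\omega=\int_{r\circ\gamma}\omega=0$ by $(\Lambda 2')$, and $f$ is defined directly by integration on each component of $U$. You instead first show $[\omega]|_Y=0$ in $H^1(Y;\R)$ and then invoke tautness plus the identification of \v Cech with singular cohomology for ENRs to kill the class on some neighborhood, finishing with Lemma~\ref{lslmlocal}. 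Both arguments work; the paper's is more elementary (one homotopy, one integration), while yours imports the continuity-of-cohomology machinery but parallels the paper's own use of $\check H^1(Y;\R)=\lim_{W\supset Y}H^1(W;\R)$ in the necessity proposition of the same section. Two small remarks. First, your aside that ``one cannot simply retract'' is slightly off: one can, provided one uses the full Dold property that $i_U\simeq i_Y\circ r$ in $M$ rather than the bare existence of a retraction --- that is exactly the paper's argument. Second, both your proof and the paper's rely on $\int_\gamma\omega=0$ for \emph{continuous} loops $\gamma$ in $Y$; since $Y$ need not be a submanifold, the honest justification is that each local smooth primitive $f_U$ has every point of $Y\cap U$ as a critical point, so by Sard $f_U$ is constant on each connected component of $Y\cap U$ (a connected measure-zero subset of $\R$ is a point); your phrase ``vanishes on all tangent vectors to $Y$'' glosses over this, but the gap is shared with the paper and is not specific to your approach.
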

\begin{proof} Again, we only have to establish ($\Lambda 2^\prime$) $\Rightarrow$
($\Lambda 2$). Since $Y$ is an ENR it admits an open neighborhood $U\subset M$
such that the inclusion $i_U: U\to M$ is homotopic to $i_Y\circ r$, where
$i_Y:Y\to M$ is the inclusion and $r: U\to Y$ is a retraction (see \cite{Do},
Chapter 4, \S 8, Corollary 8.7). Pick a base point $x_j$ in every
path-connected component $U_j$ of $U$ and define a smooth function $f_j:U_j\to
\R$ by
$$f_j(x) =\int\limits_{x_j}^x \omega, \quad x\in U_j.$$
The latter integral is independent of the choice of the integration path in
$U_j$ connecting $x_j$ with $x$. This claim is equivalent to the vanishing of
the integral $\int_\gamma \omega$ for any closed loop $\gamma$ lying in $U$. To
show this, we apply the retraction to see that $\gamma$ is homotopic in $M$ to
the loop $\gamma_1=r\circ \gamma$, which lies in $Y$; thus we obtain
$\int_\gamma \omega = \int_{\gamma_1} \omega=0$ because of ($\Lambda
2^\prime$). It is clear that the functions $f_j$ together determine a smooth
function $f:U\to \R$ with $df=\omega|_U$.
\end{proof}

A class of interesting examples of Lyapunov 1-forms can be obtained as follows.
Let $\omega$ be a smooth closed 1-form on a closed Riemannian manifold $M$.
Consider the negative gradient vector field $V$ of $\omega$, i.e., $\langle
V,X\rangle =-\omega(X)$ for any vector field $X$ on $M$ where $\langle\cdot
,\cdot\rangle$ denotes the Riemannian metric. Denote by $\Phi$ the flow induced
by the vector field $V$ and by $Y$ the set of zeros of $\omega$. Then clearly
conditions ($\Lambda 1$) and ($\Lambda 2^\prime$) are satisfied. If either the
cohomology class of $\omega$ is integral or $Y$ is an ENR, then (by the two
lemmas above) $\omega$ is a Lyapunov 1-form for the pair $(\Phi, Y)$.

The main goal of this section is to specify topological conditions which
guarantee that for a given vector field $V$ on $M$ and for a prescribed
cohomology class $\xi\in H^1(M;\R)$ there exists a Lyapunov 1-form $\omega$ of
the flow of $V$ with $[\omega]=\xi$.

\subsection*{Asymptotic cycles of Schwartzman}

Let $M$ be a closed smooth manifold and let $V$ be a smooth vector field. Let $\Phi:
M\times \R\to M$ be the flow generated by $V$. Consider a Borel measure $\mu$ on $M$
which is invariant under $\Phi$. According to S. Schwartzman \cite{Sc}, these data
determine a real homology class
$$\mathcal A_\mu= \A_\mu(\Phi)\, \in H_1(M;\R)$$
called {\it the asymptotic cycle of the flow $\Phi$ corresponding to the measure
$\mu$.}\index{Asymptotic cycle of a flow} The class $\A_\mu$ is defined as follows. For a
de Rham cohomology class $\xi \in H^1(M;\R)$ the evaluation $\langle \xi, \A_\mu\rangle
\in \R$ is given by the integral
\begin{eqnarray}
\langle \xi , \A_\mu\rangle  \, =\, \int_M \iota_V(\omega)d\mu,\label{integral}
\end{eqnarray}
where $\omega$ is a closed 1-form in the class $\xi$. Note that $\langle \xi,
\A_\mu\rangle$ is well-defined, i.e., it depends only on the cohomology class
$\xi$ of $\omega$; see \cite{Sc}, page 277. Indeed, when we replace $\omega$ by
$\omega'=\omega+df$, where $f: M\to \R$ is a smooth function, the integral in
(\ref{integral}) gets changed by the quantity
\begin{eqnarray}
\int_M V(f)d\mu = \lim_{s\rightarrow 0} \;\frac 1 s \int_M \bigl\{ f(x\cdot s)
- f(x)\bigr\}\, d\mu(x) .\label{integral1}
\end{eqnarray}
Here $V(f)$ denotes the derivative of $f$ in the direction of the vector field
$V$ and $x\cdot s$ stands for the flow $\Phi(x, s)$ of the vector field $V$.
Since the measure $\mu$ is flow invariant, the right integral in
(\ref{integral1}) vanishes for any $f$. It is clear that the right hand side of
(\ref{integral}) is a linear function of $\xi\in H^1(M;\R)$. Hence there exists
a unique real homology class $\A_\mu\in H_1(M;\R)$ which satisfies
(\ref{integral}) for all $\xi\in H^1(M;\R)$.

\subsection*{Necessary conditions}
We consider the flow $\Phi$ as being fixed and we vary the invariant measure
$\mu$. As the class $\A_\mu\in H_1(M;\R)$ depends linearly on $\mu$, the set of
asymptotic cycles $\A_\mu$ corresponding to all $\Phi$-invariant positive
measures $\mu$ forms a convex cone in the vector space $H_1(M;\R)$.

\begin{proposition} Assume that there exists a Lyapunov 1-form for $(\Phi,Y)$ lying in a cohomology class
$\xi\in H^1(M;\R)$. Then
\begin{eqnarray}
\langle \xi, \A_\mu\rangle \, \leq \, 0\label{new11}
\end{eqnarray}
for any $\Phi$-invariant positive Borel measure $\mu$ on $M$; equality in
(\ref{new11}) takes place if and only if the complement of $Y$ has measure
zero. Furthermore, the restriction of $\xi$ to $Y$, viewed as a \v Cech
cohomology class
$$\xi|_{Y}\in \check H^1(Y;\R),$$
vanishes, $\xi|_{Y} =0$.
\end{proposition}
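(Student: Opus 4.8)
The plan is to work with a Lyapunov $1$-form $\omega$ representing $\xi$ and to notice that the smooth function $\iota_V(\omega)=\omega(V)$ is everywhere nonpositive on $M$, vanishing precisely on $Y$. Indeed, ($\Lambda$1) gives $\iota_V(\omega)<0$ on $M-Y$, while on $Y$ condition ($\Lambda$2) implies ($\Lambda 2^\prime$), so $\omega$ vanishes as a section of $T^\ast M$ at every point of $Y$ and hence $\iota_V(\omega)(x)=\omega_x(V_x)=0$ for $x\in Y$. (Alternatively: near $Y$ one has $\omega=df$ with $f$ locally constant on $Y$, so $f$ is constant along each orbit contained in $Y$ --- such an orbit being connected --- whence $\tfrac{d}{dt}f(x\cdot t)=\iota_V(\omega)(x\cdot t)=0$.) In particular $\{\iota_V(\omega)=0\}=Y$ and $\{\iota_V(\omega)<0\}=M-Y$.

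Given this, inequality (\ref{new11}) is immediate from formula (\ref{integral}): since $\mu$ is a positive measure and the integrand is nonpositive,
\begin{eqnarray*}
\langle\xi,\A_\mu\rangle\,=\,\int_M\iota_V(\omega)\,d\mu\,\le\,0.
\end{eqnarray*}
For the equality clause I would use the elementary fact that a nonpositive measurable function has vanishing integral against a positive Borel measure if and only if it vanishes $\mu$-almost everywhere; by the previous paragraph this is equivalent to $\mu(M-Y)=0$, i.e. to the complement of $Y$ having measure zero.

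For the final assertion I would invoke ($\Lambda$2) directly: it furnishes an open neighbourhood $U$ of $Y$ and a smooth function $f\colon U\to\R$ with $\omega|_U=df$, so the de Rham class $\xi|_U\in H^1(U;\R)$ is zero. Now the restriction homomorphism $H^1(M;\R)\to\check H^1(Y;\R)$ factors through $H^1(U;\R)$, because $\check H^1(Y;\R)=\varinjlim_{W\supseteq Y}H^1(W;\R)$ (tautness, or continuity, of \v Cech cohomology for a closed subset of a manifold); hence $\xi|_Y$ is the image of $\xi|_U=0$, and therefore $\xi|_Y=0$.

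The only point that is more than an unwinding of definitions --- the main obstacle, modest as it is --- is verifying that $\iota_V(\omega)$ vanishes on all of $Y$, not merely that it is constant along orbits in $Y$. This is exactly where the strengthened hypothesis ($\Lambda$2), through its consequence ($\Lambda 2^\prime$), is used; once it is in hand all three conclusions follow directly: the integral estimate and its equality case from positivity of $\mu$, and the \v Cech statement from exactness of $\omega$ near $Y$.
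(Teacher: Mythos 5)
Your proof is correct and follows essentially the same route as the paper's: nonpositivity of $\iota_V(\omega)$ (negative off $Y$ by ($\Lambda$1), zero on $Y$ by ($\Lambda$2)) gives the inequality via (\ref{integral}), and the \v Cech statement follows from $\xi|_U=0$ together with $\check H^1(Y;\R)=\varinjlim_{W\supset Y}H^1(W;\R)$. The only (harmless) divergence is in the equality case: the paper extracts a compact $K\subset M-Y$ of positive measure via inner regularity and bounds $\iota_V(\omega)\le-\epsilon$ on it, whereas you use the equivalent elementary fact that a nonpositive function with vanishing integral against a positive measure vanishes $\mu$-a.e.
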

\begin{proof} Let $\omega$ be a Lyapunov 1-form for $(\Phi, Y)$ lying in the class $\xi$.
According to Definition \ref{def11}, the function $\iota_V(\omega)$ is negative
on $M-Y$ and vanishes on $Y$. We obtain that the integral
$$\int_M\iota_V(\omega)d\mu = \langle \xi, \A_\mu\rangle$$
is nonpositive.

Assuming $\mu(M-Y)>0$, we find a compact $K\subset M-Y$ with $\mu(K)>0$; This
follows from the theorem of Riesz; see, e.g., \cite{Lang}, Theorem 2.3(iv),
page 256. There is a constant $\epsilon >0$ such that $\iota_V(\omega)|_K \leq
-\epsilon$. Therefore, one has
$$\int_M \iota_V(\omega)d\mu \leq -\epsilon \mu(K) \, <\, 0.$$
Hence, the value $\langle\xi,\A_\mu\rangle$ is strictly negative if the measure
$\mu$ is not supported in $Y$.

To prove the second statement, we observe (see \cite{Sp}) that the \v Cech
cohomology $\check H^1(Y;\R)$ equals the direct limit of the singular
cohomology
$$\check H^1(Y;\R) = \lim_{W\supset Y} H^1(W;\R),$$
where $W$ runs over open neighborhoods of $Y$. It is clear in view of condition
($\Lambda$2) that $\xi|_U=0\in H^1(U;\R)$ (by the de Rham theorem). Hence the
result follows. \end{proof}

\subsection*{Chain-recurrent set $R_\xi$}
Given a flow $\Phi$, our aim is to construct a Lyapunov 1-form $\omega$ for a
pair $(\Phi, Y)$ lying in a given cohomology class $\xi\in H^1(M;\R)$. A
natural candidate for $Y$ is the subset $R_\xi=R_\xi(\Phi)$ of the
chain-recurrent set $R=R(\Phi)$ which was defined in \cite{FKLZ}. We briefly
recall the definition.

Fix a Riemannian metric on $M$ and denote by $d$ the corresponding distance
function. Given any $\delta>0$, $T>1$, a $(\delta,T)$-{\it chain from $x\in M$
to $y\in M$}  is a finite sequence $x_0=x, x_1, \dots, x_{N}=y$ of points in
$M$ and numbers $t_1, \dots, t_N\in \R$ such that $t_i\geq T$ and
$d(x_{i-1}\cdot t_i, x_i) < \delta$ for all $1 \leq i \leq N$. Here we use the
notation $\Phi(x,t)=x\cdot t$. The {\it chain recurrent set} $R=R(\Phi)$ of the
flow $\Phi$ is defined as the set of all points $x\in M$ such that for any
$\delta>0$ and $T > 1$ there exists a $(\delta,T)$-chain starting and ending at
$x$. The chain recurrent set $R$ is closed and invariant under the flow.

Given a cohomology class $\xi\in H^1(M;\R)$, there is a natural covering space
$p_\xi: \tilde M_\xi\to M$ associated with $\xi$. A closed loop $\gamma:
[0,1]\to M$ lifts to a closed loop in $\tilde M_\xi$ if and only if the value
of the cohomology class $\xi$ on the homology class $[\gamma]\in H_1(M;\Z)$
vanishes, $\langle \xi, [\gamma]\rangle =0$.

The flow $\Phi$ lifts uniquely to a flow $\tilde \Phi$ on the covering $\tilde
M_\xi$. Consider the chain recurrent set $R(\tilde \Phi)\subset \tilde M_\xi$
of the lifted flow and denote by $R_\xi=p_\xi(R(\tilde \Phi))\subset M$ its
projection onto $M$. The set $R_\xi$ is referred to as {\it the chain recurrent
set associated to the cohomology class $\xi$.}\index{Chain recurrent set
$R_\xi$} It is clear that $R_\xi$ is closed, $\Phi$-invariant, and
$R_\xi\subset R$.

We denote by $C_\xi$ the complement of $R_\xi$ in $R$,
$$C_\xi = R- R_\xi.$$

Let us mention the following example illustrating the definition of $R_\xi$.
Consider a smooth flow on a closed manifold $M$ whose chain recurrent set $R$
consists of finitely many rest points and periodic orbits. Given a cohomology
class $\xi\in H^1(M;\R)$, the chain recurrent set $R_\xi$ is the union of all
the rest points and of those periodic orbits whose homology classes $z\in
H_1(M;\Z)$ satisfy $\langle \xi, z\rangle =0$.

In general, any fixed point of the flow belongs to $R_\xi$. The points of a
periodic orbit belong to $R_\xi$ if the homology class $z\in H_1(X;\Z)$ of this
orbit satisfies $\langle \xi, z\rangle =0$.

It may happen that the points of a periodic orbit belong to $R_\xi$ although
$\langle \xi, z\rangle\not=0$ for the homology class $z$ of the orbit. This
possibility is illustrated by the following example; cf. \cite{FKLZ}.
\begin{figure}[h]
\begin{center}
\resizebox{6.5cm}{5cm}{\includegraphics[230,419][410,572]{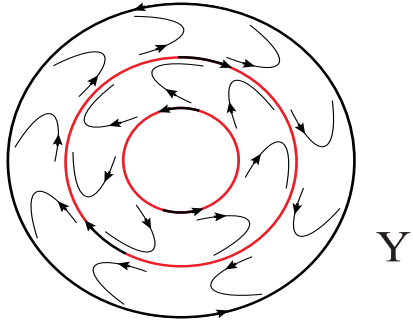}}
\end{center}
\caption{The flow on the planar ring $Y$.}\label{example2}
\end{figure}
Consider the flow on the planar ring $Y\subset \C$ shown in Figure
\ref{example2}. In the polar coordinates $(r, \phi)$ the ring $Y$ is given by
the inequalities $1\leq r\leq 3$ and the flow is given by the differential
equations
$$\dot r = (r-1)^2(r-3)^2(r-5)^2, \quad \dot\phi =\sin\left(r\cdot \frac{\pi}{2}\right).$$
Let $C_k$, where $k=1, 2, 3$, denote the circle $r=2k-1$. The circles $C_1,
C_2, C_3$ are invariant under the flow. The motion along the circles $C_1$ and
$C_3$ has constant angular velocity 1. Identifying any point $(r, \phi)\in C_1$
with $(5r, \phi)\in C_3$, we obtain a torus $X=Y/\simeq$ and a flow $\Phi:
X\times \R\to X$. The images of the circles $C_1, C_2, C_3\subset Y$ represent
two circles $C'_1=C'_3$ and $C'_2$ on the torus $X$. Let $\xi\in H^1(X;\R)$ be
a nonzero cohomology class which is the pullback of a cohomology class of $Y$.
One verifies that in this example the set $R_\xi(\Phi)$ coincides with the
whole torus $X$. In particular, $R_\xi(\Phi)$ contains the periodic orbits
$C_1'=C'_3$ and $C_2'$ although clearly $\langle \xi, [C'_k]\rangle \not=0$.

A different definition of $R_\xi$ which does not use the covering space $\tilde
M_\xi$ can be found in \cite{FKLZ}.

To state the main result of this section, we need the following notion. A
$(\delta, T)${\it -cycle} of the flow $\Phi$ is defined as a pair $(x,t)$,
where $x\in M$ and $t>T$ such that $d(x, x\cdot t)<\delta$. If $\delta$ is
small enough, then any $(\delta, T)$-cycle determines in a canonical way a
unique homology class $z\in H_1(M;\Z)$ which is represented by the flow
trajectory from $x$ to $x\cdot t$ followed by a \lq\lq {\it short}\rq\rq\, arc
connecting $x\cdot t$ with $x$. See \cite{FKLZ} for more details.
\index{cycle@$(\delta, T)$-Cycle}
\begin{figure}[h]
\begin{center}
\resizebox{6cm}{5cm} {\includegraphics[80,360][429,651]{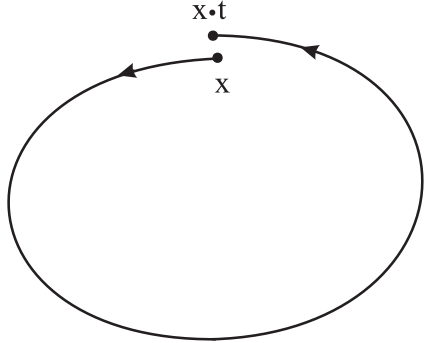}}
\caption{$(\delta, T)$-cycle.}
\end{center}
\end{figure}
\begin{theorem}[Farber, Kappeler, Latschev, Zehnder \cite{FKLZ1}] \label{lyapmain}
Let $V$ be a smooth vector field on a smooth closed manifold $M$. Denote by
$\Phi: M\times \R\to M$ the flow generated by $V$. Let $\xi\in H^1(M;\R)$ be a
cohomology class such that the restriction $\xi|_{R_\xi}$, viewed as a \v Cech
cohomology class $\xi|_{R_\xi}\in \check H^1(R_\xi;\R)$, vanishes. Then the
following properties of $\xi$ are equivalent:

{\rm (I)} There exists a smooth Lyapunov 1-form for $(\Phi, R_\xi)$ in the
cohomology class $\xi$ and the subset $C_\xi$ is closed.

{\rm (II)} For any Riemannian metric on $M$ there exist $\delta>0$ and $T>1$
such that the homology class $z\in H_1(M;\Z)$ associated with an arbitrary
$(\delta, T)$-cycle $(x,t)$ of the flow, with $x\in C_\xi$, satisfies $\langle
\xi, z\rangle \leq -1$.

{\rm (III)} The subset $C_\xi$ is closed and there exists a constant $\eta>0$
such that for any $\Phi$-invariant positive Borel measure $\mu$ on $M$ the
asymptotic cycle $\A_\mu=\A_\mu(\Phi)\in H_1(M;\R)$ satisfies
\begin{eqnarray}
\langle \xi, \A_\mu\rangle \, \leq\,   -\eta\cdot \mu(C_\xi).\label{tkinequa3}
\end{eqnarray}

{\rm (IV)} The subset $C_\xi$ is closed and for any $\Phi$-invariant positive
Borel measure $\mu$ on $X$ with $\mu(C_\xi)>0$, the asymptotic cycle
$\A_\mu=\A_\mu(\Phi)\in H_1(M;\R)$ satisfies
\begin{eqnarray}
\langle \xi, \A_\mu\rangle < 0.\label{tkinequa}
\end{eqnarray}
\end{theorem}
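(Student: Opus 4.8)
The plan is to establish the cycle of implications (I) $\Rightarrow$ (II) $\Rightarrow$ (IV) $\Rightarrow$ (III) $\Rightarrow$ (I), which is the natural route since (III) is visibly the strongest-looking statement while (IV) is the weakest, and (II) is the combinatorial ``chain'' reformulation that mediates between the 1-form and the measures. First I would prove (I) $\Rightarrow$ (II). Given a smooth Lyapunov 1-form $\omega$ for $(\Phi,R_\xi)$ with $[\omega]=\xi$, the function $\iota_V(\omega)$ is negative on $M-R_\xi$, hence on the closed set $C_\xi\subset R-R_\xi$; together with the function $f$ on a neighbourhood $U$ of $R_\xi$ with $\omega|_U=df$, one can integrate $\omega$ along flow trajectories and along short arcs. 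The key computation is that for a $(\delta,T)$-cycle $(x,t)$ with $x\in C_\xi$, the integral $\int\omega$ over the flow segment from $x$ to $x\cdot t$ is bounded above by $-ct$ for some $c>0$ (using that trajectories starting in the compact set $C_\xi$ spend a definite proportion of time in a region where $\iota_V(\omega)\le -\epsilon$, or more carefully using that $C_\xi$ is flow-invariant so the whole segment lies in $C_\xi$ where $\iota_V(\omega)\le-\epsilon$), while the short closing arc contributes $O(\delta)$; since $\int_{\text{cycle}}\omega = \langle\xi,z\rangle$, choosing $T$ large and $\delta$ small forces $\langle\xi,z\rangle\le -1$. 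The flow-invariance of $C_\xi$ is the crucial structural fact that makes the time-integral estimate clean.

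Next, (II) $\Rightarrow$ (IV). Assume $C_\xi$ is closed and the $(\delta,T)$-cycle inequality holds, and suppose $\mu$ is a $\Phi$-invariant positive Borel measure with $\mu(C_\xi)>0$. I would invoke the ergodic decomposition to reduce to ergodic $\mu$; then by the Birkhoff ergodic theorem applied to $\iota_V(\omega)$ (for a closed 1-form $\omega$ in class $\xi$) almost every orbit has a well-defined time average equal to $\langle\xi,\A_\mu\rangle/\mu(M)$-type quantity, and by Poincaré recurrence, a positive-measure set of points in $C_\xi$ returns arbitrarily close to itself after arbitrarily long times, producing $(\delta,T)$-cycles based at points of $C_\xi$. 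The computation $\int_{\text{trajectory}}\omega \approx \langle\xi,z\rangle \le -1$ over time $\ge T$ then yields a strictly negative time-average, hence $\langle\xi,\A_\mu\rangle<0$. Here one must be careful that the returning point genuinely lies in $C_\xi$ and that the closing arc is short enough to compute the homology class; closedness of $C_\xi$ is what guarantees the return points stay in $C_\xi$.

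The implication (IV) $\Rightarrow$ (III) is a compactness/continuity argument: the set of asymptotic cycles $\{\A_\mu\}$ normalized by, say, $\mu(M)=1$ is a compact convex set (weak-$*$ compactness of invariant probability measures), and $\mu\mapsto(\langle\xi,\A_\mu\rangle,\ \mu(C_\xi))$ is affine and continuous; statement (IV) says the ratio is negative whenever $\mu(C_\xi)>0$, and a standard separation/compactness argument upgrades this to a uniform bound $\langle\xi,\A_\mu\rangle\le-\eta\,\mu(C_\xi)$ — one needs $C_\xi$ closed so that $\mu\mapsto\mu(C_\xi)$ is upper semicontinuous, and one handles the boundary case $\mu(C_\xi)=0$ separately where both sides are $\le 0$ automatically by the first Proposition of this section. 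Finally, (III) $\Rightarrow$ (I) is the hard analytic heart of the theorem and the main obstacle: one must actually construct the smooth Lyapunov 1-form. I would follow the scheme of \cite{FKLZ, FKLZ1}: the uniform inequality (\ref{tkinequa3}) translates (via a Hahn–Banach / duality argument in the space of invariant measures, in the spirit of Schwartzman and of Conley's construction of Lyapunov functions) into the existence of a continuous closed 1-form whose integral decreases at a uniform rate along orbits in $C_\xi$; then a smoothing and patching argument — decreasing at a definite rate is an open condition — produces a smooth $\omega$ in class $\xi$ with $\iota_V(\omega)<0$ off $R_\xi$, and the hypothesis $\xi|_{R_\xi}=0\in\check H^1(R_\xi;\R)$ together with Lemma-type arguments as in the proof of the first Proposition lets one arrange $\omega|_U=df$ near $R_\xi$, giving ($\Lambda$1) and ($\Lambda$2). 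The delicate points are passing from the measure-theoretic inequality to a genuine closed 1-form (this is where the noncompact-orbit-closure subtleties live) and ensuring the construction respects the prescribed cohomology class rather than just its restriction; I expect this step to consume the bulk of the proof and to rely essentially on the machinery already developed in \cite{FKLZ}.
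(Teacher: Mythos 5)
First, a caveat: the paper does not prove this theorem — it states it and refers to \cite{FKLZ1} ("A proof of Theorem \ref{lyapmain} can be found in \cite{FKLZ1}"), so there is no in-paper argument to compare against. Your overall architecture (a cycle of implications in which everything is soft except the construction of the 1-form) is the right shape, but there are concrete gaps. The most structural one: condition (II), unlike (I), (III) and (IV), does \emph{not} assert that $C_\xi$ is closed. In your step (II) $\Rightarrow$ (IV) you simply "assume $C_\xi$ is closed", so your cycle never shows that (II) implies the closedness claims contained in the other three conditions, and the equivalence is not established; deriving closedness of $C_\xi$ from the $(\delta,T)$-cycle condition is a genuine argument (one must show $R_\xi$ is open in $R$). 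The second major issue is that (III) $\Rightarrow$ (I) — the actual production of the smooth Lyapunov 1-form, which is the substance of the theorem and the reason the paper develops flow-convex neighbourhoods in Section 7 — is only gestured at ("Hahn--Banach", "smoothing and patching"); nothing in the sketch would survive as a proof of that step.

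Two further points. In (II) $\Rightarrow$ (IV), Poincar\'e recurrence only yields infinitely many returns, and a single $(\delta,T)$-cycle of duration $t$ contributes only $-1/t$ to the Birkhoff average, which tends to $0$; to conclude $\langle\xi,\A_\mu\rangle<0$ you need returns at a \emph{linear rate} in time. This is fixable (pass to an ergodic component supported on $C_\xi$, take a generic point $x$ of the support, and use the Birkhoff theorem for the ball $B(x,\delta/2)$ to get a positive visit frequency, so that consecutive near-returns spaced by at least $T$ occur within bounded multiples of $T$ and the concatenated cycles accumulate $-1$ per bounded time), or one can instead prove the contrapositive by building a limit invariant measure from orbit-segment averages of failing cycles, which is closer to \cite{FKLZ1}. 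Finally, in (IV) $\Rightarrow$ (III) you dispose of the case $\mu(C_\xi)=0$ by citing "the first Proposition of this section", but that Proposition \emph{assumes} a Lyapunov 1-form exists, which is exactly what the cycle is supposed to deliver — this is circular. The correct argument is that every invariant measure is supported on $R=R_\xi\sqcup C_\xi$, and for $\mu$ supported on $R_\xi$ the hypothesis $\xi|_{R_\xi}=0\in\check H^1(R_\xi;\R)$ gives $\omega=df$ on a neighbourhood of $R_\xi$, whence $\langle\xi,\A_\mu\rangle=\int V(f)\,d\mu=0$ by invariance; note this is the only place outside the final construction where the standing hypothesis on $\xi|_{R_\xi}$ enters, and your write-up never uses it there.
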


Condition (\ref{tkinequa3}) can be reformulated using the notion of a
quasi-regular point. Recall that $x\in X$ is a {\it
quasi-regular}\index{Quasi-regular point} point of the flow $\Phi: X\times
\R\to X$ if for any continuous function $f: X\to \R$ the limit
\begin{eqnarray}
\lim_{t\to \infty} \frac{1}{t} \int_0^t f(x\cdot s)ds
\end{eqnarray}
exists. It follows from the ergodic theorem that the subset $Q\subset X$ of all
quasi-regular points has full measure with respect to any $\Phi$-invariant
positive Borel measure on $X$. From the Riesz
representation theorem (see, e.g., \cite{Rud}, page 256) one deduces that for
any quasi-regular point $x\in Q$ there exists a unique positive flow-invariant
Borel measure $\mu_x$ with $\mu_x(X)=1$ satisfying
\begin{eqnarray}
\lim_{t\to \infty} \frac{1}{t} \int_0^t f(x\cdot s)ds = \int_X fd\mu_x
\end{eqnarray}
for any continuous function $f$. We use below the well-known fact that any
positive, $\Phi$-invariant Borel measure $\mu$ with $\mu(X)=1$ belongs to the
weak$^\ast$ closure of the convex hull of the set of measures $\mu_x$, $x\in
Q$.

If the subset $C_\xi\subset X$ is closed, and hence compact, one can apply the
above-mentioned facts to the restriction of the flow to $C_\xi$. Let $\omega$
be an arbitrary smooth closed 1-form lying in the cohomology class $\xi$. For
any quasi-regular point $x\in C_\xi$ of the flow $\Phi|_{C_\xi}$ one has
\begin{eqnarray}
\lim_{t\to \infty} \frac{1}{t}\int_x^{x\cdot t} \omega = \lim_{t\to
\infty}\frac{1}{t}
\int_0^t \iota_V(\omega)(x\cdot s)ds\\
= \int_{M}\iota_V(\omega)d\mu_x = \langle \xi, \mathcal
A_{\mu_x}\rangle.\nonumber
\end{eqnarray}

We therefore conclude that condition {\rm (III)} is equivalent to:

{$\rm (III')$} {\it The subset $C_\xi$ is closed and there exists a constant
$\eta>0$ such that for any quasi-regular point $x\in C_\xi$,
\begin{eqnarray}
\lim_{t\to \infty} \frac{1}{t}\int_x^{x\cdot t} \omega\, \, \leq\, \,
-\eta,\label{stam2}
\end{eqnarray}
where $\omega$ is an arbitrary closed 1-form in the class $\xi$.}

The value of the limit (\ref{stam2}) is independent of the choice of a closed
1-form $\omega$; the only requirement is that $\omega$ lies in the cohomology
class $\xi$.

In the special case $\xi=0$ the set $C_\xi$ is empty and $R=R_\xi$. The above
statement then reduces to the following well-known theorem of C. Conley (see
\cite{Cnl} and  \cite{Shu}, Theorem 3.14):

\begin{theorem}\label{conley} {\rm (C. Conley)} Let $V$ be a smooth vector field on a smooth closed manifold $M$. Denote by
$\Phi: M\times \R\to M$ the flow generated by $V$ and by $R$ the chain
recurrent set of $\Phi$. Then there exists a smooth Lyapunov function $L: M\to
\R$ for $(\Phi, R)$. This means that $V(L)<0$ on $M-R$ and $dL=0$ pointwise on
$R$.
\end{theorem}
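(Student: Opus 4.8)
The plan is to deduce Theorem \ref{conley} from Theorem \ref{lyapmain} by specializing to the zero class $\xi=0$, and then, for completeness, to indicate the classical self-contained route through attractor--repeller pairs.

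First I would unwind Theorem \ref{lyapmain} for $\xi=0\in H^1(M;\R)$. In this case the associated covering $\tilde M_\xi\to M$ is the identity, so $R_\xi=R$ and the complement $C_\xi=R-R_\xi$ is empty; in particular $C_\xi$ is (vacuously) closed. The restriction $\xi|_{R_\xi}\in\check H^1(R_\xi;\R)$ is $0$, so the standing hypothesis of Theorem \ref{lyapmain} holds. Moreover condition (II) is automatically satisfied: there are no $(\delta,T)$-cycles $(x,t)$ with $x\in C_\xi=\emptyset$, so the inequality $\langle\xi,z\rangle\le -1$ is imposed on nothing; equivalently $\mu(C_\xi)=0$ for every $\Phi$-invariant measure $\mu$, which makes conditions (III) and (IV) vacuous as well. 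By the equivalence asserted in Theorem \ref{lyapmain}, condition (I) holds: there exists a smooth Lyapunov 1-form $\omega$ for the pair $(\Phi,R)$ with $[\omega]=\xi=0$.

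It remains to pass from this 1-form to a function. Since $[\omega]=0$, the homomorphism of periods of $\omega$ vanishes, so by Lemma \ref{lslmlocal} (applied on each connected component of $M$) there is a function $L:M\to\R$ with $\omega=dL$; as $\omega$ is a smooth closed 1-form, locally $\omega=df_U$ with $f_U$ smooth, and hence $L$ is smooth. Now condition $(\Lambda 1)$ for $\omega$ reads $\omega(V)=dL(V)=V(L)<0$ on $M-R$, and condition $(\Lambda 2)$ --- which for the integral class $\xi=0$ is equivalent to its pointwise form $(\Lambda 2^\prime)$ by Lemma \ref{lmintegral}, and is in any case automatic here since $\omega$ is globally exact --- gives $\omega|_R=dL|_R=0$, i.e. $dL$ vanishes pointwise on $R$. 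These are exactly the two conclusions of Theorem \ref{conley}.

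The only delicate point in this argument is the bookkeeping of the first step: one must check that each of the four equivalent conditions of Theorem \ref{lyapmain} is genuinely empty when $C_\xi=\emptyset$, and that the 1-form produced by (I) is honestly exact (so that no local ambiguity survives) --- both of which are immediate. For readers who prefer a direct proof I would instead recall Conley's original construction: in a compact metric space there are at most countably many attractors $A_1,A_2,\dots$, each attractor $A_i$ with its dual repeller $A_i^\ast$ admits a continuous function $g_i:M\to[0,1]$ with $g_i^{-1}(0)=A_i$, $g_i^{-1}(1)=A_i^\ast$ and $g_i$ strictly decreasing along orbits in $M-(A_i\cup A_i^\ast)$, and the series $\sum_i 2^{-i}g_i$ is then a continuous Lyapunov function for $(\Phi,R)$ because $R=\bigcap_i(A_i\cup A_i^\ast)$. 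In that route the real work is the final smoothing --- replacing the continuous Lyapunov function by a smooth one, keeping strict decrease off $R$ and arranging $dL\equiv 0$ on $R$ --- which is precisely the technical content that Theorem \ref{lyapmain} already packages for us.
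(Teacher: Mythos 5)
Your proposal is correct and follows exactly the route the paper itself takes: the paper presents Theorem \ref{conley} as the $\xi=0$ specialization of Theorem \ref{lyapmain} (noting that $C_\xi=\emptyset$ and $R_\xi=R$ in that case), and your verification that the hypotheses and conditions (II)--(IV) become vacuous, followed by integrating the exact Lyapunov $1$-form to a smooth function, fills in the details of that reduction faithfully. The appended sketch of Conley's attractor--repeller construction is a nice supplement but is not part of the paper's argument, which defers the real work to the proof of Theorem \ref{lyapmain} in the cited reference.
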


A proof of Theorem \ref{lyapmain} can be found in \cite{FKLZ1}.

We also refer to \cite{FKLZ} where continuous Lyapunov 1-forms are studied. The
paper \cite{FKLZ} also contains a discussion comparing the results about the
existence of Lyapunov 1-forms with the theorems of D. Fried \cite{Fr:1} about
the existence of cross sections for flows.

Finally we describe a class of examples of flows for which there exists a
cohomology class $\xi$ satisfying all the conditions of Theorem \ref{lyapmain}.

Let $M$ be a closed smooth manifold with a smooth vector field $v$. Let $\Psi:
M\times \R\to M$ be the flow of $v$. Assume that the chain recurrent set
$R(\Psi)$ is a union of two disjoint closed sets $R(\Psi)=R_1\cup R_2$, where
$R_1\cap R_2=\emptyset$. Out of this data we construct a flow $\Phi$ on
$$X=M\times S^1$$
such that $R_\xi(\Phi)=R_1\times S^0$, $C_\xi=R_2\times S^1$. Here $\xi\in
H^1(X;\Z)$ denotes the de Rham cohomology class of the 1-form $-d\theta$ where
$\theta\in [0,2\pi]$ denotes the angle coordinate on the circle $S^1$;
$S^0\subset S^1$ is a two-point set.

We will need two vector fields $w_1$ and $w_2$ on $S^1$, $w_1=\cos
(\theta)\cdot \frac{\partial}{\partial \theta}$ and
$w_2=\frac{\partial}{\partial \theta}$. The field $w_1$ has two zeros $\{p_1,
p_2\}=S^0\subset S^1$ corresponding to the angles $\theta=\pi/2$ and
$\theta=3\pi/2$.

Let $f_i: M\to [0,1]$, where $i=1, 2$, be two smooth functions having disjoint
supports and satisfying $f_1|_{R_1}=1$, $f_2|_{R_2}=1$.

Consider the flow $\Phi: X\times \R\to X$ determined by the vector field
$$V\, = \, v+f_1w_1+f_2w_2.$$
Any trajectory of $V$ has the form $(\gamma(t), \theta(t))$, where
$\dot\gamma(t)=v(\gamma(t))$, i.e., $\gamma(t)$ is a trajectory of $v$. It
follows that the chain recurrent set of $V$ is contained in $R(\Psi)\times
S^1$. Over $R_1$ we have the vertical vector field $w_1$ along the circle which
has two points $S^0\subset S^1$ as its chain recurrent set. Over $R_2$ we have
the vertical vector field $w_2$ which has all of $S^1$ as the chain recurrent
set. We see that $R_1\times S^0 = R_\xi(\Phi)$, $R_2\times S^1=C_\xi$. Hence
$$\xi|_{R_\xi}=0$$ and $C_\xi$ is closed. One easily checks that condition ({\rm {III})
of Theorem \ref{lyapmain} (and hence the other conditions as well) is
satisfied.

Further examples can be found in \S 7 of \cite{FKLZ}.

An approach to study dynamical systems using cocycles instead of Lyapunov closed 1-forms was suggested by H. Fan and J. Jost \cite{FJ1}, \cite{FJ2}.

\section{Notions of category with respect to a cohomology class}\label{lscat}

In this section we describe several generalizations of the classical notion of Lusternik - Schnirelmann category which reflect interesting dynamical properties of flows on manifolds. These new notions depend on a choice of a cohomology class $\xi\in H^1(X;\R)$ and are all equal to the classical invariant $\cat(X)$ in the case $\xi=0$.

\subsection{Movable subsets} First we define the notion of movability of a subset with respect to a closed one-form.

 \begin{definition}\label{defmovable} Let $\omega$ be a closed one-form on a topological space $X$.
 A subset $A\subset X$ is called $N$-movable with respect to $\omega$ (where $N$ is an integer), if there is a homotopy $H: A\times [0,1]\to X$ such that $H(a,0)=a$ for all $a\in A$, and
\begin{eqnarray*}
\int_a^{H_1(a)} \omega & \leq &   -N
\end{eqnarray*}
for all $a\in A$.
\end{definition}
Note that the latter inequlaity

Below we shall think of the form $\omega$ being fixed and of integer $N$ being large, or tending to infinity.
This may happen only in the case when the cohomology class $\xi= [\omega]\in H^1(X;\R)$ is nonzero. Indeed, it is clear that in the case of an exact form $\omega=df$, a nonempty subset $A\subset X$ can be $N$-movable with respect for $\omega$ only for $N\le \max f - \min f$.

Any subset $A\subset X$ such that the inclusion $A\to X$ is null-homotopic is $N$-movable with respect to any closed 1-forms $\omega$ assuming that the cohomology class $\xi=[\omega]\not= 0\in H^1(X; \R)$ is nonzero.

As another example consider the following situation. Assume that $X$ is a closed smooth manifold and $\omega$ is a smooth closed one-form on $X$. If $\omega$ has no zeros then any subset $A\subset X$ is $N$-movable with respect to $\omega$ for any integer $N$. In this case a homotopy
$H:A\times I\to X$ is given by the gradient flow (appropriately scaled) of $\omega$ with respect to a Riemannian metric on $X$.

In general there are topological obstructions to movability of subsets which are captured by topological
invariants of Lusternik - Schnirelmann type described below.

\subsection{Various notions of category}

Here is the first version of the notion of a category $\cat(X,\xi)$ with respect to a cohomology class. The definition given below is equivalent to the original definition of \cite{farber}, see \cite{farkap}.

\begin{definition}\label{defcat}
 Let $X$ be a finite CW-complex and $\xi\in H^1(X;\R)$. Fix a closed 1-form $\omega$ representing $\xi$. Then the number $$\cat(X,\xi)$$ is defined the minimal integer $k$ such that for every $N>0$ there exists a closed subset $A\subset X$ which is $N$-movable with respect to $\omega$ and such that $\cat_X(X-A)\leq k$.
\end{definition}

Recall that for $A\subset X$, $\cat_X(A)$ is the minimal integer $k$ such that $A$ can be covered by $k$ open subsets of $X$ each of which is null-homotopic in $X$.

Note that the number $\cat(X, \xi)$ does not depend on the choice of a closed one-form $\omega$ and may depend only on the cohomology class $\xi=[\omega]\in H^1(X,;\R)$. Indeed, if $\omega'$ is another closed 1-form lying in the same cohomology $\xi=[\omega']$ then $\omega'=\omega+df$ for some continuous function $f: X\to \R$. Then for any path $\gamma:[a, b]\to X$ one has
$$\int_\gamma \omega' = \int_\gamma \omega +f(\gamma(b))- f(\gamma(a)).$$
Since $X$ is compact we see that there exists a constant $C>0$ such that for all paths $\gamma$ in $X$ one has
$$\left| \int_\gamma \omega' - \int_\gamma \omega\right|< C.$$
Hence any subset $A\subset X$ which is $N$-movable with respect to $\omega$ is $(N-C)$-movable with respect to $\omega'$.

 It may happen that $$\cat(X, \xi) \not=\cat(X, -\xi),$$ see Example 10.10 in \cite{farbook}. Therefore it makes sense to introduce (as was suggested first in  \cite{schman}) a symmetric version $\cat_s(X, \xi)$
 of $\cat(X, \xi)$ which suit better dynamics applications:

 \begin{definition}\label{defcats}
 Let $X$ be a finite CW-complex and $\xi\in H^1(X;\R)$. Fix a closed 1-form $\omega$ representing $\xi$. Then the number $$\cat_s(X,\xi)$$ is defined the minimal integer $k$ such that for every $N>0$ there exists a closed subset $A\subset X$ which is $N$-movable with respect to both $\omega$ and $-\omega$ and such that $\cat_X(X-A)\leq k$.
\end{definition}

Clearly, one has
$$\cat_s(X,\xi) =\cat_s(X, -\xi)\ge \max\{\cat(X, \xi), \cat(X, -\xi)\}.$$

Reversing the quantifiers in Definition \ref{defcat} gives a slightly different notion $\cat^1(X, \xi)$ which was initially introduced in \cite{farkap}:
\begin{definition}\label{defcatone}
 Let $X$ be a finite CW-complex and $\xi\in H^1(X;\R)$.
 Fix a closed 1-form $\omega$ representing $\xi$. Then the number $\cat^1(X,\xi)$ is defined as the minimal integer $k$ such that there exists a closed subset $A\subset X$, which is $N$-movable with respect to $\omega$ for every $N>0$, and such that
 $\cat_X(X-A)\leq k$.
\end{definition}

The invariant $\cat^1(X,\xi)$ also has a symmetric modification $\cat_s^1(X, \xi).$ The latter is defined similarly to $\cat^1(X, \xi)$ with the only difference that in Definition \ref{defcatone} one requires that the closed subset $A\subset X$ is $N$-movable with respect to both one-form $\omega$ and $-\omega$.

Yet another invariant similar in spirit is given by the following definition:

\begin{definition}\label{bigcat}
 Let $X$ be a finite CW-complex and $\xi\in H^1(X;\R)$. Fix a closed 1-form $\omega$ representing $\xi$. Then $\Cat(X,\xi)$ is defined as the minimal integer $k$ such that there exists an open subset $U\subset X$ satisfying

 (a) $\cat_X(X-U)\leq k$;

 (b) for some homotopy $h:U\times [0,\infty) \to X$ one has
 \begin{eqnarray*} \label{limits}
 h(x,0)=x \quad \mbox{and}\quad \lim_{t\to \infty} \int_x^{h_t(x)} \omega \, =\, - \infty
\end{eqnarray*}
 for any point $x\in U$;

 (c) the limit in (b) is uniform in $x\in U$.
 \end{definition}

The symbol $\int_x^{h_t(x)}\omega$ denotes the integral $\int_\gamma \omega$ where the path $\gamma:[0,t]\to X$ is given by $\gamma(\tau)=h_\tau(x)$.

\begin{remark}
The definition of $\Cat(X, \xi)$ given above differs from the original definition suggested in \cite{farbe4}; the latter was symmetric and stronger in some examples.
\end{remark}

 Independence of $\cat^1(X,\xi)$ and $\Cat(X,\xi)$ of the choice of closed one-form $\omega$ with $[\omega]=\xi$ follows similarly to the argument given above for the case of $\cat(X, \xi)$.

The role the invariants $\cat(X,\xi)$, $\cat^1(X, \xi)$ and $\Cat(X,\xi)$ play in dynamics will be discussed in the following sections.

\begin{lemma} In the case of the trivial cohomology class $\xi=0\in H^1(X;\R)$ the numbers
$$\cat(X, \xi) = \cat^1(X, \xi)= \Cat(X, \xi) = \cat(X)$$
coincide with the classical Lusternik - Schnirelmann category $\cat(X)$.
\end{lemma}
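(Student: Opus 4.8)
The plan is to verify the equality $\cat(X,0)=\cat^1(X,0)=\Cat(X,0)=\cat(X)$ by unwinding the definitions in the special case of the zero cohomology class. When $\xi=0$ we may choose the representing closed $1$-form to be $\omega=df$ for a continuous function $f\colon X\to\R$; since $X$ is a finite CW-complex (hence compact), $f$ is bounded, say $|f|\le C$. The central observation is that for any path $\gamma$ from $a$ to $b$ one has $\int_\gamma\omega=f(b)-f(a)\in[-2C,2C]$, so the integral along {\it any} path is bounded in absolute value by $2C$, independently of the path.

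First I would treat the direction $\cat(X)\le\cat(X,0)$, $\cat^1(X,0)$, $\Cat(X,0)$. For $\cat^1$ and $\cat$: if $A\subset X$ is $N$-movable with respect to $\omega=df$ for some $N>2C$ (which is required in the definition of $\cat(X,0)$, where one must have an $N$-movable set for every $N$, and forced in $\cat^1(X,0)$ where $A$ must be $N$-movable for every $N$), then by the bound above the inequality $\int_a^{H_1(a)}\omega\le -N<-2C$ is impossible unless $A=\emptyset$. Hence the only candidate $A$ is empty, forcing $\cat_X(X-A)=\cat_X(X)=\cat(X)$, so both $\cat(X,0)\ge\cat(X)$ and $\cat^1(X,0)\ge\cat(X)$. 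For $\Cat(X,0)$: condition (b) requires $\lim_{t\to\infty}\int_x^{h_t(x)}\omega=-\infty$ for all $x\in U$, but again this integral equals $f(h_t(x))-f(x)\in[-2C,2C]$ and cannot tend to $-\infty$; so $U=\emptyset$ and $\cat_X(X-U)=\cat(X)$, giving $\Cat(X,0)\ge\cat(X)$.

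Next I would treat the reverse inequalities $\cat(X,0),\cat^1(X,0),\Cat(X,0)\le\cat(X)$. Take $A=U=\emptyset$: the empty set is vacuously $N$-movable with respect to $\omega$ for every $N$ (the homotopy is the empty homotopy, and the displayed inequality holds vacuously), and condition (b), (c) in Definition \ref{bigcat} hold vacuously for $U=\emptyset$. Then $\cat_X(X-\emptyset)=\cat_X(X)=\cat(X)$ (recall $\cat_X(X)=\cat(X)$ by definition of LS-category). This choice is admissible in all of Definitions \ref{defcat}, \ref{defcatone} and \ref{bigcat}, so each of the three invariants is $\le\cat(X)$. Combining the two directions yields the chain of equalities.

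The only subtlety — and the step I would be most careful about — is the edge case handling of the empty set and the exact bookkeeping of the quantifier ``for every $N>0$'' versus ``there exists $N$'' in the three definitions: one must check that in Definition \ref{defcat} the requirement is an $N$-movable set {\it for each} $N$ (so large $N$ forces emptiness), and that the vacuous satisfaction of the movability condition and of conditions (b)--(c) for $A=\emptyset$ is legitimate under the stated definitions. Once these conventions are pinned down, no further difficulty arises; the argument is entirely formal and uses only the compactness of $X$ (to bound $f$) and the fundamental theorem of calculus for the integral of an exact form.
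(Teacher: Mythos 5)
Your proof is correct and follows essentially the same route as the paper: the paper simply takes the representative $\omega=0$ (so that every line integral vanishes and any $N$-movable set with $N>0$ must be empty), whereas you take a general exact representative $\omega=df$ and use compactness to bound the integrals, which amounts to the same forcing argument. You are more thorough than the paper, which only writes out the case of $\cat(X,\xi)$ and leaves $\cat^1$ and $\Cat$ (and the trivial reverse inequality via $A=\emptyset$) implicit.
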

\begin{proof} We will give the proof for $\cat(X,\xi)$. If $\xi=0$ we may take $\omega=0$ and hence any subset $A\subset X$ which is $N$-movable with respect to $\omega$ is empty assuming that $N>0$. Hence we obtain that $\cat(X,\xi)=\cat_X(X)=\cat(X)$.
\end{proof}
\begin{example}\label{ex1} Suppose that $p: X \to S^1$ is a locally trivial fibration and $\xi=p^\ast(\eta)$ where $\eta\in H^1(S^1;\R)$, $\eta\not=0$. Then
\begin{eqnarray}\label{varcats1}
 \cat(X,\xi)= \cat^1(X,\xi)= \Cat(X,\xi)=0
\end{eqnarray}
(see \cite{farbook}). Note that $\cat(X)$ can be arbitrarily large in this example.
Hence the new \lq\lq cats\rq\rq\, are not always equal the classical $\cat(X)$. \end{example}

Comparing the definitions one trivially has
\begin{eqnarray}\label{varcats}
 \cat(X,\xi)\leq \cat^1(X,\xi)\leq \Cat(X,\xi).
\end{eqnarray}
We will show later that $\cat(X,\xi)$ may be significantly smaller than $\cat^1(X, \xi)$. At the moment we do not have examples
when $\cat^1(X,\xi)< \Cat(X, \xi)$ but we believe that such examples exist.
\begin{lemma}\label{lm10}
Suppose that $\xi\not=0$ and $X$ is connected. Then
\begin{eqnarray}\label{catineq}
  \Cat(X,\xi)\leq \cat(X)-1.
\end{eqnarray}
\end{lemma}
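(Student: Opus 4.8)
The plan is to start from a minimal Lusternik--Schnirelmann cover of $X$ and use the nonvanishing of $\xi$ to ``absorb'' one of the open sets into a movable subset. Concretely, write $\cat(X) = k$ and let $U_1, \dots, U_k$ be open subsets of $X$, each null-homotopic in $X$, which cover $X$. The idea is that one of these sets, say $U_k$, together with a suitable thickening, can play the role of the open set $U$ in Definition \ref{bigcat}, while the remaining $U_1, \dots, U_{k-1}$ witness $\cat_X(X - U) \le k-1$.

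First I would fix a closed $1$-form $\omega$ representing $\xi$. Since $U_k$ is null-homotopic in $X$, the inclusion $U_k \hookrightarrow X$ extends over the cone, which in particular gives a homotopy $g : U_k \times [0,1] \to X$ with $g(x,0) = x$ and $g(x,1) = x_0$ a fixed point, for all $x \in U_k$. Because $\xi \ne 0$ and $X$ is connected, there is a loop $\ell$ based at $x_0$ with $\int_\ell \omega = -1$ (after possibly replacing $\omega$ by a cohomologous form, or simply rescaling; note $\rk(\xi) \ge 1$ so the image of the period homomorphism is a nontrivial subgroup of $\R$, and we may arrange a negative period). Now I would define the required homotopy $h : U_k \times [0,\infty) \to X$ by first applying $g$ to push all of $U_k$ to $x_0$ during $[0,1]$, and then for $t \ge 1$ looping around $\ell$ repeatedly: on the interval $[n, n+1]$ traverse $\ell$ once more. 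Then for $x \in U_k$ one has
\begin{eqnarray*}
\int_x^{h_t(x)} \omega \; = \; \int_x^{x_0} \omega \; + \; \lfloor t \rfloor \cdot \int_\ell \omega \; + \; o(1),
\end{eqnarray*}
where the first term is bounded on $U_k$ (by compactness of a slightly smaller set, or by continuity — one should take $U$ to be an open set with $\overline{U} \subset U_k$ and $\overline U$ compact, which exists since $X$ is a normal space and the $U_i$ cover). Hence $\int_x^{h_t(x)}\omega \to -\infty$ as $t \to \infty$, uniformly in $x \in U$. This verifies conditions (b) and (c) of Definition \ref{bigcat} for this choice of $U$. Finally, $X - U \subset X - \overline{U} \subset \bigcup_{i=1}^{k-1} (U_i \setminus \overline U) \cup (U_k \setminus \overline U)$; more carefully, since $\overline U \subset U_k$ we have $X - U \supset X - U_k$ is covered by $U_1, \dots, U_{k-1}$, and the ``collar'' $U_k - \overline U$ is still null-homotopic in $X$, so $X - U$ is covered by $k$ null-homotopic open sets — but this only gives $\cat_X(X-U) \le k$, not $k-1$.

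The main obstacle, therefore, is getting the bound down to $k-1$ rather than $k$: one must arrange that $X - U$ is covered by only $k-1$ sets null-homotopic in $X$. The fix is to choose $U$ so that $X - U$ is actually contained in $U_1 \cup \dots \cup U_{k-1}$. To do this I would not take $U$ to be a shrinking of $U_k$, but rather set $U = X - (\overline{V_1} \cup \dots \cup \overline{V_{k-1}})$ where $V_1, \dots, V_{k-1}, U_k$ is a shrinking of the cover $U_1, \dots, U_{k-1}, U_k$ (so $\overline{V_i} \subset U_i$ and the $V_i$ together with $U_k$ still cover $X$). Then $X - U = \overline{V_1} \cup \dots \cup \overline{V_{k-1}} \subset U_1 \cup \dots \cup U_{k-1}$, giving $\cat_X(X-U) \le k-1$ as required. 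It remains only to check that $U \subset U_k$ (which holds since the $V_i$ and $U_k$ cover $X$, so the complement of the $\overline{V_i}$ lies in $U_k$) so that the null-homotopy $g$ and hence the homotopy $h$ above is defined on all of $U$; and that $\overline U$ is compact, so the period term $\int_x^{x_0}\omega$ is bounded and the convergence in (b) is uniform. This completes the argument, and I expect the only genuinely delicate point to be the bookkeeping with shrinkings of open covers in a normal (here, CW, hence metrizable-enough) space, which is routine.
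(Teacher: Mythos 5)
The paper itself does not reproduce a proof of this lemma (it refers to \cite{farbook}), but your argument is the standard one: take a categorical cover $U_1,\dots,U_k$ of $X$, use the null-homotopy of one member to contract it to a basepoint and then wind repeatedly around a loop of negative $\omega$-period (which exists at any basepoint since $\xi\ne 0$ and $X$ is connected), and arrange by shrinking the cover that the complement of the resulting open set $U$ is covered by the remaining $k-1$ sets. Your realization that the naive choice $U=U_k$ only yields $\cat_X(X-U)\le k$, and your repair via a shrinking $V_1,\dots,V_{k-1}$, is exactly the right move, and positive rescaling of $\omega$ is harmless since the condition in Definition \ref{bigcat} only asks that the integral tend to $-\infty$.

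There is one loose end you should tighten. With $U=X-(\overline{V_1}\cup\dots\cup\overline{V_{k-1}})$ you do get $U\subset U_k$, but you then invoke compactness of $\overline U$ to bound the term $\int_x^{x_0}\omega$ uniformly; compactness of $\overline U$ is automatic ($X$ is a finite complex), but it does not help unless $\overline U\subset U_k$, because the contraction $g$ and hence the continuous function $x\mapsto\int_{g(x,\cdot)}\omega$ are only defined on $U_k$ and need not be bounded near $\partial U_k$. Your construction does not guarantee $\overline U\subset U_k$. The fix is one more level of shrinking: choose open $W_i$ with $\overline{W_i}\subset V_i$ such that $W_1,\dots,W_{k-1},U_k$ still cover $X$, and set $U=X-(\overline{W_1}\cup\dots\cup\overline{W_{k-1}})$. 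Then $\overline U\subset X-(W_1\cup\dots\cup W_{k-1})\subset U_k$ is compact, so the period term is uniformly bounded and condition (c) holds, while $X-U=\overline{W_1}\cup\dots\cup\overline{W_{k-1}}\subset U_1\cup\dots\cup U_{k-1}$ still gives $\cat_X(X-U)\le k-1$. With that adjustment the argument is complete.
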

The proof of this Lemma can be found in \cite{farbook}.

As another example consider a bouquet $X=Y\vee S^1$, where $Y$ is a finite polyhedron,
and assume that the class $\xi\in H^1(X,\R)$ satisfies
$\xi|_Y =0$ and $\xi|_{S^1}\neq 0$. One can show (see \cite{farbook}) that then
\[{\rm cat}(X,\xi) \, =\, {\rm cat}(Y)-1.\]

\subsection{Homotopy invariance}
\begin{proposition}
 Let $f:Y\to X$ be a homotopy equivalence between finite connected CW-complexes, and $\xi\in H^1(X;\R)$. Then
\begin{eqnarray*}
 \cat(X,\xi)&=&\cat(Y,f^\ast\xi).
\end{eqnarray*}
This statement remains true if one replaces $\cat$ by $\cat^1$ or $\Cat$.
\end{proposition}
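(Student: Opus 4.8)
The plan is to prove the inequality $\cat(Y,f^\ast\xi)\le \cat(X,\xi)$; the reverse inequality then follows by applying it to a homotopy inverse $g:X\to Y$ of $f$ (which is again a homotopy equivalence) with the class $f^\ast\xi\in H^1(Y;\R)$, since $g^\ast f^\ast\xi=(fg)^\ast\xi=\xi$ — homotopic maps induce the same map in cohomology, and $\cat(X,\cdot)$ depends only on the cohomology class. The constructions will be identical for $\cat^1$ and $\Cat$, so we set them up at once.

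First we fix a homotopy inverse $g:X\to Y$ and homotopies $K:Y\times[0,1]\to Y$ with $K_0=\id_Y$, $K_1=g\circ f$, and $L:X\times[0,1]\to X$ with $L_0=\id_X$, $L_1=f\circ g$. We also fix a closed $1$-form $\omega$ on $X$ representing $\xi$; then $f^\ast\omega$ represents $f^\ast\xi$. The function sending a point $y$ to the line integral of a closed $1$-form along the track $t\mapsto (\text{homotopy})(y,t)$ of a fixed homotopy is continuous, so, $X$ and $Y$ being compact, there is a constant $C>0$, depending only on $f$, $g$, $K$, $L$, $\omega$, with $\left|\int_{K(y,\cdot)}f^\ast\omega\right|\le C$ for all $y\in Y$ and $\left|\int_{L(x,\cdot)}\omega\right|\le C$ for all $x\in X$.

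The core step is the following. Let $A\subset X$ be closed and $N$-movable with respect to $\omega$ via a homotopy $H:A\times[0,1]\to X$ with $H_0=\id_A$ and $\int_a^{H_1(a)}\omega\le -N$. Put $B=f^{-1}(A)$, a closed subset of $Y$, and move $B$ by first following $K$ from $b$ to $g(f(b))$ and then following $s\mapsto g(H(f(b),s))$. Pulling the resulting $f^\ast\omega$-integral down to $X$: the first piece contributes $\int_{f\circ K(b,\cdot)}\omega$, of modulus $\le C$; the second contributes $\int_{f\circ g\circ H(f(b),\cdot)}\omega$, which — since $\omega$ is closed, so the line integral over the null-homotopic boundary loop of the square $(s,u)\mapsto L(H(f(b),s),u)$ vanishes — equals $\int_{f(b)}^{H_1(f(b))}\omega$ plus the integrals of $\omega$ over the two vertical edges $L(f(b),\cdot)$ and $L(H_1(f(b)),\cdot)$, each of modulus $\le C$. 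As $\int_{f(b)}^{H_1(f(b))}\omega\le -N$, the total is $\le -N+3C$, so $B$ is $(N-3C)$-movable with respect to $f^\ast\omega$. Moreover $Y-B=f^{-1}(X-A)$, and if $U_1,\dots,U_k$ are open in $X$, cover $X-A$, and are null-homotopic in $X$, then $f^{-1}(U_1),\dots,f^{-1}(U_k)$ are open, cover $Y-B$, and each inclusion $f^{-1}(U_i)\hookrightarrow Y$ is null-homotopic, being homotopic via $K$ to $g\circ(f|_{f^{-1}(U_i)})$ with $f|_{f^{-1}(U_i)}$ factoring through the null-homotopic inclusion $U_i\hookrightarrow X$. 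Hence $\cat_Y(Y-B)\le\cat_X(X-A)$.

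The three statements now follow. For $\cat$: if $\cat(X,\xi)=k$, then for each $N'>0$ pick $A$ that is $(N'+3C)$-movable with $\cat_X(X-A)\le k$, so $B=f^{-1}(A)$ is $N'$-movable with $\cat_Y(Y-B)\le k$. For $\cat^1$: the single set $B=f^{-1}(A)$ works, since if $A$ is $N$-movable for every $N$ then $B$ is $(N-3C)$-movable for every $N$. For $\Cat$: from an open $U\subset X$ with deformation $h:U\times[0,\infty)\to X$ as in Definition \ref{bigcat}, set $U'=f^{-1}(U)$ and $h'(x,t)=K(x,t)$ for $t\le 1$, $h'(x,t)=g(h(f(x),t-1))$ for $t\ge 1$; the same square estimate gives $\int_x^{h'_t(x)}f^\ast\omega\le 3C+\int_{f(x)}^{h_{t-1}(f(x))}\omega\to-\infty$ uniformly in $x\in U'$, and $\cat_Y(Y-U')\le\cat_X(X-U)$ as above. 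I expect the main obstacle to be the uniformity bookkeeping: one must check that $C$ is independent of the movability level $N$ and of the chosen subsets, which is exactly where compactness of the finite complexes and continuity of the line-integral functional are used; the square argument using closedness of $\omega$ is what upgrades "homotopic paths with varying endpoints" to "equal integrals up to a bounded error", and the remaining verifications are routine.
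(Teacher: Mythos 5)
Your proof is correct and follows essentially the same route as the paper: fix a homotopy inverse and the two homotopies, use compactness to bound the integrals of $\omega$ along their tracks by a constant $C$, transport movable subsets through preimages, and note that $\cat_X$ of the complement does not increase. The only difference is cosmetic: the paper deforms $g^{-1}(A)$ by $x\mapsto f(H(g(x),\cdot))$, so the relation $\int_{f\circ\sigma}\omega=\int_\sigma f^\ast\omega$ gives a single error term $C$, whereas your choice $b\mapsto g(H(f(b),\cdot))$ forces the extra homotopy-square comparison and the error $3C$ — harmless, since only boundedness matters.
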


\begin{proof}
 Let $\omega$ be a closed 1-form on $X$ representing $\xi$. Let $g:X\to Y$ and $K: X\times [0,1]\to X$ satisfy $K:{\rm id}_X\simeq fg$. Because of compactness of $X$, there exists a constant $C>0$ such that
\begin{eqnarray*}
 \left|\int_{\alpha_x}\omega\right|&\leq& C
\end{eqnarray*}
for all $x\in X$, where $\alpha_x(t)=K(x,t)$.
Assume that a subset $A\subset Y$ is $(N+C)$-movable with respect to the closed one-form $\omega'=f^\ast\omega$ on $Y$. Let $H: A\times [0,1]\to Y$ be a homotopy such that $H(a, 0)=a$ and $\int_{\gamma_a}\omega'\geq N+C$ for all $a\in A$ where $\gamma_a(t)=H(a, 1-t)$. Then the set $g^{-1}(A)\subset X$ is $N$-movable with respect to $\omega$. Indeed, one may define a homotopy $h_t: g^{-1}(A) \to X$ by
$$h_t(x)= \left\{
\begin{array}{ll}
K(x, 2t)&\mbox{for}\quad 0\le t\le 1/2,\\
f(H(g(x), 2t-1))&\mbox{for}\quad 1/2\leq t\leq 1.
\end{array}\right.
$$
and for $x\in g^{-1}(A)$ one has $\int_{\beta_x}\omega \geq N$ where $\beta_x(t)=h_{1-t}(x)$.

If $\cat(Y, f^\ast \xi)\le k$ then for any $N$ there exists a closed subset $A\subset Y$ which is $(N+C)$-movable with respect to $\omega'=f^\ast \omega$ and such that $\cat_Y(Y-A)\leq k$. Then $g^{-1}(A)\subset X$ is $N$-movable with respect to $\omega$ and, as is well-known, one has $\cat_X(X-g^{-1}(A))\leq k$.
This proves that $\cat(X,\xi)\leq \cat(Y,f^\ast\xi)$.

The inverse inequality follows similarly if $gf\simeq \id_Y$.
The arguments for $\cat^1(X, \xi)$ and $\Cat(X, \xi)$ are similar.
\end{proof}
\subsection{Spaces of category zero} Here we collect some simple observations about spaces of category zero. More information can be found in
\cite{schman}, \S3.

\begin{lemma} Let $X$ be a finite CW complex and $\xi\in H^1(X;\R)$. The following properties
are equivalent:

{\rm (a)} $\cat(X, \xi)=0$.

{\rm (b)} $\cat^1(X,\xi)=0$.

{\rm (c)} $\Cat(X, \xi)=0$.

{\rm (d)} There exists a continuous closed 1-form $\omega$ on $X$ representing
$\xi$ (in the sense of subsection \ref{cohclass}) and a homotopy $h_t: X\to X$,
where $t\in [0,1]$, such that for any point $x\in X$ one has
\begin{eqnarray}\label{ineq}
\int_x^{h_1(x)}\omega \, <\, 0.
\end{eqnarray}
In (\ref{ineq}) the integral is calculated along the curve $t\mapsto h_t(x)$,
$t\in [0,1]$.

{\rm (e)} For any continuous closed 1-form $\omega$ on $X$ representing $\xi$
there exists a homotopy $h_t: X\to X$, where $t\in [0,1]$, such that for any
point $x\in X$ inequality (\ref{ineq}) holds.
\end{lemma}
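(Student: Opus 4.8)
The plan is to establish the cycle of implications by exploiting the trivial inequalities already recorded in the excerpt. From display~(\ref{varcats}) we know $\cat(X,\xi)\le\cat^1(X,\xi)\le\Cat(X,\xi)$, so as soon as we prove the chain $(c)\Rightarrow(b)\Rightarrow(a)$ among the numerical conditions and then close the loop $(a)\Rightarrow(c)$, the three equalities (a)$\Leftrightarrow$(b)$\Leftrightarrow$(c) will follow for free; in fact the monotonicity already gives $(c)\Rightarrow(b)\Rightarrow(a)$ with no work, so the real content on the numerical side is a single implication, say $(a)\Rightarrow(c)$. Then I would link the numerical conditions to the ``flow'' conditions (d) and (e) by proving $(a)\Rightarrow(e)$, noting the trivial $(e)\Rightarrow(d)$ (specialize the universal statement to any chosen $\omega$), and finally $(d)\Rightarrow(a)$.

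For the step $(d)\Rightarrow(a)$: assume we have a closed $1$-form $\omega$ representing $\xi$ and a homotopy $h_t\colon X\to X$ with $\int_x^{h_1(x)}\omega<0$ for every $x$. By compactness of $X$ there is $\eps>0$ with $\int_x^{h_1(x)}\omega\le-\eps$ for all $x$. Iterating the homotopy $m$ times and concatenating the paths, the additivity of the line integral over concatenation gives $\int_x^{h_1^{(m)}(x)}\omega\le -m\eps$, where $h_1^{(m)}$ is the $m$-fold composite; reparametrizing $[0,m]\to[0,1]$ we get, for any prescribed $N>0$, a homotopy of all of $X$ moving every point by at least $-N$ along $\omega$. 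Hence $A=X$ is $N$-movable for every $N$, and $\cat_X(X-A)=\cat_X(\emptyset)=0$, so $\cat(X,\xi)=0$; this proves $(d)\Rightarrow(a)$. (This same iteration argument is what powers $(a)\Rightarrow(c)$: if $\cat(X,\xi)=0$ then for every $N$ there is a closed $A_N$ with $X-A_N$ of category $0$ in $X$, i.e.\ $A_N=X$, so $X$ itself is $N$-movable for every $N$; taking $N=1$ gives a homotopy as in (d), and the iteration upgrades the ``$\le-N$ for each $N$'' to a single homotopy $h\colon X\times[0,\infty)\to X$ with $\int_x^{h_t(x)}\omega\to-\infty$ uniformly, which is exactly condition (b)(c) in Definition~\ref{bigcat} with $U=X$; thus $\Cat(X,\xi)=0$.)

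The only remaining point is the independence-of-$\omega$ statement packaged in $(a)\Leftrightarrow(d)\Leftrightarrow(e)$. Here I would reuse the constant-$C$ comparison already employed for the well-definedness of $\cat(X,\xi)$: if $\omega'=\omega+df$ is another representative of $\xi$, then for any path $\gamma\colon[a,b]\to X$ one has $\int_\gamma\omega'=\int_\gamma\omega+f(\gamma(b))-f(\gamma(a))$, so $|\int_\gamma\omega'-\int_\gamma\omega|\le 2\max_X|f|=:C$. Thus a homotopy $h$ realizing $\int_x^{h_1(x)}\omega\le-N$ for all $x$ automatically realizes $\int_x^{h_1(x)}\omega'\le-(N-C)$; combined with the iteration trick (pick $N$ large relative to $C$ and iterate) this converts a witnessing homotopy for one representative into one for any other, giving $(d)\Rightarrow(e)$, while $(e)\Rightarrow(d)$ is immediate.

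I expect the main obstacle to be purely bookkeeping rather than conceptual: making the reparametrization and concatenation of the iterated homotopies precise (ensuring continuity at the gluing times and that the resulting map is genuinely a homotopy of the whole space, not just of a subset), and being careful about the sign conventions in Definition~\ref{defmovable} (the movability inequality is $\int_a^{H_1(a)}\omega\le -N$, matching the sign in (\ref{ineq})). None of these is deep, but the proof should state the iteration lemma once and cite it at each use rather than repeating the estimate.
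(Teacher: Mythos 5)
Your proposal is correct and follows essentially the same route as the paper's proof: compactness yields a uniform $\epsilon>0$, iterating the homotopy amplifies the decrease to $-k\epsilon$, and the bounded difference $\left|\int_\gamma\omega'-\int_\gamma\omega\right|\le C$ between representatives of $\xi$ handles the passage from (d) to (e). You are in fact slightly more complete than the printed argument, which never explicitly treats condition (c); your infinite concatenation on $[0,\infty)$ with $U=X$ supplies that case.
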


\begin{proof} By Definition \ref{defcat}, $\cat(X,\xi)=0$ means that
the whole space $X$ is $N$-movable
 for any $N>0$,
i.e. given $N>0$, there exists a homotopy $H_t: X\to X$, where $t\in [0,1]$,
such that $H_0(x)=x$ and
\begin{eqnarray}\label{large} \int_x^{H_1(x)}\omega <-N
\end{eqnarray}
for any $x\in X$. Hence, (a) implies (d).

 Conversely, given property (d), using compactness of $X$
we find $\epsilon>0$ such that (\ref{ineq}) can be
 replaced by $\int_x^{h_1(x)}\omega <-\epsilon$. Now, one may iterate this
 deformation as follows. The $k$-th iteration is a homotopy $H^k_t: X\to X$,
 where $t\in [0,1]$, defined as follows. Denote by $h_1^{(i)}:X\to X$ the $i$-fold
 composition $h_1^{(i)} = h_1\circ h_1\circ \dots\circ h_1$ ($i$ times). Then for $t\in
 [i/k, (i+1)/k]$ one has
 \begin{eqnarray}
 H^k_t(x) = h_{kt-i}(h_1^{(i)}(x)).
 \end{eqnarray}
 If for any $x\in X$ one has $\int_x^{h_1(x)}\omega <-\epsilon$ then
 for the $k$-th iteration we obtain $\int_x^{H^k_1(x)}\omega <-k\epsilon$
 and (d) follows assuming that $k>N/\epsilon$.
This shows equivalence between (a) and (d).

(e) $\implies$ (d) is obvious. Now suppose that (d) holds for $\omega$ and let
$\omega_1$ be another continuous closed one-form lying in the same cohomology
class, i.e. $\omega_1=\omega +df$ where $f: X\to \R$ is continuous. Using compactness of $X$ we may find $C$ such that for
any path $\gamma:[0,1]\to X$ one has $|\int_\gamma df|<C$. Fix $N>C$ and apply
equivalence between (a) and (c) to find a homotopy $h_t:X\to X$ with
$\int_x^{h_1(x)}\omega <-N$. Then one has $\int_x^{h_1(x)}\omega_1 <0$, i.e.
(e) holds.

It is obvious that (b) $\implies$ (a). Hence we are left to show that (a)
implies (b). Given (b) fix a deformation as described in (c). Let $C>0$ be such
that for any $x\in X$ and for any $t\in [0,1]$ one has $\int_1^{h_t(x)}\omega
<C$. Then for any iteration $H^k_t: X\to X$ (see above) one has
$\int_x^{H^k_t(x)}\omega <C$ and the result follows.
\end{proof}

In the case when $X$ is a closed smooth manifold a deformation as appearing in
(b) can be constructed as the flow generated by a vector field $v$ on $X$
satisfying $\omega(v)<0$.

The remark of the previous paragraph explains why the following statement can
be viewed as an analogue of the classical Euler - Poincar\'e theorem:

\begin{theorem}\label{ep} $\cat(X,\xi)=0$ implies $\chi(X)=0$.
\end{theorem}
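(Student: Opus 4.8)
The plan is to show that the Novikov homology $H_\ast(X;\L_\xi)$ vanishes in every degree and then to conclude from the Euler--Poincar\'e identity $\chi(X)=\sum_i(-1)^i b_i(\xi)$ recalled in \S1. One may assume $X$ is connected: a component on which $\xi$ restricts trivially cannot be $N$-movable in itself for $N>0$, so each component carries a nonzero restricted class and $\chi(X)$ is the sum of the (separately vanishing) Euler characteristics of the components.

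First I would extract a self-map from the hypothesis. By the equivalence (a)$\Leftrightarrow$(d) of the lemma above there are a continuous closed $1$-form $\omega$ with $[\omega]=\xi$ and a homotopy $h_t\colon X\to X$ with $h_0=\mathrm{id}$ and $\int_x^{h_1(x)}\omega<0$ for all $x$; since $x\mapsto\int_x^{h_1(x)}\omega$ is continuous and $X$ is compact, it is bounded above by some $-\varepsilon<0$. Put $f=h_1\simeq\mathrm{id}_X$, and lift $h_t$ to the universal cover $\tilde X$ starting from the identity; this yields a $\pi_1(X)$-equivariant map $\tilde f\colon\tilde X\to\tilde X$, equivariantly homotopic to the identity, and writing $d\tilde F$ for a primitive on $\tilde X$ of the pulled-back form (quasi-periodic under $\pi_1(X)$ via the period homomorphism) the estimate lifts to $\tilde F(\tilde f(y))\le\tilde F(y)-\varepsilon$ for all $y$. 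Iterating $f$ replaces this by $\tilde F(\tilde f^{k}(y))\le\tilde F(y)-k\varepsilon$, still with $\tilde f^{k}\simeq\mathrm{id}$.

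Now fix a finite CW structure on $X$ with chosen lifts of the cells, so that $E_\ast:=\Nov\otimes_{\Z[\pi_1(X)]}C_\ast(\tilde X)$ (cellular chains) is a finite free $\Nov$-complex computing $H_\ast(X;\L_\xi)$ and carries the Novikov filtration by $\tilde F$-support. Transporting the chain map induced by $\tilde f$ through the standard equivariant chain equivalences between cellular and singular chains --- which, as in the construction of the Novikov complex, may be chosen to distort this filtration by only a fixed finite amount --- yields an $\Nov$-linear chain self-map $\phi$ of $E_\ast$ with $\phi\simeq\mathrm{id}_{E_\ast}$ (as $\tilde f$ is equivariantly homotopic to $\mathrm{id}$); choosing the iteration parameter $k$ large enough (the equivalences do not depend on $k$) makes $\phi$ strictly filtration-decreasing. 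The conclusion is then formal: $\mathrm{id}_{E_\ast}-\phi$ is null-homotopic, hence zero on $H_\ast(E_\ast)$, while $\Nov$ being complete for its order filtration and $\phi$ strictly lowering the filtration make $\phi$ topologically nilpotent, so $\sum_{m\ge0}\phi^m$ converges and inverts $\mathrm{id}_{E_\ast}-\phi$, which therefore induces an isomorphism on $H_\ast(E_\ast)$. Hence $H_\ast(X;\L_\xi)=0$, all $b_i(\xi)=0$, and $\chi(X)=\sum_i(-1)^i b_i(\xi)=0$.

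The delicate point is the filtration bookkeeping: since $\tilde f$ need not be cellular, one must realize its effect on the \emph{finite} complex $E_\ast$ via chain equivalences with the singular complex and check these can be taken to shift the $\tilde F$-support filtration only boundedly --- exactly the kind of control underlying Novikov theory, and harmless here because that shift is fixed in advance whereas the gain $k\varepsilon$ from iterating $f$ is unbounded. The same argument can be packaged through the equivariant Lefschetz theorem: $\tilde f$ is a fixed-point-free, uniformly $\tilde F$-displacing equivariant self-map equivariantly homotopic to the identity, so its Reidemeister--Lefschetz trace, an element of a Novikov completion of $\Z[\pi_1(X)]$, is supported off the identity element, whereas homotopy invariance identifies it with $\chi(X)\cdot 1$; hence $\chi(X)=0$.
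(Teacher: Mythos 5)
Your proof is correct, but it takes a genuinely different route from the paper's. The paper's argument is a two-line deduction from the cup-length machinery of \S 11: since $\cat(X,\xi)=0$ forces $\xi\neq 0$, Lemma \ref{bair} provides a $\xi$-transcendental flat line bundle $L\in\V_\xi$, Theorem \ref{perfect} with $k=0$ shows that $H^q(X;L)=0$ for all $q$, and then $\chi(X)=\sum_q(-1)^q\dim H^q(X;L)=0$. You instead prove directly that the Novikov homology $H_\ast(X;\L_\xi)$ vanishes, by converting the deformation supplied by condition (d) of the preceding lemma into a filtration-decreasing chain self-map $\phi\simeq\mathrm{id}$ of the finite free $\Nov$-complex and noting that $\mathrm{id}-\phi$ is simultaneously null-homotopic and invertible via the geometric series. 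In substance this re-proves the $k=0$ case of Theorem \ref{perfect} in its homological guise (cf.\ Lemma \ref{lomeshane}), so your argument is self-contained where the paper defers to \cite{farsch}; the price is the filtration bookkeeping you flag, which is genuine but standard and handled exactly as you indicate: the shift introduced by equivariant cellular approximation (or by the cellular--singular equivalences) is bounded by the $\tilde F$-diameters of the finitely many cell orbits, hence is independent of the iteration parameter $k$, whereas the gain $k\varepsilon$ is unbounded. Your closing Reidemeister--Lefschetz packaging is an attractive third route: the lift $\tilde f$ is fixed-point free, so the coefficient of the trivial Reidemeister class in the trace vanishes, while homotopy invariance identifies that coefficient with $\chi(X)$. (One cosmetic slip: a component on which $\xi$ restricts to zero fails to be $N$-movable only for $N$ large, not for every $N>0$; this does not affect your reduction to the connected case, since $\cat=0$ demands $N$-movability for all $N$.)
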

\begin{proof} Suppose that $\cat(X,\xi)=0$. Then $\xi\not=0$, i.e.
the rank $r$ of class $\xi$ is positive, $r>0$. By Lemma \ref{bair} below there
exists $\xi$-transcendental bundle $L\in \V_\xi=(\C^\ast)^r$. If $H^q(X;L)\not=0$ for
some $q$ then one may apply Theorem \ref{perfect} below with $k=0$ obtaining
$\cat(X,\xi)>0$ and contradicting our hypothesis. Hence $H^q(X;L)=0$ for all
$q$ which implies
$$\chi(X)=\sum_q (-1)^q \dim H^q(X;L)= 0.$$
\end{proof}
Pairs $(X,\xi)$ with $\cat(X,\xi)=0$ form an {\it \lq\lq ideal\rq\rq}\,  in the
following sense:

\begin{lemma}\label{ideal} Let $X_1$ and $X_2$ be finite cell complexes and $\xi_1\in
H^1(X_1;\R)$, $\xi_2\in H^1(X_2;\R)$. If $\cat(X_1, \xi_1)=0$ then
$\cat(X_1\times X_2, \xi)=0$ where $\xi = \xi_1\times 1+ 1\times \xi_2\in
H^1(X_1\times X_2;\R).$
\end{lemma}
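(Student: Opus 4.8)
The plan is to bypass Definition \ref{defcat} entirely and to work instead with the homotopical reformulation of the condition $\cat=0$ proved in the Lemma on spaces of category zero above, namely the equivalence of (a) and (d): a pair $(X,\xi)$ satisfies $\cat(X,\xi)=0$ if and only if there exist a continuous closed $1$-form $\omega$ on $X$ with $[\omega]=\xi$ and a homotopy $h_t:X\to X$, $h_0=\mathrm{id}_X$, such that $\int_x^{h_1(x)}\omega<0$ for every $x\in X$, the integral being taken along $t\mapsto h_t(x)$. (As the proof of that Lemma shows, $h_0=\mathrm{id}_X$ is part of the content of (d), being needed for the iteration argument.) This criterion is far more convenient here than chasing movable closed subsets through the product.

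First I would invoke the criterion for $(X_1,\xi_1)$: since $\cat(X_1,\xi_1)=0$ there are a continuous closed $1$-form $\omega_1$ on $X_1$ with $[\omega_1]=\xi_1$ and a homotopy $h_t:X_1\to X_1$, $h_0=\mathrm{id}_{X_1}$, with $\int_{x_1}^{h_1(x_1)}\omega_1<0$ for all $x_1\in X_1$. Next, choose any continuous closed $1$-form $\omega_2$ on $X_2$ representing $\xi_2$ (such a form exists since $X_2$ is a finite complex; cf. Lemma \ref{lslmreal}). Writing $p_i:X_1\times X_2\to X_i$ for the two projections, set $\omega=p_1^\ast\omega_1+p_2^\ast\omega_2$. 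This is a continuous closed $1$-form on $X_1\times X_2$, and since the cohomology class of a sum of closed $1$-forms is the sum of the classes and $[p_i^\ast\omega_i]=p_i^\ast\xi_i$, we get $[\omega]=p_1^\ast\xi_1+p_2^\ast\xi_2=\xi_1\times 1+1\times\xi_2=\xi$.

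Then I would define $H_t:X_1\times X_2\to X_1\times X_2$ by $H_t(x_1,x_2)=(h_t(x_1),x_2)$, so that $H_0=\mathrm{id}$. For a fixed $(x_1,x_2)$ put $\gamma(t)=H_t(x_1,x_2)$; then
$$\int_\gamma\omega=\int_{p_1\circ\gamma}\omega_1+\int_{p_2\circ\gamma}\omega_2.$$
Here $p_2\circ\gamma$ is the constant path at $x_2$, so the second term vanishes, while $p_1\circ\gamma$ is the path $t\mapsto h_t(x_1)$, so the first term equals $\int_{x_1}^{h_1(x_1)}\omega_1<0$. Hence $\int_{(x_1,x_2)}^{H_1(x_1,x_2)}\omega<0$ for every point of $X_1\times X_2$, and applying the criterion above in the direction (d) $\Rightarrow$ (a) to the $1$-form $\omega$ and the homotopy $H$ yields $\cat(X_1\times X_2,\xi)=0$.

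As for difficulty: once one commits to the homotopical criterion there is essentially no obstacle. The only points requiring a little care are that $p_1^\ast\omega_1+p_2^\ast\omega_2$ genuinely represents $\xi_1\times 1+1\times\xi_2$, and that the $\omega_2$-contribution to the integral drops out precisely because $H$ is stationary in the $X_2$-coordinate. (Arguing directly from Definition \ref{defcat} would instead force one to exhibit an explicit closed $A\subset X_1\times X_2$ that is $N$-movable with $\cat_{X_1\times X_2}((X_1\times X_2)-A)\le 0$, which is exactly the inconvenience the criterion (d) lets us avoid.)
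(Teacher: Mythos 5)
Your proof is correct. The paper offers no argument for this lemma beyond ``follows directly by applying the definitions,'' and your route through the category-zero criterion (the equivalence (a)$\Leftrightarrow$(d) of the preceding lemma) is a clean way to make that precise: the essential computation in either case is the product homotopy $H_t(x_1,x_2)=(h_t(x_1),x_2)$, stationary in the second factor, so that the $\omega_2$-contribution to $\int_{(x_1,x_2)}^{H_1(x_1,x_2)}\omega$ vanishes identically. The only alternative the paper's one-liner suggests is to observe from Definition \ref{defcat} directly that $\cat(X_1,\xi_1)=0$ forces $X_1$ itself to be $N$-movable for every $N$ (since $\cat_{X_1}(X_1-A)\le 0$ forces $A=X_1$), whence the same product homotopy shows $X_1\times X_2$ is $N$-movable for every $N$; this is the same argument with the iteration step of criterion (d) absorbed into the hypothesis, so the two routes are interchangeable.
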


\begin{proof} The statement follows directly by applying the definitions.
\end{proof}

\section{Focusing effect}\label{secfocus}

In this section we start surveying applications of the invariant $\cat(X,\xi)$ in dynamics. We describe the focusing effect discovered in \cite{farber}, see also \cite{farbook}.

The nature of the Lusternik-Schnirelmann
type theory for closed 1-forms is very much distinct from
both the classical Lusternik-Schnirelmann theory for functions and
from the Novikov theory. As we know, in any nonzero cohomology class $\xi\in
H^1(M;\Z)$, $\xi\not=0$, one may always find a representing closed 1-form
having at most one (highly degenerate) zero. Hence, quantitative estimates on the number of zeros may
be obtained only under some additional assumptions. It turns out that these
additional assumptions can be expressed in terms of some interesting dynamical
properties of gradient-like vector fields for a given closed 1-form. This makes
the new theory potentially very useful for dynamics.

Let $v$ be a smooth vector field on a closed smooth manifold $M$. Recall that {\it a homoclinic orbit} of $v$ is defined as an integral trajectory
$\gamma(t)$,
$$\dot\gamma(t) = v(\gamma(t)), \quad t\in \R$$
 such that both limits $\lim_{t\to + \infty}\gamma(t)$
 and $\lim_{t\to - \infty}\gamma(t)$ exist and are equal
 $$\lim_{t\to + \infty}\gamma(t)=\lim_{t\to - \infty}\gamma(t).$$

More generally, {\it a homoclinic cycle of length $n$} is a
sequence of integral trajectories $\gamma_1(t)$, $\gamma_2(t),$ $\dots, \gamma_n(t)$ of $v$ such that
$$\lim_{t\to +\infty} \gamma_i(t) = p_{i+1}= \lim_{t\to -\infty} \gamma_{i+1}(t)$$
for $i=1, \dots, n-1$ and
$$\lim_{t\to +\infty} \gamma_n(t) = p_1 = \lim_{t\to -\infty} \gamma_{1}(t).$$

\begin{figure}[h]
\begin{center}
\resizebox{8cm}{3.6cm}{\includegraphics[53,483][550,705]{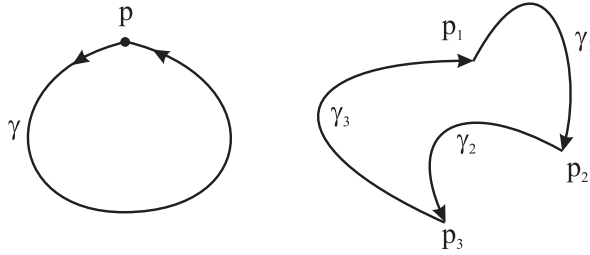}}
\end{center}
\caption{Homoclinic orbit (left) and homoclinic cycle (right).}\label{homocli}
\end{figure}

Homoclinic orbits were discovered by H. Poincar\'e and were studied by S.
Smale. In the mathematical literature there are many results about existence of
homoclinic orbits in Hamiltonian systems. For obvious reasons homoclinic orbits cannot exist in
gradient systems for functions corresponding to
the classical Lusternik-Schnirelmann theory.

One of the main theorems of this section (Theorem \ref{lsmain1}, originally established in \cite{farber}) states that any smooth closed
1-form $\omega$ on a smooth closed manifold $X$ must have at least $\cat(X,\xi)$
geometrically distinct zeros (where $\xi=[\omega]\in H^1(X;\R)$ denotes the cohomology
class of $\omega$) assuming that $\omega$ admits a gradient-like vector field with no
homoclinic cycles.

Viewed differently, the main result of the section claims that
any gradient-like vector field of a closed 1-form $\omega$ has a homoclinic cycle if the number of zeros of $\omega$
is less than $\cat(M,\xi)$.

\begin{definition}
Let $M$ be a smooth closed manifold and let $\omega$ be a smooth closed 1-form on $M$ with the set of zeros
$$Y_\omega = \{p\in M; \omega_p=0\}.$$ We say that a smooth vector field $v$ on $M$ is {\it a gradient-like vector field} for $\omega$ if
$\omega$ is a Lyapunov 1-form for the pair $(\Phi, Y_\omega)$ where $\Phi$ is the flow on $M$ generated by the field $-v$, see Definition \ref{def11}.
\end{definition}

Note that this implies that
\begin{eqnarray}\label{gradientlike}
\omega(v)>0\end{eqnarray} on $M-Y_\omega$ and $Y_\omega$ is invariant with respect to the flow generated by $v$.

In the special case when the set of zeros $Y_\omega$ is finite, a vector field $v$ is a gradient-like vector field for $\omega$ iff the sets of zeros of $\omega$ and of $v$ coincide and
the inequality (\ref{gradientlike}) holds on $M-Y_\omega$.

\begin{theorem} [Farber \cite{farber}]\label{lsmain1} Let $\omega$ be a smooth closed 1-form on a closed smooth manifold $M$.
If $\omega$ admits a gradient-like vector field $v$ with no homoclinic cycles then $\omega$ has at least $\cat(M, [\omega])$ geometrically distinct zeros.
\end{theorem}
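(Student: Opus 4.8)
The plan is to deduce Theorem \ref{lsmain1} from the definition of $\cat(M,\xi)$ (Definition \ref{defcat}) by producing, from a gradient-like vector field $v$ with no homoclinic cycles, a closed $N$-movable subset $A\subset M$ for every $N>0$ such that $M-A$ is covered by small neighbourhoods of the zeros of $\omega$. If $\omega$ has $k$ zeros $p_1,\dots,p_k$ and we can achieve $\cat_M(M-A)\le k$ for all $N$, then $\cat(M,[\omega])\le k$, which is the contrapositive of the theorem (here one may assume $Y_\omega$ is finite, since otherwise the statement is either vacuous or handled by a limiting argument). So the core of the argument is a deformation lemma: using the flow of $-v$ one pushes the complement of a small neighbourhood $U=\bigcup_i U_i$ of $Y_\omega$ "downhill" with respect to $\omega$, and one needs to show that points can be pushed arbitrarily far down (decreasing $\int\omega$ by any prescribed $N$) while staying inside a closed set $A$ whose complement retracts into $U$.

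The key steps, in order, are: First, fix disjoint contractible (in $M$) neighbourhoods $U_i\ni p_i$ on which $\omega=df_i$; set $U=\bigcup U_i$ and $A=M-U$ (or a slightly shrunk closed version). Since $\omega(v)>0$ on $M-Y_\omega$ and $Y_\omega$ is the zero set, the flow $\Phi_t$ of $-v$ strictly decreases the local primitives of $\omega$ off $Y_\omega$. Second, I would use the no-homoclinic-cycles hypothesis to control the recurrence of the flow outside $U$: the crucial claim is that there is no $\epsilon$-chain-recurrence phenomenon forcing trajectories to hover near the zeros indefinitely without descending — concretely, for each $N$ there is a time $T=T(N)$ so that every point of a suitable compact part of $M-U$ flows, within time $T$, either into $U$ or to a point whose $\omega$-primitive has dropped by at least $N$. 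The absence of homoclinic cycles is exactly what rules out trajectories that leave a neighbourhood of some $p_i$, wander, and return to a neighbourhood of $p_i$ (or cyclically among the $p_i$) without net descent; a compactness/contradiction argument extracts from a hypothetical bad family of orbits a limiting homoclinic orbit or cycle. Third, having this $T$, define $H\colon A\times[0,1]\to M$ by running the flow $\Phi_{sT}$ and, where a trajectory has already entered $U$, holding it or continuing (one must be careful to keep the homotopy well-defined and to keep $\int_a^{H_1(a)}\omega\le -N$, possibly after slightly enlarging the neighbourhoods and using that $\omega$ decreases along flow lines). Fourth, observe $\cat_M(M-A)\le\cat_M(U)\le k$ since $U$ is a disjoint union of $k$ sets each null-homotopic in $M$. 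Assembling these for all $N$ gives $\cat(M,[\omega])\le k$.

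The main obstacle will be Step 2: converting "no homoclinic cycles" into a uniform descent estimate. The naive flow need not push every point of $M-U$ far down — trajectories can asymptote to the $p_i$ and spend unbounded time near them without the primitive decreasing much, and a priori a trajectory could keep re-entering neighbourhoods of the zeros. One has to set this up as a Conley-theory-style argument: consider the chain recurrent behaviour of $-v$ on $M$, note that the chain recurrent set outside the zeros would, in the presence of a closed $1$-form Lyapunov function, have to consist of pieces on which $\omega$ integrates to zero around loops, and then show that any such recurrence produces (in the limit, using compactness of $M$ and continuity of the flow) an actual homoclinic orbit or cycle of $v$, contradicting the hypothesis. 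Making this extraction rigorous — choosing the right sequence of times and basepoints, taking limits in the space of trajectories, and verifying the limiting object is genuinely a homoclinic cycle of positive "length" through the zeros — is the delicate technical heart of the proof, and is presumably where Farber's original argument in \cite{farber} does the real work.
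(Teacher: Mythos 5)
Your overall strategy --- prove the contrapositive by exhibiting, for each $N$, a closed $N$-movable set $A$ with $\cat_M(M-A)\le k$, using the flow of $-v$ and a compactness/limiting argument to convert the absence of homoclinic cycles into a descent property --- is the same as the paper's, which deduces the theorem from the more general Theorem \ref{jankothm1} proved in \S\ref{jankoproof}. But there is a genuine gap in your choice of $A$. You take $A=M-U$ where $U$ is a union of small neighbourhoods $U_i$ of the zeros. This set is not $N$-movable for large $N$: a point $a$ on the stable set of a zero $p_i$ lying just outside $U_i$ flows into $U_i$ and converges to $p_i$, and along its entire forward orbit $\int_a^{a\cdot t}\omega$ converges to the finite value $f_i(p_i)-f_i(a\cdot t_0)$ plus a bounded initial contribution (where $\omega|_{U_i}=df_i$). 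Neither ``holding'' such a point once it enters $U$ nor ``continuing'' the flow will ever make the integral $\le -N$, and no slight enlargement of the neighbourhoods fixes this, because the obstruction is the whole stable set of $Y_\omega$, which is not contained in any small neighbourhood of the zeros. Your dichotomy in Step 2 is essentially right, but it tells you precisely that $M-U$ splits into points that descend by $N$ and points that are captured by $U$ first --- and the captured points must be removed from $A$ and assigned to the null-homotopic part of the cover, not kept in $A$.

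The paper's construction does exactly this. For each zero one chooses a flow-convex neighbourhood $V_i$ (Theorem \ref{convexthm}) with the additional ``no cheap return'' property (c) of Lemma \ref{lmnbh}: no trajectory can exit $V_i$ and later return to its interior while $\int\omega$ along the way is still $\ge -N$. It is in establishing this property, by extracting a generalized homoclinic cycle from a sequence of almost-returning orbits, that the no-homoclinic-cycle hypothesis is used; this matches the intended content of your Step 2. One then sets $F_i=\{p:\ p\cdot t_p\in\Int V_i \mbox{ for some } t_p>0 \mbox{ with } \int_p^{p\cdot t_p}\omega>-N\}$ and $A=M-\bigcup_iF_i$. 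Each $F_i$ deforms into $V_i$ along the flow (continuity of the first-entrance time is exactly what flow-convexity provides), so $\cat_M(M-A)\le\sum_i\cat_M(V_i)=k$; and every $p\in A$ reaches $\int_p^{p\cdot t_p}\omega=-N$ in finite time, with $p\mapsto t_p$ continuous, so the flow itself gives the $N$-moving homotopy. No uniform time $T(N)$ is needed. Repairing your argument amounts to replacing $U$ by $\bigcup_iF_i$ throughout.
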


Here $[\omega]\in H^1(M;\R)$ denotes the cohomology class of $\omega$.

Next we give a reformulation of the above theorem:

\begin{theorem}\label{lsmain2}
If the number of zeros of a smooth closed 1-form $\omega$ is less than $\cat(M, [\omega])$,
 then any
 gradient-like vector field $v$ for $\omega$ has a homoclinic cycle.
\end{theorem}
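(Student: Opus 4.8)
The plan is to observe that Theorem \ref{lsmain2} is nothing but the logical contrapositive of Theorem \ref{lsmain1}, so that no new geometric input is required. All of the substantive work — the properties of the invariant $\cat(M,\xi)$, the role of Lyapunov 1-forms, and the deformation/movability arguments — has already been carried out in establishing Theorem \ref{lsmain1} and the material of the preceding sections.

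Concretely, I would argue by contradiction. Let $\omega$ be a smooth closed 1-form on the closed smooth manifold $M$ whose number of geometrically distinct zeros is strictly less than $\cat(M,[\omega])$, and let $v$ be a gradient-like vector field for $\omega$. Suppose, contrary to the assertion, that $v$ has no homoclinic cycle. Then the hypotheses of Theorem \ref{lsmain1} are met, and that theorem forces $\omega$ to have at least $\cat(M,[\omega])$ geometrically distinct zeros, contradicting the standing assumption on the number of zeros. Hence the supposition fails, and every gradient-like vector field $v$ for $\omega$ must contain a homoclinic cycle.

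The one bookkeeping point worth making explicit is the shift from the existential phrasing of Theorem \ref{lsmain1} (``if there exists a gradient-like field with no homoclinic cycle, then $\omega$ has many zeros'') to the universal phrasing of Theorem \ref{lsmain2} (``if $\omega$ has few zeros, then every gradient-like field has a homoclinic cycle''): negating ``$\exists\, v$ gradient-like with no homoclinic cycle'' yields precisely ``$\forall\, v$ gradient-like, $v$ has a homoclinic cycle,'' which is exactly the conclusion claimed. Consequently there is no real obstacle in this statement; it is a reformulation offered for emphasis, and the proof is complete once Theorem \ref{lsmain1} is invoked.
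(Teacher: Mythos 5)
Your proposal is correct and matches the paper exactly: the paper introduces Theorem \ref{lsmain2} with the words ``Next we give a reformulation of the above theorem,'' i.e.\ it is presented precisely as the contrapositive of Theorem \ref{lsmain1} with no further argument needed. Your quantifier bookkeeping is the right (and only) point to check, and it checks out.
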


Note that the definition of gradient-like vector field in this paper is slightly more general than the one used in \cite{farber}, \cite{farbook}.

A slightly more informative statement was proven in \cite{farbook}:

\begin{theorem}[Farber \cite{farbook}, Theorem 10.16]\label{lsnnn}
Let $\omega$ be a smooth closed 1-form on a closed manifold $M$ having less than $\cat(M, \xi)$ zeros, where $\xi=[\omega]\in H^1(M;\R)$.
Then there exists an integer $N>0$ such that any gradient-like vector field $v$ for $\omega$ has a homoclinic cycle $\gamma_1, \dots, \gamma_n$ satisfying
\begin{eqnarray}
\sum_{i=1}^n \, \int\limits_{\gamma_i}\omega \, \leq \, N.
\end{eqnarray}
\end{theorem}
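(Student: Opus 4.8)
The plan is to argue by contraposition, turning the definition of $\cat(M,\xi)$ around. Write $c$ for the number of connected components (``geometrically distinct zeros'') of the zero set $Y_\omega$ of $\omega$; the hypothesis is $c<\cat(M,\xi)$. Unwinding Definition \ref{defcat}, the inequality $\cat(M,\xi)>c$ says precisely that there is a constant $N_0>0$ such that \emph{every} closed subset $A\subseteq M$ that is $N_0$-movable with respect to $\omega$ has $\cat_M(M-A)>c$. Fix such an $N_0$. Fix also, independently of any vector field, an open neighbourhood $U_0$ of $Y_\omega$ and a smooth $f_0\colon U_0\to\R$ with $\omega|_{U_0}=df_0$ --- this is legitimate because the mere existence of a gradient-like field forces $\xi|_{Y_\omega}=0$ through condition $(\Lambda 2)$ --- and let $K>0$ bound the oscillation of $f_0$ near $Y_\omega$. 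I claim that the integer $N:=N_0+K+1$ works; note it depends only on $M$ and $\omega$.

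So let $v$ be a gradient-like vector field for $\omega$ and suppose, for contradiction, that every homoclinic cycle $\gamma_1,\dots,\gamma_n$ of $v$ has $\sum_i\int_{\gamma_i}\omega>N$. I will manufacture from $v$ a closed $N_0$-movable subset $A$ with $\cat_M(M-A)\le c$, contradicting the choice of $N_0$. It is convenient to pass to the covering $p\colon\tilde M_\xi\to M$ on which $p^\ast\omega=d\tilde f$, with $\tilde f$ strictly increasing along the lifted field; then a forward orbit of $-v$ in $M$ either escapes (its $\omega$-height tends to $-\infty$, i.e.\ $\tilde f$ of any lift tends to $-\infty$) or converges to a single component of $Y_\omega$, by the usual Lyapunov argument. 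For $T>0$ set $A_T=\{x\in M:\int_x^{(-v)_T(x)}\omega\le -N_0\}$, where $(-v)_t$ denotes the time-$t$ map of $-v$; each $A_T$ is closed and $N_0$-movable, the witnessing homotopy being $(-v)_t$, $0\le t\le T$. The complement $M-A_T$ decreases, as $T\to\infty$, to the compact set $B$ of points whose forward $(-v)$-orbit converges to $Y_\omega$ with total $\omega$-descent at most $N_0$; by compactness it therefore suffices to cover $B$ by $c$ open subsets of $M$, each null-homotopic in $M$ (more generally, by $\sum_j\cat_M(Y_j)$ of them, which is the correct reading of ``number of zeros'' when $Y_\omega$ is not finite), and then take $A:=A_T$ for $T$ large.

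Now $B$ is the union, over the components $Y_1,\dots,Y_c$ of $Y_\omega$, of the unstable sets $W^u(Y_j;v)$ truncated at $\omega$-height $N_0$ above $Y_j$; each such truncated unstable set deformation-retracts onto $Y_j$ along the flow of $-v$ and hence is null-homotopic inside $M$ whenever $Y_j$ is. The trouble --- and this is where the no-short-homoclinic-cycle hypothesis enters --- is that the \emph{closures} of these truncated unstable sets, hence any open neighbourhoods of them, may spill onto other components of $Y_\omega$; a limiting argument shows that if $\overline{W^u(Y_j;v)}$, within an $\omega$-window of width $N$, meets another component, then that component is distinct from $Y_j$ downstairs --- were it another lift of $Y_j$ itself, chaining would produce a homoclinic cycle of $\omega$-length $\le N$, which is excluded. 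Exploiting this, one orders linearly the components of $\tilde Y=p^{-1}(Y_\omega)$ lying in a given $\omega$-window by the ``connected-within-$\omega$-budget-$N$'' relation (a strict partial order, since the lifted connecting-orbit digraph is acyclic and, crucially, contains no ``return to the same downstairs component'' within budget $N$), and builds the desired cover inductively: one parks a null-homotopic-in-$M$ neighbourhood of the yet-unprocessed part of each component's truncated unstable set and pushes the rest down, the room needed to carry this out while still realizing a net descent of $N_0$ being exactly the width-$K$ collar of $Y_\omega$ --- which is why $N=N_0+K+1$ suffices.

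\emph{Main obstacle.} Everything hinges on the construction in the last paragraph, which is the quantitative sharpening of the proof of Theorem \ref{lsmain1} (where the absence of \emph{all} homoclinic cycles permits pushing $M$ down by arbitrarily large amounts): the real work is the inductive bookkeeping of the truncated unstable sets, their closures, and the ``park-and-push'' homotopy, keeping the parked pieces null-homotopic in $M$ and disjoint from the descending region throughout. A secondary point, easily overlooked, is the uniformity of $K$ (and hence of $N$) in $v$; this is what makes it legitimate to fix $U_0$ and $f_0$ before introducing $v$, and it rests on $\omega$ being fixed together with $\xi|_{Y_\omega}=0$. Granting the construction, the theorem is immediate: contraposition gives that any gradient-like $v$ has a homoclinic cycle of $\omega$-length $\le N$.
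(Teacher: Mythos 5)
Your contrapositive skeleton is exactly the right one, and it matches the paper's: the survey derives this statement as the finite-zero-set case of Theorem \ref{jankothm1}, whose proof likewise assumes no short homoclinic cycle exists and manufactures a closed $N$-movable set $A$ with $\cat_M(M-A)$ at most the number of zeros, contradicting Definition \ref{defcat}. Your unwinding of $\cat(M,\xi)>c$ into a fixed threshold $N_0$, the sets $A_T$ with the flow of $-v$ as witnessing homotopy, and the reduction to covering the limit set $B$ by $c$ null-homotopic open sets are all sound. But the proof stops precisely where the theorem lives. Everything after ``by compactness it therefore suffices to cover $B$\dots'' is asserted, not proved, and you say so yourself (``Granting the construction, the theorem is immediate''). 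That construction \emph{is} the proof: the paper spends all of Sections 7 and 8 on it.

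Concretely, two steps in your sketch are genuine gaps rather than routine verifications. First, the claim that a truncated unstable set of a component $Y_j$ ``deformation-retracts onto $Y_j$ along the flow of $-v$,'' and hence that a null-homotopic \emph{open} neighbourhood of it exists, requires the first-entry/exit times into a neighbourhood of $Y_j$ to depend continuously on the point; this is false for arbitrary neighbourhoods and is exactly what the flow-convex neighbourhoods of Theorem \ref{convexthm} are built to guarantee. Second, the upgrade from ``$v$ has no homoclinic cycle of $\omega$-length $\le N$'' to ``the closures of the truncated unstable sets admit no cyclic chain within budget $N$'' (your acyclicity of the partial order) is a nontrivial limiting argument: one must take sequences of orbit segments that exit a neighbourhood $V_i$ and cheaply return, bound the number of return intervals uniformly (the paper's step (iii), using the lower bounds $a,b$ on return time and descent), pass to subsequences, and show the limits assemble into an actual (generalized) homoclinic cycle with total integral $\ge -N$, using Sard's theorem to see the primitive is constant on each $Y_j$. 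This is the paper's Lemma \ref{lmnbh}. Once its conclusion (property (c): no cheap return to $\Int V_i$ after exiting) is available, the paper's covering is also simpler than your park-and-push induction: it takes $F_i$ to be the open set of points whose forward orbit enters $\Int V_i$ with descent $>-N$, retracts $F_i$ into $V_i$ along the flow via the (now continuous) first-entry time, and gets $\cat_M(F_i)\le\cat_M(Y_i)$ with no ordering of components needed. As written, your argument establishes the strategy but not the theorem.
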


One may combine these results with Theorem \ref{colliding} which guarantees existence of
closed 1-forms with at most one zero in any nonzero integral cohomology class.
This shows
that there always exist homoclinic cycles which cannot be destroyed while perturbing the
gradient-like vector field!

This {\it \lq\lq focusing effect\rq\rq} starts when the
number of zeros of a closed 1-form becomes less than the number $\cat(M,\xi)$.
It is a new phenomenon, not occurring in Novikov theory. Indeed, if we assume
that the zeros of $\omega$ are all Morse type, then (by the Kupka-Smale Theorem
\cite{Sm3}) it is always possible to find a gradient-like vector field $v$ for
$\omega$ such that any integral trajectory connecting two zeros comes out of a
zero with higher Morse index and goes into a zero with lower Morse index; such
a vector field $v$ has no homoclinic cycles.

As an illustration we will state here the following result which is a consequence of Theorem \ref{lsmain2} and depends on a calculation of $\cat(X, \xi)$ in the case when $X$ is a product of surfaces, see Theorem \ref{surface} below.

\begin{theorem}
Let $M^{2k}$ denote the product $\Sigma_1\times \Sigma_2\times \dots\times \Sigma_k$ where each $\Sigma_i$ is a closed orientable surface of genus $g_i>1$. For a cohomology class $\xi=[\omega]\in H^1(X;\R)$ we denote by $r(\xi)$ the number of indices $i\in \{1, \dots, k\}$ such that
$\xi|\Sigma_i=0$. Let $\omega$ be a smooth closed 1-form on $M$ lying in the cohomology class $\xi$ and having at most $2r(\xi)$ zeros.
Then any gradient-like vector field for $\omega$ has a homoclinic cycle.
\end{theorem}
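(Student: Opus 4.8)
The plan is to derive this from Theorem~\ref{lsmain2} together with the computation of $\cat(M,\xi)$ for products of surfaces contained in Theorem~\ref{surface} below. Recall that Theorem~\ref{lsmain2} asserts: if a smooth closed 1-form on a closed manifold $M$ has strictly fewer than $\cat(M,[\omega])$ zeros, then every gradient-like vector field for it has a homoclinic cycle. Hence the entire task reduces to verifying the single numerical inequality that the number of zeros of $\omega$ is strictly less than $\cat(M,\xi)$, where $\xi=[\omega]$.

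First I would reorder the factors, which changes neither $M$ up to diffeomorphism, nor $r(\xi)$, nor $\cat(M,\xi)$, so that $\xi|\Sigma_i=0$ for $i=1,\dots,r(\xi)$ and $\xi|\Sigma_i\neq 0$ for $i=r(\xi)+1,\dots,k$. Next I would invoke Theorem~\ref{surface}, which gives the exact value of $\cat(M,\xi)$ when $M$ is a product of closed orientable surfaces of genus $>1$; the only consequence needed here is the lower bound $\cat(M,\xi)> 2r(\xi)$ (equivalently $\cat(M,\xi)\geq 2r(\xi)+1$, since $\cat$ is an integer). Granting this, the hypothesis that $\omega$ has at most $2r(\xi)$ zeros gives at once that $\omega$ has at most $2r(\xi)<\cat(M,\xi)$ zeros, so Theorem~\ref{lsmain2} applies and produces a homoclinic cycle for any gradient-like vector field of $\omega$. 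That finishes the proof.

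The substantive content is therefore entirely in Theorem~\ref{surface}, specifically in the lower bound $\cat(M,\xi)\geq 2r(\xi)+1$, and this is the step I expect to be the main obstacle: lower bounds for $\cat(X,\xi)$ never follow from the definition and require the cohomological machinery of category weights and local-system cup-product estimates developed in \S\S11--13 (of the type of Theorem~\ref{perfect}). Concretely, on the first $r(\xi)$ factors one exploits the nondegenerate cup pairing $H^1(\Sigma_i)\otimes H^1(\Sigma_i)\to H^2(\Sigma_i)$ to build a length-$2r(\xi)$ cup product whose category weight survives against $\omega$, while on the remaining factors one uses that $\xi$ restricts nontrivially there (together with the hypothesis $g_i>1$, which via $\chi(\Sigma_i)\neq 0$ and Theorem~\ref{ep} pins down those local contributions) to show they add nothing further to $\cat(M,\xi)$. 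Once Theorem~\ref{surface} is available, the deduction above is immediate and purely arithmetic.
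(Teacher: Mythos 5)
Your proposal is correct and is exactly the argument the paper intends: the theorem is stated as a direct consequence of Theorem \ref{lsmain2} combined with the computation $\cat(M^{2k},\xi)=1+2r(\xi)$ from Theorem \ref{surface}, so that ``at most $2r(\xi)$ zeros'' means ``fewer than $\cat(M,\xi)$ zeros.'' Your closing speculation about how Theorem \ref{surface} itself is proved is not needed (the survey cites it from \cite{farsch, farsce}), and the reordering of factors is harmless but superfluous.
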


Theorem \ref{lsmain2} was generalized by J. Latschev \cite{Lat} who studied a more general case of closed 1-forms having infinitely many zeros. The result of Latschev gives a \lq\lq comparison\rq\rq\, between the topology of the set of zeros of $\omega$ and the invariant $\cat(M, [\omega])$.
We prove below in \S \ref{jankoproof} of this paper the following strengthening of the main Theorem of \cite{Lat} and of Theorem \ref{lsnnn}:

\begin{theorem} \label{jankothm1} Let $\omega$ be a smooth closed 1-form on a smooth closed manifold $M$. Assume that the set of zeros $Y=\{p\in M; \omega_p=0\}$ admits a neighbourhood $Y\subset U\subset M$ such that $\omega|U$ is exact\footnote{This condition is automatically satisfied if either the cohomology class $[\omega]$ is integral or $Y$ is an ENR, see Lammas \ref{lmintegral}, \ref{lmenr}.} and $Y$ has finitely many connected components $Y_1, \dots, Y_k$ which satisfy
\begin{eqnarray}\label{jankoineq1}
 \sum_{i=1}^k \cat_M(Y_i) < \cat(M, [\omega])
\end{eqnarray}
where $[\omega]\in H^1(M;\R)$ denotes the cohomology class of $\omega$. Then there exists an integer $N>0$ such that
for any gradient-like vector field $v$ for $\omega$ there exists
a finite chain of orbits $$\gamma_1, \dots, \gamma_\ell, \gamma_{\ell+1}=\gamma_1$$ lying in $M-Y$
(with $1\le \ell \le k$) and labels $1\le i(j)\le k$ such that
\begin{eqnarray}\label{jankocycle1}
\bigcap_{t\in \R} \overline{\gamma_j([t, +\infty))} \subset Y_{i(j)} \quad \mbox{and} \quad \bigcap_{t\in \R} \overline{\gamma_{j+1}((-\infty, t])}\subset Y_{i(j)},
\end{eqnarray}
for all $ 1\le j\le \ell$ and additionally one has
\begin{eqnarray}\label{lessn}
\sum_{i=1}^\ell\,  \int\limits_{\gamma_i}\omega \, \leq\,  N.\label{lsen}
\end{eqnarray}
\end{theorem}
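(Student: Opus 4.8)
The plan is to argue by contradiction: it suffices to exhibit a constant $N=N(M,\omega,U,g)$ (an auxiliary Riemannian metric $g$ being fixed) such that, for every gradient-like $v$ which admits no chain of $v$-orbits of the type described in the theorem with $\sum_j\int_{\gamma_j}\omega\le N$, one can deduce $\cat(M,[\omega])\le\sum_{i=1}^k\cat_M(Y_i)$, contradicting (\ref{jankoineq1}). So fix such a $v$ and let $\Phi$ be the flow of $-v$; by Definition \ref{def11} and the hypothesis on $U$ one has $\omega(-v)<0$ on $M-Y$, while $\omega|_U=df$ with $f$ constant, say $\equiv c_i$, on each $Y_i$. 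Shrinking $U$, I may assume its components $U_i\supset Y_i$ are pairwise disjoint and $\cat_M(U_i)\le\cat_M(Y_i)$, which holds once the $U_i$ are small since the $Y_i$ are compact. For $x\in M$ put $g_t(x)=\int_0^t \omega(-v)|_{\Phi_s(x)}\,ds$, which is $\le 0$ and strictly decreasing in $t$ as long as the orbit stays off $Y$, so $g_\infty(x):=\lim_{t\to\infty}g_t(x)\in[-\infty,0]$ exists. The first step is a dichotomy, valid for every $x$: either $g_\infty(x)=-\infty$ (the $\Phi$-orbit ``drains''), or $g_\infty(x)$ is finite, the $\omega$-limit set of $x$ is a compact connected subset of $Y$, hence lies in a single component $Y_{j(x)}$, and the orbit converges into $Y_{j(x)}$ with total drop $|g_\infty(x)|$. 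Indeed, if the $\omega$-limit set contained a point $z\notin Y$ then (since $\omega(-v)(z)<0$ and $v(z)\ne0$) the orbit would visit, infinitely often and for uniformly bounded-below durations, a neighbourhood of $z$ on which $\omega(-v)\le-\varepsilon<0$, forcing $g_\infty(x)=-\infty$; otherwise the orbit eventually stays in $U$, where $\omega=df$, so $g_t(x)$ converges.

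Given $L>0$, let $A_L$ be (the closure of) the set $\{x:g_\infty(x)\le -L\}$. The first-passage time $\tau_L(x)=\min\{t:g_t(x)=-L\}$ is finite and continuous on $\{g_\infty(x)<-L\}$, and the partial flow $(x,s)\mapsto\Phi_{s\tau_L(x)}(x)$ exhibits $A_L$ as an $L$-movable closed subset of $M$ with respect to $\omega$ (a routine shrink-and-close arranges closedness without spoiling movability). By the dichotomy, $M-A_L\subset\bigcup_i W^s_{<L}(Y_i)$, where $W^s_{<L}(Y_i)=\{x:\ \text{the }\Phi\text{-orbit of }x\text{ converges into }Y_i\text{ with total drop }<L\}$ is the truncated stable set of $Y_i$. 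Thus the problem is reduced to bounding $\cat_M\bigl(\bigcup_i W^s_{<L}(Y_i)\bigr)$ by $\sum_i\cat_M(Y_i)$ for every $L$.

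The core of the argument is the compression claim: if $v$ admits no chain $\gamma_1,\dots,\gamma_\ell,\gamma_{\ell+1}=\gamma_1$ as in the theorem with $\sum_j\int_{\gamma_j}\omega\le N$, then for every $L$ the forward flow deforms each $W^s_{<L}(Y_i)$ inside $M$, in uniformly bounded time, into a neighbourhood $U_i'$ of $Y_i$ with $\cat_M(U_i')\le\cat_M(Y_i)$; carrying this out in an order compatible with the (then acyclic) reachability relation ``$Y_i\leadsto Y_j$ if some $v$-orbit in $M-Y$ has backward limit in $Y_i$ and forward limit in $Y_j$'' lets one assemble an open covering of $M-A_L$ by $\sum_i\cat_M(Y_i)$ sets null-homotopic in $M$. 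The mechanism is that the forward flow already pushes $W^s_{<L}(Y_i)$ towards $Y_i$, and the only obstruction to pushing it all the way into $U_i'$ in bounded time is the presence of an orbit segment that leaves a neighbourhood of some $Y_j$, passes through $M\setminus\bigcup_m U_m'$, and re-enters a neighbourhood of $Y$ — which, concatenated with short arcs inside the $Y_m$'s (along which $\omega$ contributes nothing), is exactly a link of a generalized homoclinic chain; absent such links of total $\omega$-cost $\le N$, the bounded-time deformation succeeds, all bookkeeping being done on the covering $p_\xi:\tilde M_\xi\to M$, where $p_\xi^\ast\omega=dF$ and the lifted picture is a genuine, though non-compact, strict Lyapunov situation for the lifts of the $Y_i$. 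Granting this claim, no such chain with $\sum\int\omega\le N$ yields, for every $L$, a closed $L$-movable $A_L$ with $\cat_M(M-A_L)\le\sum_i\cat_M(Y_i)$, hence $\cat(M,[\omega])\le\sum_i\cat_M(Y_i)$ — contradicting (\ref{jankoineq1}). Therefore the chain exists, and its defining orbit segments satisfy the limit-set conditions (\ref{jankocycle1}) and the bound (\ref{lsen}) by construction.

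I expect the main obstacle to be the compression claim itself: converting the purely dynamical hypothesis ``no short chain of connecting orbits'' into the topological conclusion ``$W^s_{<L}(Y_i)$ is pushed into $U_i'$ in uniformly bounded time, and the pieces glue into a covering of $M-A_L$ by $\sum_i\cat_M(Y_i)$ sets null-homotopic in $M$''. This requires developing an isolating-neighbourhood/Conley-type analysis of the lifted flow on $\tilde M_\xi$ carefully, in the presence of possibly non-ENR components $Y_i$ and a non-exact $\omega$, and it is here that the constant $N$ is produced — from a Lebesgue-type number for the cover $\{U_i'\}$ and the maximal $\omega$-cost of a passage across the compact set $M\setminus\bigcup_m U_m'$ — so that $N$ depends only on $(M,\omega,U,g)$ and not on $v$, which is precisely what legitimizes the quantifier order ``$\exists N\ \forall v$''. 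The remaining ingredients — the dichotomy, the movability of $A_L$, the reduction to (\ref{jankoineq1}), and the standard closedness and approximation technicalities — are routine once this compression lemma is in place.
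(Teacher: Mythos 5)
Your overall architecture coincides with the paper's: argue by contradiction, and from a gradient-like field admitting no cheap generalized homoclinic cycles produce, for each $N$, a closed $N$-movable subset whose complement has category at most $\sum_i\cat_M(Y_i)$, contradicting (\ref{jankoineq1}). But essentially all of the mathematical content of the theorem sits inside what you call the ``compression claim,'' and you do not prove it --- you only describe what it should say and correctly flag it as the main obstacle. In the paper this is Lemma \ref{lmnbh}: one must construct closed neighbourhoods $V_i\supset Y_i$ that (i) lift to the $\xi$-cover and are \emph{flow-convex} there (this rests on Theorem \ref{convexthm}, a separate Conley-type argument occupying all of Section 7), and (ii) satisfy a ``no cheap return'' property: no orbit exits $V_i$ and re-enters $\Int\, V_i$ while its $\omega$-integral is still $\ge -N$. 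Property (ii) is extracted from the no-short-cycle hypothesis by a delicate compactness argument: one shows the set of cheap return times of a point of the exit set is a union of a uniformly bounded number of intervals, passes to subsequences of putative counterexamples, and assembles the limiting orbit segments into a generalized homoclinic cycle of total cost at most $N$, verifying that the integrals survive the limit because $\omega$ is exact near $Y$ with primitive constant on each $Y_j$ (Sard). None of this appears in your sketch, and the sketch contains assertions that fail if taken literally: the deformation of the truncated stable sets into neighbourhoods of the $Y_i$ cannot be carried out in \emph{uniformly bounded time} (entrance times blow up; the paper instead uses a continuous but unbounded first-entrance-time function $\phi_i$ on the open set $F_i$, whose continuity is precisely what flow-convexity and the no-cheap-return property deliver), and the reachability relation among the $Y_i$ need not be acyclic, since the hypothesis excludes only cycles of $\omega$-cost at most $N$, not all cycles.

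A secondary but genuine problem is your set $A_L=\overline{\{x:\ g_\infty(x)\le -L\}}$. Since $g_\infty$ is a decreasing limit of continuous functions it is only upper semi-continuous, so $\{g_\infty\le -L\}$ is a $G_\delta$ whose closure may contain points with $g_\infty>-L$; on such points the orbit never achieves drop $-L$, your first-passage time is undefined, and ``a routine shrink-and-close'' does not obviously repair this. The paper avoids the issue by taking $A$ to be the complement of the open sets $F_i$, so that $A$ is closed by construction and every $p\in A$ attains drop exactly $-N$ at a finite time by the intermediate value theorem, which yields the $N$-moving homotopy directly.
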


In other words, if the set of zeros of a closed 1-form $\omega$ is  \lq\lq small\rq\rq\, in some sense, i.e. it satisfies the inequality
(\ref{jankoineq1}), then any gradient-like vector field $v$ for $\omega$ has a generalized homoclinic cycle
making at most $N$ full twists with respect to $\omega$.

Note that conditions (\ref{jankocycle1}) describe a generalization of the notion of homoclinic cycle. Indeed, (\ref{jankocycle1}) intuitively means that the $j$-th
trajectory $\gamma_j$ \lq\lq ends\rq\rq\, at the same connected component
$Y_{i(j)}$ of $Y$ from which the next $(i+1)$-th trajectory $\gamma_{j+1}$ \lq\lq originates\rq\rq. Such sequence of trajectories $\gamma_1, \dots, \gamma_\ell$ will be called {\it a generalized homoclinic cycle}.

Theorem \ref{jankothm1} reduces to Theorem \ref{lsmain1} under an additional assumption that the set $Y$ is finite.
 \begin{figure}[h]
\begin{center}
\resizebox{7cm}{5cm} {\includegraphics[6,260][584,753]{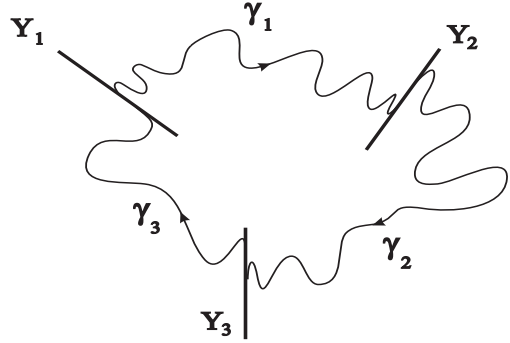}}
\end{center}
\caption {Generalized homoclinic cycle.}
\end{figure}

The proof of Theorem \ref{jankothm1} is postponed to section \S \ref{jankoproof}; in the following section we prove a theorem which is used in \S \ref{jankoproof}.

\section{Existence of flow-convex neighborhoods}

In this section we study some auxiliary problem which will be used in the proof
of Theorems \ref{jankothm1} and Theorem \ref{tkthm} and is also of independent interest.
The results of this section are typical for the Conley theory of isolated invariant sets.
We follow here the paper \cite{farkap}.

Consider a smooth vector field $v$ on a closed smooth manifold $M$. Let $\Phi:
M\times \R\to M$ be the flow of $v$. We will write $\Phi(x,t)$ as $x\cdot t$,
where $x\in M$ and $t\in \R$. The symbol $R$ denotes the chain recurrent set of
$\Phi$.

\begin{theorem}[Farber - Kappeler \cite{farkap}] \label{convexthm} Let $Z$ be a connected component of $R$ which is isolated in $R$, i.e. such
that there exists a neighborhood $Z\subset U$ with $U\cap R = Z$. Let $W\subset
M$ be a neighborhood of $Z$. Then there exists an open neighborhood $B$ of $Z$,
contained in $W$, with the following two properties:

{\rm {(A)}} For any $x\in M$, the open set
$$J_x= \{t; x\cdot t\in B\}\subset \R$$
is convex (i.e. it is either empty or an interval);

{\rm {(B)}} Let $A$ be the set of points $x\in M$ such that the interval $J_x$
is nonempty and bounded below. Then the function
$$A\to \R, \quad x\mapsto \inf J_x,$$
is continuous.
\end{theorem}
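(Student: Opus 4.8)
The plan is to construct $B$ as a sublevel/superlevel set of a Lyapunov-type function adapted to the isolated invariant component $Z$, following the Conley theory of isolating blocks. First I would exploit the hypothesis that $Z$ is a connected component of the chain recurrent set $R$ which is isolated in $R$. Let $Z \subset U$ with $U \cap R = Z$. Standard Conley theory (e.g. via Conley's Theorem \ref{conley} applied appropriately, or a local version of it) produces a smooth function $g : M \to \R$ which is a Lyapunov function for the flow: $V(g) < 0$ on $M - R$, and $g$ is constant on each connected component of $R$. After composing with a suitable homeomorphism of the target and using that the components of $R$ are closed and mutually separated, I may arrange that $g(Z) = 0$, that $g$ takes values in $(-\infty,-1] \cup \{0\} \cup [1,\infty)$ on $R$ — or more conveniently, that there is an $\varepsilon > 0$ with $R \cap g^{-1}([-\varepsilon,\varepsilon]) = Z$. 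Then on the compact set $\overline{g^{-1}([-\varepsilon,\varepsilon])} \setminus (\text{small nbhd of } Z)$ one has $V(g) < 0$ strictly, which is the key quantitative input.

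The candidate for $B$ is a neighborhood of $Z$ of the form $\{x : g(x) \in (-\delta, \delta)\} \cap O$, where $O$ is a suitably chosen open set and $\delta \le \varepsilon$ is small enough that this set is contained in $W$; because $Z$ is a component of $R$ and $g^{-1}([-\varepsilon,\varepsilon])\cap R = Z$, for small $\delta$ the relevant sublevel neighborhood shrinks into $W$. For property (A): if $x \cdot t_1 \in B$ and $x \cdot t_2 \in B$ with $t_1 < t_2$, I want $x\cdot t \in B$ for all $t \in [t_1,t_2]$. The function $t \mapsto g(x\cdot t)$ is nonincreasing wherever the trajectory lies outside $R$; the subtlety is that a trajectory could leave the band $(-\delta,\delta)$ through the "sides" rather than by $g$ increasing. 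The clean way to rule this out is to choose $B$ not just as a band in $g$ but as a genuine isolating block: one picks $B$ so that its boundary is transverse to the flow except along the entry/exit sets, using that near $Z$ (but off $Z$) the flow is gradient-like for $g$. Concretely, shrink $\delta$ and take $B$ to be the connected component of $g^{-1}((-\delta,\delta))$ containing $Z$, intersected with a small tubular neighborhood; monotonicity of $g$ along orbits inside this component, together with the fact that orbits cannot accumulate on $R$ inside the band other than on $Z$, forces $J_x$ to be an interval. I would verify convexity by a direct argument: the set $\{t : x \cdot t \in B\}$ is open; if it were disconnected, pick $t_1 < s < t_2$ with $x\cdot t_1, x\cdot t_2 \in B$ but $x \cdot s \notin B$, and derive a contradiction from $g(x\cdot t_1) \ge g(x\cdot s) \ge g(x \cdot t_2)$ combined with the fact that leaving $B$ forces $g$ to exit $(-\delta,\delta)$, which can only happen downward, so once out below it never returns.

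For property (B): continuity of $x \mapsto \inf J_x$ on the set $A$ of points whose $J_x$ is nonempty and bounded below. The natural tool is the implicit function theorem applied to the "entry time": a point $x \in A$ enters $B$ at time $\tau(x) = \inf J_x$, and at $x \cdot \tau(x)$ the trajectory crosses $\partial B$ from outside to inside. If $B$ is an isolating block with $\partial B$ smooth and transverse to $v$ at the entry set (the essential point of the block construction), then $\frac{d}{dt} g(x \cdot t)\big|_{t = \tau(x)} \ne 0$ — indeed it is strictly negative there, since the trajectory is crossing transversally and $g$ is strictly decreasing off $R$ — so $\tau$ is locally given by solving $g(x \cdot t) = \pm\delta$ and the implicit function theorem gives smoothness, in particular continuity. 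One must also check that $A$ is open and that the entry point depends continuously even as trajectories approach the boundary of $A$; this follows from the transversality, which prevents tangential (hence unstable) contact with $\partial B$. The main obstacle, and the real content of the theorem, is the construction of $B$ as a bona fide isolating block: one must show the Lyapunov function $g$ can be upgraded so that some neighborhood of $Z$ has boundary transverse to the flow away from a well-behaved exit set, which is exactly where the isolation hypothesis $U \cap R = Z$ is used — it guarantees $V(g) < 0$ uniformly on a deleted neighborhood of $Z$, making the transversality achievable after an arbitrarily small smoothing and shrinking. Everything else (convexity, continuity) is then a fairly mechanical consequence of monotonicity of $g$ along orbits plus transversality at the boundary.
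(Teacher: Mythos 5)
Your overall toolbox is the same as the paper's (Conley's Lyapunov function $L$, a level band around $c=L(Z)$, and the implicit function theorem for the entry time), but the proposal glosses over exactly the two points where the real work lies. First, the normalization ``$R\cap g^{-1}([-\varepsilon,\varepsilon])=Z$'' cannot be obtained by composing $g$ with a homeomorphism of the target: such a composition preserves which components of $R$ share a common value, and in general other components of $R$ lie on the very same level set $L^{-1}(c)$ (the set of components may even be uncountable, with $L(R)$ only of measure zero). The paper does not assume this separation; instead it confronts the presence of other components of $R$ at level $c$ head on. This is the entire content of Lemma \ref{lmcon} and of the first half of the proof of Lemma \ref{lm22}: a trajectory that starts near a component $Z'\neq Z$ at level $c$ must, while traversing the definite distance from $Z'$ to $Z$, descend in $L$ by at least $\eta(d-2\delta)/K$, and therefore slips below level $c$ before it can reach a neighborhood of $Z$. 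Without this quantitative descent argument your claim that the relevant neighborhood ``shrinks into $W$'' is unsupported.

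Second, even granting a value separation, your candidate $B$ does not work as stated. The connected component of $g^{-1}((-\delta,\delta))$ containing $Z$ does not shrink to $Z$ as $\delta\to 0$; it shrinks to a whole component of the level set $g^{-1}(c)$, which is typically a codimension-one subset not contained in $W$. Once you repair this by intersecting with a small open set $O$ (or a ``tubular neighborhood''), the convexity argument collapses: a trajectory can now leave $B=(\mathrm{component})\cap O$ through $\partial O$ at an interior value of $g$ and, a priori, return later while $g$ is still in $(-\delta,\delta)$, producing a disconnected $J_x$. Ruling out such returns is not ``a mechanical consequence of monotonicity''; it is precisely what the hypothesis $U\cap R=Z$ must be used for, and the paper does it by defining $B$ dynamically rather than as a band: $B=C\cup Z\cup\bigcup_{0<\epsilon'<\epsilon}A_{\pm}(\epsilon')$, where $C$ consists of the points whose orbit segment crosses from level $c+\epsilon$ to level $c-\epsilon$ without leaving $W$, and $A_{\pm}(\epsilon')$ are the local stable and unstable sets; openness of $B$ and convexity of $J_x$ then rest on Lemmas \ref{lm22} and \ref{lm23}. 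Your appeal to isolating-block transversality, besides not being carried out, would not by itself give property (A): a trajectory can exit and re-enter an isolating block at a lower level of $L$. The final step (continuity of $x\mapsto\inf J_x$ via the implicit function theorem applied to $L(x\cdot t)=c+\epsilon$, using $V(L)<0$ at the entry point) does match the paper.
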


A neighborhood $B$ of $Z$ having properties (A) and (B) is called {\it convex}
with respect to the flow $\Phi$.

Theorem \ref{convexthm} could be compared to Lemma B.1 in Appendix B of \cite{farber}.

By Conley's theorem \cite{Cnl, Cnl1}, there exists a smooth Lyapunov function
$L: M\to \R$ for the flow; see also \cite{FKLZ1}, Proposition 2, where the
proof of the smooth version of Conley's theorem is given. The Lyapunov function
$L$ satisfies: $V(L)<0$ on the complement $M-R$ of $R$ and the differential
$dL$ vanishes on $R$.

Fix a point $x\in M-R$ and consider its orbit $x(t)=x\cdot t$. The function
$t\mapsto L(x\cdot t)$ is strictly decreasing. Hence, as $t$ tends to $+\infty$
the limit \begin{eqnarray} \ell(x)= \lim_{t\to \infty}L(x\cdot t)
\label{limit}\end{eqnarray} exists and is finite. If $x\in R$ then the function
$t\mapsto L(x\cdot t)$ is constant.
\begin{figure}[h]
\begin{center}
\resizebox{7cm}{4cm} {\includegraphics[80,474][482,751]{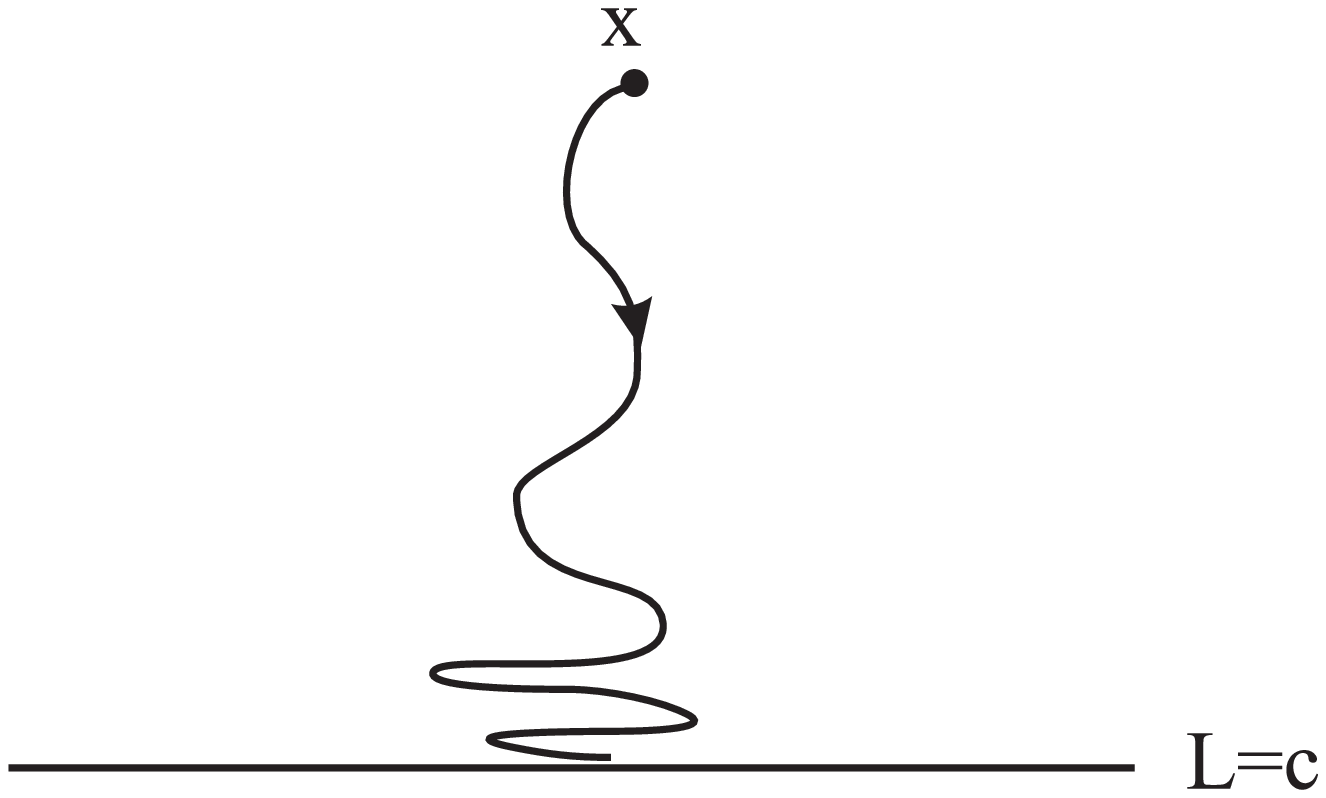}}
\end{center}
\end{figure}

For any $x\in M$ the number $\ell(x)$ is a critical value of $L$. Indeed, the
$\omega$-limit set $\omega(x)$ is contained in the level set $L^{-1}(\ell(x))$
and, on the other hand, $\omega(x)$ is a part of the chain recurrent set $R$
which is the set of critical points of $L$.

\begin{lemma}
The function $\ell: M\to \R$ is upper semi-continuous: if $x_n\to x\in M$ then
\begin{eqnarray}
\ell(x)\geq \lim\sup \ell(x_n).\label{semi}
\end{eqnarray}

\end{lemma}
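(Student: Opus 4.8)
The plan is to exploit the monotonicity of the function $t\mapsto L(x\cdot t)$ together with continuity of the flow and of $L$. Fix $x\in M$ and a sequence $x_n\to x$, and let $\epsilon>0$ be arbitrary. First I would recall why $t\mapsto L(x\cdot t)$ is non-increasing: since $R$ is flow-invariant, the orbit of $x$ either lies entirely in $R$ (in which case $\frac{d}{dt}L(x\cdot t)=dL(v)=0$ because $dL$ vanishes on $R$, so $L$ is constant along it) or lies entirely in $M-R$ (in which case $\frac{d}{dt}L(x\cdot t)=V(L)(x\cdot t)<0$, so $L$ is strictly decreasing along it). In either case $t\mapsto L(x\cdot t)$ is non-increasing and, as already noted in the excerpt, converges to the finite limit $\ell(x)$ as $t\to+\infty$. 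Hence there is a time $T>0$ with $L(x\cdot T)<\ell(x)+\epsilon$.

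Next I would bring in continuity. The flow map $\Phi(\cdot,T):M\to M$ is continuous, so $x_n\cdot T\to x\cdot T$, and therefore $L(x_n\cdot T)\to L(x\cdot T)$ by continuity of $L$. Consequently $L(x_n\cdot T)<\ell(x)+\epsilon$ for all sufficiently large $n$. Applying the monotonicity of $s\mapsto L(x_n\cdot s)$ to each of these orbits gives
$$\ell(x_n)=\lim_{s\to\infty}L(x_n\cdot s)\;\le\; L(x_n\cdot T)\;<\;\ell(x)+\epsilon$$
for all large $n$. Taking $\limsup$ over $n$ yields $\limsup_n\ell(x_n)\le\ell(x)+\epsilon$, and letting $\epsilon\to 0$ gives the desired inequality (\ref{semi}).

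I do not expect a genuine obstacle here: the argument is the standard one showing that a pointwise limit of a monotone family is semicontinuous in the appropriate direction. The only points deserving an explicit line are the monotonicity of $L$ along every orbit (reduced above to the defining properties of a Lyapunov function and the flow-invariance of $R$) and the fact, already recorded in the excerpt, that $\ell(x)$ is a well-defined finite limit; once these are in place the rest is an $\epsilon$-chase using continuity of $\Phi(\cdot,T)$ and of $L$. One should also remark that the argument covers the case $x\in R$ as well, where simply $\ell(x)=L(x)$.
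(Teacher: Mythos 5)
Your proof is correct and follows essentially the same route as the paper's: pick $T$ with $L(x\cdot T)<\ell(x)+\epsilon$, use continuity of $y\mapsto L(y\cdot T)$ near $x$, and then use the monotone bound $\ell(y)\le L(y\cdot T)$; the paper merely phrases this with a neighborhood $U$ of $x$ instead of a sequence. Your explicit remarks on the monotonicity of $L$ along orbits and on the case $x\in R$ are fine additions that the paper leaves implicit.
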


\begin{proof}
Given any $\epsilon >0$ there exists $T>0$ such that $0< L(x\cdot T) - \ell(x)
<\epsilon$. Since $L$ is continuous and the map $x\mapsto x\cdot T$ is
continuous, there exists a neighborhood $U\subset M$ of $x$ such that
$|L(y\cdot T)-L(x\cdot T)|<\epsilon$ for any $y\in U$. Hence for all $y\in U$
one has
$$\ell(y) \leq L(y\cdot T) <L(x\cdot T)+\epsilon<\ell(x) +2\epsilon.$$
Hence, $\ell(y)\leq \ell(x)$.
\end{proof}

The function $L:M\to \R$ restricted to $Z$ is constant. Indeed, $L(Z)\subset
\R$ must be connected (as the image of a connected set) and it has measure zero
by Sard's theorem. Hence it is a single point. Denote $c=L(Z)$.

Let $Z\subset W$ be an open neighborhood. As $Z$ is supposed to be isolated in
$R$, we may assume (without loss of generality) that $\overline W\cap R = Z$.

Fix $\epsilon >0$ and denote by $A_+(\epsilon)$ the set of all points $x\in M$
with the properties: (a) $L(x) = c+\epsilon$; (b) $\omega(x) \subset Z$. Note
that (b) implies that $\ell(x) = c$.

\begin{lemma}\label{lmcon}
For any sufficiently small $\epsilon >0$ the set $A_+(\epsilon)$ is
contained in the neighborhood $W$. \end{lemma}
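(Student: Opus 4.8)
The natural approach is by contradiction. If the claim fails there are a sequence $\epsilon_n\to 0$ with $\epsilon_n>0$ and points $x_n\in A_+(\epsilon_n)$ with $x_n\notin W$. The first observation is that it is hopeless to pass to a subsequential limit of the $x_n$ themselves: the $x_n$ are only constrained to lie on the level set $L=c+\epsilon_n$ with $\omega(x_n)\subset Z$, and nothing forces them to accumulate near $Z$. Instead I would follow each forward orbit until it first meets $W$. Since $\omega(x_n)\subset Z\subset W$ and $W$ is open, the standard fact that $d(x_n\cdot t,\omega(x_n))\to 0$ as $t\to\infty$ shows the forward orbit of $x_n$ eventually stays in $W$; hence the set $\{t\ge 0:\ x_n\cdot t\in\overline W\}$ is nonempty and, being closed, has a minimum $\tau_n\ge 0$. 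Put $y_n=x_n\cdot\tau_n$. Then $y_n\in\overline W$, and either $\tau_n=0$ (so $y_n=x_n\in\overline W\setminus W$) or $\tau_n>0$ (so $y_n$ is a limit of the points $x_n\cdot t$, $t<\tau_n$, which lie outside $\overline W$); since $W$ is open, in either case $y_n\in\partial W$. Note also $Z\cap\partial W=\emptyset$ because $Z\subset W$ and $W$ is open.

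Next I would estimate $L(y_n)$. The forward orbit of $y_n$ is a tail of that of $x_n$, so $\omega(y_n)=\omega(x_n)\subset Z$; since $\omega(y_n)\subset L^{-1}(\ell(y_n))$ and $L\equiv c$ on $Z$, this forces $\ell(y_n)=c$. As $t\mapsto L(y_n\cdot t)$ is non-increasing we get $L(y_n)\ge c$, and $L(y_n)=c$ would make $L$ constant $=c$ along the whole forward orbit of $y_n$, hence $y_n\in R$ by $VL<0$ on $M-R$; but $y_n\in R\cap\partial W\subset Z\cap\partial W=\emptyset$, a contradiction, so $L(y_n)>c$. On the other hand $L(y_n)=L(x_n\cdot\tau_n)\le L(x_n)=c+\epsilon_n$. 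Thus $c<L(y_n)\le c+\epsilon_n$, so $L(y_n)\to c$. By compactness of $\partial W$ pass to a subsequence with $y_n\to y\in\partial W$; then $L(y)=c$.

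Finally I would show $y\in R$, which is the contradiction we want, since $\overline W\cap R=Z\subset W$ while $y\in\partial W$. Upper semi-continuity of $\ell$ (the preceding Lemma) gives $\ell(y)\ge\limsup_n\ell(y_n)=c$, while monotonicity of $L$ along orbits gives $\ell(y)\le L(y)=c$; hence $\ell(y)=L(y)=c$. Therefore $t\mapsto L(y\cdot t)$ is a non-increasing function equal to $c$ at $t=0$ and tending to $c$, so it is constant $=c$, which gives $(VL)(y\cdot t)=0$ for all $t\ge 0$. Since $VL<0$ on $M-R$ we conclude $y\cdot t\in R$ for all $t\ge 0$, in particular $y\in R$.

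The one genuinely nontrivial idea here is replacing $x_n$ by the first boundary-crossing point $y_n$: this is what simultaneously pins the points to the compact set $\partial W$, keeps the level estimate $c<L(y_n)\le c+\epsilon_n$, and preserves $\omega(y_n)\subset Z$. Everything afterwards is routine packaging of the Conley-theoretic facts already recorded in this section — upper semi-continuity of $\ell$, $VL<0$ off $R$, $L$ constant on $Z$, and $\overline W\cap R=Z$ — together with the trivial topological remarks that $Z\cap\partial W=\emptyset$ and that the first-hitting time is attained.
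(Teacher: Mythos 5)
Your proof is correct, but the second half takes a genuinely different route from the paper's. Both arguments begin identically: negate the statement to get $\epsilon_n\to 0$ and $x_n\in A_+(\epsilon_n)\setminus W$, so $L(x_n)=c+\epsilon_n$, $\ell(x_n)=c$, $\omega(x_n)\subset Z$. The paper then does exactly what you declare ``hopeless'': it passes to a subsequential limit $x_0$ of the $x_n$ inside the compact set $M-W$. The point is not that the $x_n$ accumulate near $Z$ --- they don't --- but that upper semi-continuity of $\ell$ together with $L(x_0)=c$ forces $x_0\in L^{-1}(c)\cap(R-Z)$, i.e.\ the $x_n$ accumulate at a \emph{different} component of $R$. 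The contradiction is then quantitative: a trajectory starting $\delta$-close to $R-Z$ with $\omega$-limit in $Z$ must spend time at least $(d-2\delta)/K$ in the region where $V(L)\le-\eta$, so $L$ drops below $c$ along it, contradicting $\ell(x_n)=c$. Your argument avoids the Riemannian estimates entirely by following each orbit to its first entry point $y_n$ into $\overline W$, pinning $c<L(y_n)\le c+\epsilon_n$, and producing in the limit a point $y\in\partial W$ with $L(y)=\ell(y)=c$, hence $y\in R\cap\overline W=Z$, which is impossible since $Z\cap\partial W=\emptyset$. This is cleaner and more qualitative; it leans harder on the normalization $\overline W\cap R=Z$ and on the standard facts ($\omega$-limit sets attract their orbits, $\emptyset\ne\omega(x)\subset L^{-1}(\ell(x))$, $V(L)<0$ off $R$), all of which are available in this section. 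The paper's version has the minor virtue of exhibiting an explicit lower bound for the drop in $L$, which is the mechanism reused in the proof of Lemma \ref{lm22}; your version generalizes more readily to settings without a metric. Either way, the result stands.
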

\begin{proof}
Assuming the contrary, there exists a convergent sequence $x_n\in M-W$ such
that $L(x_n)\to c$ and $L(x_n)>c$, $\ell(x_n)=c$ and $\omega(x_n)\subset Z$,
where $\omega(x)$ denotes the $\omega$-limit set of the trajectory $x\cdot t$.
If $x_0=\lim x_n\in M$ is the limit of $x_n$ then (using (\ref{semi})) one has
$\ell(x_0)\geq c$. On the other hand, $L(x_0)=c$ and hence $\ell(x_0)=c$. We
obtain that $x_0$ belongs to the set $L^{-1}(c)\cap (R-Z).$ In other words, it
lies in a connected component of $R$ distinct from $Z$.

Fix a Riemannian metric on $M$. Denote by $d>0$ the distance between $Z$ and
$R-Z$. Denote by $K>0$ a constant such that the norm of the vector $V$ is less
than $K$ at every point of $M$. Such $K$ exists since $M$ is compact.

Fix some $\delta>0$ such that $\delta< d/2$. The function $V(L):M\to \R$
restricted to the complement of the $\delta$-neighborhood of $R$ is negative
and moreover can be estimated from above $V(L)\leq -\eta$ for some positive
$\eta=\eta_\delta>0$. Now, we may find a large $n$ such that
$$L(x_n)-c \, <\,
\frac{\eta (d-2\delta)}{K}$$ and $x_n$ lies in the $\delta$-neighborhood of
$R-Z$. The trajectory $x_n\cdot t$ approaches $Z$ for large $t$. The length of
the trajectory is given by
$$\int_{t_1}^{t_2} |\dot x(t)|dt = \int_{t_1}^{t_2} |V(x(t)|dt \, \leq \, K|t_2-t_1|.$$
One concludes that the time $\tau_n$ the trajectory $x_n\cdot t$ for $t>0$
spends in the complement of the $\delta$-neighborhood of $R$ can be estimated
by
$$\tau_n\, \geq \, \frac{d-2 \delta}{K}.$$
Therefore, for large $t>0$ one has
$$L(x_n\cdot t) - L(x_n) = \int_0^t V(L)(x_n\cdot t) dt \leq -\eta\cdot
\tau_n\leq -\frac{\eta(d-2\delta)}{K}$$ These inequalities show (since $L(x_n)$
tends to $c$) that for large $t$ one has $L(x_n\cdot t)<c$ contradicting the
assumption $\ell(x_n)=c$.
\end{proof}

\begin{remark}\label{rm1}{\rm
Define $A_-(\epsilon)$ as the set of all points $x\in M$ with $L(x)=c-\epsilon$
and $\alpha(x)\subset Z$ where $\alpha(x)$ denotes the $\alpha$-limit set of
the trajectory $x\cdot t$. Applying the above arguments to the time reversed
flow one obtains that for any neighborhood $W$ of $Z$ the set $A_-(\epsilon)$
is contained in $W$ for any sufficiently small $\epsilon
>0$.}
\end{remark}

The arguments of the proof could be summarized as follows: the point $x_n$ is
very close to a component $Z'$ of $R$ lying in $L^{-1}(c)$ and distinct from
$Z$. The trajectory starting at $x_n$ cannot approach $Z$ since while it passes
the distance separating $Z$ and $Z'$ it descends with respect to $L$ so that
the point $x_n\cdot t$ slips below the level $c$.

\begin{lemma} For any sufficiently small $\epsilon >0$
the set $A_+(\epsilon)$ is closed.
\end{lemma}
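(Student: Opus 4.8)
The plan is to run the Lyapunov descent estimate of Lemma \ref{lmcon}, but now to control the \emph{whole} forward orbit of a limit point. Fix $\delta$ with $0<\delta<d/2$, where $d=\mathrm{dist}(Z,R\setminus Z)>0$; let $K$ bound $|V|$ on $M$, and let $\eta=\eta_\delta>0$ satisfy $V(L)\le-\eta$ outside the $\delta$-neighbourhood of $R$ (both exist by compactness of $M$, as in Lemma \ref{lmcon}). Since $M$ is normal and $Z$ is closed in $M$ and isolated in $R$, choose an open neighbourhood $W\supset Z$ with $\overline W\subset\{x:\mathrm{dist}(x,Z)<\delta\}$ and $\overline W\cap R=Z$. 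By Lemma \ref{lmcon} applied to this $W$, fix $\epsilon>0$ small enough that $A_+(\epsilon)\subset W$, requiring moreover $\epsilon<\eta(d/2-\delta)/K$.

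Let $x_n\in A_+(\epsilon)$ with $x_n\to x_0$. Continuity of $L$ gives $L(x_0)=c+\epsilon$; since $x_0\in\overline W$ we get $\mathrm{dist}(x_0,Z)<\delta$ and $x_0\notin R$ (because $\overline W\cap R=Z$ while $L(x_0)\ne c$). Upper semicontinuity of $\ell$ together with $\ell(x_n)=c$ forces $\ell(x_0)\ge c$. As $R$ is flow-invariant and $x_0\notin R$, the entire forward orbit of $x_0$ avoids $R$, so $L$ strictly decreases along it and $L(x_0\cdot t)\in(c,c+\epsilon]$ for all $t\ge0$; in particular any two such values differ by less than $\epsilon$.

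It remains to show the forward orbit of $x_0$ stays in $\{x:\mathrm{dist}(x,Z)<d/2\}$: granting this, $\omega(x_0)\subset\{x:\mathrm{dist}(x,Z)\le d/2\}\cap R=Z$ (the $\omega$-limit set is contained in $R$, and $R\setminus Z$ is at distance $\ge d$ from $Z$), so $\omega(x_0)\subset Z$ and hence $x_0\in A_+(\epsilon)$, proving closedness. Suppose instead the orbit first reaches $\mathrm{dist}(\cdot,Z)=d/2$ at a time $t_1>0$, and set $\tau=\sup\{t\in[0,t_1]:\mathrm{dist}(x_0\cdot t,Z)\le\delta\}$, so that $0\le\tau<t_1$. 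For $t\in(\tau,t_1]$ one has $\delta<\mathrm{dist}(x_0\cdot t,Z)\le d/2$, whence also $\mathrm{dist}(x_0\cdot t,R\setminus Z)\ge d/2>\delta$, so $x_0\cdot t$ lies outside the $\delta$-neighbourhood of $R$ and $V(L)(x_0\cdot t)\le-\eta$. Integrating, $-\epsilon<L(x_0\cdot t_1)-L(x_0\cdot\tau)\le-\eta(t_1-\tau)$, so $t_1-\tau<\epsilon/\eta$; but $\mathrm{dist}(\cdot,Z)$ changes by at least $d/2-\delta$ over $[\tau,t_1]$ while changing at speed at most $K$, so $d/2-\delta\le K(t_1-\tau)<K\epsilon/\eta$, i.e.\ $\epsilon>\eta(d/2-\delta)/K$, contradicting the choice of $\epsilon$.

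The substantive difficulty — and the reason $\epsilon$ must be taken small — is that a priori a point on $L^{-1}(c+\epsilon)$ lying near $Z$ whose $\omega$-limit set is inside $Z$ could have a forward orbit travelling far from $Z$ and returning; the estimate above excludes this once $\epsilon$ is below the drop in $L$ any orbit must incur crossing the shell separating $Z$ from the rest of $R$. Everything else — semicontinuity of $\ell$, the inclusion $A_+(\epsilon)\subset W$, the bound $V(L)\le-\eta$ off a neighbourhood of $R$ — is already available.
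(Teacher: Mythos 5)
Your proof is correct, but it follows a genuinely different route from the paper's. The paper argues softly: for each $t>0$ the point $x_n\cdot t$ belongs to $A_+(\epsilon')$ for some $0<\epsilon'<\epsilon$, hence lies in $W$ once Lemma \ref{lmcon} is invoked uniformly for all smaller $\epsilon'$; letting $n\to\infty$, the entire forward orbit of $x_0$ lies in $\overline W$, so $\omega(x_0)\subset \overline W\cap R=Z$ and $x_0\in A_+(\epsilon)$, with no quantitative estimates and no use of the semicontinuity of $\ell$. You instead work directly with the limit point: you use upper semicontinuity of $\ell$ to confine $L$ along the forward orbit of $x_0$ to $(c,c+\epsilon]$, and then re-run the descent estimate from the proof of Lemma \ref{lmcon} (drop of at least $\eta(d/2-\delta)/K$ in $L$ to cross the shell separating $Z$ from $R-Z$, versus a total available drop of less than $\epsilon$) to show the orbit can never escape the $d/2$-neighbourhood of $Z$, whence $\omega(x_0)\subset Z$. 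The paper's route buys brevity; yours buys an explicit admissible threshold $\epsilon<\eta(d/2-\delta)/K$ realizing the phrase \lq\lq sufficiently small\rq\rq, and it needs the containment $A_+(\epsilon)\subset W$ only for the single fixed $\epsilon$ rather than for the whole family of smaller values. All the individual steps check out: $d>0$ because $Z$ and $R-Z$ are disjoint compact sets, $x_0\notin R$ because $L(x_0)=c+\epsilon\neq c$ while $\overline W\cap R=Z$, the set over which you take the supremum defining $\tau$ contains $0$ and is closed, and the points $x_0\cdot t$ for $t\in(\tau,t_1]$ indeed avoid the $\delta$-neighbourhood of all of $R$, not just of $Z$.
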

\begin{proof} Let $W$ be a neighborhood of $Z$ with $\overline W\cap R=Z$ and let
$\epsilon>0$ be such that $A_+(\epsilon)\subset W$. Assume that a sequence
$x_n\in A_+(\epsilon)$ converges to a point $x_0$. Then, for any $t>0$ the
point $x_n\cdot t$ lies in $W$. Hence we obtain that for any $t>0$ the point
$x_0\cdot t$ lies in $\overline W$. It follows that $\omega(x_0)\subset
\overline W\cap R=Z$. Hence, $x_0$ is an element of $A_+(\epsilon)$.
\end{proof}
\begin{lemma}\label{lm22} Let $W$ be an open neighborhood of $Z$ such that $\overline W\cap
R=Z$. Let $\epsilon>0$ be such that $A_+(\epsilon')$ is contained in $W$ for
any $0<\epsilon'\leq \epsilon$. Then there exists an open neighborhood
$U_+(\epsilon)$ of the set $A_+(\epsilon)$ in the level set
$L^{-1}(c+\epsilon)$ and an open neighborhood $U_0$ of $Z$ in $L^{-1}(c)$ with
the following properties:
\begin{enumerate}
\item If $x\in U_+(\epsilon) - A_+(\epsilon)$ then for some $\tau_x>0$ the
point $x\cdot \tau_x$ lies in $U_0-Z$. \item The mapping $x\mapsto x\cdot
\tau_x$ is a homeomorphism of $U_+(\epsilon) - A_+(\epsilon) \to U_0-Z$. \item
For any $0\leq t\leq \tau_x$ the point $x\cdot t$ lies in $W$. \item The number
$\tau_x$ depends continuously on $x\in U_+(\epsilon)-A_+(\epsilon)$ and it
tends to $+\infty$ as $x\in U_+(\epsilon)-A_+(\epsilon)$ approaches
$A_+(\epsilon)$.
\end{enumerate}
\end{lemma}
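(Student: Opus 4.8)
The plan is to build the neighborhood $U_+(\epsilon)$ inside the level set $L^{-1}(c+\epsilon)$ using the entrance behavior of orbits into a small convex-like neighborhood of $Z$, and to transport it down to a neighborhood $U_0$ of $Z$ in $L^{-1}(c)$ by following the flow. First I would fix $W$ with $\overline W\cap R=Z$ and choose $\epsilon>0$ so small that $A_+(\epsilon')\subset W$ for all $0<\epsilon'\le\epsilon$ (possible by Lemma \ref{lmcon}); shrinking $W$ if necessary, I would also use Theorem \ref{convexthm} to assume that $W$ contains a flow-convex neighborhood $B$ of $Z$ and that every orbit segment with both endpoints in $\overline W$ stays in a slightly larger still-isolating neighborhood. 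The key observation is this: if $x\in A_+(\epsilon)$, then $\omega(x)\subset Z$, so the orbit $x\cdot t$ enters $B$ at some finite time and stays there; moreover, since $dL$ vanishes on $R$ while $V(L)\le-\eta<0$ off a $\delta$-neighborhood of $R$, the orbit reaches the level $L^{-1}(c)$ — i.e. hits a point at $L$-value exactly $c$ — only in the limit $t\to+\infty$, so $x$ does not itself meet $L^{-1}(c)$. This is what forces $\tau_x\to+\infty$ at the end.

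Next I would define, for small $\epsilon$, the candidate set
\[
U_+(\epsilon)=\{x\in L^{-1}(c+\epsilon)\ :\ x\cdot t\in W\ \text{for all}\ t\ge 0\}.
\]
Upper semicontinuity of $\ell$ (the Lemma preceding Lemma \ref{lmcon}) together with $\overline W\cap R=Z$ shows that $x\in U_+(\epsilon)$ forces $\omega(x)\subset Z$, hence $U_+(\epsilon)\subset A_+(\epsilon)$; conversely $A_+(\epsilon)\subset U_+(\epsilon)$ once $W$ is chosen so that orbits with $\omega$-limit in $Z$ eventually enter $B$ and never leave $W$ afterward, which one arranges by first picking $B\subset W'\subset\overline{W'}\subset W$ and using that a forward orbit converging into $B$ must, after a last exit time, remain in $\overline{W'}$. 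So in fact $U_+(\epsilon)=A_+(\epsilon)$ as sets — but that is too rigid; instead I would take $U_+(\epsilon)$ to be a genuinely open neighborhood of $A_+(\epsilon)$ in $L^{-1}(c+\epsilon)$, chosen so that for $x\in U_+(\epsilon)$ the orbit stays in $W$ until it crosses below the level $c$. Concretely: for $x$ close enough to $A_+(\epsilon)$, the orbit follows an orbit of $A_+(\epsilon)$ into $B$ for a long time (continuous dependence on initial conditions), and then, being slightly off, it exits $B$ downward through $L^{-1}(c)$ at a well-defined first time $\tau_x$ with $x\cdot\tau_x\in L^{-1}(c)$. One defines $U_0$ to be the image $\{x\cdot\tau_x: x\in U_+(\epsilon)-A_+(\epsilon)\}\cup Z$ inside $L^{-1}(c)$.

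For property (1) I would argue that if $x\notin A_+(\epsilon)$ but $x\in U_+(\epsilon)$, then $\omega(x)\not\subset Z$ is impossible for $x$ near $A_+(\epsilon)$ with the orbit staying in $W$, so instead the orbit must drop below level $c$; the first crossing time $\tau_x>0$ exists because $L(x\cdot t)$ is strictly decreasing and the orbit cannot hover at level $c$ without its $\omega$-limit lying in $R\cap L^{-1}(c)$, which near $A_+(\epsilon)$ can only be $Z$. For (2), continuity and injectivity of $x\mapsto x\cdot\tau_x$ follow from the flow-box structure of $\Phi$ near the transversal $L^{-1}(c+\epsilon)$ and from $V(L)\ne0$ at the crossing point (so $L^{-1}(c)$ is a smooth transversal there, giving $\tau_x$ smooth via the implicit function theorem), with the inverse given by flowing backward until hitting $L^{-1}(c+\epsilon)$; here I would invoke Theorem \ref{convexthm}(B), the continuity of the entrance-time function for the convex neighborhood, to control the inverse globally. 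Property (3) is immediate from the construction, since $\tau_x$ was chosen as the first exit from (a neighborhood inside) $W$. For (4), continuity of $\tau_x$ is the implicit-function statement just mentioned, and $\tau_x\to+\infty$ as $x\to A_+(\epsilon)$ is exactly the observation from the first paragraph: points of $A_+(\epsilon)$ reach level $c$ only in infinite time, so by continuous dependence the crossing time of nearby points blows up.

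\textbf{Main obstacle.} The delicate point is making the definition of $U_+(\epsilon)$ and $\tau_x$ simultaneously honest: I need an open set on which the \emph{first} time of crossing $L^{-1}(c)$ is well-defined, continuous, and a homeomorphism onto an open subset of $L^{-1}(c)$, and this requires ruling out orbits that approach some \emph{other} component of $R$ at level $c$ before descending — precisely the phenomenon handled in the proof of Lemma \ref{lmcon}. The clean way is to first fix $W$ (hence $\epsilon$) small enough via Lemma \ref{lmcon} and Remark \ref{rm1} so that no such competing component interferes within $W$, and then to do all the transversality bookkeeping relative to the flow-convex neighborhood $B$ supplied by Theorem \ref{convexthm}; once $W$ and $B$ are locked in, the rest is a flow-box argument. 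I expect the transversality/implicit-function step establishing that $\tau_x$ is continuous and the crossing map is a homeomorphism (property (2)) to be where the real work lies.
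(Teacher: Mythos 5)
Your overall strategy is the same as the paper's: take $U_+(\epsilon)$ to consist of points of $L^{-1}(c+\epsilon)$ near $A_+(\epsilon)$ whose forward orbit descends to $L^{-1}(c)$ without leaving $W$, get continuity of $\tau_x$ from the implicit function theorem (since $V(L)<0$ at the crossing), and get $\tau_x\to+\infty$ from the fact that a point of $A_+(\epsilon)$ reaches level $c$ only in infinite time. But there is a logical problem: the argument is circular as written. You invoke Theorem \ref{convexthm} twice --- once to supply a flow-convex neighborhood $B\subset W$ of $Z$, and once (part (B)) to control the inverse of the crossing map --- yet Lemma \ref{lm22}, together with Lemma \ref{lm23}, is exactly what the paper uses to \emph{prove} Theorem \ref{convexthm}. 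You cannot assume a flow-convex neighborhood exists here. The paper avoids this by working with elementary data only: a neighborhood $G$ of $Z$ with $\overline G\subset W$, the distance $d$ from $\overline G$ to $M-W$, the bounds $|V|\le K$ and $V(L)\le -\eta$ on $W-G$, and a smaller $G'$ with $L(G')\subset(-\infty,\, c+\eta d/K)$; these yield the quantitative claim that an orbit starting near $A_+(\epsilon)$ cannot escape $W$ before dropping below level $c$, by the same descent estimate as in Lemma \ref{lmcon}. Your use of $B$ as a ``trap'' only ever needs $B$ to be a small open neighborhood of $Z$, so this is repairable, but the appeal to Theorem \ref{convexthm}(B) for the inverse map is not a repair --- it stands in for an argument you have not given.

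That missing argument is the second genuine gap: you never establish that $U_0$ is an open \emph{neighborhood of $Z$} in $L^{-1}(c)$, which is part of the statement and is what makes $x\mapsto x\cdot\tau_x$ a homeomorphism onto $U_0-Z$ rather than onto some image set. Defining $U_0$ as $\{x\cdot\tau_x\}\cup Z$ leaves precisely this to prove. The paper handles it with a second, independent claim: every $y\in W-Z$ with $L(y)=c$ sufficiently close to $Z$ flows \emph{backward} to $L^{-1}(c+\epsilon)$ without leaving $W$. Its proof is a separate limiting argument with a three-case analysis on the limit point $z\in\partial W$ of the would-be counterexamples (either $\ell(z)>c$, or $\omega(z)$ lies in a component of $R$ on the level $L=c$ distinct from $Z$, or $z\in A_+(\epsilon')$ for some $0<\epsilon'\le\epsilon$); the last case is excluded exactly by the hypothesis that $A_+(\epsilon')\subset W$ for \emph{all} $0<\epsilon'\le\epsilon$. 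You state this hypothesis at the outset but never use the ``all $\epsilon'$'' part anywhere --- a reliable symptom that the backward half of the construction is missing from your proposal.
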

\begin{figure}
\begin{center}
\resizebox{6cm}{4.8cm} {\includegraphics[107,358][542,715]{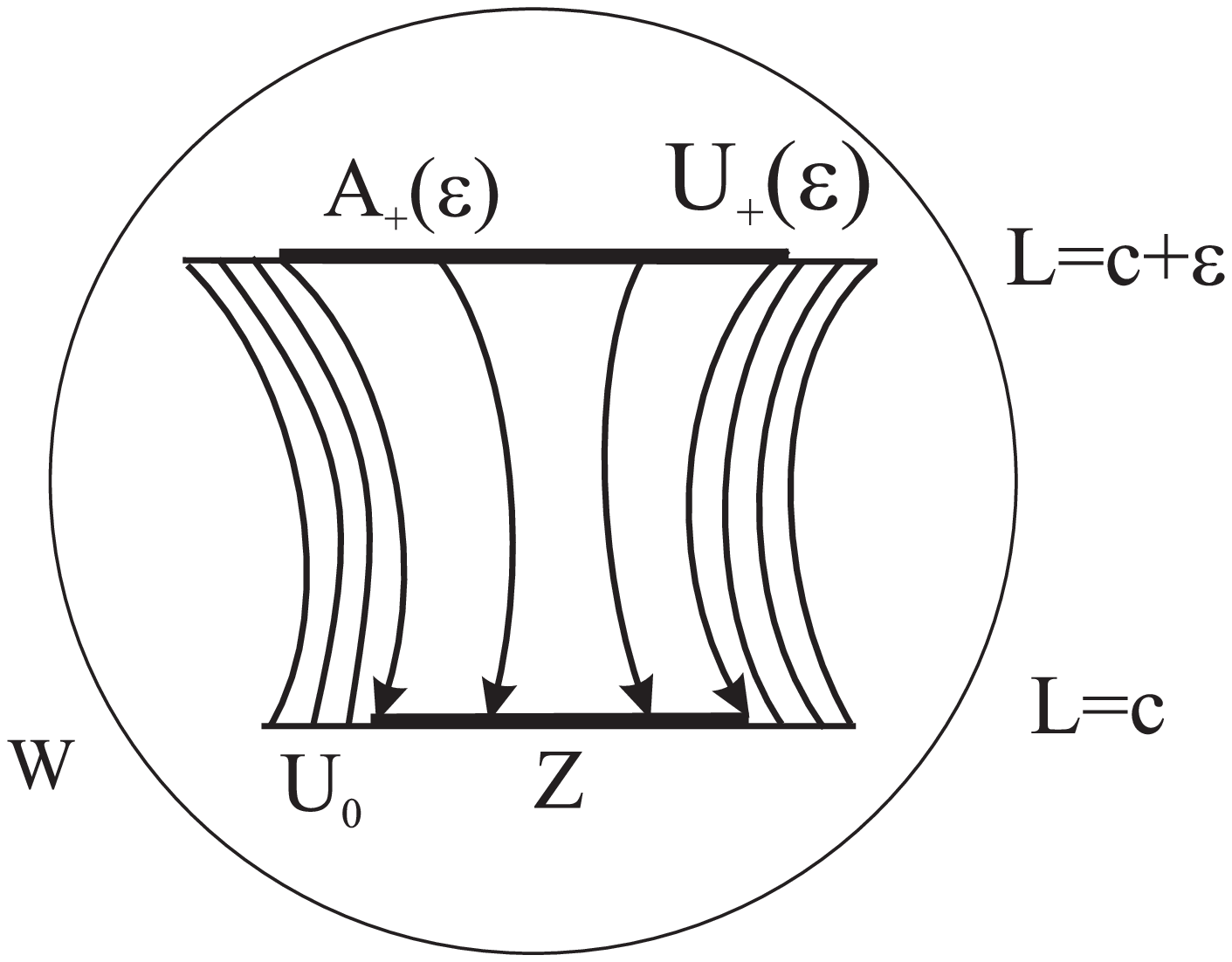}}
\end{center}
\end{figure}
\begin{proof}
If $L(y)=c+\epsilon$ and $y$ is close enough to the set $A_+(\epsilon)$ then
for $t>0$ the orbit $y\cdot t$ does not leave $W$ before it leaves the set
$L\geq c$. Indeed, if this claim is false then there exists a sequence of
points $y_n\in M$ and a sequence of numbers $t_n>0$ such that
$L(y_n)=c+\epsilon$, $y_n\to x\in A_+(\epsilon)$, $L(y_n\cdot t_n)\geq c$, and
$y_n\cdot t_n\notin W$. Fix a Riemannian metric on $M$. Let $K>0$ be such that
the length of the vector $V(x)$ is less or equal than $K$ at every point $x\in
M$. Choose a small neighborhood $G$ of $Z$ such that its closure is contained
in $W$. Let $d>0$ be the distance between $\overline G$ and $M-W$. The function
$V(L)$ satisfies in $W-G$ the inequality $V(L)\leq -\eta$ for some $\eta>0$.
Now, let $G'\subset G$ be a neighborhood of $Z$ such that
$$L(G')\subset (-\infty, c+\eta\cdot\frac{d}{K}).$$
As $x$ belongs to $A_+(\epsilon)$ there exists $t'>0$ with $x\cdot t'\in G'$.
Then for $n$ large enough $y_n\cdot t'\in G'$. Arguing as in the proof of Lemma
\ref{lmcon}, one obtains that the point $y_n\cdot t$ cannot reach $M-W$ before
it leaves the domain $L\geq c$.

Similarly we observe: if $L(y)=c$ where $y\in W-Z$ is sufficiently close to $Z$
then the trajectory $y\cdot t$ for some $t<0$ reaches the level $L=c+\epsilon$
without leaving $W$. Indeed, if this statement is false one finds a sequence
$y_n\in W-Z$, $L(y_n)=c$ such that $y_n\to x\in Z$ and for some $t_n<0$,
$y_n\cdot t_n\in
\partial W$, $L(y_n\cdot t_n)\leq c+\epsilon$. We may assume
that the sequence $y_n\cdot t_n$ converges to a point $z\in
\partial W$. If $\ell(z)>c$ the forward trajectory $z\cdot t$ where $t>0$ crosses the level
surface $L=c$ at a point which is not in $Z$ and the trajectory $y_n\cdot t$
must cross this level at a nearby point as $n\to \infty$, a contradiction.
Hence we have to consider only the case $\ell(z)=c$ which leaves two
possibilities: either $\omega(z)$ is contained in a connected component of $R$
distinct from $Z$ which also lies on the level surface $L=c$, or
$\omega(z)\subset Z$. The first possibility leads to a contradiction, arguing
as in the proof of Lemma \ref{lmcon}. The second possibility means that $z$
belongs to $A_+(\epsilon')$ where $0<\epsilon'\leq \epsilon$. This is a
contradiction since we assume that $A_+(\epsilon')$ is contained in the open
set $W$.

Let $U_0$ be the union of the set $Z$ and the set of all points $y\in W\cap
L^{-1}(c)$ such that the trajectory $y\cdot t$ for $t<0$ reaches the set $W\cap
L^{-1}(c+\epsilon)$ without leaving $W$.

Let $U_+(\epsilon)$ be the union of $A_+(\epsilon)$ and the set of all $y\in
W\cap L^{-1}(c+\epsilon)$ such that the trajectory $y\cdot t$ for $t>0$ reaches
the level surface $L^{-1}(c)$ without leaving $W$.

$U_0$ is an open neighborhood of $Z$ in $L^{-1}(c)$ and $U_+(\epsilon)$ is an
open neighborhood of $A_+(\epsilon)$ in $L^{-1}(c+\epsilon)$ as was shown
above. It is then easy to see that the statements of Lemma \ref{lm22} hold for
$U_0$ and $U_+(\epsilon)$. Let us show, for example, that if $x_n\in
U_+(\epsilon)-A_+(\epsilon)$ and $x_n\to x\in A_+(\epsilon)$ then $\tau_{x_n}$
tends to $+\infty$. If not one may pass to a subsequence such that the sequence
$\tau_{x_n}$ has a finite limit $\tau$. Then $L(x\cdot \tau) =\lim L(x_n\cdot
\tau_{x_n})= c$. Thus the trajectory $x\cdot t$ arrives in finite time at $Z$
which is impossible since $Z$ is flow-invariant and $x\notin Z$.
\end{proof}
Here is an analog of Lemma \ref{lm22}:

\begin{lemma}\label{lm23} Let $W$ be an open neighborhood of $Z$ such that $\overline W\cap
R=Z$. Let $\epsilon>0$ be such that $A_-(\epsilon')$ is contained in $W$ for
any $0<\epsilon'\leq \epsilon$. Then there exists an open neighborhood
$U_-(\epsilon)$ of the set $A_-(\epsilon)$ in the level set
$L^{-1}(c-\epsilon)$ and an open neighborhood $U_0$ of $Z$ in $L^{-1}(c)$ with
the following properties:
\begin{enumerate}
\item If $x\in U_-(\epsilon) - A_-(\epsilon)$ then for some $T_x<0$ the point
$x\cdot T_x$ lies in $U_0-Z$. \item The mapping $x\mapsto x\cdot T_x$ is a
homeomorphism
$$U_-(\epsilon) - A_-(\epsilon) \to U_0-Z.$$ \item For any $T_x<
t<0$ the point $x\cdot t$ lies in $W$. \item The number $T_x$ depends
continuously on $x\in U_-(\epsilon)-A_-(\epsilon)$ and it tends to $-\infty$ as
$x\in U_-(\epsilon)-A_-(\epsilon)$ approaches $A_-(\epsilon)$.
\end{enumerate}
\end{lemma}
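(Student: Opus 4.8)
The plan is to deduce Lemma \ref{lm23} from Lemma \ref{lm22} by reversing time. I would consider the flow $\Phi'$ generated by the vector field $-v$, so that, in the notation $x\cdot t=\Phi(x,t)$, one has $\Phi'(x,t)=x\cdot(-t)$. The chain recurrent set of $\Phi'$ coincides with $R$, and $Z$ remains a connected component of $R$ which is isolated in $R$; in particular the standing assumption $\overline W\cap R=Z$ is unchanged. Moreover $L'=-L$ is a smooth Lyapunov function for $\Phi'$: the directional derivative of $-L$ along $-v$ equals $V(L)<0$ on $M-R$, and $dL'=-dL$ vanishes on $R$. The critical value attached to $Z$ for the pair $(\Phi',L')$ is $c'=-c$.

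Under this reversal the forward limit set of a point $x$ with respect to $\Phi'$ is exactly the $\alpha$-limit set $\alpha(x)$ with respect to $\Phi$. Hence the set $A'_+(\epsilon)$ built from $(\Phi',L')$ — the points $x$ with $L'(x)=c'+\epsilon$ whose forward $\Phi'$-limit set lies in $Z$ — is precisely $A_-(\epsilon)=\{x:L(x)=c-\epsilon,\ \alpha(x)\subset Z\}$, while $(L')^{-1}(c'+\epsilon)=L^{-1}(c-\epsilon)$ and $(L')^{-1}(c')=L^{-1}(c)$. The hypothesis of Lemma \ref{lm23}, namely that $A_-(\epsilon')\subset W$ for all $0<\epsilon'\le\epsilon$ (which holds for small $\epsilon$ by Remark \ref{rm1}), is therefore exactly the hypothesis needed to apply Lemma \ref{lm22} to $(\Phi',L')$. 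Doing so produces an open neighborhood $U_-(\epsilon):=U'_+(\epsilon)$ of $A_-(\epsilon)$ in $L^{-1}(c-\epsilon)$ and an open neighborhood $U_0:=U'_0$ of $Z$ in $L^{-1}(c)$. For $x\in U_-(\epsilon)-A_-(\epsilon)$, Lemma \ref{lm22} gives $\tau'_x>0$ with $\Phi'(x,\tau'_x)\in U_0-Z$; setting $T_x=-\tau'_x<0$ this reads $x\cdot T_x\in U_0-Z$, which is (1). Statement (2) is the homeomorphism assertion of Lemma \ref{lm22} read for $\Phi'$. Statement (3) follows because $\Phi'(x,s)\in W$ for $0\le s\le\tau'_x$ and $\Phi'(x,s)=x\cdot(-s)$. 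Statement (4), continuity of $T_x$ together with $T_x\to-\infty$ as $x\to A_-(\epsilon)$, is the reversal of the corresponding claim about $\tau'_x$.

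I expect no genuine obstacle here: the argument is essentially formal. The only point requiring care is keeping the time-reversal dictionary straight — that passing from $v$ to $-v$ sends $L$ to $-L$, $c$ to $-c$, the level $c+\epsilon$ to $c-\epsilon$, $\omega$-limit sets to $\alpha$-limit sets, and positive times to negative times — and verifying that Lemma \ref{lm22}, together with the auxiliary facts it rests on (upper semicontinuity of $\ell$, Lemma \ref{lmcon}, and closedness of $A_+(\epsilon)$), uses only structure that is symmetric under this reversal; the time-reversed forms of these auxiliary statements are already recorded in Remark \ref{rm1}.
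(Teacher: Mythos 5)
Your proposal is correct and matches the paper's intent: the paper's proof of Lemma \ref{lm23} is simply ``It is similar to the proof of Lemma \ref{lm22},'' i.e.\ the time-reversed argument, and your formal reduction (apply Lemma \ref{lm22} to the flow of $-v$ with Lyapunov function $-L$, using the dictionary $c\mapsto -c$, $\omega$-limit $\mapsto$ $\alpha$-limit, $A_+\mapsto A_-$) is exactly the precise version of that remark, in the same spirit as Remark \ref{rm1}. The translation of hypotheses and of conclusions (1)--(4) checks out, and the auxiliary facts Lemma \ref{lm22} relies on are indeed symmetric under time reversal.
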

\begin{proof} It is similar to the proof of Lemma \ref{lm22}.
\end{proof}
\begin{proof}[Proof of Theorem \ref{convexthm}]
Let $W\subset M$ be an open neighborhood of $Z$. We may assume without loss of
generality that $\overline W\cap R=Z$. By Lemma \ref{lmcon} and Remark
\ref{rm1} we may find $\epsilon>0$ such that for any $0<\epsilon'\leq \epsilon$
the sets $A_+(\epsilon')$ and $A_-(\epsilon')$ are contained in $W$.

Let $C$ denote the set of all points $x\in W\cap L^{-1}(c-\epsilon,
c+\epsilon)$ such that there exist numbers $t_x<0<\tau_x$ with
\begin{eqnarray}
L(x\cdot t_x)=c+\epsilon, \quad L(x\cdot \tau_x)=c-\epsilon
\label{neigh}\end{eqnarray} and $x\cdot t$ is contained in $W$ for any $t\in
(t_x, \tau_x)$. Observe that if $x\in C$ then $x\cdot (t_x, \tau_x)$ is
contained in $C$.

Define the set
$$B=C\cup Z\cup \bigcup_{0\, <\,\epsilon'\,<\, \epsilon}A_\pm(\epsilon').$$
We are going to show that it satisfies the requirements of Theorem \ref{convexthm}. This set
is open. Indeed, the terms in (\ref{neigh}) are pairwise disjoint. The set $C$
is clearly open. Any point of $A_\pm(\epsilon')$ (where $0<\epsilon'<\epsilon$)
has a neighborhood contained entirely in $B$ (by Lemmas \ref{lm22} and
\ref{lm23}). Similarly, any sufficiently small neighborhood $G$ of a point
$x\in Z$ is contained in $B$: if $y\in G$ and $L(y)>c$ then the trajectory
$y\cdot t$ for $t>0$ either approaches $Z$ (in that case $y$ lies in
$A_+(\epsilon')$  for some $0<\epsilon'<\epsilon$) or it hits the level surface
$L=c$. In the second case the intersection point is very close to $Z$ and hence
by Lemmas \ref{lm22} and \ref{lm23} the trajectory continues all the way till
it reaches the level $L=c-\epsilon$ without leaving $W$.

Given $x\in M$ consider the set $J_x=\{t;x\cdot t\in B\}$. Let us assume that
$L(x)>c$. Then there exist the following possibilities:
\begin{enumerate}
\item $\ell(x)>c$; then $J_x=\emptyset$. \item $\omega(x)\subset J_x)$; then
$J_x$ is a half infinite interval $(a, +\infty)$. \item $\ell(x)=c$ and
$\omega(x)\cap Z=\emptyset$; then $J_x=\emptyset$.\item $\ell(x)<c$; then $J_x$
is either the empty set or a finite interval $(a,b)$.
\end{enumerate}

In the case $L(x)\leq c$ the arguments are similar.

The continuity of $x\mapsto \inf J_x$ follows from the Implicit Function
Theorem: the number $\inf J_x=t$ is the solution of the equation $L(x\cdot
t)=c+\epsilon$ and the partial derivative of $L(x\cdot t)$ with respect to $t$
is strictly negative.
\end{proof}

\section{Proof of Theorem \ref{jankothm1}}\label{jankoproof}

The arguments are similar to those use in the proof of Theorem 4.1 of \cite{farber}.

Suppose that Theorem \ref{jankothm1} is false, i.e. for any $N>0$ there exists a gradient-like vector field $v=v_N$ for $\omega$ having no generalized homoclinic cycles of integral trajectories $\gamma_1, \dots, \gamma_\ell$ satisfying (\ref{jankocycle1}) and (\ref{lessn}). Our goal is to show that then
\begin{eqnarray}\label{ineq3}
\cat(M, [\omega]) \leq \sum_{i=1}^k \cat_M(Y_i),\end{eqnarray}
i.e. the inequality (\ref{jankoineq1}) is violated. Without loss of generality we may assume that the number of connected components $k$ is finite since
if $k=\infty$ the inequality (\ref{ineq3}) is obviously true, as $\cat_M(Y_i)\geq 1$ for any $i$.

Fix a Riemannian metric on $M$.
We will denote the flow $M\times \R\to M$ generated by $-v$ by $t\mapsto m\cdot t$, where $m\in M$ and $t\in \R$. Since $\omega(v)>0$ on $M-Y$, the integral $\int_p^{p\cdot t}\omega\leq 0$ is  non-positive and non-increasing for $t>0$.

We start by proving the following Lemma:

\begin{lemma}\label{lmnbh} Assume that for an integer $N>0$ there exists a gradient-like vector field $v$ for $\omega$ having no generalized homoclinic cycles of integral trajectories $\gamma_1, \dots, \gamma_\ell$ satisfying (\ref{jankocycle1}) and (\ref{lessn}). Let $Y_i\subset U_i\subset M$ be an open neighbourhood of $Y_i$, where $i=1, \dots, k$. Then there exists a family of smaller closed neighbourhoods $V_i\subset U_i$ of $Y_i$ satisfying the following properties:
\begin{enumerate}
\item[(a)] Each set $V_i$ lifts to the covering space $\pi: \tilde M\to M$ corresponding to the kernel of the homomorphism of periods $\pi_1(M)\to \R$ determined by the cohomology class $\xi=[\omega]\in H^1(M;\R)$.

    \item[(b)] The lift of $V_i$ into $\tilde M$ is flow convex (i.e. it satisfies properties (A) and (B) of Theorem \ref{convexthm}) with respect to the natural lift $\tilde v$ of the flow $v$ into $\tilde M$.

    \item[(c)] Let $\partial_-V_i$ denote the exit set of $V_i$, i.e. the set of all points $p\in V_i$ such that for all sufficiently small $\tau>0$ one has $p\cdot \tau\notin V_i$. Then there exist no $p\in \partial_-V_i$ and $t>0$ such that $p\cdot t\in \Int V_i$ and $\int_p^{p\cdot t} \omega \geq -N$.
\end{enumerate}
\end{lemma}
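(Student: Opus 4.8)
The plan is to pass to the covering $\pi\colon\widetilde M\to M$ associated with $\ker({\rm Per}_\xi)$, where $\omega$ pulls back to an \emph{exact} form $dF$, and to treat each $Y_i$ there as an isolated invariant set, with $F$ playing the role of a global Lyapunov function and the period of a returning orbit supplying the quantitative control needed for (c). First one carries out routine reductions. Since $Y_1,\dots,Y_k$ are pairwise disjoint compact sets and $\omega$ is exact near $Y$ (the standing hypothesis of Theorem \ref{jankothm1}), one may shrink the given neighbourhoods so that the closures $\overline{U_i}$ are pairwise disjoint, $\overline{U_i}\cap Y=Y_i$, each $U_i$ is connected with $\omega|_{U_i}=df_i$ (so the zeros of $\omega$ in $U_i$ are exactly $Y_i$) and $\xi|_{U_i}=0$; in particular each $U_i$ lifts to $\widetilde M$. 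Fix lifts $\widetilde U_i$ (homeomorphic to $U_i$) and $\widetilde Y_i\subset\widetilde U_i$ and a smooth $F\colon\widetilde M\to\R$ with $dF=\pi^\ast\omega$ and $F|_{\widetilde U_i}=f_i\circ(\pi|_{\widetilde U_i})^{-1}$; then $F(gx)-F(x)={\rm Per}_\xi(g)$ for deck transformations $g$, and since $\omega(v)>0$ off $Y$ the function $F$ decreases strictly along every nonconstant orbit of the lift of the flow of $-v$ outside $\widetilde Y=\pi^{-1}(Y)$. Thus $F$ is a global Lyapunov function with critical set $\widetilde Y$, and $\widetilde Y_i$ is a connected component of $\widetilde Y$ separated from $\widetilde Y\setminus\widetilde Y_i$ by a positive distance (properness of the deck action, together with the triviality of the stabiliser of $\widetilde Y_i$, which follows from $\xi|_{Y_i}=0$).

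To obtain (a) and (b) I would run the proof of Theorem \ref{convexthm} inside $\widetilde U_i$, with $F$ in place of the Lyapunov function $L$ and $\widetilde Y$ in place of the chain recurrent set $R$. Inspection of that proof shows it uses only a uniform bound on the length of the vector field (available, as the lifted field comes from the compact $M$), positivity of the distance between $\widetilde Y_i$ and the rest of $\widetilde Y$ (just established), and a uniform negative upper bound for the derivative of $F$ along the flow on the complement of a neighbourhood of $\widetilde Y_i$ inside a small, relatively compact neighbourhood $\widetilde W$ with $\overline{\widetilde W}\cap\widetilde Y=\widetilde Y_i$ (also available). Hence the construction produces a connected, open, flow-convex neighbourhood $\widetilde B_i\subset\widetilde U_i$ of $\widetilde Y_i$ with $\overline{\widetilde B_i}\subset\widetilde U_i$. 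Setting $V_i=\pi(\overline{\widetilde B_i})\subset U_i$, this is a closed connected neighbourhood of $Y_i$; it lifts, its lift being $\overline{\widetilde B_i}$, and that lift is flow convex since $\widetilde B_i$ is and taking closures preserves convexity of the time-sets. So $V_i$ satisfies (a) and (b); moreover, for $x\in M$ the set $\{t\colon x\cdot t\in V_i\}$ is then a union of intervals, one for each sheet $g\widetilde B_i$ the lifted orbit meets, and once the lifted orbit leaves $\widetilde B_i$ it never returns to $\widetilde B_i$ itself.

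The substance of the lemma is property (c), which I would establish by contradiction, shrinking $V_i$ and using the hypothesis on $v$ and $N$. Construct a nested sequence $\widetilde B_i^{(1)}\supset\widetilde B_i^{(2)}\supset\cdots$ of flow-convex lifting neighbourhoods of $\widetilde Y_i$ inside $\widetilde U_i$ with $\diam\widetilde B_i^{(n)}\to 0$, and put $V^{(n)}=\pi(\overline{\widetilde B_i^{(n)}})$. If no $V^{(n)}$ satisfies (c), then for each $n$ there are $p_n\in\partial_-V^{(n)}$ and $t_n>0$ with $q_n=p_n\cdot t_n\in\Int V^{(n)}$ and $\int_{p_n}^{q_n}\omega\ge -N$, where $\cdot$ is the flow of $-v$. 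Lifting the orbit segment $\sigma_n$ to $\widetilde\sigma_n$ starting at $\widetilde p_n\in\widetilde B_i^{(n)}$: by convexity it leaves $\widetilde B_i^{(n)}$ at once and never returns to it, so $\widetilde q_n\in g_n\widetilde B_i^{(n)}$ for a deck transformation $g_n\ne e$; hence $\widetilde\sigma_n$ travels a distance bounded below by a positive constant for large $n$, and so $\sigma_n$ must leave a fixed neighbourhood $W_0$ of $Y_i$ with $\overline{W_0}\cap Y=Y_i$, $\overline{W_0}\subset U_i$. Let $s_n$ be the first exit time; passing to a subsequence, $\sigma_n(s_n)\to z\notin Y$, and since the starting points converge into $Y_i$ one must have $s_n\to\infty$, which forces the backward orbit of $z$ to stay in $\overline{W_0}$; using that $F$ is a Lyapunov function, the $\alpha$-limit set of $z$ lies in $\overline{W_0}\cap Y=Y_i$. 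Thus the orbit $\gamma_1$ of $-v$ through $z$ lies in $M-Y$ and meets the left half of \eqref{jankocycle1} with $Y_{i(\ell)}=Y_i$; its $\omega$-limit set lies in a single component $Y_{i(1)}$ of $Y$, and a level comparison via $F$ excludes $Y_{i(1)}=Y_i$ and lets one continue the extraction along $\sigma_n$, producing successive connecting orbits $\gamma_2,\dots,\gamma_L$ between components of $Y$, the last ending at $Y_i$. Passing to a simple subcycle yields a generalised homoclinic cycle $\gamma_1,\dots,\gamma_\ell$ of $-v$ with $1\le\ell\le k$; the sojourn pieces of $\widetilde\sigma_n$ near translates of components of $\widetilde Y$ contribute $o(1)$ to the integral (there $\omega=df_j$ with $f_j$ nearly constant), whence $\sum_{m=1}^\ell\int_{\gamma_m}\omega=\lim_n\int_{\sigma_n}\omega\ge -N$. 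Reversing time turns $\gamma_\ell,\dots,\gamma_1$ into a generalised homoclinic cycle $\delta_1,\dots,\delta_\ell$ of $v$ in the sense of \eqref{jankocycle1} with $\sum_{m=1}^\ell\int_{\delta_m}\omega=-\sum_{m=1}^\ell\int_{\gamma_m}\omega\le N$, contradicting the hypothesis. Hence some $V^{(n_0)}$ satisfies (c); it still satisfies (a) and (b), and one makes such a choice simultaneously for every $i=1,\dots,k$.

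The main obstacle is the compactness step in the previous paragraph. When $\rk(\xi)>1$ the period group ${\rm Per}_\xi(\pi_1 M)$ is dense in $\R$, so a strip $F^{-1}([c-N-1,\,c+1])\subset\widetilde M$ is noncompact, and one cannot simply confine the lifted segments $\widetilde\sigma_n$ to a compact region. The resolution is to exploit the bounded-twist inequality $\int_{\sigma_n}\omega\ge -N$ together with the structure of the cover: between two sojourns near translates of components of $\widetilde Y$ the segment must descend a definite amount of $F$, so only boundedly many transitions occur, and each transition piece, after translating by a deck transformation into a bounded region, subconverges to a connecting orbit. Carrying out this bookkeeping carefully — modelled on the proof of Theorem~4.1 of \cite{farber} — is the only genuinely delicate part of the argument; the construction of the flow-convex lifting neighbourhoods in the first two paragraphs is a localised and ``upstairs'' version of Theorem \ref{convexthm} and involves no new ideas.
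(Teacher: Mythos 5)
Your plan is correct and follows essentially the same route as the paper: (a) and (b) come from flow-convex neighbourhoods in the spirit of Theorem \ref{convexthm}, and (c) is proved by contradiction, using the fact that each excursion out of $V$ costs a definite amount $b>0$ of $\int\omega$ (so at most about $N/b$ transitions occur) and then extracting a generalized homoclinic cycle with $\sum\int_{\gamma_i}\omega$ controlled by $N$ via a compactness/subsequence argument, with the sojourns near $Y_j$ contributing nothing in the limit because $\omega=df_j$ there and $f_j$ is constant on $Y_j$. The only real difference is that the paper performs the subsequence extraction downstairs on points of the compact manifold $M$ (the entry/exit points $p_n\cdot a_s(p_n)$, $p_n\cdot b_s(p_n)$ of the finitely many return intervals), which sidesteps the noncompactness of $F$-strips in $\tilde M$ that you flag as the delicate point; also note that your ``$\diam\widetilde B_i^{(n)}\to 0$'' should read ``$\widetilde B_i^{(n)}$ shrinks to $\widetilde Y_i$''.
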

The intuitive meaning of property (c) is that it is impossible for a trajectory to leave $V_i$ and then revisit $V_i$ in time $t>0$ without making the quantity
$\int_p^{p\cdot t} \omega$  smaller than $-N$.
\begin{figure}[h]
\begin{center}
\resizebox{10cm}{4.5cm} {\includegraphics[20,356][569,664]{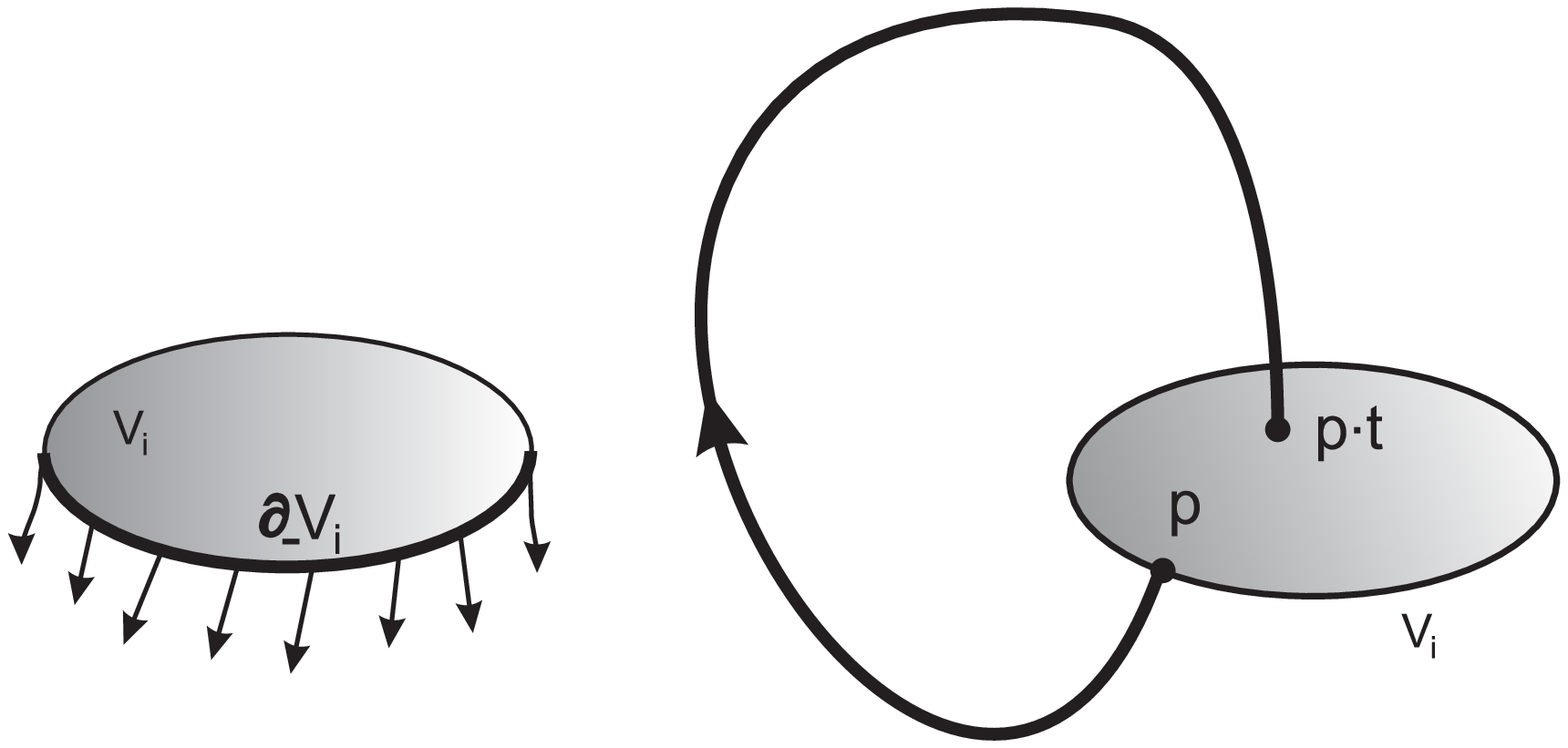}}
\end{center}
\end{figure}
\begin{proof} Without loss of generality we may assume that the initial neighbourhoods $U_i$ satisfy properties (a) and (b) of Lemma \ref{lmnbh}.

Let $V_i\subset U_i$ be any closed neighborhood of $Y_i$ satisfying (b); then (a) is automatically satisfied. Denote by $V$ and $U$ the unions $V=V_1\cup \dots \cup V_k$, $U=U_1\cup \dots\cup U_k$ and $\partial_-V = \partial_-V_1\cup \dots\cup \partial_-V_k$.  We claim:
\begin{enumerate}
\item[(i)] {\it There exists $a>0$ such that for any $p\in \partial_-V$ and $t>0$
with $p\cdot t\in V$ one has $t\geq a$.}
\end{enumerate}
Indeed, as follows from the
gradient convexity of the sets $U_i$, any
trajectory $p\cdot t$ starting at $p\in \partial_-V$ at $t=0$ leaves $U$ before it can
re-enter $V$. Hence, we may take $a=\min\{l_i v_i^{-1}; i=1, \dots, k\}$,
where $l_i>0$ denotes the distance between $V_i$ and $M-{\rm Int}\, U_i$, and
$v_i =\max |v(x)|$ for $x\in U_i -{\rm Int}\, V_i$.

Note that if we shrink the sets $V_i$, the number $a>0$ may only increase,
assuming that $V_i$ are sufficiently small.

\begin{enumerate}
\item[(ii)] {\it There exists $b>0$ such that for any $p\in \partial_-V$ and $t>0$
with $p\cdot t\in V$ one has}
$$\int_p^{p\cdot t}\omega < -b.$$
\end{enumerate}
Indeed, we may take
$b=a\cdot\min\{\omega(v(x));x\in U-V\}.$

From (i) and (ii) it follows that:
\begin{enumerate}
\item[(iii)] {\it There exists an integer $K>0$ such that for any $p\in \partial_-V$
the set
$$I(p) =\{t>0; \, p\cdot t\in V \, \mbox{and}\quad \int_p^{p\cdot t}\omega \geq -N\}$$
is a union of at most $K$ disjoint intervals, i.e.
$$I(p) = \bigcup_{s=1}^{k(p)}[a_s(p), b_s(p)],\quad k(p)\leq K$$
and
$$0<a_s(p)\leq b_s(p) <a_{s+1}(p).$$
}
\end{enumerate}
Indeed, the number of gaps between the intervals $[a_s(p), b_s(p)]$ is at most $[N/b]$ as follows from (ii).

For a point $q\in M$ we denote by
\begin{eqnarray}
\A(q) =\bigcap_{t\in \R} \overline{q\cdot (-\infty, t]}, \quad \ZZ(q) =\bigcap_{t\in \R} \overline{q\cdot [t, \infty)},
\end{eqnarray}
{\it the backward and forward limit sets} of the trajectory passing through $q$. These sets are nonempty, compact, connected and flow-invariant.

Now, suppose that we can never achieve (c) by shrinking the initially chosen neighbourhoods $V_i\supset Y_i$.
We want to show that then there exists a sequence of points $q_1, \dots, q_\ell\in M$  and an injection $j: \{1, \dots, \ell\}\to \{1, \dots,k\}$
such that
$$\A(q_i)\subset Y_{j(i)},\quad \ZZ(q_i)\subset Y_{j(i+1)}$$
where $i=1,\dots, \ell$ and $j(\ell+1)=j(1)$ and additionally the following inequality is satisfied
\begin{eqnarray}\label{sum3}\sum_{i=1}^\ell \int_{\gamma_i}\omega \ge -N\end{eqnarray}
where the curve $\gamma_i: \R\to M$ is given by $\gamma_i(t)=q_i\cdot t$ for $t\in \R$ which would contradict our assumptions.

There exists an infinite sequence of points
$p_{n}\in \partial_- V$, where $n=1, 2, \dots$, and two sequences of real
numbers $t_{n}>0$ and $s_{n}<0$ such that

\begin{enumerate}{\it
\item[(1)] the set $p_{n}\cdot [s_{n},0]$ is contained in a fixed connected component $V_i\subset V$ and
the distance $d(p_{n}\cdot s_{n}, Y_i)$ tends to $0$ as $n\to \infty$;

\item[(2)] $d(p_{n}\cdot t_{n}, Y_i)$ converges to $0$ as $n\to \infty$;

\item[(3)] one has
\begin{eqnarray}\label{sum4}
\int\limits_{p_{n}}^{p_{n}\cdot t_{n}}\omega \geq -N.\end{eqnarray}}
\end{enumerate}

Passing to a subsequence, we may assume that $p_{n}$ converges to a point
$p\in \partial_- V_i$ and the sequences $s_{n}$ and $t_{n}$ have finite
or infinite limits, which we denote by $s$ and $t$ respectively.
Because of property (iii), we may also assume that the number of intervals $\kappa(p_n)=\kappa$ is independent of $n$ and the sequences $a_s(p_{n})$ and
$b_s(p_{n})$ have finite or infinite limits denoted by $a_s$ and $b_s$ respectively for $s=1, \dots, \kappa$. Moreover, for the same reasons, we may assume that as $n\to \infty$ the sequences
$$p_n\cdot a_s(p_n)\in V\quad \mbox{and}\quad p_n\cdot b_s(p_n)\in V$$ converge to points $q_s, q'_s\in V$ correspondingly, where $s=1, \dots, \kappa$.

It is easy to see that (1) and (2) imply that $t=+\infty$ and $s =-\infty$.

Note that $\A(p)\subset Y_i$. Indeed, by (1), one has $p\cdot (-\infty, 0]\subset V_i$ and therefore the set $\A(p)$ must be contained in the maximal invariant subsets $Y_i$ of $V_i$.
\vspace{-10pts}
\begin{figure}[h]
\begin{center}
\resizebox{9.5cm}{7cm} {\includegraphics[7,242][585,698]{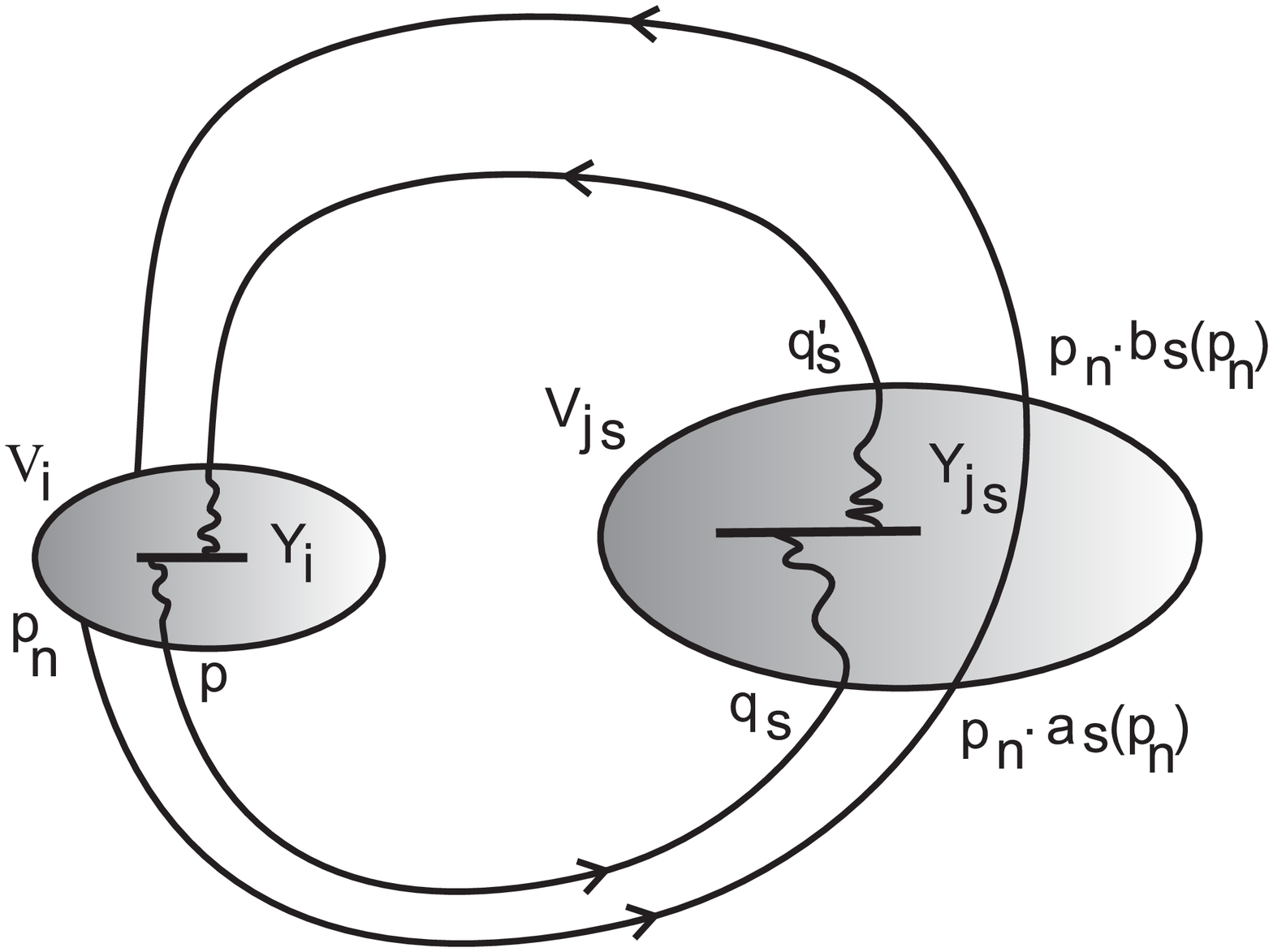}}
\end{center}
\end{figure}
Clearly the points $q_s, q'_s$ lie on the boundary of the same connected component $V_{j_s}$ of $V$ where $s\mapsto j_s$ is a map
$\{1, \dots, \kappa\} \to \{1, \dots, k\}$ and $j_\kappa=i$.
The points $q_s$ and $q'_s$ either lie on the same orbit of the flow or one has
\begin{eqnarray}\label{rel}
\ZZ(q_s) \subset Y_{j_s}, \quad \A(q'_s) \subset Y_{j_s}
\end{eqnarray}
as follows from arguments mentioned above. We may ignore the first case since then we may simply relabel the points and apply the following observations.
The points $q'_s$ and $q_{s+1}$ lie on the same orbit and hence we may rewrite (\ref{rel}) in the form
\begin{eqnarray}\label{rel1}
\ZZ(q_s) \subset Y_{j_s}, \quad \A(q_{s+1}) \subset Y_{j_s}
\end{eqnarray}
where $s=1, \dots,\kappa -1$. For the last point $q_\kappa$ we have
$$\ZZ(q_\kappa)\subset Y_i \quad \mbox{and}\quad \A(q_1)=\A(p)\subset Y_i.$$
Hence we obtaine a closed chain of integral trajectories -- a generalized homoclinic cycle. Inequality (\ref{sum3}) follows from (\ref{sum4}) by passing to the limit and observing that the form $\omega|V_j$ is exact, $\omega|V_j=df_j$ where $f_j\to \R$ is smooth. Moreover the function $f_j$ is constant on $Y_j$. Indeed,
 the image $f_j(Y_j)\subset \R$ is connected and has measure zero by Sard's theorem; hence $f_j(Y_j)$ is a single point. These remarks imply that
 $$\lim_{n\to\infty} \int_{p_n\cdot a_s(p_n)}^{p_n\cdot b_s(p_n)} \omega \, = \, \lim_{T\to \infty} \int_{q_s}^{q_s\cdot T}\omega \, +\,  \lim_{S\to -\infty}\int_{q'_s\cdot S}^{q'_s} \omega$$
 for any $s=1, \dots, \kappa$.

 This completes the proof of Lemma \ref{lmnbh}.
\end{proof}

\begin{proof}[Proof of Theorem \ref{jankothm1}]
Assume that under the condition of Theorem \ref{jankothm1} there exist no generalized homoclinic cycles satisfying (\ref{lessn}). Applying Lemma
\ref{lmnbh} with neighbourhoods $W_i$ such that $\cat_M(W_i)=\cat(Y_i)$ we obtain a system of neighbourhoods $Y_i\subset V_i$ satisfying (a)-(c) of Lemma \ref{lmnbh} and also $\cat_M(V_i)=\cat_M(Y_i)$ where $i=1, \dots, k$.

Define sets $F_1, \dots, F_k\subset M$ as follows.
We say that $p\in F_i$ if for some $t_p>0$ the point $p\cdot t_p$ belongs to the interior of $V_i$,
and
$$\int_p^{p\cdot t_p}\omega >-N.$$
It is clear that $F_i$ is open.

Let us show that $F_i\subset M$ can be deformed into $V_i\subset F_i$.
For any point $p\in M$ let $J_p\subset \R$ denote the set
$J_p =\{t\geq 0; p\cdot t\in V_i\}$. Because of our assumption about the
gradient-convexity of neighborhood $V_i$, the set $J_p$ is a union of disjoint
closed intervals, and some of these intervals may degenerate to a point.
Consider the first interval $[\alpha_p,\beta_p]\subset J_p$. If this interval
degenerates to a point (i.e., the trajectory through $p$ touches $V_j$), then
$p$ does not belong to the set $F_j$, according to our assumption (c); see
above. For the same reason, points of $\partial_-V_j$ do not belong to $F_j$.

Assume now that $p\in F_j$ and $p\notin {\rm Int}\, V_i$.
Then the point $p\cdot t$ lies in the interior of $V_j$ for $\alpha_p<t<\beta_p$. Also, we have
$$\int_p^{p\cdot {\alpha_p}}\omega > -N.$$
The function
$\phi_i: F_i\to \R$, given by
$$
\phi_i(p) = \left\{
\begin{array}{ccl}
0,&\mbox{for}& p\in {\rm Int}\,  V_i,\\ \\
\alpha_p,&\mbox{for}&p\in F_i -{\rm Int}\, V_i,
\end{array}
\right.
$$
is continuous since $(\partial_-V_i)\cap F_i=\emptyset$.
One may now define a homotopy
$$h^i_\tau: F_i \to M, \quad h_\tau(p) = p\cdot (\tau\phi_i(p)), \quad p\in F_i, \quad \tau \in [0,1].$$
Here $h_0$ is the inclusion $F_i\to M$ and $h_1$ maps $F_i$ into the interior of $V_i$.
By Lemma 19.5 from \cite{DNF} we obtain
$$\cat_M(F_i) \leq \cat_M(V_i) = \cat_M(Y_i)$$
and therefore
\begin{eqnarray}\cat_M(\bigcup_{i=1}^k F_i) \le \sum_{i=1}^k \cat_M(Y_i).
\end{eqnarray}

Consider now the complement $A = M-\bigcup_{i=1}^k F_i$. For any point $p\in A$ there exists $t_p>0$ such that
$\int_p^{p\cdot t_p}\omega =-N$. It is clear that $p\mapsto t_p$ is a continuous real valued function on $A$. One may define a homotopy
$$h_\tau: A \to A \quad \mbox{by}\quad h_\tau(p)=p\cdot(\tau t_p), \quad \tau\in [0,1].$$ Then for any $x\in A$ one has
$$\int_x^{h_1(x)}\omega =-N$$
i.e. the closed set $A$ is $N$-movable with respect to $\omega$ and $$\cat_M(M-A)\le \sum_{i=1}^k \cat_M(Y_i).$$

According to Definition \ref{defcat} we obtain that $$\cat(M,[\omega])\le \sum_{i=1}^k \cat_M(Y_i)$$ as claimed.
This completes the proof. \end{proof}

\section{Topology of the chain recurrent set $R_\xi$}

In this section we prove the following result from \cite{farkap}.

\begin{theorem}[Farber - Kappeler]\label{tkthm}
Consider a smooth flow $\Phi:M\times \R\to M$ on a closed smooth manifold $M$.
Let $\xi\in H^1(M;\R)$ be a cohomology class such that the following conditions
are satisfied:

(1) The chain recurrent set $R_\xi$ is isolated in the full chain recurrent set
$R$ of $\Phi$.

(2) The restriction
$$\xi|_{R_\xi}\in \check H^1(R_\xi;\R)$$
(viewed as a \v Cech cohomology class) vanishes.

(3) For any $\Phi$-invariant, positive Borel measure $\mu$ on $M$ with
$\mu(R)>\mu(R_\xi)$, Schwartzman's asymptotic cycle $\A_\mu=\A_\mu(\Phi)\in
H_1(M;\R)$ satisfies
$$
\langle \xi, \A_\mu\rangle < 0.\label{inequa}
$$

Then one has the following inequality
\begin{eqnarray}
\sum\limits_{i=1}^r \cat_M(R_\xi^i)\, \geq\, \cat^1_s(M,\xi),\label{inmain}
\end{eqnarray}
where $R_\xi^1, \dots, R_\xi^r$ denote the connected components\footnote{The
number of connected components $r$ of $R_\xi$ can be infinite. In this case
inequality (\ref{inequa}) is trivially satisfied as its LHS is infinite.} of
the chain recurrent set $R_\xi$.
\end{theorem}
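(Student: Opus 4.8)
The plan is to combine Theorem~\ref{lyapmain} with the flow-convex neighbourhoods supplied by Theorem~\ref{convexthm}, and to exhibit a single closed set $A\subset M$ realizing $\cat^1_s(M,\xi)\le \cat_M(M-A)\le \sum_{i=1}^r\cat_M(R_\xi^i)$. We may assume $r<\infty$, since otherwise the left-hand side of (\ref{inmain}) is infinite.

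First I would check that hypotheses~(1)--(3) are exactly what is needed to invoke Theorem~\ref{lyapmain} with $Y=R_\xi$: condition~(2) is the standing assumption there; condition~(1) says $R_\xi$ is open in $R$, and since $R_\xi$ is also closed in $M$ it is clopen in $R$, so $C_\xi=R-R_\xi$ is closed; finally, for any $\Phi$-invariant positive Borel measure $\mu$ one has $\mu(C_\xi)=\mu(R)-\mu(R_\xi)$, so the implication in~(3) is precisely condition~(IV) of Theorem~\ref{lyapmain}. Hence there is a smooth Lyapunov 1-form $\omega$ for $(\Phi,R_\xi)$ with $[\omega]=\xi$; in particular $\iota_V(\omega)=\omega(V)<0$ on $M-R_\xi$, where $V$ generates $\Phi$. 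Moreover, $R_\xi$ being clopen in $R$ with only $r$ components, each $R_\xi^i$ is a connected component of $R$ which is isolated in $R$, so Theorem~\ref{convexthm} applies to it. Choosing first (using that $M$ is an ANR) an open neighbourhood $W_i$ of $R_\xi^i$ with $\cat_M(W_i)=\cat_M(R_\xi^i)$, I would obtain open flow-convex neighbourhoods $B_i$ of $R_\xi^i$ with pairwise disjoint closures, $\overline{B_i}\subset W_i$ and $\overline{B_i}\cap R=R_\xi^i$.

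For each $i$ put $F_i^{+}=\{p\in M:\ p\cdot t\in B_i\ \text{for some}\ t\ge 0\}$ and $F_i^{-}=\{p\in M:\ p\cdot t\in B_i\ \text{for some}\ t\le 0\}$ (flow of $V$), and $F_i=F_i^{+}\cup F_i^{-}$; these are open, and they contain the stable, respectively unstable, set of $R_\xi^i$. The essential point is the inequality $\cat_M(F_i)\le\cat_M(R_\xi^i)$. To prove it I would deform $F_i$ inside $M$ into $\overline{B_i}$ by pushing each point of $F_i^{+}$ forward along the flow to the first time $\alpha_p\ge 0$ at which $p\cdot\alpha_p\in\overline{B_i}$, and each point of $F_i^{-}$ backward to the last such time; flow-convexity of $B_i$ forces $F_i^{+}\cap F_i^{-}\subset B_i$, so on this overlap both partial deformations are constant and hence glue to a homotopy $F_i\times[0,1]\to M$ whose time-$1$ map lands in $\overline{B_i}$. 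Then Lemma~19.5 of \cite{DNF} gives $\cat_M(F_i)\le\cat_M(\overline{B_i})\le\cat_M(W_i)=\cat_M(R_\xi^i)$, and subadditivity of $\cat_M$ yields $\cat_M\bigl(\bigcup_{i=1}^r F_i\bigr)\le\sum_{i=1}^r\cat_M(R_\xi^i)$. Set $A=M-\bigcup_{i=1}^r F_i$, a closed set with $\cat_M(M-A)\le\sum_{i=1}^r\cat_M(R_\xi^i)$.

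It remains to show that $A$ is $N$-movable with respect to both $\omega$ and $-\omega$ for every $N$. If $p\in A$ then $p\notin F_i^{\pm}$ for all $i$, so the forward and the backward orbits of $p$ both avoid $\bigcup_{i}B_i\supset R_\xi$; hence they stay in the compact set $K=M-\bigcup_i B_i$, which is disjoint from $R_\xi$, so $\iota_V(\omega)\le-\varepsilon<0$ on $K$ for some $\varepsilon>0$. Then the homotopy $H^{N}(p,\tau)=p\cdot(\tau N/\varepsilon)$ satisfies $\int_p^{H^{N}_1(p)}\omega=\int_0^{N/\varepsilon}\iota_V(\omega)(p\cdot s)\,ds\le -N$, and the time-reversed homotopy $\bar H^{N}(p,\tau)=p\cdot(-\tau N/\varepsilon)$ gives $\int_p^{\bar H^{N}_1(p)}(-\omega)\le -N$. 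By the definition of $\cat^1_s$ this yields $\cat^1_s(M,\xi)\le\cat_M(M-A)\le\sum_{i=1}^r\cat_M(R_\xi^i)$, which is (\ref{inmain}). I expect the main obstacle to be the continuity of the entrance/exit functions $\alpha_p$ on $F_i^{\pm}$ and the compatibility of the two partial deformations of $F_i$ --- i.e.\ reproving in this setting the technical statements analogous to Lemmas~\ref{lm22} and~\ref{lm23}.
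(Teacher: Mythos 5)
Your proposal is correct and follows essentially the same route as the paper's proof: a Lyapunov 1-form from Theorem~\ref{lyapmain}, flow-convex neighbourhoods $B_i$ from Theorem~\ref{convexthm}, the deformation of $F_i$ onto $\overline{B_i}$ along the flow (your two glued partial deformations on $F_i^{\pm}$ are exactly the paper's single retraction $x\mapsto x\cdot\tau_i(x)$), and the two-sided $N$-movability of the closed complement. The only cosmetic difference is that the paper defines $A$ via an intermediate smaller neighbourhood $U_i\subset B_i$ rather than directly as $M-\bigcup F_i$; both choices work.
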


A proof of Theorem \ref{tkthm} is given in section \ref{pr}.

Let us compare Theorem \ref{tkthm} with Theorem 6.6 of \cite{farbe4}. Theorem
\ref{tkthm} allows arbitrary sets $R_\xi$ whereas Theorem 6.6 of \cite{farbe4}
assumes that $R_\xi$ consists of finitely many isolated points. Another
important advantage of Theorem \ref{tkthm} is that it does not require condition
($\ast$), see \cite{farbe4}, page 108; this condition is difficult to check in
concrete examples. On the other hand, since $\cat^1_s(X,\xi)\leq \Cat(X,\xi)$,
the estimate of Theorem 6.6 of \cite{farbe4} is potentially slightly sharper.

Next we state some corollaries of Theorem \ref{tkthm}.

In the special case $\xi=0$ one has $R_\xi=R$ and the assumptions (1), (2), (3)
of Theorem \ref{tkthm} are automatically satisfied. Hence we obtain the
following result which presumably is well known but for which we failed to find
a reference:

\begin{theorem}\label{xi=0}
Consider a smooth flow on a closed smooth manifold $M$. Let $R$ be the chain
recurrent set of the flow. Then
\begin{eqnarray}
\sum_{i=1}^r \cat_M(R^i) \, \geq \, \cat(M).
\end{eqnarray}
Here $R^1, \dots, R^r$ denote the connected components of $R$.
\end{theorem}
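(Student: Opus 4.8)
The plan is to obtain Theorem \ref{xi=0} as the special case $\xi=0$ of Theorem \ref{tkthm}. When $\xi=0$ the covering $p_\xi\colon\tilde M_\xi\to M$ associated with $\xi$ is the identity, since a loop $\gamma$ lifts precisely when $\langle 0,[\gamma]\rangle=0$, which always holds; hence $R(\tilde\Phi)=R(\Phi)$, so that $R_\xi=R$ and $C_\xi=R-R_\xi=\emptyset$. Therefore hypothesis (1) of Theorem \ref{tkthm} holds trivially, with the isolating neighbourhood $U=M$; hypothesis (2) is immediate because $\xi=0$; and hypothesis (3) is vacuous, since no $\Phi$-invariant positive Borel measure $\mu$ can satisfy $\mu(R)>\mu(R_\xi)=\mu(R)$. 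Theorem \ref{tkthm} then gives $\sum_{i=1}^r\cat_M(R^i)\ge\cat^1_s(M,0)$.

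It remains to identify $\cat^1_s(M,0)$ with $\cat(M)$, and here I would repeat the argument used for the zero class in the lemma establishing $\cat(X,\xi)=\cat^1(X,\xi)=\Cat(X,\xi)=\cat(X)$ when $\xi=0$. Represent $0$ by the closed $1$-form $\omega=0$, so that $-\omega=0$ as well; then a nonempty closed subset $A\subset M$ cannot be $N$-movable with respect to $\omega$ for every $N>0$, because the integral of $\omega$ along any path vanishes. Hence the only admissible subset in the definition of $\cat^1_s$ is $A=\emptyset$, so $\cat^1_s(M,0)=\cat_M(M)=\cat(M)$, and the asserted inequality follows.

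A proof that does not quote the (deferred) proof of Theorem \ref{tkthm} can be obtained by specializing that argument; the shape is as follows. One may assume $r<\infty$, since otherwise the left-hand side is infinite; then each $R^i$, being open and closed in $R$, is isolated in $R$. By Conley's theorem (Theorem \ref{conley}) fix a smooth Lyapunov function $L\colon M\to\R$, and for each $i$ choose, using Theorem \ref{convexthm}, a flow-convex neighbourhood $B_i$ of $R^i$ small enough that $\cat_M(B_i)=\cat_M(R^i)$ — possible because $\cat_M(A)=\min\{\cat_M(W):W\supset A\text{ open}\}$ for any $A$. One then deforms $M$ along the flow of $-V$, reparametrised by the entry-time functions into the $B_i$ (continuous by property (B) of Theorem \ref{convexthm} together with flow-convexity), so that the portion of $M$ drawn into $B_i$ has category in $M$ at most $\cat_M(B_i)=\cat_M(R^i)$ (Lemma 19.5 of \cite{DNF}); taking the union over $i$ of the resulting null-homotopic open covers yields $\cat(M)\le\sum_i\cat_M(R^i)$.

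The one delicate point in this self-contained route — and the place where Theorem \ref{convexthm} is essential — is that the decomposition of $M$ into the pieces drawn into the respective $B_i$ must be organised (via the Conley sublevel filtration of $L$, as in the proof of Theorem \ref{tkthm}) so that the relevant entry-time functions are genuinely continuous; this is exactly what flow-convexity supplies, and it is available here because each $R^i$ is isolated in $R$ (forced by $r<\infty$). Beyond this I expect no real obstacle: the substantive input is Conley's theorem and Theorem \ref{convexthm}, both already established above, and in the form stated the result is simply the $\xi=0$ instance of Theorem \ref{tkthm}.
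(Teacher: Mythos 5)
Your proposal is correct and follows the paper's own route: Theorem \ref{xi=0} is obtained there precisely by observing that for $\xi=0$ one has $R_\xi=R$, $C_\xi=\emptyset$, so hypotheses (1)--(3) of Theorem \ref{tkthm} hold automatically, and then identifying $\cat^1_s(M,0)$ with $\cat(M)$ exactly as you do. Your additional sketch of a self-contained argument by specializing the proof of Theorem \ref{tkthm} is consistent with that proof but not needed.
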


In the special case where the chain recurrent set $R$ consists of finitely many
points (the fixed points of the flow) Theorem \ref{xi=0} says that {\it the
number of fixed points is at least the Lusternik - Schnirelman category of
$M$}. This is one of the fundamental results of the classical Lusternik --
Schnirelman theory, see \cite{DNF}.

Consider another special case of Theorem \ref{tkthm} when $R_\xi$ consists of
finitely many points (they are the fixed points of the flow). Condition (2) of
Theorem \ref{tkthm} is automatically satisfied under these assumptions. We
obtain:

\begin{theorem} [Farber - Kappeler]\label{thm22}
Consider a smooth flow on a closed smooth manifold $M$. Let $\xi\in H^1(M;\R)$
be a cohomology class such that the chain recurrent set $R_\xi$ consists of
finitely many points which are isolated in the full chain recurrent set $R$ of
the flow. Assume additionally that condition (3) of Theorem \ref{tkthm} is
satisfied. Then the flow has at least $\cat^1_s(M,\xi)$ fixed points.
\end{theorem}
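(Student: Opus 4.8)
The plan is to deduce Theorem~\ref{thm22} directly from Theorem~\ref{tkthm}, by checking that under the present hypotheses all three assumptions (1)--(3) of the latter hold and that its conclusion specializes to the asserted count.

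First I would verify hypothesis (1). By assumption each point of the finite set $R_\xi$ is isolated in $R$; choosing for each such point a small open neighbourhood in $M$ meeting $R$ only in that point, with these neighbourhoods pairwise disjoint, and taking their union, one sees that $R_\xi$ is isolated in $R$. Moreover $R_\xi$ is closed and flow-invariant and each of its connected components is a single point, so every point of $R_\xi$ is a rest point (fixed point) of the flow. (Conversely, as noted in \S\ref{lslyap}, every fixed point of the flow automatically lies in $R_\xi$.) Hypothesis (2) is then automatic: a finite discrete space has vanishing first \v{C}ech cohomology, so $\xi|_{R_\xi}\in \check H^1(R_\xi;\R)$ is zero. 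Hypothesis (3) is assumed outright.

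Hence Theorem~\ref{tkthm} applies and yields
\[
\sum_{i=1}^r \cat_M(R_\xi^i)\, \ge\, \cat^1_s(M,\xi),
\]
where $R_\xi^1,\dots,R_\xi^r$ are the connected components of $R_\xi$, i.e. its $r$ points. Each $R_\xi^i$ is a single point, which is contained in a small contractible ball of $M$ and is therefore null-homotopic in $M$; thus $\cat_M(R_\xi^i)=1$ for every $i$, and the displayed inequality reads $r\ge \cat^1_s(M,\xi)$. Since by the first step all $r$ points of $R_\xi$ are fixed points of the flow, the flow has at least $r\ge\cat^1_s(M,\xi)$ fixed points, which is the claim.

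There is no serious obstacle here; this is essentially a specialization of Theorem~\ref{tkthm}. The only points requiring a little care are the observation that the one-point components of $R_\xi$ really are fixed points (using that $R_\xi$ is flow-invariant) and the elementary bookkeeping that $\cat_M$ of a one-point set equals $1$ (a small ball about the point is null-homotopic in $M$). Everything else follows from Theorem~\ref{tkthm} together with the triviality of the first \v{C}ech cohomology of a finite discrete set.
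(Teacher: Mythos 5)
Your proposal is correct and is essentially the paper's own argument: the paper derives Theorem \ref{thm22} from Theorem \ref{tkthm} by observing that condition (2) is automatic for a finite set of points, that the one-point components of the flow-invariant set $R_\xi$ are exactly fixed points, and that $\cat_M$ of each one-point component equals $1$, so the sum in (\ref{inmain}) reduces to the number of fixed points. Your verification of hypothesis (1) and the bookkeeping are exactly what the paper leaves implicit.
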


Here is a reformulation of Theorem \ref{thm22}:

\begin{theorem}[Farber - Kappeler]\label{thm23}
Let $\xi\in H^1(M;\R)$ be a cohomology class. Consider a flow $\Phi$ on $M$
such that the chain recurrent set $R_\xi$ consists of less than
$\cat^1_s(M,\xi)$ fixed points and such that condition (3) of Theorem
\ref{tkthm} is satisfied. Then at least one of the fixed points of the flow is
not isolated in the full chain recurrent set $R$.
\end{theorem}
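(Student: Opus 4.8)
The plan is to obtain Theorem \ref{thm23} as an immediate logical reformulation of Theorem \ref{thm22}; essentially all of the substance already sits in Theorem \ref{tkthm}, and what remains is a contrapositive argument together with one bookkeeping observation.

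First I would argue by contradiction, assuming that \emph{every} fixed point of the flow $\Phi$ is isolated in the full chain recurrent set $R$. The key preliminary step is to identify $R_\xi$ with the set of all fixed points of $\Phi$. On the one hand, by hypothesis $R_\xi$ consists of fixed points only; on the other hand, as recalled in \S\ref{lslyap}, every fixed point of a flow automatically belongs to $R_\xi$. Hence $R_\xi$ is exactly the set of fixed points of $\Phi$, and by hypothesis it is a finite set. Under the contradiction assumption, each of these finitely many points is isolated in $R$.

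At this stage all the hypotheses of Theorem \ref{thm22} are in place: the chain recurrent set $R_\xi$ consists of finitely many points which are isolated in $R$, and condition (3) of Theorem \ref{tkthm} holds by assumption. Applying Theorem \ref{thm22} then yields at least $\cat^1_s(M,\xi)$ fixed points of $\Phi$. Since the fixed points of $\Phi$ coincide with the points of $R_\xi$, this contradicts the standing assumption that $R_\xi$ consists of fewer than $\cat^1_s(M,\xi)$ fixed points. Therefore at least one fixed point of $\Phi$ fails to be isolated in $R$, which is the assertion of the theorem. I do not expect a genuine obstacle here: the only point requiring care is the identification of the fixed-point set with $R_\xi$, which is precisely why the hypothesis is phrased as ``$R_\xi$ consists of fewer than $\cat^1_s(M,\xi)$ fixed points'' rather than directly in terms of the fixed-point set; the real dynamical content has already been established in Theorem \ref{tkthm}.
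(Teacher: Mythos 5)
Your proof is correct and matches the paper's intent exactly: the paper presents Theorem \ref{thm23} simply as a reformulation (contrapositive) of Theorem \ref{thm22}, with no further argument given, and your careful identification of the fixed-point set with $R_\xi$ (using that every fixed point lies in $R_\xi$) is precisely the bookkeeping needed to make that reformulation rigorous.
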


Here is a typical application of Theorem \ref{thm22}:

\begin{theorem}[Farber - Kappeler] \label{thm24} Let $\Phi$ be a smooth flow on a closed smooth manifold such
that the chain recurrent set $R$ of $\Phi$ is a union of finitely many circles
and isolated points. Let $\xi\in H^1(M;\R)$ be a cohomology class such that
$\langle \xi, z\rangle \leq 0$ for the homology class $z\in H_1(M)$ of any
periodic orbit. Then
\begin{eqnarray}
p_0+p_1+2 p_2\, \geq \, \cat^1_s(M,\xi),
\end{eqnarray}
where $p_0$ denotes the number of fixed points of the flow, $p_1$ denotes the
number of periodic orbits which are null-homotopic in $M$, and $p_2$ denotes
the number of periodic orbits which are homotopically nontrivial but their
homology classes $z\in H_1(M)$ satisfy $\langle \xi, z\rangle =0$.
\end{theorem}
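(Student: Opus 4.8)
The plan is to deduce Theorem~\ref{thm24} from Theorem~\ref{tkthm}: I would verify that the cohomology class $\xi$ satisfies hypotheses (1), (2), (3) of Theorem~\ref{tkthm}, conclude $\sum_i \cat_M(R_\xi^i)\ge \cat^1_s(M,\xi)$, and then evaluate the left-hand side of~(\ref{inmain}), showing it equals $p_0+p_1+2p_2$. So the work splits into (a) understanding $R_\xi$ for such a flow, (b) checking (1)--(3), and (c) the count of categories of the components of $R_\xi$.

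The key step, and the place I expect the real difficulty, is to identify $R_\xi$. Since $R_\xi$ is closed and $\Phi$-invariant and $R_\xi\subseteq R$, and since $R$ is a disjoint union of finitely many fixed points and periodic orbits (each periodic orbit being a single $\Phi$-orbit), $R_\xi$ is a union of some of these components; it contains every fixed point and every periodic orbit $C$ with $\langle\xi,[C]\rangle=0$. The claim to establish is that \emph{no} periodic orbit $C$ with $\langle\xi,[C]\rangle<0$ meets $R_\xi$ — here the hypothesis $\langle\xi,z\rangle\le 0$ for all periodic homology classes $z$ is indispensable, and the claim genuinely fails for general flows (cf.\ the planar ring example). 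To prove it I would pass to the covering $p_\xi:\tilde M_\xi\to M$ and the primitive $F$ of $p_\xi^\ast\omega$: a lift of a point of such a $C$ has orbit a properly embedded line $L_0$ on which $F$ is unbounded above and below, and since $C$ is isolated in $R$ and $p_\xi$ is a local homeomorphism, $L_0$ is its own connected component of the chain recurrent set of the lifted flow. However $L_0$ cannot be chain recurrent: a $(\delta,T)$-chain returning to the initial lift $\tilde x$ must recover along its flow segments an amount of $\int\omega$ that grows with $T$ (the drop caused by the initial segment, which runs along $C$), whereas the assumption $\langle\xi,[C']\rangle\le 0$ for every periodic orbit $C'$, combined with Conley's decomposition into isolated invariant sets linked by transit orbits, bounds the positive variation of $F$ the flow can produce away from the basic sets. (One may instead argue with the covering-free description of $R_\xi$ from \cite{FKLZ} and the stabilization of the homology classes of $(\delta,T)$-chains for small $\delta$, large $T$.) Hence $R_\xi$ is exactly the union of the fixed points and the periodic orbits of zero $\xi$-pairing.

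Granting this description, conditions (1)--(3) are routine. For (1): $R_\xi$ and $R\setminus R_\xi$ are disjoint compact sets (each a union of finitely many components of $R$), so a neighbourhood of $R_\xi$ disjoint from $R\setminus R_\xi$ meets $R$ exactly in $R_\xi$. For (2): $R_\xi$ is a disjoint union of points and circles $C$ with $\langle\xi,[C]\rangle=0$, hence $\check H^1(R_\xi;\R)=H^1(R_\xi;\R)\cong\bigoplus_C\R$ with the summand indexed by $C$ detected by evaluation on $[C]$, so $\xi|_{R_\xi}$ vanishes. For (3): by Poincar\'e recurrence every $\Phi$-invariant positive Borel measure $\mu$ gives zero mass to $M\setminus R$, so writing $\mu=\mu_{\mathrm{fix}}+\sum_j\mu_j$ with $\mu_j$ supported on the periodic orbit $C_j$ (of period $P_j$), using $\iota_V(\omega)=0$ on fixed points and $\int_{C_j}\iota_V(\omega)\,d\mu_j=\frac{\mu_j(C_j)}{P_j}\langle\xi,[C_j]\rangle\le 0$, one obtains $\langle\xi,\A_\mu\rangle=\int_M\iota_V(\omega)\,d\mu\le 0$ with strict inequality precisely when $\mu$ assigns positive mass to some $C_j$ with $\langle\xi,[C_j]\rangle<0$; and $\mu(R)>\mu(R_\xi)$ holds exactly in that case. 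Thus Theorem~\ref{tkthm} applies and gives $\sum_i\cat_M(R_\xi^i)\ge \cat^1_s(M,\xi)$.

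It remains to compute $\sum_i\cat_M(R_\xi^i)$. A fixed point contributes $1$. A null-homotopic periodic orbit $C$ has a tubular neighbourhood which is homotopy equivalent to $C$ and hence null-homotopic in $M$, so $\cat_M(C)=1$; all $p_1$ such orbits lie in $R_\xi$ since $[C]=0$. A homotopically essential periodic orbit $C$ has $\cat_M(C)=2$: covering $C$ by two open arc-neighbourhoods, each contractible in $M$, gives $\cat_M(C)\le 2$, while $\cat_M(C)=1$ would force $C$ to be null-homotopic in $M$; all $p_2$ such orbits have zero $\xi$-pairing, hence lie in $R_\xi$, and by the description of $R_\xi$ there are no further components. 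Therefore $\sum_i\cat_M(R_\xi^i)=p_0+p_1+2p_2$, and combining with the previous paragraph yields $p_0+p_1+2p_2\ge \cat^1_s(M,\xi)$, which is the assertion.
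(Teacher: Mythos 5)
Your proposal is correct and takes essentially the same route as the paper: the paper's own proof of Theorem~\ref{thm24} is a three-line deduction from Theorem~\ref{tkthm}, resting on exactly the three points you develop --- that $R_\xi$ is the union of the fixed points and the zero-pairing periodic orbits, that conditions (1)--(3) hold (with (3) being where the hypothesis $\langle\xi,z\rangle\le 0$ enters, via your measure decomposition), and that $\cat_M$ of a component is $1$, $1$ or $2$ accordingly. The only caveat is minor: the identification of $R_\xi$, which the paper itself only asserts (it appears earlier as an ``example illustrating the definition'', referring to \cite{FKLZ}), does not actually require the sign hypothesis --- it follows for any $\xi$ from the fact that for small $\delta$ and large $T$ a $(\delta,T)$-cycle based at a point of an isolated chain component $C$ stays near $C$ and so carries homology class a positive multiple of $[C]$ --- so your remark that the hypothesis is ``indispensable'' there is misplaced, though your sketch of that step is otherwise in the right spirit.
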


\begin{proof}
In the special case of Theorem \ref{thm24} condition (3) of Theorem \ref{tkthm}
is equivalent  to the requirement that $\langle \xi,z\rangle \leq 0$ for
the homology class $z\in H_1(M)$ of any periodic orbit. Under the assumptions
of Theorem \ref{thm24} the set $R_\xi$ is the union of the fixed points and of
the periodic orbits satisfying $\langle \xi, z\rangle =0$. If $C$ is a periodic
orbit then the number $\cat_M(C)$ equals 1 or 2 depending on whether $C$ is or
is not null-homotopic in $M$. These consideration show that Theorem \ref{thm24}
follows from Theorem \ref{tkthm}.
\end{proof}

As a side remark we point out that under the conditions of Theorem \ref{thm24},
\begin{eqnarray} p_0+p_1+2p_2+2q_2 \, \geq \, \cat(M)\end{eqnarray} as
follows from Theorem \ref{xi=0}; here $q_2$ denotes the number of periodic
orbits satisfying $\langle \xi, z\rangle \not=0$.

\section{Proof of Theorem \ref{tkthm}}\label{pr}

Consider a smooth flow $\Phi: M\times \R\to M$, $\Phi(x,t)=x\cdot t$ and a real
cohomology class $\xi\in H^1(M;\R)$ satisfying the conditions of Theorem
\ref{tkthm}.

If the chain recurrent set $R_\xi$ has infinitely many connected components
then the RHS of inequality (\ref{inmain}) is infinite and so this inequality is
obviously satisfied. Hence without loss of generality we may assume that the
number of connected components of $R_\xi$ is finite.

Let $R_\xi=R_\xi^1\cup R_\xi^2\cup\dots\cup R_\xi^r$ be the connected
components of $R_\xi$. Fix mutually disjoint open neighborhoods $W_i\supset
R_\xi^i$, where $i=1, 2, \dots, r$. We shall assume that $W_i$ is so small that
\begin{eqnarray}
\cat_M(W_i) =\cat_M(R_\xi^i), \quad i=1, \dots, r.\label{stam3}
\end{eqnarray}
Compare \cite{DNF}, Lemmas 19.2 and 19.6.

Applying Theorem \ref{convexthm} we may find an open neighborhood $B_i\subset W_i$
of $R_\xi^i$ which is convex with respect to the flow $\Phi$. Clearly,
\begin{eqnarray}
\cat_M(\overline B_i) =\cat_M(R_\xi^i), \quad i=1, \dots, r,\label{stam1}
\end{eqnarray}
as a consequence of (\ref{stam3}).

Let $U_i$ be an open neighborhood of $R_\xi^i$ such that the closure $\overline
U_i$ is contained in $B_i$. We have the inclusions $R_\xi^i\subset U_i\subset
\overline U_i\subset B_i\subset W_i$.

Let $A\subset M$ denote the set of points $x\in M$ such that for any $t\in \R$
the point $x\cdot t$ does not belong to the union $U=\cup_{j=1}^r U_j$. The set
$A$ is closed and flow-invariant.

Applying Theorem \ref{lyapmain} we find that there exists a smooth Lyapunov
1-form $\omega$ for the pair $(\Phi, R_\xi)$ lying in the cohomology class
$\xi=[\omega]\in H^1(M;\R)$. This means that the function $\omega(V):M\to \R$
is negative on $M-R_\xi$ (where $V$ denotes the vector field on $M$ generating
the flow $\Phi$) and $\omega(V)|_{R_\xi}=0$. Moreover, the restriction of
$\omega$ on some open neighborhood of $R_\xi$ is the differential of a smooth
function (this is equivalent to condition (1) of Theorem \ref{tkthm}).

Let us show that for any integer $N$ the set $A$ is $N$-movable with respect to
both $\omega$ and $-\omega$. Since $M-U$ is compact there exists $\epsilon
>0$ such that $\omega(V)<-\epsilon$ on $M-U$. Then for any $x\in A$ one has
$$\int\limits_x^{x\cdot t}\omega \, <\,  -\epsilon t, \quad \mbox{for}\quad t>0.$$
Hence a continuous homotopy $h_t: A\to M$, $t\in [0,1]$ defined by
$$h_t(x) = x\cdot \left(\frac{Nt}{\epsilon}\right)$$
satisfies the condition of Definition \ref{defmovable}. This shows that $A$ is
$N$-movable with respect to $\omega$. In a similar way one shows that for any
integer $N$ the set $A$ is $N$-movable with respect to $-\omega$.

Now the inequality (\ref{inmain}) follows once one shows that
\begin{eqnarray}
\cat_M(M-A)\, \, \leq\, \, \sum_{j=1}^r \cat_M(R_\xi^j),\label{once}
\end{eqnarray} see Definition \ref{defcats}.

Let $F_i\subset M$ denote the set of points $x\in M$ such that $x\cdot t$
belongs to $B_i$ for some $t\in \R$. Then $J_x^i=\{t\in \R; x\cdot t\in B_i\}$
is a nonempty open interval. For $x\in F_i$ define
$$
\tau_i(x) = \left\{
\begin{array}{lll}
0, & \mbox{if} &0\in J_x^i,\\
\inf J_x^i, & \mbox{if}& J_x^i\subset (0, \infty),\\
\sup J_x^i, & \mbox{if}& J_x^i\subset (-\infty, 0).
\end{array}
\right.
$$
The function $\tau_i: F_i\to \R$ is continuous as follows from Theorem
\ref{convexthm}.

The mapping $x\mapsto x\cdot \tau_i(x)$ is a deformation retraction $F_i\to
\overline B_i$ and hence
$$\cat_M (F_i) \leq \cat_M(\overline
B_i)=\cat_M(R_\xi^i)$$ in view of (\ref{stam1}). Here we use Lemma 19.5 from
\cite{DNF}. Since $M-A$ is contained in the union $\cup_{i=1}^r F_i$ one
obtains
$$\cat_M(M-A) \leq \sum_{i=1}^r \cat_M(F_i)\leq\sum_{i=1}^r \cat_M(R_\xi^i).$$
This proves inequality (\ref{once}) and hence completes the proof of Theorem
\ref{tkthm}.

\section{Cohomological estimates for $\cat(X,\xi)$}

The effectiveness of dynamical applications described in sections \S\S \ref{secfocus} - \ref{pr} depend crucially on the ability to calculate, or to estimate from below, the quantity
$\cat(X, \xi)$ where $X$ is a finite polyhedron and $\xi\in H^1(X;\R)$ is a cohomology class. The situation here is quite similar to the classical Lusternik - Schnirelmann theory where $\cat(X)$ is most often computed by a combination of upper and lower bounds and the most popular lower bound for $\cat(X)$
uses the cohomological cup-length, see \cite{DNF}.

In this section we
discuss results of this type for $\cat(X, \xi)$ following our paper \cite{farsch}; some lower bounds for $\cat(X, \xi)$ were obtained earlier in the papers \cite{farber} and \cite{farbook}.
We give in this section the main definitions, state principal results and illustrate them by several specific examples; however for complete proofs we refer the reader to our original papers \cite{farsch}, \cite{farsce} and \cite{farscm}.

Let $X$ be a finite polyhedron and $\xi \in H^1(X;\R)$. Denote by $\ker (\xi)$
the kernel of the homomorphism $\pi_1(X)\to \R$ given by evaluation on $\xi$.
Then $H=\pi_1(X)/\ker(\xi)$ is a free abelian group of finite rank $r$ where
$r$ denotes the rank of class $\xi$. Consider the cover $p: \tilde X\to X$
corresponding to $\ker(\xi)$. It has $H$ as the group of covering translations.

Let $\V_\xi= (\C^\ast)^r= \Hom(H, \C^\ast)$ denote the variety of all complex
flat line bundles $L$ over $X$ such that the induced flat line bundle $p^\ast
L$ on $\tilde X$ is trivial. If $t_1, \dots, t_r\in H$ is a basis, then the
monodromy of $L\in \mathcal V_\xi$ along $t_i$ is a nonzero complex number
$x_i\in \C^\ast$ and the numbers $x_1, \dots, x_r\in \C^\ast$ form a coordinate
system on $\mathcal V_\xi$. Given a flat line bundle $L\in \V_\xi$ the
monodromy representation of $L$ is the ring homomorphism
\begin{eqnarray}\label{monodromy}{\rm
Mon}_L: \C[H]\to \C\end{eqnarray}
sending each $t_i\in H$ to $x_i\in \C^\ast$.
The dual bundle $L^\ast\in \V_\xi$ is such that $L\otimes L^\ast$ is trivial;
if $x_1, \dots, x_r\in \C^\ast$ are coordinates of $L$ then $x_1^{-1}, \dots,
x_r^{-1}\in \C^\ast$ are coordinates of $L^\ast$.

Any nontrivial element $p\in \C[H]$ lying in the kernel of ${\rm Mon}_L$ can be
viewed as a (Laurent) polynomial equation between the variables $x_1, \dots,
x_r$. Alternatively, we will consider algebraic subvarieties $V\subset \V_\xi$.
Any such $V$ is the set of all solutions of a system of equations of the form
$$q_i(x_1, \dots, x_r, x_1^{-1}, \dots, x_r^{-1})=0, \quad i=1, \dots, m$$
where $q_i$ is a Laurent polynomial with complex coefficients $$p_i\in \C[x_1,
\dots, x_r, x_1^{-1}, \dots, x_r^{-1}].$$ This is equivalent to fixing an ideal
$J\subset \Q[H]$ and studying the set of all flat line bundles $L\in \V_\xi$
such that ${\rm Mon}_L(J)=0.$

\begin{definition}\label{transcendental}{\it
(1) We say that a bundle $L\in \V_\xi$  is {\it $\xi$-algebraic} if the monodromy homomorphism
${\rm Mon}_L: \Z[H]\to \C$ has nontrivial kernel.

(2) We say that $L\in \V_\xi$ is {\it a $\xi$-algebraic integer} if the kernel of
${\rm Mon}_L: \Z[H]\to \C$ contains a nontrivial polynomial $P\in \Z[H]$ with $\xi$-top coefficient $1$ (see below).

(3) We say that $L$ is
{\it $\xi$-transcendental} if ${\rm Mon}_L:
\Z[H]\to \C$ is injective. }
\end{definition}

Any nonzero $P\in \Z[H]$ can be written as $P=\sum_{i=1}^k\alpha_ih_i$ where $\alpha_i\in \Z$, $\alpha_i\not=0$, $h_i\in H$ and $\xi(h_1)<\xi(h_2)<\dots <\xi(h_k).$ The nonzero integer $\alpha_k\in \Z$ is called {\it the $\xi$-top coefficients of $P$.}

If $t_1, \dots, t_r\in H$ is a basis and if $a_i\in \C$ denotes the monodromy
of $L$ along $t_i$, i.e. $a_i={\rm Mon}_L(t_i)$, then $L$ is $\xi$-algebraic iff
there exist a nontrivial Laurent polynomial equation with integral coefficients
$q(t_1, \dots, t_r)$ such that $q(a_1, \dots, a_r)=0$.

There exist countably many nonzero Laurent polynomials $q$ with integral
coefficients and for each such $q$ the set of solutions $q(a_1, \dots, a_r)=0$
is nowhere dense in $\V_\xi$. Since $\V_\xi=(\C^\ast)^r$ is homeomorphic to a
complete metric space we obtain:

\begin{lemma}\label{bair} The set of all $\xi$-transcendental bundles $L\in \V_\xi$ is of Baire category 2. In
particular, the set of $\xi$-transcendental $L\in \V_\xi$ is dense in the variety
$\V_\xi$.
\end{lemma}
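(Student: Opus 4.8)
The plan is to exhibit the set of $\xi$-algebraic bundles as a countable union of closed nowhere dense subsets of $\V_\xi$ and then invoke the Baire category theorem. First I would fix a basis $t_1,\dots,t_r\in H$, which identifies the group ring $\Z[H]$ with the ring of Laurent polynomials $\Z[x_1^{\pm 1},\dots,x_r^{\pm 1}]$ and identifies $\V_\xi=\Hom(H,\C^\ast)$ with $(\C^\ast)^r$ via $L\mapsto(a_1,\dots,a_r)$, where $a_i={\rm Mon}_L(t_i)$. Under this identification, for $P\in\Z[H]$ corresponding to a Laurent polynomial $q_P$ one has ${\rm Mon}_L(P)=q_P(a_1,\dots,a_r)$, and $P\ne 0$ if and only if $q_P$ is a nonzero Laurent polynomial. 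Hence $L$ is $\xi$-transcendental precisely when $(a_1,\dots,a_r)$ is not a zero of any nonzero Laurent polynomial with integer coefficients.

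Next I would observe that there are only countably many nonzero Laurent polynomials $q\in\Z[x_1^{\pm 1},\dots,x_r^{\pm 1}]$, and that for each such $q$ its zero set $Z_q=\{a\in(\C^\ast)^r;\ q(a)=0\}$ is a closed subset of $(\C^\ast)^r$ which is nowhere dense. Closedness is clear since $q$ is continuous; for nowhere density one uses that $(\C^\ast)^r$ is connected and that $q$, being a nonzero polynomial (after clearing denominators, a nonzero holomorphic function), cannot vanish on any nonempty open set, since by the identity principle it would then vanish identically on the connected complex manifold $(\C^\ast)^r$. Therefore the set $\bigcup_q Z_q$ of all $\xi$-algebraic bundles is meager (of the first Baire category) in $\V_\xi$.

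Finally, I would use that $\V_\xi=(\C^\ast)^r$ is homeomorphic to $(\R\times S^1)^r$, hence to a complete metric space; by the Baire category theorem it is a Baire space in which no nonempty open set is meager. Consequently the complement of $\bigcup_q Z_q$, namely the set of $\xi$-transcendental bundles, is comeager, so it is of the second Baire category and, being comeager in a Baire space, in particular dense in $\V_\xi$. The one point that requires a little care is the nowhere-density of each $Z_q$: the argument hinges on the irreducibility (equivalently, connectedness together with smoothness) of $(\C^\ast)^r$, so that a single nonzero polynomial relation cuts out a proper subvariety with empty interior; everything else is a routine application of Baire's theorem.
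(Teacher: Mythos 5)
Your proposal is correct and follows essentially the same route as the paper: the $\xi$-algebraic bundles form a countable union of zero sets of nonzero integral Laurent polynomials, each closed and nowhere dense in $\V_\xi=(\C^\ast)^r$, which is homeomorphic to a complete metric space, so the Baire category theorem gives the conclusion. Your added justification of nowhere-density via the identity principle fills in a detail the paper leaves implicit, but the argument is the same.
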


\begin{lemma}\label{lomeshane} For any $\xi$-transcendental flat line bundle $L\in \V_\xi$ the dimension of the vector space $H^q(X;L)$ equals the $q$-dimensional
Novikov - Betti number $b_q(\xi)$,
$$\dim H^q(X;L)=b_q(\xi).$$
\end{lemma}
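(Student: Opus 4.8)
The plan is to identify both $\dim_\C H^q(X;L)$ and $b_q(\xi)$ with the ``generic'' Betti number of $X$ with respect to $\xi$: the dimension over the field of rational functions $\Q(H)=\Q(t_1,\dots,t_r)$ (the field of fractions of $\Z[H]$) of the homology of a single chain complex. First I would set up the common input. Let $p\colon\tilde X\to X$ be the cover with deck group $H$ fixed before the statement, and let $C_\ast=C_\ast(\tilde X)$ be its cellular chain complex, a bounded complex of finitely generated free left $\Z[H]$-modules with $C_q$ of rank $n_q$ equal to the number of $q$-cells of $X$ (here we use that $X$ is a finite polyhedron). Both sides of the asserted equality arise from $C_\ast$ by extension of scalars along a ring homomorphism out of $\Z[H]$. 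On one hand, $\phi_\xi$ of (\ref{rep}) factors through $\Z[\pi]\to\Z[H]$, and, the induced homomorphism $H\to\R$ being injective, it embeds $\Z[H]$ into $\Nov$; moreover $\Nov\otimes_{\Z[\pi]}C_\ast(\tilde M)\cong\Nov\otimes_{\Z[H]}C_\ast$, so $b_q(\xi)$ is the rank of the free part of $H_q(\Nov\otimes_{\Z[H]}C_\ast)$. On the other hand, the monodromy of $L\in\V_\xi$ kills $\ker(\xi)$, hence is a ring homomorphism ${\rm Mon}_L\colon\Z[H]\to\C$, and $H^q(X;L)=H^q(\Hom_{\Z[H]}(C_\ast,\C))$ where $\C$ is the $\Z[H]$-module defined by ${\rm Mon}_L$.

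The core step is the following elementary principle. If $F$ is a field equipped with an \emph{injective} ring homomorphism $\iota\colon\Z[H]\to F$, then $\iota$ extends uniquely to a field embedding $\Q(H)\hookrightarrow F$, and since $F$ is then flat over $\Q(H)$,
\begin{eqnarray*}
\dim_F H_q\bigl(F\otimes_{\Z[H]}C_\ast\bigr)\, =\, \dim_{\Q(H)} H_q\bigl(\Q(H)\otimes_{\Z[H]}C_\ast\bigr)\, =:\, \beta_q
\end{eqnarray*}
for all $q$. Apply this first with $F=\mathrm{Frac}(\Nov)$ (available since $\Nov$ is a principal ideal domain by Lemma \ref{lmpr}): because the free rank of a finitely generated module over a PID equals the dimension of its localisation over the field of fractions, $b_q(\xi)=\dim_{\mathrm{Frac}(\Nov)}H_q(\mathrm{Frac}(\Nov)\otimes_{\Z[H]}C_\ast)=\beta_q$. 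Apply it next with $F=\C$ and $\iota={\rm Mon}_L$: this is legitimate exactly because $L$ is $\xi$-transcendental, i.e. ${\rm Mon}_L$ is injective, so $\dim_\C H_q(\C\otimes_{\Z[H]}C_\ast)=\beta_q$. Finally, over a field the homology and cohomology dimensions of a finite complex with local coefficients coincide --- a matrix and its transpose have the same rank, and replacing $L$ by the dual bundle $L^\ast$, which is again $\xi$-transcendental, leaves the counts unchanged --- so $\dim_\C H^q(X;L)=\dim_\C H_q(\C\otimes_{\Z[H]}C_\ast)=\beta_q=b_q(\xi)$, as claimed.

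Concretely, $\beta_q=n_q-\rho_q-\rho_{q+1}$ with $\rho_q=\rk_{\Q(H)}(\partial_q)$, and the only place the hypothesis on $L$ is used is that evaluating the integral Laurent-polynomial entries of $\partial_q$ at $L$ cannot kill a $\rho_q\times\rho_q$ minor that is a nonzero element of $\Z[H]$, precisely because ${\rm Mon}_L$ is injective; thus no rank is lost and the evaluated complex has the generic homology dimensions. I expect the only delicate points to be bookkeeping: checking that $\phi_\xi$ genuinely embeds $\Z[H]$ into $\Nov$, that the free rank over the PID $\Nov$ is detected after inverting all nonzero elements (the torsion dying on passage to a field), and the standard identification $\dim H^q=\dim H_q$ over a field. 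The homological-algebra content --- exactness and dimension-preservation of base change along a field extension --- is then routine.
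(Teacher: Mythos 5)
Your proposal is correct and follows essentially the same route as the paper: the transcendence of $L$ gives a field extension $Q(H)\to\C$, and flat base change from $Q(H)$ identifies $\dim_\C H^q(X;L)$ with the generic Betti number $\dim_{Q(H)}H^q(X;Q(H))$. The only difference is that where the paper cites Proposition 1.30 of \cite{farbook} for the equality of this generic dimension with $b_q(\xi)$, you supply the argument yourself by embedding $\Z[H]$ into the Novikov ring and using that free rank over a PID is detected over its field of fractions; this is a correct and self-contained substitute for the citation.
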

\begin{proof} The monodromy homomorphism ${\rm {Mon}}_L: \Z[H]\to \C$ defines a
left $\Z[H]$-module structure $\C_L$ on $\C$ and by definition
\begin{eqnarray}\label{deftwisted}
H^q(X;L)=H^q(\Hom_{\Z[H]}(C_\ast(\tilde X), \C_L)) \end{eqnarray}
 where
$C_\ast(\tilde X)$ is the cellular chain complex of the covering $\tilde X\to
X$ corresponding to $\ker(\xi)$. If $L$ is transcendental then ${\rm {Mon}}_L$
gives a field extension $Q(H)\to \C$ where $Q(H)$ is the field of fractions of
$\Z[H]$. We obtain therefore (using finiteness of $C_\ast(\tilde X)$ over
$\Z[H]$ and (\ref{deftwisted})):
\begin{eqnarray*}
H^q(X;L) &\simeq & H^q(\Hom_{\Z[H]}(C_\ast(\tilde X);Q(H)))\otimes_{Q(H)}\C_L
\\ &\simeq & H^q(X;Q(H))\otimes_{Q(H)}\C_L. \end{eqnarray*}
This implies that for any transcendental $L$ one has
\begin{eqnarray}\label{nov}
\dim_\C H^q(X;L) \, =\, \dim_{Q(H)}H^q(X;Q(H))
\end{eqnarray}
and the right hand side. Finally we invoke Proposition 1.30 of \cite{farbook} which explains why the right hand side of (\ref{nov}) equals the Novikov - Betti number $b_q(\xi)$.
\end{proof}

\begin{theorem}[Farber - Sch\"utz, \cite{farsch}] \label{perfect}
Let $X$ be a finite cell complex and $\xi\in H^1(X;\R)$. Let $L\in \V_\xi$ be
$\xi$-transcendental. Assume
 that there exist
cohomology classes $v_0\in H^{d_0}(X;L)$ and $v_i\in H^{d_i}(X;\C)$ where $i=1,
\dots, k$ such that $d_i>0$ for $i\in \{1, \dots, k\}$ and the cup-product
\begin{eqnarray}
v_0\cup v_1\cup\dots\cup v_k\, \not=0 \, \in H^\ast(X;L)
\end{eqnarray}
is nontrivial. Then $\cat(X,\xi)>k$.
\end{theorem}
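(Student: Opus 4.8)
The plan is to argue by contradiction, combining the cup-length lower bound for ordinary Lusternik--Schnirelmann category (in the relative form $\cat_X$) with the movability structure encoded in Definition \ref{defcat} and the twisted-cohomology computation of Lemma \ref{lomeshane}. Suppose $\cat(X,\xi)\le k$. By Definition \ref{defcat}, for every integer $N>0$ there is a closed subset $A=A_N\subset X$ which is $N$-movable with respect to a fixed closed $1$-form $\omega$ representing $\xi$, and with $\cat_X(X-A)\le k$. This means $X-A$ is covered by open sets $U_1,\dots,U_k\subset X$, each null-homotopic in $X$. The complement $A$ should be thought of as a ``small'' piece that one can push in the $\xi$-decreasing direction arbitrarily far; the crux is to show that $A$ cannot support the class $v_0\in H^{d_0}(X;L)$ once $N$ is large, because the flat bundle $L$ restricted to $A$ acquires, via the movability homotopy, a monodromy-twist that forces $H^*(A;L|_A)\to H^*(X;L)$-type maps to kill $v_0$; this is exactly where $\xi$-transcendence of $L$ is used.

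Concretely, the key step is to establish that for $N$ sufficiently large the restriction homomorphism $H^{d_0}(X;L)\to H^{d_0}(A;L|_A)$ sends $v_0$ to $0$, or more precisely that $v_0$ lies in the image of $H^{d_0}(X,A;L)\to H^{d_0}(X;L)$. The mechanism is the following: an $N$-movable set $A$ comes with a homotopy $H\colon A\times[0,1]\to X$ with $H_0=\mathrm{incl}$ and $\int_a^{H_1(a)}\omega\le -N$ for all $a$. Pulling back $L$ along $H$ and comparing the two ends, the difference of the two restrictions of $L$ to $A$ is measured by the ``shift by $\omega$'', i.e.\ by the monodromy character evaluated along the tracks of $H$; feeding this into the Novikov/Sikorav-type estimate (in the spirit of Lemma \ref{lomeshane} and the discussion of $\xi$-transcendence in Definition \ref{transcendental}) shows that a nonzero class like $v_0$ cannot be ``carried'' by a subset that is pushed $\ge N$ in the $\xi$-direction, once $N$ exceeds a bound depending only on the cellular chain complex of $\tilde X$ and on $v_0$. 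Thus $v_0$ comes from $H^{d_0}(X,A;L)$.

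Granting that, the proof finishes by a standard cup-product argument. Since each $U_i$ is null-homotopic in $X$, the restriction $v_i|_{U_i}=0\in H^{d_i}(U_i;\C)$ for $i=1,\dots,k$, so $v_i$ lifts to a relative class $\bar v_i\in H^{d_i}(X,U_i;\C)$; and $v_0$ lifts to $\bar v_0\in H^{d_0}(X,A;L)$ by the previous paragraph. Using the relative cup-product pairing
\[
H^{d_0}(X,A;L)\otimes H^{d_1}(X,U_1;\C)\otimes\dots\otimes H^{d_k}(X,U_k;\C)\;\longrightarrow\;H^{*}\!\Bigl(X,\;A\cup U_1\cup\dots\cup U_k;\;L\Bigr),
\]
the product $\bar v_0\cup\bar v_1\cup\dots\cup\bar v_k$ maps to $v_0\cup v_1\cup\dots\cup v_k$ under the natural map to $H^*(X;L)$. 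But $A\cup U_1\cup\dots\cup U_k=X$ (since the $U_i$ cover $X-A$), so the target group $H^*(X,X;L)=0$, whence $v_0\cup v_1\cup\dots\cup v_k=0$, contradicting the hypothesis. Therefore $\cat(X,\xi)>k$.

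The main obstacle is the middle step: making precise and proving the claim that a nonzero $L$-twisted class $v_0$ cannot be supported on an $N$-movable set for large $N$. This is really a quantitative Novikov-homology statement --- one needs a uniform bound, independent of $N$ and of the particular movable set $A_N$, controlling how far a homology class of $\tilde X$ can be displaced in the $\xi$-direction before it must vanish after tensoring through the transcendental character $\mathrm{Mon}_L$. I expect this to require a filtration argument on $C_*(\tilde X)$ by values of a lift of $\omega$, together with the injectivity of $\mathrm{Mon}_L$ on $\Z[H]$ (the defining property of $\xi$-transcendence), mirroring the reasoning behind Lemma \ref{lomeshane}; the remaining relative-cup-product bookkeeping is routine.
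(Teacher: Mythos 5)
Your architecture is the right one and, as far as it goes, matches the strategy of the source \cite{farsch} to which the survey defers this proof: argue by contradiction from Definition \ref{defcat}, show that for $N$ large an $N$-movable closed set $A$ cannot carry $v_0$, and then annihilate $v_0\cup v_1\cup\dots\cup v_k$ via the relative cup product landing in $H^\ast(X,X;L)=0$. The closing bookkeeping is correct as you describe it; in particular the hypothesis $d_i>0$ is precisely what lets each $v_i$ lift to $H^{d_i}(X,U_i;\C)$ when $U_i\to X$ is null-homotopic, and $v_0|_A=0$ is by exactness equivalent to the lift you need.

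The genuine gap is the middle step, which you flag as ``the main obstacle'' but do not prove, and which is the entire technical content of the theorem: one must produce a single $N_0=N_0(v_0,\omega)$ such that \emph{every} closed subset $A$ that is $N_0$-movable satisfies $v_0|_A=0$. Your heuristic mechanism does not work as stated. The tracks $t\mapsto H(a,t)$ of a movability homotopy are paths, not loops, so they do not produce one scalar $\mathrm{Mon}_L(g)$ by which $v_0|_A$ is multiplied; different points of $A$ are displaced by different amounts (only bounded above by $-N$), $A$ need not lift to the cover $\tilde X$ associated with $\ker(\xi)$, and $N$-movability of $A$ for a single $N$ by itself implies nothing (one cannot iterate the homotopy, since $H_1(A)$ need not lie back in $A$). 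What is actually required is the quantitative analysis carried out in \cite{farscm} and \cite{farsch}: pass to $\tilde X$, filter the finite free $\Z[H]$-complex $C_\ast(\tilde X)$ by the values of a primitive $F$ of the lifted form, and show that a class whose restriction to arbitrarily deeply movable sets survives must already die after completion in the direction of $\xi$; $\xi$-transcendence of $L$ then enters because $\mathrm{Mon}_L$ extends over the field of fractions $Q(H)$ and therefore detects exactly the Novikov homology computed in Lemma \ref{lomeshane}. Until that uniform lemma is established, what you have is a correct plan rather than a proof.
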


Theorem \ref{perfect} combines simplicity with remarkable efficiency.
Below in this section we test this theorem in many specific
examples.

\subsection{The notion of cup-length $\clz(X,\xi)$}

In view of Theorem \ref{perfect} we introduce the following notation.

Let $X$ be a finite cell complex and $\xi\in H^1(X;\R)$. We denote by
$\clz(X,\xi)$ the maximal integer $k\geq 0$ such that Theorem \ref{perfect}
could be applied to $(X,\xi)$; if Theorem \ref{perfect} is not applicable (i.e.
if $H^\ast(X;L)=0$ for any $\xi$-transcendental $L\in \V_\xi$, compare Lemma
\ref{lomeshane}) we set
$$\clz(X,\xi)=-1.$$
Hence, \begin{eqnarray}\clz(X,\xi)\, \in \, \{-1, 0, 1, \dots\}.
\end{eqnarray}

In other words, $\clz(X,\xi)\geq k$ where $k\geq 0$ iff there exists a
$\xi$-transcendental flat line bundle $L\in \V_\xi$ and there exist cohomology
classes $v_0\in H^{d_0}(X;L)$ and $v_i\in H^{d_i}(X;\C)$ where $i=1, \dots, k$
and $d_i>0$ for $i\in \{1, \dots, k\}$ such that the cup-product
\begin{eqnarray}\label{cupprod}
v_0\cup v_1\cup\dots\cup v_k\, \not=0 \, \in H^\ast(X;L)
\end{eqnarray}
is nontrivial.

Note that for $\xi=0$ the number $\clz(X,\xi)$ coincides with the usual
cup-length $\clz(X)$; recall that the later is defined as the largest integer
$r$ such that there exist cohomology classes $u_i\in H^{d_i}(X;\C)$ where $i=1,
\dots, r$ of positive degree such that their cup-product $ u_1\cup\dots\cup
u_k\, \not=0 \, \in H^\ast(X;L) $ is nontrivial.

One can restate Theorem \ref{perfect} as follows:

\begin{theorem}\label{rephrase} For any finite complex $X$ and $\xi\in H^1(X;\R)$ one has $$\cat(X,\xi)\geq \clz(X,\xi)+1.$$
\end{theorem}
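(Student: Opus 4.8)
The statement is purely a repackaging of Theorem \ref{perfect} together with the definition of $\clz(X,\xi)$, so the plan is to unwind that definition and split into the two possible regimes for $\clz(X,\xi)$.

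First I would dispose of the degenerate case $\clz(X,\xi)=-1$. By definition this means Theorem \ref{perfect} is inapplicable to $(X,\xi)$, i.e. $H^\ast(X;L)=0$ for every $\xi$-transcendental $L\in\V_\xi$; in this case the asserted inequality reads $\cat(X,\xi)\geq 0$, which holds trivially since $\cat(X,\xi)$ is a nonnegative integer. So from now on assume $\clz(X,\xi)=k$ for some integer $k\geq 0$.

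Next, I would invoke the definition of $\clz(X,\xi)$ directly: since $\clz(X,\xi)=k\geq 0$, Theorem \ref{perfect} is applicable with this value of $k$, meaning there exist a $\xi$-transcendental flat line bundle $L\in\V_\xi$ and cohomology classes $v_0\in H^{d_0}(X;L)$, $v_i\in H^{d_i}(X;\C)$ for $i=1,\dots,k$ with $d_i>0$ for $i\in\{1,\dots,k\}$, such that the cup-product
\[
v_0\cup v_1\cup\dots\cup v_k\,\neq 0\,\in H^\ast(X;L).
\]
Applying Theorem \ref{perfect} to this data yields $\cat(X,\xi)>k$, i.e. $\cat(X,\xi)\geq k+1=\clz(X,\xi)+1$, which is the desired conclusion. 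Combining the two cases completes the argument.

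I do not expect any genuine obstacle here: the theorem is by design a reformulation, and all the substantive work — constructing the deformation that detects the cup-length obstruction via the $\xi$-transcendental local system $L$ — is already contained in the proof of Theorem \ref{perfect} (carried out in \cite{farsch}). The only thing to be careful about is the bookkeeping at the boundary value $\clz(X,\xi)=-1$, where one must note that Theorem \ref{perfect} genuinely says nothing and one falls back on the trivial bound $\cat(X,\xi)\geq 0$.
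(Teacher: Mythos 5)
Your argument is correct and coincides with the paper's: Theorem \ref{rephrase} is presented there simply as a restatement of Theorem \ref{perfect} via the definition of $\clz(X,\xi)$, which is exactly the unwinding you carry out, including the trivial treatment of the degenerate case $\clz(X,\xi)=-1$.
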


The next useful Lemma suggests several different ways to characterize the
number $\clz(X,\xi)$. This Lemma plays an important role in the sequel.

\begin{lemma}\label{equivalent} Let $X$ be a finite cell complex and $\xi\in H^1(X;\R)$.
The following statements regarding an integer $k\geq 0$ are equivalent:
\begin{enumerate}
\item[{\rm (A)}] $\clz(X,\xi)\geq k$;

\item[{\rm (B)}] There exists cohomology class $v_0\in H^{d_0}(X;L)$ where
$L\in \V_\xi$ is a $\xi$-transcendental flat line bundle and there exist $k$ integral
cohomology classes $v_i\in H^{d_i}(X;\Z)$ where $i=1, \dots, k$ and $d_i>0$ for
$i\in \{1, \dots, k\}$, such that the cup-product (\ref{cupprod}) is
nontrivial.

\item[{\rm (C)}] Let $H$ denote $H_1(X;\Z)/\ker (\xi)$ and let $Q(H)$ denote the field of fractions of the group
ring $\Z[H]$. Then there exist cohomology classes $w_0\in H^{d_0}(X;Q(H))$ and $v_i\in H^{d_i}(X;\Z)$
where $d_i>0$ for $i=1, \dots, k$ such that the cup-product
\begin{eqnarray*}
w_0\cup v_1\cup \dots \cup v_k\not=0\in H^\ast(X;Q(H))
\end{eqnarray*}
is nontrivial.

\item[{\rm (D)}] For any $\xi$-transcendental flat line bundle $L\in \V_\xi$ there
exists cohomology class $v_0\in H^{d_0}(X;L)$ such that the cup-product
(\ref{cupprod}) is nontrivial for some integral cohomology classes $v_i\in
H^{d_i}(X;\Z)$ with $d_i>0$ for $i\in \{1, \dots, k\}$.

\item[{\rm (E)}] For any $\xi$-transcendental flat line bundle $L\in \V_\xi$ there
exist cohomology classes $v_0\in H^{d_0}(X;L)$ and $v_i\in H^{d_i}(X;\C)$ where
$i=1, \dots, k$ and $d_i>0$ for $i\in \{1, \dots, k\}$ such that the
cup-product (\ref{cupprod}) is nontrivial.
\end{enumerate}
\end{lemma}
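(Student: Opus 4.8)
The plan is to establish the five-fold equivalence by a cycle of implications, using Lemma \ref{bair} (density and Baire-category 2 of the $\xi$-transcendental bundles) as the engine that lets us pass back and forth between a single chosen bundle and all of them. I would arrange the implications as $(E)\Rightarrow(D)\Rightarrow(B)\Rightarrow(A)\Rightarrow(E)$, with $(C)$ inserted into the loop via $(C)\Leftrightarrow(B)$, so that altogether the cycle $(A)\Rightarrow(E)\Rightarrow(D)\Rightarrow(B)\Rightarrow(C)\Rightarrow(A)$ closes. Several of these are essentially formal: $(E)\Rightarrow(D)$ is the observation that an arbitrary class $v_i\in H^{d_i}(X;\C)$ can be replaced by an integral class without destroying a nonvanishing cup-product, since $H^{d_i}(X;\C)=H^{d_i}(X;\Z)\otimes\C$ and the cup-product map $H^{d_0}(X;L)\otimes H^{d_1}(X;\C)\otimes\cdots\to H^\ast(X;L)$ is $\C$-multilinear, so one may choose integral classes spanning enough of $H^{d_i}(X;\Z)\otimes\C$ for one summand to survive; $(D)\Rightarrow(B)$ and $(B)\Rightarrow(A)$ are immediate from Definition \ref{transcendental} and the definition of $\clz(X,\xi)$ (specialize "for all $L$" to "there exists $L$", and recall $(A)$ is precisely the statement that Theorem \ref{perfect} applies, which is $(B)$ with complex rather than integral classes $v_i$ — so in fact $(A)$ and the complex-coefficient version of $(B)$ are literally the same, and one only needs the integrality upgrade).

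The substantive content is the implication $(A)\Rightarrow(E)$, i.e.\ upgrading "there exists a $\xi$-transcendental $L$ with a nonzero cup-product of the required shape" to "for every $\xi$-transcendental $L$ such a cup-product exists". Here I would argue through the field of fractions $Q(H)$ exactly as in the proof of Lemma \ref{lomeshane}. The key point is that for any $\xi$-transcendental $L\in\V_\xi$, the monodromy ${\rm Mon}_L$ factors as $\Z[H]\hookrightarrow Q(H)\xrightarrow{\ \phi_L\ }\C$, a field extension, and hence
$$H^q(X;L)\ \simeq\ H^q(X;Q(H))\otimes_{Q(H)}\C_L$$
naturally in $q$, compatibly with cup-products (this is flat base change along $Q(H)\to\C$ applied to the finite free $\Z[H]$-complex $C_\ast(\tilde X)$). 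Consequently the cup-product structure of $H^\ast(X;L)$ over $\C$, in the relevant tri-degrees involving one $L$-coefficient factor and several constant-coefficient factors, is obtained from the cup-product structure of $H^\ast(X;Q(H))$ by the faithfully flat extension $-\otimes_{Q(H)}\C_L$, hence a product of classes is nonzero in $H^\ast(X;L)$ if and only if the corresponding product is nonzero in $H^\ast(X;Q(H))$. Since this criterion over $Q(H)$ does not refer to $L$ at all, nonvanishing for one $\xi$-transcendental bundle forces nonvanishing for all of them. This same computation gives $(B)\Rightarrow(C)$ (pull the product up to $Q(H)$ via the extension $\Z[H]\to\C$ associated to a $\xi$-transcendental $L$, which is injective, so a nonzero product over $L$ comes from a nonzero product over $Q(H)$) and $(C)\Rightarrow(A)$ (conversely, specialize $Q(H)\to\C$ at a $\xi$-transcendental point, using that such points exist and are dense by Lemma \ref{bair}, and choose one at which the finitely many relevant classes remain nonzero).

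The main obstacle is bookkeeping around cup-products and base change: one must check that the isomorphism $H^\ast(X;L)\cong H^\ast(X;Q(H))\otimes_{Q(H)}\C_L$ is one of graded rings (or at least is compatible with the external products pairing an $L$-coefficient class against $\C$- or $\Z$-coefficient classes), which is where flatness of $\C_L$ over $Q(H)$ and the standard comparison of cup-products with coefficient pairings are invoked; and in the direction $(C)\Rightarrow(A)$ one must know that the $\xi$-transcendental locus is not merely nonempty but large enough to avoid the proper subvarieties cut out by the finitely many polynomial conditions "this class is nonzero" — which is exactly Lemma \ref{bair}. I expect each individual step to be short once this framework is in place; the only care needed is to phrase the base-change statement for cohomology with local coefficients over the PID $\Z[H]$ correctly, as already done in the proof of Lemma \ref{lomeshane}, and to invoke Proposition 1.30 of \cite{farbook} if one wants to phrase the $Q(H)$-condition in terms of Novikov numbers, though this is not strictly necessary for the equivalence.
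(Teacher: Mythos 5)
Your proposal is correct and follows essentially the same route as the paper: the two working parts are identical, namely (i) multilinearity of the cup-product over $\C$ plus the fact that integral classes span $H^{d_i}(X;\C)$ to upgrade complex classes $v_i$ to integral ones, and (ii) the factorization ${\rm Mon}_L\colon \Z[H]\hookrightarrow Q(H)\to\C$ together with flat base change $H^\ast(X;L)\simeq H^\ast(X;Q(H))\otimes_{Q(H)}\C_L$ to make the nonvanishing criterion independent of the particular $\xi$-transcendental $L$. The paper merely orders the cycle as (A)$\Rightarrow$(B)$\Rightarrow$(C)$\Rightarrow$(D)$\Rightarrow$(E)$\Rightarrow$(A), which cleanly separates the two steps (and note that in (C)$\Rightarrow$(D) no density or Baire-category argument is needed beyond injectivity of $Q(H)\to\C$, since a field extension is faithfully flat).
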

\begin{proof}
Let us show that (A)$\implies$(B). Fix a $\xi$-transcendental bundle $L\in \V_\xi$,
and $v_0\in H^\ast(X;L)$ such that (\ref{cupprod}) is nontrivial for some
$v_i\in H^{d_i}(X;\C)$ with $d_i>0$. Consider now the cup-products
\begin{eqnarray}\label{function}
v_0\cup v'_1\cup\dots \cup v'_k
\end{eqnarray}
with arbitrary integral cohomology classes $v'_i\in H^{d_i}(X;\Z)$; here the
degrees $d_i$ are assumed to be fixed. (\ref{function}) is a multi-linear
function of the classes $v'_i$. Since the integral classes generate
$H^{d_i}(X;\C)$ over $\C$ we obtain that (\ref{function}) must be nonzero for
some choice of classes $v'_i$, i.e. (B) holds.

Now we show that (B)$\implies$(C). Fix $L\in \V_\xi$ and the classes $v_0\in
H^{d_0}(X;L)$ and $v_i\in H^{d_i}(X;\Z)$ satisfying conditions described in
(B). The monodromy homomorphism ${\rm {Mon}}_L: \Z[H]\to \C$ is an injective
ring homomorphism, it extends to the field of fractions $Q(H)\to \C$. The image
of the induced homomorphism on cohomology
\begin{eqnarray}\label{flat}
\psi: H^{d_0}(X;Q(H)) \to H^{d_0}(X;L)
\end{eqnarray}
generates $H^{d_0}(X;L)$ over $\C$ and (\ref{flat}) is injective (for reasons
mentioned in the proof of Lemma \ref{lomeshane}). Fix cohomology classes
$v_i\in H^\ast(X;\Z)$ where $i=1, \dots, k$. For a cohomology class $w_0\in
H^{d_0}(X;Q(H))$ the function
\begin{eqnarray}\label{com} \psi(w_0)\cup v_1\cup \dots\cup v_k= \psi(w_0\cup
v_1\cup \dots\cup v_k)\, \in \, H^\ast(X;L)
\end{eqnarray} extends to a $\C$-linear function of $v_0\in H^{d_0}(X;L)$
$$v_0\mapsto v_0\cup v_1\cup \dots\cup v_k \in H^\ast(X;L).$$
If this function is nonzero then it may not vanish on the image of $\psi$, i.e.
(C) holds.

The implication (C)$\implies$(D) follows from injectivity of homomorphism
(\ref{flat}) and from (\ref{com}).

Implications (D)$\implies$(E) and (E)$\implies$(A) are obvious. This completes
the proof.
\end{proof}

\begin{lemma}\label{clgeq}
Assume that $X$ and $Y$ are path connected finite cell complexes and $\xi\in
H^1(X\times Y;\R)$.
 Then
\begin{eqnarray}\label{cl}
\clz(X\times Y, \xi)\geq \clz(X,\xi|_X) + \clz(Y, \xi|_Y).
\end{eqnarray}
\end{lemma}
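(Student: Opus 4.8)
The plan is to combine the K\"unneth theorem with the several characterizations of $\clz$ collected in Lemma \ref{equivalent}. Put $k=\clz(X,\xi|_X)$ and $\ell=\clz(Y,\xi|_Y)$; I treat the principal case $k,\ell\ge 0$. Write $p\colon X\times Y\to X$ and $q\colon X\times Y\to Y$ for the projections, and set $H_X=H_1(X;\Z)/\ker(\xi|_X)$, $H_Y=H_1(Y;\Z)/\ker(\xi|_Y)$, $H=H_1(X\times Y;\Z)/\ker(\xi)$. Since $H_1(X\times Y;\Z)=H_1(X;\Z)\oplus H_1(Y;\Z)$ and $\langle\xi,(a,b)\rangle=\langle\xi|_X,a\rangle+\langle\xi|_Y,b\rangle$, the factor inclusions induce homomorphisms $H_X\to H$ and $H_Y\to H$ which together are onto; moreover $\langle\xi|_X,a\rangle=\langle\xi,(a,0)\rangle$ shows that $a\mapsto[(a,0)]$ has kernel exactly $\ker(\xi|_X)$, so $H_X\hookrightarrow H$, and likewise $H_Y\hookrightarrow H$. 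The genuinely delicate point is that the surjection $H_X\oplus H_Y\to H$ need not be injective, so a bundle in $\V_\xi$ is \emph{not} a free choice of a bundle on each factor.

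First I would use Lemma \ref{bair} to fix a $\xi$-transcendental flat line bundle $L\in\V_\xi$ on $X\times Y$. As $X$ and $Y$ are path connected, $L$ is determined by its monodromy homomorphism on $\pi_1(X\times Y)=\pi_1(X)\times\pi_1(Y)$, whence $L\cong p^\ast L_X\otimes q^\ast L_Y$ with $L_X=L|_{X\times\{y_0\}}\in\V_{\xi|_X}$ and $L_Y=L|_{\{x_0\}\times Y}\in\V_{\xi|_Y}$. Using the injection $H_X\hookrightarrow H$ one checks that the restriction of ${\rm Mon}_L\colon\Z[H]\to\C$ to $\Z[H_X]$ is precisely ${\rm Mon}_{L_X}$; since ${\rm Mon}_L$ is injective, so is ${\rm Mon}_{L_X}$, i.e.\ $L_X$ is $\xi|_X$-transcendental, and symmetrically $L_Y$ is $\xi|_Y$-transcendental.

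Now I would apply characterization (D) of Lemma \ref{equivalent}, the point being that it holds for \emph{every} transcendental bundle, hence for the particular bundles $L_X$, $L_Y$ that were forced on us. This produces $v_0^X\in H^{a_0}(X;L_X)$ and integral classes $v_1,\dots,v_k$ of positive degree with $v_0^X\cup v_1\cup\dots\cup v_k\ne 0$ in $H^\ast(X;L_X)$, and similarly $v_0^Y\in H^{b_0}(Y;L_Y)$ with integral positive-degree classes $u_1,\dots,u_\ell$ and $v_0^Y\cup u_1\cup\dots\cup u_\ell\ne 0$ in $H^\ast(Y;L_Y)$. On $X\times Y$ form the cross product $w_0=v_0^X\times v_0^Y\in H^{a_0+b_0}(X\times Y;\,p^\ast L_X\otimes q^\ast L_Y)=H^{a_0+b_0}(X\times Y;L)$ together with the pulled-back positive-degree integral classes $p^\ast v_i$ and $q^\ast u_j$. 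The identities $(\alpha\times\beta)\cup p^\ast\gamma=\pm(\alpha\cup\gamma)\times\beta$ and $(\alpha\times\beta)\cup q^\ast\delta=\pm\,\alpha\times(\beta\cup\delta)$ then give
\[
w_0\cup p^\ast v_1\cup\dots\cup p^\ast v_k\cup q^\ast u_1\cup\dots\cup q^\ast u_\ell \;=\;\pm\bigl(v_0^X\cup v_1\cup\dots\cup v_k\bigr)\times\bigl(v_0^Y\cup u_1\cup\dots\cup u_\ell\bigr),
\]
which is nonzero because, over the field $\C$, the K\"unneth cross product $H^\ast(X;L_X)\otimes_\C H^\ast(Y;L_Y)\to H^\ast(X\times Y;L)$ is injective (no Tor terms).

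Thus $L$ is a $\xi$-transcendental bundle on $X\times Y$ and $w_0$ is a class with coefficients in $L$ whose cup product with $k+\ell$ integral classes of positive degree is nontrivial, so by characterization (B) of Lemma \ref{equivalent} (or directly by the definition of $\clz$) we conclude $\clz(X\times Y,\xi)\ge k+\ell=\clz(X,\xi|_X)+\clz(Y,\xi|_Y)$. I expect the only real obstacle to be the issue flagged in the first paragraph: since $\V_\xi$ need not split as $\V_{\xi|_X}\times\V_{\xi|_Y}$, one must begin with a transcendental bundle on the product and verify that its two slice restrictions remain transcendental, which is exactly why one needs the \lq\lq for every transcendental $L$\rq\rq\ form of Lemma \ref{equivalent} rather than its \lq\lq there exists\rq\rq\ form; everything else is bookkeeping with the K\"unneth formula for local coefficients.
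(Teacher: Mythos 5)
Your proof is correct and follows essentially the same route as the paper: decompose a $\xi$-transcendental bundle on $X\times Y$ as an exterior tensor product $L_X\boxtimes L_Y$, observe that both factors are transcendental, invoke the \lq\lq for every transcendental bundle\rq\rq\ form of Lemma \ref{equivalent} on each factor, and cross-multiply via the K\"unneth isomorphism over $\C$. Your verification that the slice restrictions remain transcendental (via $\Z[H_X]\hookrightarrow\Z[H]$) is a point the paper leaves implicit, and your restriction to the case $k,\ell\ge 0$ matches the paper's own implicit assumption.
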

\begin{proof} Denote $\clz(X, \xi|_X)=k$ and $\clz(Y, \xi|_Y)=r$.
Any flat line bundle $L$ over $X\times Y$ has the form $L_1\boxtimes L_2$
(exterior tensor product) where $L_1$ and $L_2$ are flat line bundles over $X$
and $Y$ respectively. Note that if $L$ lies in the variety
$\V_\xi=\Hom(H_1(X\times Y;\Z)/\ker (\xi),\C^\ast)$ then $L_1$ and $L_2$ are
obtained by restrictions and hence $L_1\in \V_{\xi|_X}$ and $L_2\in
\V_{\xi|_Y}$. We will use equivalence between (A) and (E) of Lemma
\ref{equivalent}. Fix a $\xi$-transcendental bundle $L=L_1\boxtimes L_2\in \V_\xi$
over $X\times Y$. Then both $L_1$ and $L_2$ are $\xi$-transcendental. Find classes
$v_0\in H^\ast(X;L_1)$, $v_1, \dots, v_k\in H^\ast(X;\C)$, $u_0\in
H^\ast(Y;L_2)$, $u_1, \dots, u_r\in H^\ast(Y;\C)$ such that $v_0\cup v_1\cup
\dots\cup v_k\not=0$ and $u_0\cup u_1\cup \dots\cup u_r\not=0$. Now we have
cohomology classes $v_0\times u_0\in H^\ast(X\times Y;L)$ and $v_i\times 1,
1\times u_j\in H^\ast(X\times Y;\C)$ and the product
\begin{eqnarray*}
(v_0\times u_0) \, \cup \, \prod_{i=1}^k (v_i\times 1) \, \cup \, \prod_{j=1}^r
(1\times u_j)\not=\, \, 0\, \in\,  H^\ast(X\times Y;L)
\end{eqnarray*}
is nonzero. Here we use the K\"unneth formula which states
\begin{eqnarray}
H^\ast(X\times Y; L_1\boxtimes L_2) \simeq H^\ast(X;L_1) \otimes H^\ast(Y;L_2).
\end{eqnarray}
This proves (\ref{cl}).
\end{proof}

\subsection{Some examples}\label{sec5}

In this section we apply Theorem \ref{perfect} in a few simple examples.

\subsubsection{} First consider the case $\xi=0$. We know that for $\xi=0$ the number
$\cat(X,\xi)$ coincides with the classical LS category $\cat(X)$, see
\cite{farbook}, Example 10.8. Let us examine what Theorem \ref{perfect} gives in
this case. The variety $\V_\xi$ has only one point -- the trivial flat line
bundle $\C$ over $X$. The support $\Supp(X,\xi)=\emptyset$ is always empty for
$\xi=0$. We may therefore take $v_0=1\in H^0(X;\C)$ applying Theorem
\ref{perfect}. Thus, we see that Theorem \ref{perfect} claims in the special
case $\xi=0$ that if there exist cohomology classes $v_1, \dots, v_k\in
H^{>0}(X;\C)$ with $v_1\cup\dots\cup v_k\not=0$ then $\cat(X)>k$. This claim is
the classical cup-length estimate for the LS category $\cat(X)$.

\subsubsection{}
Note that if $\xi\not=0$ and $X$ is connected then $H^0(X;L)=0$ for any
nontrivial $L\in \V_\xi$. Note also that the trivial flat line bundle $\C\in
\V_r$ is never $\xi$-transcendental. Therefore the degree of the class
$v_0$ (which appears in Theorem \ref{perfect}) in the case $\xi\not=0$ must be
positive. Hence for $\xi\not=0$ the number $k$ in Theorem \ref{perfect} satisfies
$k \leq \dim X-1$. This explains why Theorem \ref{perfect} cannot give
$\cat(X,\xi) \geq \dim X+1$ for $\xi\not=0$.

Lemma \ref{lm10} yields
\begin{eqnarray}\label{lesss}
\cat(X,\xi)\leq \cat(X)-1 \leq \dim X
\end{eqnarray}
assuming that $X$ is connected and $\xi\not=0$. This is consistent with the
remark of the previous paragraph.

\subsubsection{}
The following example shows that (\ref{lesss}) can be satisfied as an equality
i.e. that $\cat(X,\xi)=\dim X$ is possible.
Consider the bouquet $X=Y\vee S^1$ where $Y$ is a finite polyhedron, and assume
that the class $\xi\in H^1(X;\R)$ satisfies $\xi|_Y=0$ and $\xi|_{S^1}\not=0.$
We know that in this case
\begin{eqnarray}\label{minus} \cat(X,\xi) =
\cat(Y)-1,\end{eqnarray} see Example 10.11 from \cite{farbook}.

We are going to apply Theorem \ref{perfect}. The variety $\V_\xi$ in this case
coincides with the set $\C^\ast=\C-\{0\}.$ The support $\Supp(X,\xi)$ contains
in this case only the trivial line bundle. $L\in \V_\xi$ is $\xi$-transcendental if
the monodromy along the circle $S^1$ is a $\xi$-transcendental complex number. For
any $L\in \V_\xi$ the restriction $L|_Y$ is trivial and the restriction
homomorphism $H^i(X;L)\to H^i(Y;\C)$ is onto.

Suppose that the cohomological cup-length of $Y$ with $\C$-coefficients equals
$\ell$, i.e. there exist cohomology classes of positive degree $u_0, u_1,
\dots, u_{\ell-1}\in H^{>0}(Y;\C)$ such that the product $u_0\cup \dots\cup
u_{\ell-1}\not=0$ is nonzero. By the above remark, for a nontrivial $L\in
\V_\xi$ we obtain cohomology classes $v_0\in H^\ast(X;L)$ and $v_1, \dots,
v_{\ell-1}\in H^{>0}(X;\C)$ such that $v_i|_Y=u_i$. Hence $v_0\cup v_1\cup
\dots\cup v_{\ell-1} \not=0\in H^\ast(X; L)$. By Theorem \ref{perfect} we
obtain $\cat(X,\xi) > \ell-1$ which is equivalent (taking into account
(\ref{minus})) to $\cat(Y)>\ell$. The last inequality is the classical
cup-length estimate for the usual category.

\subsubsection{} \label{specific}
Let us now consider a very specific example: $X=T^2\vee S^1$. In this case
$H^1(X;\R)=\R^3$ and we describe $\cat(X,\xi)$ as function of $\xi\in
H^1(X;\R)=\R^3$. We denote by $\ell\subset \R^3$ the set of all classes $\xi$
such that $\xi|_{T^2}=0$. Clearly $\ell$ is a line through the origin in
$\R^3$. We claim that:
\begin{eqnarray}\label{cases}
\cat(X,\xi)=\left\{ \begin{array}{ll} 1, & \mbox{if $\xi\in \R^3 -\ell$},\\
2, & \mbox{if $\xi\in \ell-\{0\}$},\\
3, & \mbox{if $\xi=0$}.
\end{array}
\right.
\end{eqnarray}
Indeed, consider first the case $\xi\notin \ell$, i.e. $\xi|_{T^2}\not=0$. Let
us show that $\cat(X,\xi)\leq 1$. Denote $p=T^2\cap S^1$ and let $q\in S^1$ be
a point distinct from $p$. Set $F=X-\{q\}$ and $F_1=S^1-\{p\}$. Then $F\cup
F_1=X$ is an open cover of $X$ with $F_1\to X$ null-homotopic and with $F$
being $N$-movable in $X$ for any $N$ (assuming that $\xi|_{T^2}\not=0$; this
follows from homotopy invariance of $\cat(X,\xi)$ and from Example \ref{ex1}.

Since $\cat(X,\xi)=0$ would imply $\chi(X)=0$ by Theorem \ref{ep} stated below
we obtain that $\cat(X,\xi)>0$ for any $\xi$ (as $\chi(X)=-1\not=0$). This
proves the first line of (\ref{cases}).

If $\xi\in \ell-\{0\}$ we apply (\ref{minus}) to obtain $\cat(X,\xi)=\cat(T^2)-1=2$.

For $\xi=0$ we easily find $\cat(X,\xi)=\cat(X)=3$.

\section{Upper bounds for $\cat(X, \xi)$ and relations with the Bieri - Neumann - Strebel invariants}

Bieri, Neumann and Strebel introduced in \cite{binest} a geometric invariant of
discrete groups $G$ which captures information about the finite generation of
kernels of abelian quotients of $G$. In this section we describe a relation
between this invariant and properties of $\cat(X,\xi)$.

Let us recall the definition. We always assume that $G$ is finitely presented
as this is sufficient for our purposes. Let $S(G)$ denote
$(\Hom(G,\R)-\{0\})/\R_+$ where $\R_+$ acts on $\Hom(G,\R)$ by multiplication.
Clearly $S(G)$ is a sphere of dimension $n-1$ where $n$ is the rank of the
abelianization of $G$. Denote by $[\chi]$ the equivalence class of a nonzero
homomorphism $\chi:G\to \R$. The Bieri-Neumann-Strebel invariant associates to
$G$ a subset $\Sigma(G)\subset S(G)$ defined as follows\footnote{We rely on
Theorem 5.1 of \cite{binest} which states that the definition of $\Sigma(G)$ given
above coincides in the case of finitely presented $G$ with the main definition
of \cite{binest}.}. Let $X$ be a finite cell complex with $\pi_1(X)=G$ and let $p:
\tilde X \to X$ be the universal abelian cover of $X$. A homomorphism $\chi\in
\Hom(G,\R)$ can be viewed as a cohomology class lying in $H^1(X;\R)$. One has
$\chi\in \Sigma(G)$ if and only if the inclusion $N\to \tilde X$ induces an
epimorphism $\pi_1(N, x_0)\to \pi_1(\tilde X, x_0)$ where $N\subset\tilde X$ is
a connected neighborhood of infinity with respect to $\chi$, see \S
2 of \cite{farsch} and Lemma 5.2 from \cite{binest}.

The following Theorem summarizes the results proven in \cite{farsch}:

\begin{theorem}[Farber - Sch\"utz]\mbox{ }  \\[-0.6cm]
\begin{enumerate}
\item[(a)] Let $X$ be a finite connected polyhedron and let $\xi\in H^1(X;\R)$ be nonzero. Then
$$\cat(X, \xi) \le \dim X.$$

\item[(b)] Let $M$ be a closed connected smooth manifold and let $\xi\in H^1(M;\R)$ be nonzero. Then
$$\cat(M, \xi) \le \dim (M) -1.$$

\item [(c)] Let $M$ be a closed connected smooth manifold of dimension $\ge 5$ and let $\xi \in H^1(M;\R)$ be nonzero.
If either $$\xi\in \Sigma(\pi_1(M))\quad \mbox{or}\quad -\xi\in \Sigma(\pi_1(M))$$
then
$$\cat(M,\xi) \le \dim M - 2.$$

\item [(d)] Let $M$ be a closed connected smooth manifold of dimension $\ge 5$ and let $\xi \in H^1(M;\R)$ be nonzero.
If both inclusions take place $$\xi\in \Sigma(\pi_1(M))\quad \mbox{and}\quad -\xi\in \Sigma(\pi_1(M))$$ then
$$\cat(M,\xi) \le \dim M - 3.$$
\end{enumerate}
\end{theorem}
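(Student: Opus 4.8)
\emph{Plan of proof.} Part (a) is formal: for connected $X$ and $\xi\neq 0$ the inequalities \eqref{varcats}, Lemma \ref{lm10} and the classical bound $\cat(X)\le\dim X+1$ give
\[
\cat(X,\xi)\ \le\ \cat^1(X,\xi)\ \le\ \Cat(X,\xi)\ \le\ \cat(X)-1\ \le\ \dim X .
\]

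For (b)--(d) the plan is uniform. First I would realize $\xi$ by a Morse closed $1$-form $\omega$ whose zeros have Morse indices confined to as short an interval $\{a,a+1,\dots,b\}\subseteq\{0,1,\dots,n\}$ (with $n=\dim M$) as the hypotheses allow, and then show $\cat(M,\xi)\le b-a+1$. This last inequality I would prove by a gradient-flow argument modelled on the proof of Theorem \ref{jankothm1}: fix a generic gradient-like vector field $v$ for $\omega$, let $N$ be a large integer, and let $A_N$ be the closed set of points whose negative gradient trajectory attains $\int_x^{x\cdot t}\omega=-N$ for some $t>0$. As in \S\ref{jankoproof}, $A_N$ is $N$-movable with respect to $\omega$; its complement is open, and a point in it stays above the level $-N$, hence converges to a zero, so $M-A_N$ is an open neighbourhood of the union of the stable manifolds $W^s(p_i)$. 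Since connecting orbits decrease the Morse index, $\overline{W^s(p_i)}$ meets only stable manifolds of zeros of strictly larger index; running over the occurring index values $k\in\{a,\dots,b\}$ from the top downwards one then covers $M-A_N$ by one open set $U_k$ for each value $k$, each $U_k$ deformation retracting in $M$ onto a disjoint union of contractible open stable disks and so null-homotopic in the connected manifold $M$. (This is the closed-$1$-form analogue of the classical proof that $\cat(M)\le\dim M+1$ from a Morse function.) Hence $\cat_M(M-A_N)\le b-a+1$, and, $N$ being arbitrary, $\cat(M,\xi)\le b-a+1$.

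It then remains to shorten the index interval. For (b) I would take $\omega$ weakly complete (Levitt \cite{Levitt}), which exists because $\xi\neq 0$ and has no zeros of index $0$ or $n$; thus $\{a,\dots,b\}\subseteq\{1,\dots,n-1\}$ and $\cat(M,\xi)\le n-1$. For (c) and (d) I would invoke the hypothesis $\dim M\ge 5$. The Bieri--Neumann--Strebel condition $\xi\in\Sigma(\pi_1(M))$ is a $\pi_1$-finiteness property at one end of the cover associated with $\ker(\xi)$; through the description of $\Sigma$ in terms of Novikov--Sikorav homology (\cite{Si1}, \cite{binest}) and the Cohn-localization and chain-collapse techniques over $\Z[\pi]$ of \cite{FR, F10}, this should translate into the possibility of cancelling equivariantly the index-$1$ zeros at that end of a weakly complete Morse $1$-form in the class $\xi$, the Whitney trick (available since $\dim M\ge 5$) making the cancellation geometric. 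This narrows the interval to $\{2,\dots,n-1\}$; dually $-\xi\in\Sigma(\pi_1(M))$ narrows it to $\{1,\dots,n-2\}$; either way $\cat(M,\xi)\le n-2$, which is (c). When both $\pm\xi\in\Sigma(\pi_1(M))$, performing both cancellations confines the indices to $\{2,\dots,n-2\}$ and yields $\cat(M,\xi)\le n-3$, which is (d).

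The gradient-flow step and the index bookkeeping are routine in spirit — the former essentially repeats \S\ref{jankoproof}, the latter the classical Morse-theoretic estimate for $\cat$. The hard part will be the index reduction in (c) and (d): one must pass from the purely homotopical finiteness encoded in $\xi\in\Sigma(\pi_1(M))$ to an honest geometric cancellation of the index-$1$ zeros of a closed $1$-form staying in the fixed de Rham class $\xi$. This forces the noncommutative Novikov--Sikorav and Cohn-localization machinery of \cite{Si1, FR, F10} (needed already because $\xi$ need not be integral) and equivariant handle-trading over $\Z[\pi]$, where the restriction $\dim M\ge 5$ is unavoidable; one must also verify that the cancellations at one end do not recreate zeros of index $0$, $1$ or $n$, so that the interval genuinely shrinks. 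For full details we refer to \cite{farsch}.
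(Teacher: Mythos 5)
First, note that the survey itself offers no proof of this theorem: it is stated as a summary of results from \cite{farsch}, so your proposal can only be judged on its own terms. Part (a) is fine and is surely the intended argument: $\cat(X,\xi)\le\Cat(X,\xi)\le\cat(X)-1\le\dim X$ by (\ref{varcats}), Lemma \ref{lm10} and the classical bound for connected complexes.

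The genuine gap is in the step you call ``routine in spirit'', namely the claim that $\cat(M,\xi)\le b-a+1$ when the Morse indices of the zeros of $\omega$ lie in $\{a,\dots,b\}$. Your set $A_N$ is \emph{not} closed and its complement is \emph{not} open: $M-A_N$ consists of the points whose forward trajectory stays above level $-N$, hence converges to a zero, so $M-A_N$ is \emph{contained in} (not a neighbourhood of) the union of the stable manifolds $W^s(p_i)$; since every index is $\ge 1$ these have positive codimension, so $M-A_N$ has empty interior. A limit of points of $A_N$ lying on some $W^s(p_i)$ need not reach level $-N$ in finite time, so one cannot simply pass to the closure and keep $N$-movability -- this is exactly the difficulty that the flow-convex neighbourhoods of \S 7 and the homoclinic-cycle analysis of \S 8 are designed to handle. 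Moreover, even the corrected machinery of \S 8 produces one categorical open set $F_i$ \emph{per zero} (the sets $F_i$ for distinct zeros of equal index overlap, and a union of null-homotopic open sets need not be null-homotopic), i.e.\ it yields $\cat(M,\xi)\le\#\{\mathrm{zeros}\}$, which is useless here: a weakly complete form on $\Sigma_g$ has $2g-2$ zeros while the claimed bound is $1$. The grouping ``one open set per index value'' is the whole content of the inequality you need, and it is not supplied by the argument you sketch.

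Two further cautions. The deduction of $M-A_N\subseteq\bigcup W^s(p_i)$ and the subsequent covering also presuppose a Kupka--Smale choice of $v$ and an induction over broken trajectories that you do not spell out. And for (c), (d) the passage from $\xi\in\Sigma(\pi_1(M))$ (equivalently, by Sikorav, vanishing of Novikov--Sikorav homology in degrees $0,1$) to an actual Morse representative of the fixed class $\xi$ with no zeros of index $0,1$ is a substantial handle-trading theorem, not a corollary of the Whitney trick; you rightly flag it as the hard part, but as it stands parts (b)--(d) rest on two unproved assertions, and the reference to \cite{farsch} is doing all the work.
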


\section{Homological category weights, estimates for $\cat^1(X, \xi)$ and calculation of $\cat(X, \xi)$, $\cat^1(X,\xi)$ for products of surfaces}

In this section we describe briefly the results obtained in \cite{farsce} which has as its main goal obtaining cohomological lower estimates for
$\cat^1(X, \xi)$. In particular we will see that there are many examples when $\cat(X, \xi) < \cat^1(X, \xi)$ and the difference $\cat^1(X, \xi) - \cat(X, \xi)$ can be arbitrarily large.

The first step is the introduction of a new notion of category weight of
homology classes which is somewhat dual to the cohomological
notion introduced by E. Fadell and S. Husseini \cite{FH}. It was a pleasant surprise to discover that the
homological category weight is homotopy
invariant unlike the cohomological version of Fadell and Husseini.

The notion of homological category weight allows us to obtain improved
cohomological lower bounds for $\cat^1(X;\xi)$.

\subsection{Homological category weight}\label{basdef}

The classical cohomological lower bound for the Lusternik -
Schnirelmann category $\cat(X)$ states that $\cat(X)>n$  if there
exist $n$ cohomology classes of positive degree $u_j\in
H^\ast(X;R_j)$ where $j=1, 2, \dots, n$, such that their
cup-product $u_1 u_2 \dots u_n\not=0 \in H^\ast(X; R)$ is
nontrivial. Here $R_j$ denotes a local coefficient system on $X$
and $R$ is the tensor product $R=R_1\otimes \dots \otimes R_n$.

E. Fadell and S. Husseini \cite{FH} improved this estimate by introducing the
notion of a {\it category weight} $\cwgt(u)$ of a cohomology class $u\in
H^q(X;R)$.

\begin{definition}[Fadell - Husseini, \cite{FH}]\label{def1}
Let $u\in H^q(X; R)$ be a nonzero cohomology class where $R$ is a
local coefficient system on $X$. One says that $\cwgt(u)\geq k$
(where $k\geq 0$ is an integer) if for any closed subset $A\subset
X$ with $\cat_XA\leq k$ one has $u|_A=0\in H^q(A;R)$.
\end{definition}

Recall that the inequality $\cat_XA\leq k$ means that $A$ can be
covered by $k$ open subsets $U_i\subset X$ such that each
inclusion $U_i\subset X$ is null-homotopic, $i=1, \dots, k$.

According to Definition \ref{def1} one has $\cwgt(u) \geq 0$ in
general and $\cwgt(u)\geq 1$ for any nonzero cohomology class of
positive degree. As Fadell and Husseini \cite{FH} showed,
$\cwgt(u)>1$ in some special situations which improves
the lower estimate for $\cat(X)$. Indeed, one has
$$\cat(X) \geq 1
+\sum_{i=1}^n \cwgt(u_i)$$ assuming that the cup-product $u_1u_2\dots
u_n\not=0$ is nonzero.

Y. Rudyak \cite{Ru} and J. Strom \cite{Str} studied a modification
of $\cwgt(u)$, called the strict category weight ${\rm
{swgt}}(u)$. The latter has the advantage of being homotopy invariant.
However in some examples the strict category weight is
considerably smaller than the original category weight of Fadell
and Husseini.

In \cite{farsce} we introduced and exploited a \lq\lq dual\rq\rq\,
notion of category weight of homology classes. It has the
geometric simplicity and clarity of category weight as defined by
Fadell and Husseini and has a surprising advantage of being
homotopy invariant.

\begin{definition}[Farber - Sch\"utz, \cite{farsce}]
Let $z\in H_q(X;R)$ be a singular homology class with coefficients in a local
system $R$ and let $k\geq 0$ be a nonnegative integer. We say that
$\cwgt(z)\geq k$ if for any closed subset $A\subset X$ with $\cat_XA\leq k$
there exists a singular cycle $c$ in $X-A$ representing $z$. We say that
$\cwgt(z)=k$ iff $\cwgt(z)\geq k$ and $\cwgt(z)\not\geq k+1$.
\end{definition}

In other words, ${\cwgt}(z)\geq k$ is equivalent to the fact that $z$ can be
realized by a singular cycle avoiding any closed subset $A\subset X$
with $\cat_XA\leq k$.

For example, $\cwgt(z)\geq 1$ iff $z$ can be realized by a singular cycle
avoiding any closed subset $A\subset X$ such that the inclusion $A\to X$ is
homotopic to a constant map.

It will be convenient to define the category weight of the zero homology class
as $+\infty$.

Formally $\cwgt(z)\geq k$ if $z$ lies in the intersection
$$\bigcap_A \im[H_q(X-A;R)\to H_q(X;R)]$$
where $A\subset X$ runs over all closed subsets with $\cat_XA\leq k$.

The relation $\cwgt(z) \leq k$ means that there exists a closed subset
$A\subset X$ with $\cat_XA \leq k+1$ such that any geometric realization of $z$
intersects $A$. In particular we obtain the following inequality
\begin{eqnarray}\label{one1}
\cat(X)\geq \cwgt(z) +1
\end{eqnarray}
for any nonzero homology class $z\in H_q(X;R)$, $z\not=0$. The last inequality
can be also rewritten as
\begin{eqnarray}
0\leq \cwgt(z) \leq \cat(X)-1\leq \dim X\end{eqnarray} for any nonzero homology class.

Note that if $X$ is path-connected and $z$ is zero-dimensional, i.e. $z\in
H_0(X)$, then $\cwgt(z) = \cat(X)-1$.

\begin{lemma} Assume that $X$ is a simplicial polyhedron. Then $\cwgt(z)\geq k$
iff $z$ can be realized in $X-A$ for any sub-polyhedron $A\subset X$ with
$\cat_XA\leq k$.
\end{lemma}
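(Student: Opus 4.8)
The plan is to prove the two implications separately; the forward one is immediate and the reverse one rests on a simplicial approximation argument.

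First, if $\cwgt(z)\ge k$ in the sense of the preceding definition, then in particular $z$ can be realized by a singular cycle in $X-A$ for every \emph{sub-polyhedron} $A\subset X$ with $\cat_X A\le k$, since every sub-polyhedron is a closed subset of $X$. This gives the ``only if'' direction with no further work.

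For the converse, suppose that $z$ can be realized in $X-A$ for every sub-polyhedron $A\subset X$ with $\cat_X A\le k$, and let $B\subset X$ be an arbitrary closed subset with $\cat_X B\le k$. Choose open subsets $U_1,\dots,U_k\subset X$, each null-homotopic in $X$, whose union $U=U_1\cup\dots\cup U_k$ contains $B$. The key step is to produce a sub-polyhedron $A$ of $X$ with $B\subseteq A\subseteq U$. Since $X$ is a finite polyhedron, $B$ is compact, so the open cover $\{U,\, X-B\}$ of $X$ admits a Lebesgue number $\epsilon>0$; in particular any subset of $X$ of diameter $<\epsilon$ that meets $B$ must be contained in $U$. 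Pass to a subdivision of the given triangulation of $X$ whose mesh is $<\epsilon$, and let $A$ be the union of all closed simplices of this subdivision that meet $B$. The collection of simplices involved is closed under passing to faces, so $A$ is a subcomplex of a triangulation of $X$, i.e. a sub-polyhedron of $X$; it evidently contains $B$; and by the choice of $\epsilon$ each of its closed simplices lies in $U$, so $A\subseteq U$.

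It remains to observe that $\cat_X A\le k$: since $A\subseteq U_1\cup\dots\cup U_k$ with each $U_i$ open in $X$ and null-homotopic in $X$, the sets $U_1,\dots,U_k$ witness $\cat_X A\le k$ directly. By hypothesis there is then a singular cycle $c$ in $X-A$ representing $z$, and since $B\subseteq A$ we have $X-A\subseteq X-B$, so $c$ represents $z$ inside $X-B$ as well. As $B$ was an arbitrary closed subset with $\cat_X B\le k$, this proves $\cwgt(z)\ge k$. The only genuine content is the subdivision step, which is routine simplicial topology; everything else is bookkeeping with the definitions, and in fact the same argument shows the analogous statement with singular cycles replaced by simplicial cycles in subdivisions.
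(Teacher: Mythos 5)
Your proof is correct and follows essentially the same route as the paper: the paper's argument also reduces the ``if'' direction to sandwiching the closed set between a sub-polyhedron and the open cover via a fine subdivision, and you have merely made the subdivision step explicit with a Lebesgue number argument. No issues.
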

\begin{proof}
We only need to show the 'if'-direction. Let $A\subset X$ be
closed with $\cat_X A \leq k$. We need to show that $z$ can be
realized by a cycle in $X-A$. We have $A\subset U_1\cup\dots\cup
U_k$ with each $U_i$ open and null-homotopic in $X$. Passing to a
fine subdivision of $X$, we can find a sub-polyhedron $B\subset X$
with $A\subset B \subset U_1\cup \dots\cup U_k$. Then $\cat_XB\leq
k$ and $z$ can be realized by a cycle lying in $X-B\subset X-A$.
\end{proof}

\begin{example} Assume that $X$ is a closed 2-dimensional manifold, i.e. a
compact surface. Let us show that any nonzero homology class $z\in
H_1(X)$ has $\cwgt(z)\geq 1$. Indeed, it is easy to see that any
closed subset $A\subset X$ which is null-homotopic in $X$ lies in
the interior of a disk $D^2\subset X$; but $H_1(X- \Int D^2) \to
H_1(X)$ is an isomorphism.
\end{example}

\begin{theorem}[Farber - Sch\"utz, \cite{farsce}] If $f: X\to Y$ is a homotopy equivalence then for any homology
class $z\in H_q(X;R)$ one has
\begin{eqnarray}
\cwgt(z)=\cwgt(f_\ast(z)).
\end{eqnarray}
Here $f_\ast(z)\in H_q(X;R')$ where $R'=g^\ast R$ is the local coefficient
system over $Y$ induced by the homotopy inverse $g: Y\to X$ of $f$.
\end{theorem}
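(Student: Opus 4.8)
The plan is to establish the two inequalities $\cwgt(f_\ast z)\ge \cwgt(z)$ and $\cwgt(z)\ge \cwgt(f_\ast z)$, the second being deduced from the first by applying it to a homotopy inverse $g:Y\to X$ of $f$. Throughout one fixes homotopies $gf\simeq\id_X$ and $fg\simeq\id_Y$; these homotopies determine canonical isomorphisms $f^\ast R'\cong R$ (where $R'=g^\ast R$) and $(gf)^\ast R\cong R$, and it is with respect to these identifications that $f_\ast\colon H_q(X;R)\to H_q(Y;R')$ and $g_\ast\colon H_q(Y;R')\to H_q(X;R)$ are defined, and that $g_\ast f_\ast=(\id_X)_\ast$ on $H_q(X;R)$ by homotopy invariance of homology with local coefficients.

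The core step is the following general claim: for any homotopy equivalence $\phi\colon A\to B$ with homotopy inverse $\psi$, setting $S=\psi^\ast\,(\text{a local system on }A)$-coefficients appropriately, one has $\cwgt(\phi_\ast w)\ge\cwgt(w)$ for every $w$. To prove it, suppose $\cwgt(w)\ge k$ and let $B_0\subset B$ be closed with $\cat_{B}B_0\le k$; choose open sets $V_1,\dots,V_k\subset B$ covering $B_0$ with each inclusion $V_i\hookrightarrow B$ null-homotopic. Then $A_0:=\phi^{-1}(B_0)$ is closed in $A$ and contained in $\bigcup_i\phi^{-1}(V_i)$, each $\phi^{-1}(V_i)$ is open, and the inclusion $\phi^{-1}(V_i)\hookrightarrow A$ followed by $\phi$ factors through the null-homotopic map $V_i\hookrightarrow B$; composing further with $\psi$ and using $\psi\phi\simeq\id_A$ shows $\phi^{-1}(V_i)\hookrightarrow A$ is null-homotopic. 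Hence $\cat_{A}A_0\le k$, so by hypothesis $w$ is represented by a singular cycle $c$ with support in $A-A_0$; then $\phi_\#c$ is a cycle with support in $\phi(A-\phi^{-1}(B_0))\subset B-B_0$ representing $\phi_\ast w$. Since $B_0$ was arbitrary, $\cwgt(\phi_\ast w)\ge k$.

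Applying this with $\phi=f$, $\psi=g$ gives $\cwgt(f_\ast z)\ge\cwgt(z)$. Applying it with $\phi=g$, $\psi=f$ and $w=f_\ast z$ gives $\cwgt(g_\ast f_\ast z)\ge\cwgt(f_\ast z)$, and since $g_\ast f_\ast z=z$ we obtain $\cwgt(z)\ge\cwgt(f_\ast z)$; combining, $\cwgt(z)=\cwgt(f_\ast z)$. The only delicate point is not geometric at all but bookkeeping with twisted coefficients: one must fix the homotopies once and for all, use them to identify $f^\ast R'$, $(gf)^\ast R$ with $R$ coherently, and invoke the standard fact that homotopic maps induce the same homomorphism on homology with local coefficients so that $g_\ast f_\ast=\id$. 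Everything else is a direct unwinding of the definitions of $\cat_X$ and of $\cwgt$, the key observation being simply that the $f$-preimage of an open set null-homotopic in $Y$ is an open set null-homotopic in $X$.
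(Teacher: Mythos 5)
Your argument is correct. The survey itself states this theorem without proof (deferring to the cited reference), but your two-inequality scheme --- pulling back a categorical cover of a closed set $B_0\subset Y$ along $f$, noting that the preimage of an open set null-homotopic in $Y$ is null-homotopic in $X$ because $f$ has a left homotopy inverse, pushing the avoiding cycle forward, and then running the same claim for $g$ combined with $g_\ast f_\ast=\mathrm{id}$ --- is exactly the natural proof and the one used in the original paper; the coefficient-system bookkeeping you flag is handled correctly.
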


Another important result from \cite{farsce} shows how one may use the notion of homological category weight in conjunction
with the notion of Fadell and Husseini to estimate $\cat(X)$:

\begin{corollary} Suppose that $X$ is a metric space and for some classes
$z\in H_q(X;R)$ and $u\in H^q(X;R')$ the evaluation
$$\langle u, z\rangle \, \not=0\, \in\,  R'\otimes R$$ is nonzero.
Then
\begin{eqnarray}\label{improved}
\cat(X) \geq \cwgt(z) +\cwgt(u) +1.
\end{eqnarray}
Here $\cwgt(z)$ is the category weight of homology class $z$ as defined above and $\cwgt(u)$ is the category weight of $u$ as defined by Fadell
and Husseini \cite{FH}.
\end{corollary}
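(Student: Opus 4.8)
The plan is to argue by contradiction, splitting a minimal open categorical cover of $X$ into two closed pieces and comparing the evaluation $\langle u,z\rangle$ computed on $X$ with the one computed after restricting $u$ to one of the pieces. Write $a=\cwgt(z)$ for the homological category weight defined above, $b=\cwgt(u)$ for the Fadell--Husseini category weight, and $m=\cat(X)$; we may assume $m<\infty$, since otherwise there is nothing to prove. Because $\langle u,z\rangle\ne 0$ we have $z\ne 0$ and $u\ne 0$, so the basic inequality $\cat(X)\ge\cwgt(z)+1$ together with the Fadell--Husseini inequality $\cat(X)\ge\cwgt(u)+1$ already gives $a\le m-1$ and $b\le m-1$; in particular both $a$ and $b$ are strictly less than $m$.

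Next I would produce a closed categorical cover adapted to the splitting. Since $\cat(X)=m$, choose an open cover $X=U_1\cup\dots\cup U_m$ with each $U_i$ null-homotopic in $X$; as $X$ is a metric space, hence normal, the shrinking lemma for finite covers yields a closed cover $X=F_1\cup\dots\cup F_m$ with $F_i\subset U_i$, and therefore $\cat_X(F_i\cup\dots\cup F_j)\le j-i+1$ for all $i\le j$. Put $A=F_1\cup\dots\cup F_a$ and $B=F_{a+1}\cup\dots\cup F_m$. Both are closed, $A\cup B=X$ (so $X-A\subseteq B$), and $\cat_X A\le a$, $\cat_X B\le m-a$.

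Now assume, towards a contradiction, that $m\le a+b$, so that $\cat_X B\le m-a\le b$. Since $\cwgt(z)\ge a$ and $A$ is closed with $\cat_X A\le a$, the class $z$ is represented by a singular cycle contained in $X-A$; since $X-A\subseteq B$, that cycle lies in $B$, so $z=\iota_\ast(w)$ for some $w\in H_q(B;R)$, where $\iota\colon B\hookrightarrow X$ is the inclusion. On the other hand, since $\cwgt(u)\ge b$ in the Fadell--Husseini sense and $B$ is closed with $\cat_X B\le b$, we get $\iota^\ast u=u|_B=0\in H^q(B;R')$. By naturality of the evaluation pairing between homology and cohomology with local coefficients (applied to $\iota$, with $R$ and $R'$ restricted to $B$), $\langle u,z\rangle=\langle u,\iota_\ast w\rangle$ is the image of $\langle\iota^\ast u,w\rangle=\langle 0,w\rangle=0$ under the canonical coefficient map, hence $\langle u,z\rangle=0$ --- contradicting the hypothesis. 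Therefore $m\ge a+b+1$, that is, $\cat(X)\ge\cwgt(z)+\cwgt(u)+1$.

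The two ingredients that require a word of justification are routine. The shrinking lemma is available because a metric space is normal and the categorical cover is finite. The naturality of the Kronecker/evaluation pairing under the inclusion $\iota\colon B\hookrightarrow X$ is the only place where one must keep track of the coefficient systems $R,R'$ and their restrictions to $B$, but this is the standard functoriality of cap products with local coefficients. I expect the only genuine (and very mild) obstacle to be the bookkeeping with $a$, $b$, $m$: one must ensure that both closed pieces $A$ and $B$ are actually available as categorical subsets of the weights demanded by the two notions of category weight, and this is exactly what the inequalities $a\le m-1$ and $b\le m-1$ from the first step secure --- in particular they guarantee that $B$ consists of at least one of the sets $F_i$ and that $a\le m$, so that $A$ is a legitimate subunion of the $F_i$.
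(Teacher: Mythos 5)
Your proof is correct, and it is exactly the intended argument for this estimate (the survey states the corollary without proof, deferring to \cite{farsce}): shrink a minimal open categorical cover of $X$ to a closed one (this is precisely where the metric-space hypothesis enters, via normality), split it into a closed piece $A$ with $\cat_XA\le\cwgt(z)$ that $z$ can avoid and the complementary closed piece $B$ on which $u$ restricts to zero, and conclude $\langle u,z\rangle=0$ from naturality of the Kronecker pairing, contradicting the hypothesis. Your bookkeeping of the edge cases via $\cwgt(z)\le\cat(X)-1$ and $\cwgt(u)\le\cat(X)-1$ is also fine, so nothing further is needed.
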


The main idea standing behind this Corollary is that {\it \lq\lq the degree of nontriviality\rq\rq of a cohomology class $u$ can be \lq\lq measured\rq\rq\,  by \lq\lq quality\rq\rq\, of the homology class
$z$ satisfying $\langle u, z\rangle \not=0$.}

It is curious to observe that
 in the case of closed manifolds our notion of
category weight of homology classes coincides with the cohomological notion of Fadell and Husseini
\cite{FH} via the Poincar\`e duality. However for Poincar\'e complexes these notions are distinct as we
show by an example given below.

\begin{theorem}[Farber - Sch\"utz, \cite{farsce}]\label{manif}
Suppose that $X$ is a closed $n$-dimen\-sional manifold, $z\in H_q(X;R)$ where
$R$ is a local coefficient system. Let $u\in H^{n-q}(X; R\otimes \tilde \Z)$ be
the Poincar\'e dual cohomology class, i.e. $z=u\cap [X]$, see below. Then
\begin{eqnarray} \cwgt(z)
=\cwgt(u).\end{eqnarray} Here $\tilde \Z$ denotes the orientation local system
on $X$, i.e. for a point $x\in X$ the stalk of $\tilde \Z$ at $x$ is $\tilde
\Z_x=H_n(X, X-x;\Z)$, see \cite{Sp}.
\end{theorem}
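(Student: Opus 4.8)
The plan is to show the two inequalities $\cwgt(z)\ge k \Leftrightarrow \cwgt(u)\ge k$ for every integer $k\ge 0$, using Poincaré–Lefschetz duality to translate a statement about realizing $z$ by a cycle in $X-A$ into a statement about the restriction $u|_A$ vanishing. The crucial fact I would exploit is the duality isomorphism for the open submanifold $X-A$ together with the noncompact (Borel–Moore/locally finite) homology, which for a closed subset $A\subset X$ fits into the commutative ladder relating $H_q(X-A;R)\to H_q(X;R)$ on the homology side to the restriction $H^{n-q}(X;R\otimes\tilde\Z)\to H^{n-q}(A;R\otimes\tilde\Z)$ on the cohomology side. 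Concretely, excision and the long exact sequence of the pair $(X,X-A)$ give $H_q(X,X-A;R)\cong H^{n-q}_{c}(A; \cdots)$-type identifications; one should instead use the clean statement that, for $X$ a closed oriented-up-to-$\tilde\Z$ $n$-manifold, capping with $[X]$ induces an isomorphism $H^{n-q}(X,A;R\otimes\tilde\Z)\xrightarrow{\ \cap[X]\ }H_q(X-A;R)$ and this is compatible with the natural maps into $H^{n-q}(X;R\otimes\tilde\Z)$ and $H_q(X;R)$ respectively.

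First I would fix the direction $\cwgt(u)\ge k \Rightarrow \cwgt(z)\ge k$. Let $A\subset X$ be closed with $\cat_X A\le k$. By hypothesis $u|_A=0\in H^{n-q}(A;R\otimes\tilde\Z)$, so by the long exact sequence of the pair $(X,A)$ the class $u$ lifts to some $\bar u\in H^{n-q}(X,A;R\otimes\tilde\Z)$. Then $\bar u\cap[X]\in H_q(X-A;R)$ maps to $u\cap[X]=z$ under $H_q(X-A;R)\to H_q(X;R)$, using naturality of the cap product with respect to the pair inclusions. Hence $z$ is realized by a cycle in $X-A$, which is exactly $\cwgt(z)\ge k$. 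The reverse implication runs the same ladder backwards: if $z=i_\ast(z')$ for some $z'\in H_q(X-A;R)$, write $z'=\bar u\cap[X]$ for a suitable $\bar u\in H^{n-q}(X,A;\cdots)$ (here is where one needs the duality isomorphism for $X-A$ to be surjective onto classes mapping to $z$), and conclude that the image of $\bar u$ in $H^{n-q}(X;\cdots)$ caps to $z$; since capping with $[X]$ is an isomorphism on $X$, that image equals $u$, so $u$ comes from $H^{n-q}(X,A;\cdots)$, i.e. $u|_A=0$. Thus $\cwgt(u)\ge k$.

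The main obstacle I anticipate is bookkeeping with the correct coefficient system $\tilde\Z$ and with the relative duality isomorphism: one must verify that $\cap[X]:H^{n-q}(X,A;R\otimes\tilde\Z)\to H_q(X-A;R)$ is genuinely an isomorphism (this is Poincaré–Lefschetz duality for the manifold-with-"boundary" obtained by thickening $A$, or cleanly the duality $H^{j}(X,A)\cong H_{n-j}(X\setminus A)$ valid for any closed $A$ in a closed $n$-manifold), and that all the squares commute with the right signs. A secondary subtlety is that $A$ is only assumed closed, not a subcomplex or ENR, so I would either appeal to Čech cohomology of $A$ (which agrees with singular cohomology via the direct limit over open neighborhoods, as used already in the excerpt) or, following the Lemma on simplicial polyhedra proved above, reduce to the case of subpolyhedra by a subdivision argument; since the category-weight conditions only test $\cat_X A\le k$, replacing $A$ by a slightly larger subpolyhedron $B$ with $\cat_X B\le k$ does not change either quantity. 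Once the duality ladder is set up correctly, the equality $\cwgt(z)=\cwgt(u)$ is immediate from the two implications above.
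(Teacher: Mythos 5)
Your proposal is correct, and since the survey only states Theorem \ref{manif} (deferring the proof to \cite{farsce}) there is no in-paper argument to diverge from; the Poincar\'e--Lefschetz duality ladder identifying $H^{n-q}(X,B;R\otimes\tilde\Z)\to H^{n-q}(X;R\otimes\tilde\Z)$ with $H_q(X-B;R)\to H_q(X;R)$ is exactly the mechanism that makes the two weights coincide, and it is the route taken in the reference. You also correctly isolate and resolve the only real technical point: for an arbitrary closed $A$ the duality requires \v Cech cohomology of the pair, and enlarging $A$ to a subpolyhedron $B$ inside a categorical cover (so that $\cat_X B\le k$ still holds and singular equals \v Cech) legitimately reduces both weight conditions to polyhedral test sets, exactly as in the paper's Lemma on realizing $z$ in complements of subpolyhedra.
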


\begin{example}
Let $X=\RP^n$ be the real projective space. For the unique nonzero
cohomology class $z\in H_q(X;\Z_2)$ one has $\cwgt(z) = n-q.$
Indeed, the dual homology class is $\alpha^{n-q}\in
H^{n-q}(X;\Z_2)$ where $\alpha\in H^1(X;\Z_2)$ is the generator.
Clearly, $\cwgt(\alpha^{n-q}) = n-q$.
\end{example}

Theorem \ref{manif} implies:

\begin{corollary}
If $X$ is a closed $n$-dimensional manifold then for any homology class $z\in
H_q(X;R)$ with $q<n$ one has \begin{eqnarray} \cwgt(z)\geq 1.\end{eqnarray}
\end{corollary}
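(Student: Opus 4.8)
The statement is an immediate consequence of Theorem \ref{manif} together with the two observations: (i) every nonzero cohomology class of positive degree has category weight at least $1$ in the sense of Fadell--Husseini, and (ii) Poincar\'e duality on a closed $n$-manifold sends a class $z \in H_q(X;R)$ with $q<n$ to a class $u = $ (the Poincar\'e dual) in $H^{n-q}(X; R\otimes \tilde\Z)$ of \emph{positive} degree $n-q \geq 1$. So the plan is essentially a short chain of implications, and the only thing to be careful about is that $z$ may be zero, in which case the inequality is interpreted via the convention $\cwgt(0) = +\infty$ introduced earlier.

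First I would dispose of the trivial case: if $z = 0$ then $\cwgt(z) = +\infty$ by the convention fixed in the section, and there is nothing to prove. So assume $z \neq 0$. By Poincar\'e duality on the closed $n$-manifold $X$ there is a cohomology class $u \in H^{n-q}(X; R\otimes \tilde\Z)$ with $z = u \cap [X]$, and since the cap-product with the fundamental class is an isomorphism $H^{n-q}(X;R\otimes\tilde\Z) \xrightarrow{\cong} H_q(X;R)$, nontriviality of $z$ forces $u \neq 0$. Because $q < n$ we have $n - q \geq 1$, so $u$ is a nonzero cohomology class of positive degree.

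Next I would invoke the basic property of the Fadell--Husseini category weight: for any nonzero cohomology class $u$ of positive degree one has $\cwgt(u) \geq 1$. (This is recorded just after Definition \ref{def1}: if $A \subset X$ has $\cat_X A \leq 0$, i.e. $A = \emptyset$, then trivially $u|_A = 0$; but more is true --- if $\cat_X A \leq 1$ then $A$ is null-homotopic in $X$, so the inclusion $A \hookrightarrow X$ factors through a point, and $u$ of positive degree restricts to zero on a point, hence $u|_A = 0$. Therefore $\cwgt(u) \geq 1$.) Now Theorem \ref{manif} asserts precisely that $\cwgt(z) = \cwgt(u)$ for the Poincar\'e-dual pair $(z,u)$, so $\cwgt(z) = \cwgt(u) \geq 1$, which is the claim.

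The only step that carries any content is Theorem \ref{manif} itself, and that is already proved in the excerpt (in \cite{farsce}); everything else here is formal. So the ``main obstacle'' is really just a bookkeeping point: one must make sure the local system on which the dual class lives is $R \otimes \tilde\Z$ rather than $R$, and that the degree shift is $q \mapsto n-q$; both are exactly as stated in Theorem \ref{manif}, so no extra work is needed. I would therefore present the argument in the three short moves above: (1) handle $z=0$ by convention; (2) dualize and observe $\deg u = n-q \geq 1$, $u \neq 0$; (3) combine $\cwgt(u) \geq 1$ with Theorem \ref{manif}.
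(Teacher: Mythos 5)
Your proof is correct and follows exactly the paper's own argument: dualize $z$ to a class $u$ of positive degree $n-q\geq 1$, note that any nonzero positive-degree class has Fadell--Husseini weight at least $1$, and conclude via Theorem \ref{manif} that $\cwgt(z)=\cwgt(u)\geq 1$. The extra care you take with the $z=0$ convention and the orientation local system is fine but not needed beyond what the paper already records.
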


Indeed, if $q<n$ then the dual cohomology class $u$ has positive degree and
hence $\cwgt(u)\geq 1$.

Consider now the case when $X$ is $n$-dimensional Poincar\'e
complex. The following example shows that Theorem
\ref{manif} is false for Poincar\'e complexes. It is a
modification of an argument due to D. Puppe showing that the
notion of category weight of cohomology classes is not homotopy
invariant.

\begin{example} Consider the lens space $L=S^{2n+1}/(\Z/p)$ where $p$ is
an odd prime and $\Z/p$ acts freely on $S^{2n+1}$. Denote by $r: S^{2n+1}\to L$
the quotient map. Let $X$ be the mapping cylinder of $r$, i.e.
$$X= L \sqcup S^{2n+1}\times [0,1]/\sim$$
where each point $(x, 0)\in S^{2n+1}\times [0,1]$ is identified with $r(x)\in
L$. Clearly $X$ is homotopy equivalent to $L$ and so it is a Poincar\'e
complex. By a theorem of Krasnoselski \cite{Kr}, the category of $X$ equals
$2n+2$. Hence for $z=1\in H_0(X;\Z_2)$ one has
$$\cwgt(z)=\cat(X)-1=2n+1,$$ see
above. The dual cohomology class $u$ is the generator $u\in
H^{2n+1}(X;\Z_2)$. Let us show that
$$\cwgt(u)=1.$$ Indeed, consider the sphere
$S=S^{2n+1}\times 1\subset X$. The restriction $u|_S\in H^{2n+1}(S;\Z_2)$
coincides with the induced class $r^\ast(v)$ where $v\in H^{2n+1}(L;\Z_2)$ is
the generator. Hence the cohomology class $u|_S$ is nonzero. However, the
sphere $S$ has category 2 and moreover $\cat_XS=2$ (as the inclusion $S\to X$
is not null-homotopic).
\end{example}

\subsection{Cohomological estimate for $\cat^1(X, \xi)$}
The following Theorem is the main result established in \cite{farsce}.

\begin{theorem}[Farber - Sch\"utz, \cite{farsce}]\label{main2}
Let $X$ be a finite cell complex and $\xi\in H^1(X;\R)$. Let $L\in
\V_\xi$ be a complex flat line bundle over $X$ which is not a $\xi$-algebraic integer (see Definition \ref{transcendental}).
Suppose that for an integral homology
class $z\in H_q(\tilde X;\Z)=H_q(X;\Lambda)$ and some cohomology
classes $u\in H^{d}(X;L)$ and $u_i\in H^{d_i}(X;\C)$, where
$d_i>0$ for $i=1, \dots, k$, the evaluation $\langle u\cup
u_1\cup\dots\cup u_k, p_\ast(z)\rangle \not=0\in \C$ is nonzero.
Here $p_\ast(z)\in H_d(X;L^\ast)$, $q=d+d_1+\dots+d_k$. Then one
has
\begin{eqnarray}
\cat^1(X,\xi)\geq \cwgt(z)+k +1. \end{eqnarray}
\end{theorem}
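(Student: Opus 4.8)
The plan is to follow the same strategy that underlies Theorem \ref{perfect} (the lower bound for $\cat(X,\xi)$) but to upgrade it so that it produces a bound on $\cat^1(X,\xi)$ rather than $\cat(X,\xi)$, while simultaneously incorporating the homological category weight $\cwgt(z)$. Recall that by Definition \ref{defcatone}, saying $\cat^1(X,\xi)\le m$ means there is a \emph{single} closed subset $A\subset X$ which is $N$-movable with respect to $\omega$ for \emph{every} $N>0$ and with $\cat_X(X-A)\le m$. So, arguing by contradiction, I would fix such an $A$ with $\cat_X(X-A)\le \cwgt(z)+k$ and a compatible sequence of movability homotopies $H^N:A\times[0,1]\to X$, and aim to derive that $\langle u\cup u_1\cup\dots\cup u_k,\, p_\ast(z)\rangle=0$, contradicting the hypothesis. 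The crucial point is that, because the same $A$ works for all $N$, the hypothesis $L$ is not a $\xi$-algebraic integer lets me pass to a limit and conclude that a certain cohomology class, built from $u$ and pulled back via the covering $\tilde X$, actually vanishes when restricted to the relevant covering-lift of $X-A$.

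The key steps, in order, are as follows. (1) Split $X-A=V_0\cup V_1\cup\dots\cup V_k$ using $\cat_X(X-A)\le \cwgt(z)+k$: absorb $\cwgt(z)$ of the open null-homotopic-in-$X$ pieces into one ``thick'' set $V_0$ with $\cat_X V_0\le \cwgt(z)$, and keep $V_1,\dots,V_k$ as $k$ further such pieces. (2) Since $\cwgt(z)\ge\cat_X V_0$ and $z\in H_q(\tilde X;\Z)$ projects to $p_\ast(z)\in H_q(X;\Lambda)$, the very definition of homological category weight (applied after pushing forward appropriately to $X$, or working over $\tilde X$ with the lifted cover of $V_0$) guarantees that $p_\ast(z)$ can be realised by a singular cycle $c$ supported in $X-V_0$. (3) Each $u_i\in H^{d_i}(X;\C)$ restricts to $0$ on $V_i$ because $V_i\to X$ is null-homotopic and $d_i>0$; hence $u_1\cup\dots\cup u_k$, as a class in $H^\ast(X;\C)$, is in the image of $H^\ast(X, V_1\cup\dots\cup V_k;\C)$. (4) The closed set $A$ is $N$-movable with respect to $\omega$ for every $N$; following the collapse/limit argument used for the Cohn-localization Novikov complex and in \cite{farsce}, the movability homotopies on $A$ combined with the fact that $u$ comes from the flat bundle $L$ (with $L$ \emph{not} a $\xi$-algebraic integer) force the relevant lift of $u$ to ``push off to infinity'' on $A$; concretely, $u$ restricted to $A$ (after lifting to $\tilde X$ and twisting) becomes the zero class, so $u$ is in the image of $H^d(X,A;L)$. (5) Assemble: the product $u\cup u_1\cup\dots\cup u_k$ lifts to $H^\ast(X,\ A\cup V_1\cup\dots\cup V_k;\, L)$, and evaluating against $p_\ast(z)$ — which by (2) is carried by a cycle $c$ in $X-V_0\supset$ (the part where it needs to avoid the thick piece) and which can be further homotoped off $A\cup V_1\cup\dots\cup V_k$ precisely because $\cat_X$ of that union is $\le \cwgt(z)+k$ — gives $\langle u\cup u_1\cup\dots\cup u_k, p_\ast(z)\rangle=0$. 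This is the desired contradiction, so $\cat^1(X,\xi)\ge \cwgt(z)+k+1$.

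The main obstacle — and the step that needs genuine care rather than formal manipulation — is step (4): extracting, from $N$-movability of the \emph{fixed} set $A$ for all $N$, the vanishing of the twisted cohomology class $u|_A$. This is where the hypothesis that $L$ is \emph{not} a $\xi$-algebraic integer is essential (it is precisely the analogue, for a single line bundle, of working in the Novikov–Sikorav completion $\px$ / the Cohn localization $\sx$ rather than in $\Z[\pi]$): it guarantees that the monodromy series in the direction of decreasing $\xi$ does not ``wrap back,'' so that the integrals $\int_a^{H^N_1(a)}\omega\le -N\to-\infty$ genuinely kill the contribution of $u$ in the limit. Making this rigorous requires choosing compatible lifts of the homotopies $H^N$ to the cover $\tilde X$, controlling the supports, and invoking the characterization of $H^d(X;L)$ as twisted cohomology $H^d(\Hom_{\Z[\pi]}(C_\ast(\tilde X),\C_L))$ from the proof of Lemma \ref{lomeshane}; I would model this part closely on the corresponding argument in \cite{farsce}. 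The remaining steps (1)–(3) and (5) are the standard cup-length-versus-category bookkeeping, combined with the definition of $\cwgt$, and should go through without surprises.
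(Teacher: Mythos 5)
Your overall architecture is the right one: argue by contradiction from a single closed set $A$ that is $N$-movable for every $N$ and satisfies $\cat_X(X-A)\le\cwgt(z)+k$, split the cover of $X-A$ into a ``thick'' piece handled by $\cwgt(z)$ and $k$ null-homotopic pieces handled by the classes $u_i$, and reduce the evaluation to something carried by $A$. Steps (1)--(3) are standard bookkeeping (modulo the usual shrinking of open pieces to closed sets, since the definition of $\cwgt(z)$ quantifies over closed subsets). But the proof does not close, because step (4) --- the only step with genuine content --- is both misstated and unsupported. What you need is not the cohomological assertion that $u|_A=0$ in $H^d(A;L)$: a homotopy $H^N\colon A\times[0,1]\to X$ moves \emph{cycles}, not cochains, and nothing in your sketch produces the vanishing of a restricted cohomology class; moreover $u|_A=0$ is stronger than what the evaluation argument requires. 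The correct statement is homological. Writing $v=u_1\cup\dots\cup u_k$ and using $\langle u\cup v,\,p_\ast(z)\rangle=\langle u,\, v\cap p_\ast(z)\rangle$, one shows that $v\cap p_\ast(z)$ is represented by a cycle carried by $A$ (the realizing cycle for $z$ avoids the thick piece by the definition of $\cwgt(z)$, and the $u_i$ kill the contributions of the $V_i$); since $A$ is $N$-movable for all $N$, this class is ``movable to infinity'' in the cover $\tilde X$. The missing key lemma is then the main result of \cite{farscm}: a class in $H_\ast(X;\C[H])$ which is movable to infinity is annihilated by some $P\in\Z[H]$ with $\xi$-top coefficient $1$, and therefore maps to zero in $H_\ast(X;L^\ast)$ precisely because $L$ is not a $\xi$-algebraic integer, so that ${\rm Mon}_L(P)$ is a nonzero (hence invertible) complex number. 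Without invoking or reproving this, the hypothesis on $L$ is never actually used and no contradiction is reached.

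A smaller but real slip occurs in step (5): you say the cycle representing $p_\ast(z)$ ``can be further homotoped off $A\cup V_1\cup\dots\cup V_k$''. That is the wrong direction --- a relative class in $H^\ast(X,\,A\cup V_1\cup\dots\cup V_k;L)$ evaluates to zero precisely on cycles lying \emph{inside} $A\cup V_1\cup\dots\cup V_k$ (equivalently, avoiding the thick piece), which is exactly what step (2) supplies; pushing the cycle off that union would place it in the thick piece and destroy the argument. Note finally that this survey does not reproduce a proof of Theorem \ref{main2}; the argument sketched above is the one carried out in \cite{farsce}, with \cite{farscm} supplying the moving-to-infinity input.
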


Here $\cwgt(z)$ denotes the category weight of $z$ viewed as a homology class
of $X$ with local coefficient system $\Lambda=\Z[H]$ where $H=H_1(X;\Z)/\ker (\xi)$ is a free abelian group which can be identified with the group of periods of $\xi$.

\subsection{$\cat(X, \xi)$ and $\cat^1(X, \xi)$ for products of surfaces}
\begin{theorem}[Farber - Sch\"utz \cite{farsch, farsce}] \label{surface}
Let $M^{2k}$ denote the product $$\Sigma_1\times \Sigma_2\times \dots\times
\Sigma_k$$ where each $\Sigma_i$ is a closed orientable surface of genus
$g_i>1$. Given a cohomology class $\xi\in H^1(M^{2k};\R)$, one has
\begin{eqnarray}
\cat(M^{2k}, \xi) = 1+2r
\end{eqnarray}
and
\begin{eqnarray}
\cat^1(M^{2k},\xi) = 1+k+ r \end{eqnarray} where $r=r(\xi)$
denotes the number of indices $i\in \{1, 2, \dots, k\}$ such that the
cohomology class $\xi|_{\Sigma_i}\in H^1(\Sigma_i,\R)$ vanishes. In particular
\begin{eqnarray}
\cat(M^{2k}, \xi)=1
\end{eqnarray}
and
\begin{eqnarray}\label{one2}
\cat^1(M^{2k},\xi) = 1+k
\end{eqnarray}
assuming that $\xi|_{\Sigma_i}\not=0\in H^1(\Sigma_i;\R)$ for any $i=1, \dots,
k$.
\end{theorem}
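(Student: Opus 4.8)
The plan is to prove both equalities by establishing matching lower and upper bounds: the lower bounds come from the cohomological and homological category–weight machinery of Theorems~\ref{rephrase} and~\ref{main2}, and the upper bounds from explicit constructions of movable sets on a single surface together with product inequalities for the invariants $\cat$ and $\cat^1$. Throughout, I reorder the factors so that $\xi|_{\Sigma_i}\ne 0$ for $i=1,\dots,k-r$ and $\xi|_{\Sigma_i}=0$ for $i=k-r+1,\dots,k$.

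\textbf{Lower bound for $\cat$.} By Theorem~\ref{rephrase} it suffices to show $\clz(M^{2k},\xi)\ge 2r$, and by the multiplicativity of $\clz$ over products this reduces to a single factor. If $\xi|_{\Sigma_i}=0$ then $\clz(\Sigma_i,0)=\clz(\Sigma_i)=2$, the ordinary cup-length of a surface. If $\xi|_{\Sigma_i}\ne 0$ then $\clz(\Sigma_i,\xi|_{\Sigma_i})=0$: the Novikov--Betti numbers of a genus $g_i>1$ surface in a nonzero class are $b_0=b_2=0$ and $b_1=2g_i-2>0$, so by Lemma~\ref{lomeshane} some $\xi$-transcendental $L$ has $H^1(\Sigma_i;L)\ne 0$ and $\clz\ge 0$; conversely a nonzero admissible product $v_0\cup v_1\cup\dots\cup v_m$ with $\xi|_{\Sigma_i}\ne 0$ has $\deg v_0\ge 1$ and each $\deg v_j\ge 1$, hence lies in $H^{\ge 1+m}(\Sigma_i;L)$, and since $H^2(\Sigma_i;L)=0$ for nontrivial $L$ this forces $m\le 0$. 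Summing over the $k$ factors gives $\clz(M^{2k},\xi)\ge 2r$, whence $\cat(M^{2k},\xi)\ge 1+2r$.

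\textbf{Lower bound for $\cat^1$.} I would apply Theorem~\ref{main2}. Fix a $\xi$-transcendental line bundle $L\in\V_\xi$; its restriction to each factor is trivial on those with $\xi|_{\Sigma_i}=0$ and $(\xi|_{\Sigma_i})$-transcendental on the others. For each factor with $\xi|_{\Sigma_i}=0$ choose dual bases $u_1^{(i)},u_2^{(i)}\in H^1(\Sigma_i;\C)$ with $\langle u_1^{(i)}\cup u_2^{(i)},[\Sigma_i]\rangle\ne 0$; for each factor with $\xi|_{\Sigma_i}\ne 0$ choose $u_i\in H^1(\Sigma_i;L|_{\Sigma_i})\ne 0$ and, using Poincar\'e duality with local coefficients, an integral class $z_i\in H_1(\widetilde{\Sigma_i};\Z)$ with $\langle u_i,p_\ast(z_i)\rangle\ne 0$ (this uses that $\Z[H_i]\to\C$ is flat when $L$ is transcendental, so $p_\ast$ is onto the twisted homology). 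Let $u$ be the exterior product of the $u_i$ and let $z$ be the exterior product, pushed into the cover $\widetilde M$, of the fundamental classes $[\Sigma_i]$ of the zero-factors and the classes $z_i$ of the others, so that $\deg z=2r+(k-r)=k+r$; by the K\"unneth formula the evaluation $\langle u\cup\prod_i(u_1^{(i)}\cup u_2^{(i)}),\,p_\ast(z)\rangle$ is the product of the nonzero quantities above, hence nonzero. Since category weight is additive over products, with $\cwgt([\Sigma_i])=0$ (a top class) and $\cwgt(z_i)=1$ (a one-dimensional class on a surface, cf.\ Theorem~\ref{manif}), one gets $\cwgt(z)=k-r$, and Theorem~\ref{main2} yields $\cat^1(M^{2k},\xi)\ge(k-r)+2r+1=1+k+r$.

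\textbf{Upper bounds.} The first ingredient is the single-surface estimate: for a closed surface $\Sigma$ of genus $>1$ and $\eta\ne 0$ one has $\cat(\Sigma,\eta)\le 1$ and $\cat^1(\Sigma,\eta)\le 2$, while $\cat(\Sigma,0)=\cat^1(\Sigma,0)=\cat(\Sigma)=3$. For $\eta\ne 0$ one represents $\eta$ by a circle-valued Morse function $\Sigma\to S^1$ whose only critical points are saddles and uses its gradient-like flow to build, for each $N$ (and, for the $\cat^1$-estimate, a single set working for all $N$), a closed $A$ that is $N$-movable and whose complement can be deformed into one (respectively two) open sets null-homotopic in $\Sigma$. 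The second ingredient is a relative product inequality
\[\cat(X\times Y,\,\xi_X\times 1+1\times\xi_Y)\le\cat(X,\xi_X)+\cat(Y,\xi_Y)-1\]
and its analogue for $\cat^1$, proved by adapting the classical $\cat(X\times Y)\le\cat(X)+\cat(Y)-1$: one takes a movable set of the shape $(A_X\times Y)\cup(X\times A_Y)$ and combines the covers of the two complements along anti-diagonals $W_s=\bigsqcup_{i+j=s}U_i'\times V_j'$, so that only the sum minus one of the numbers of null-homotopic pieces is used. Granting these, iterating the product inequality over the $k$ factors and inserting the single-surface values $1$ (for $\xi|_{\Sigma_i}\ne 0$) and $3$ (for $\xi|_{\Sigma_i}=0$) gives
\[\cat(M^{2k},\xi)\le\sum_{i=1}^k\cat(\Sigma_i,\xi|_{\Sigma_i})-(k-1)=(k-r)+3r-(k-1)=1+2r,\]
and likewise $\cat^1(M^{2k},\xi)\le 2(k-r)+3r-(k-1)=1+k+r$, matching the lower bounds.

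\textbf{Main obstacle.} The hard part is the upper bound, and within it the two technical ingredients above. The single-surface construction must route the complement of the movable set past the zeros of the closed $1$-form into a ball; these zeros are unavoidable since $\chi(\Sigma)\ne 0$, so there is genuine work in organizing the gradient-like flow. In the relative product inequality the subtlety is to make the homotopies of $A_X$ and $A_Y$ compatible so that $(A_X\times Y)\cup(X\times A_Y)$ is truly movable, and for $\cat^1$ one must keep a single such set working simultaneously for all $N$. By contrast the two lower bounds are essentially mechanical once the Novikov--Betti numbers of a surface and the multiplicativity of $\clz$ and of homological category weight under products are available.
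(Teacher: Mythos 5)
The survey does not actually prove Theorem \ref{surface}; it explicitly defers the proof to \cite{farsch, farsce}, so your proposal must be judged on its own. Your two lower bounds are sound and use precisely the machinery the survey supplies: $\cat(M^{2k},\xi)\ge 1+2r$ follows from Theorem \ref{rephrase} together with Lemma \ref{clgeq} and your (correct) computation $\clz(\Sigma_i,0)=2$ and $\clz(\Sigma_i,\eta)=0$ for $\eta\ne 0$ via the Novikov--Betti numbers $b_0(\eta)=b_2(\eta)=0$, $b_1(\eta)=2g_i-2>0$; and the bound $\cat^1\ge 1+k+r$ is a legitimate application of Theorem \ref{main2}. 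Two small repairs there: ``additivity of category weight over products'' is not a stated result, and the inequality you actually need, $\cwgt(z)\ge k-r$, should be obtained from Theorem \ref{manif} plus the supermultiplicativity of the Fadell--Husseini weight of the Poincar\'e dual class (a product of $k-r$ classes of degree one); also $z$ naturally lives on the product of the factor covers, which only maps onto $\tilde M_\xi$, so it must be pushed forward before applying Theorem \ref{main2}.

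The genuine gap is in the upper bounds. The inequality $\cat(X\times Y,\xi)\le\cat(X,\xi_X)+\cat(Y,\xi_Y)-1$ is the load-bearing step and is not proved. Its easy case is $\xi_Y=0$: there one can take $A=A_X\times Y$, move it by $H^X_t\times\id_Y$ (the $Y$-coordinate contributes nothing to $\int\omega$), and cover $(X-A_X)\times Y$ by anti-diagonals; combined with the bound $\cat(\Sigma,\eta)\le\dim\Sigma-1=1$ from part (b) of the theorem in Section 12, this already settles the case $k-r\le 1$. But when at least two factors have nonzero restriction you need the hard case of the product inequality (both $\xi_X\ne0$ and $\xi_Y\ne0$), and your construction fails exactly where you flag it: Definition \ref{defmovable} demands a single homotopy of the whole closed set $(A_X\times Y)\cup(X\times A_Y)$; the homotopies $H^X\times\id$ and $\id\times H^Y$ disagree on $A_X\times A_Y$, and any interpolation or extension must control $\int_b^{\,\cdot}\omega_Y$ for points $b\notin A_Y$, where no estimate is available (an extension of $H^Y$ to all of $Y$ can increase that integral by a constant that depends on $N$, destroying the $-N$ bound). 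Identifying this as ``the main obstacle'' does not discharge it, and without it you have no proof that $\cat(N_1,\xi_1)\le 1$ for a product $N_1$ of two or more surfaces with everywhere nonzero restriction. The same applies, more severely, to $\cat^1$: even the single-surface estimate $\cat^1(\Sigma,\eta)\le 2$ (a single closed set $N$-movable for every $N$ with complement of category $\le 2$) is only gestured at, and nothing in the survey provides it. So the overall architecture (matching cohomological lower bounds against product-type upper bounds) is reasonable and the arithmetic is consistent, but the upper-bound half of the argument is missing its two essential ingredients.
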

 We refer the reader to our papers \cite{farsch, farsce} for proofs and more detail.

 Theorem \ref{surface} demonstrates that the difference between two notions of category with respect to a cohomology class
 $$\cat^1(X, \xi)- \cat(X,\xi)$$ can be arbitrarily large.

Some further results developing the Lusternik-Schnirelman theory for closed
1-forms were obtained by J. Latschev \cite{Lat} and D. Sch\"utz \cite{schman};
see also \cite{farbe4} and \cite{farkap}.

\end{document}